\theoremstyle{plain}
\newtheorem{theorem}{Theorem}[section]
\theoremstyle{plain}
\newtheorem{proposition}[theorem]{Proposition}
\theoremstyle{plain}
\newtheorem{lemma}[theorem]{Lemma}
\theoremstyle{plain}
\newtheorem{corollary}[theorem]{Corollary}
\theoremstyle{plain}
\theoremstyle{plain}
\newtheorem{mthm}{Theorem}
\theoremstyle{plain}
\newtheorem{mcor}[mthm]{Corollary}
\theoremstyle{definition}
\newtheorem{definition}[theorem]{Definition}
\theoremstyle{remark}
\newtheorem{remark}[theorem]{Remark}
\theoremstyle{remark}
\newtheorem{example}[theorem]{Example}
\theoremstyle{remark}
\title[Group approximation in Cayley topology, II]
{Group approximation in Cayley topology and coarse geometry, \\ Part II: Fibred coarse embeddings.}
\author{Masato Mimura \and Hiroki Sako}
\thanks{Work was in part done while the first-named author was partially supported  by JSPS KAKENHI Grant Number JP25800033.}
\address{Masato Mimura\\
Mathematical Institute, Tohoku University, Japan\ /\ \'{E}cole Polytechnique F\'{e}d\'{e}rale de Lausanne, Switzerland}
\email{mimura-mas@m.tohoku.ac.jp}
\address{Hiroki Sako\\
School of Science, Department of Mathematical Sciences, Tokai University, Japan}
\email{hiroki.sako@gmail.com}
\date{\today}
\begin{document}

\begin{abstract}
The objective of this series is to study metric geometric properties of disjoint unions of Cayley graphs of amenable groups by  group properties of the Cayley accumulation points in the space of marked groups. In this Part II, we prove that a disjoint union admits a fibred coarse embedding into a Hilbert space (as a disjoint union) if and only if the Cayley boundary of the sequence in the space of marked groups is uniformly a-$\mathrm{T}$-menable. We furthermore extend this result to ones with other target spaces. By combining our main results with constructions of Osajda and Arzhantseva--Osajda, we construct two systems of markings of a certain sequence of finite groups with two opposite extreme behaviors of the resulting two disjoint unions: With respect to one marking, the space has property A. On the other hand, with respect to the other, the space does not admit fibred coarse embeddings into Banach spaces with non-trivial type (for instance, uniformly convex Banach spaces) or Hadamard manifolds; the Cayley limit group is, furthermore, non-exact.
\end{abstract}

\subjclass[2010]{Primary 20F65; Secondary 20F69}
\keywords{Fibred coarse embedding; a-T-menability; space of marked groups; exact groups; expanders}

\maketitle

\tableofcontents

\section{Introduction}
The main topics of this paper are the \textit{fibred coarse embeddings} of disjoint unions of Cayley graphs and \textit{equivariant coarse embeddings} of groups. Before proceeding to these two concepts, we first recall the definition of (genuine) \textit{coarse embeddings}. By  \textit{generalized metrics}, we mean metrics that possibly take the value $+\infty$. A basic example of generalized metric spaces is constructed as follows. For a sequence of metric spaces $(X_m,d_m)_{m\in \mathbb{N}}$, we define a generalized metric $d$ on $\bigsqcup_{m\in \mathbb{N}} X_m$ by $d(x,y)=d_m(x,y)$ if $x,y \in X_m$ for some $m$ and $d(x,y)=+\infty$ otherwise. We call the resulting generalized metric space $(\bigsqcup_{m\in \mathbb{N}} X_m,d)$ the \textit{disjoint union}, and simply write it as $\bigsqcup_{m\in \mathbb{N}}X_m$. 
\begin{definition}\label{definition=CoarseEmbeddings}
Let $(X, d_X)$ be a generalized metric space and $\mathcal{M}$ be a non-empty class of (genuine) metric spaces.
\begin{enumerate}[$(1)$]
\item Let $(M,d_M)$ be a (generalized) metric space. A (possibly discontinuous and possibly non-injective) map $f \colon X \to M$ is said to be a \textit{coarse embedding}, if
there exist two non-decreasing functions $\rho, \omega \colon [0,\infty) \to [0,\infty)$ that are proper (namely, $\lim_{r\to +\infty}\rho (r)=\lim_{r\to +\infty}\omega(r)=+\infty$) such that for all $x_1,x_2\in X$ \textit{such that} $d_X(x_1,x_2)<+\infty$, 
\[
\rho(d_X(x_1, x_2)) \leq d_M(f(x_1), f(x_2)) \leq \omega(d_X(x_1, x_2))
\]
holds true. The $\rho$, $\omega$, $(\rho,\omega)$ are, respectively, called a \textit{compression function}, an \textit{expansion function} and a \textit{control pair} for $f$. 
\item We say that $X$ \textit{admits a coarse embedding into $\mathcal{M}$} if there exist $M\in \mathcal{M}$ and a coarse embedding $f\colon X\to M$.
\item We say the pair $(\rho,\omega)$ of two non-decreasing proper functions $[0,\infty)\to [0,\infty)$ is a \textit{control pair} for $X$ into $\mathcal{M}$ if there exist $M\in \mathcal{M}$ and a coarse embedding $f\colon X\to M$ such that $(\rho,\omega)$ is a control pair for $f$. Denote by $\mathcal{CP}_{\mathcal{M}}(X)$ the set of all control pairs for $X$ into $\mathcal{M}$.
 \item If $X=\mathbf{G}$ is a marked group (with the metric $d_{\mathbf{G}}$; see Subsection~\ref{subsection=CayleyTopology}), we write $\mathcal{CP}_{\mathcal{M}}(X)$ as $\mathcal{CP}_{\mathcal{M}}^{\ast}(\mathbf{G})$ in order to distinguish it from the set $\mathcal{CP}_{\mathcal{M}}^{\sharp}(\mathbf{G})$ of \textit{equivariant} control pairs; compare with Definition~\ref{definition=a-M-menable}.
\end{enumerate}
\end{definition}
We make a remark that our convention on coarse embeddability of generalized metric spaces, as in $(i)$ above, is slightly non-standard. More precisely, we \textit{impose no condition on any pair of points with infinite distance to formulate coarse embeddabilty}. This is because our model example of generalized metric spaces is the disjoint unions of an infinite family of connected graphs; in that case, it is natural to put \textit{no} conditions on pairs of two vertices in distinct components.

The notion of \textit{fibred coarse embeddings} was introduced by Chen--Wang--Yu \cite{ChenWangYu}. This is a weakening of the (genuine) coarse embeddability; see Remark~\ref{remark=FromFibredToGenuine}. In this paper, since we consider the disjoint union of possibly infinite graphs, we relax the condition on exceptional sets, and call the modified notion that of \textit{fibred coarse embeddings as  a disjoint union}; see Definition~\ref{definition=GeneralizedFibredCoarseEmbeddings}. This new notion coincides with the original notion of \cite{ChenWangYu} for a coarse disjoint union of finite graphs; see Remark~\ref{remark=OriginalDefinition}. In \cite{ChenWangYu}, they  proved that if a coarse disjoint union $X$ of finite graphs of uniformly bounded degree admits a fibred coarse embedding into a Hilbert space, then the maximal Baum--Connes conjecture holds for $X$. Furthermore, Chen--Wang--Wang \cite{ChenWangWangNPC} proved that if $X$ above admits a fibred coarse embedding into a complete, connected and simply connected Riemannian manifold with non-positive sectional curvature (it is called an \textit{Hadamard manifold}), then the coarse Novikov conjecture holds for $X$. M. Finn-Sell \cite{FinnSell} studied a coarse disjoint union of finite connected graphs with uniformly bounded degree, in relation with the associated boundary groupoid,  that admits a fibred coarse embedding into a Hilbert space; he deduced the coarse strong Novikov conjecture for such a metric space.

The concept of \textit{equivariant coarse embedding} is defined for finitely generated groups in terms of isometric actions. It relates to Gromov's \textit{a-$\mathrm{T}$-menability} if the target space is a Hilbert space, and to \textit{a-$\mathcal{M}$-menability} in general cases; see Definition~\ref{definition=a-M-menable}.

We employ \textit{the space of $($$k$-$)$marked groups} $\mathcal{G}(k)$ to study a relationship between these two notions. This space was intensively studied by  R. I. Grigorchuk \cite[Section~6]{Grigorchuk}, and it is the space of (equivalence classes of) all pairs of a group and a $k$-generating ordered set. The space $\mathcal{G}(k)$ is equipped with the topology of \textit{local convergence as rooted diagrams}. This topology is sometimes called the \textit{Cayley topology}, and it is \textit{compact} and metrizable. We will briefly recall $\mathcal{G}(k)$ in Subsection~\ref{subsection=CayleyTopology}. For a sequence $(\mathbf{G}_m)_{m\in \mathbb{N}}$, we consider the following two objects:
\begin{itemize}
  \item The disjoint union $\bigsqcup_{m\in \mathbb{N}}\mathrm{Cay}(\mathbf{G}_m)$ of Cayley graphs, which is a generalized metric space \textit{without group structure}.
  \item The \textit{Cayley boundary} $\partial_{\mathrm{Cay}}(\mathbf{G}_m)_{m\in \mathbb{N}}(\subseteq \mathcal{G}(k))$, defined as follows.
\begin{definition}\label{definition=CayleyBoundary}
The \textit{Cayley boundary} $\partial_{\mathrm{Cay}}(\mathbf{G}_m)_{m\in \mathbb{N}}$  is defined as the set of all accumulation points of $(\mathbf{G}_m)_{m\in \mathbb{N}}$ in $\mathcal{G}(k)$ in the Cayley topology.
\end{definition}
It forms a non-empty compact set, consisting of marked \textit{groups} $\mathbf{G}_{\infty}\in \partial_{\mathrm{Cay}}(\mathbf{G}_m)_{m\in \mathbb{N}}$. 
\end{itemize}

\begin{definition}\label{definiiton=Uniformlya-M-menable}
Let $K$ be a non-empty subset of $\mathcal{G}(k)$ $(k\in \mathbb{N}_{\geq 1})$. For a non-empty class of metric spaces $\mathcal{M}$, we say that $K$ is \textit{uniformly a-$\mathcal{M}$-menable} if it admits equivariant \textit{equi}-coarse embeddings into $\mathcal{M}$. That means, there exists a common pair $(\rho,\omega)$ of non-decreasing proper functions $[0,\infty) \to [0,\infty)$ such that for every $\mathbf{G}\in K$, $(\rho,\omega)$ is an equivariant control pair from $\mathbf{G}$ into $\mathcal{M}$. In short, it holds that
\[
\bigcap_{\mathbf{G}\in K}\mathcal{CP}^{\sharp}_{\mathcal{M}}(\mathbf{G})\ne \emptyset;
\]
see Definition~\ref{definition=a-M-menable} for related definitions.
\end{definition}

Our main result, Theorem~\ref{mtheorem=MainTheorem}, requires several technical terminologies for the statement. In this introduction, instead of stating it, we exhibit a corollary to Theorem~\ref{mtheorem=MainTheorem}, Theorem~\ref{theorem=Corollary}. It, in particular, relates fibred coarse embeddability into a Hilbert space (as a disjoint union)  of the disjoint union of Cayley graphs of amenable marked groups to uniform a-$\mathrm{T}$-menability of the Cayley boundary. We refer the reader to Section~\ref{section=Precise} for the statement of Theorem~\ref{mtheorem=MainTheorem}.

\begin{theorem}[See Corollary~\ref{mcorollary=a-T-menable} for more detailed statements.]\label{theorem=Corollary}
Let $(\mathbf{G}_m)_{m\in \mathbb{N}}$ be a sequence of $\mathrm{amenable}$ marked groups  in $\mathcal{G}(k)$. 
The disjoint union $\bigsqcup_{m\in \mathbb{N}}\mathrm{Cay}(\mathbf{G}_m)$ admits a fibred coarse embedding into a Hilbert space as a disjoint union if and only if $\partial_{\mathrm{Cay}}(\mathbf{G}_m)_{m\in \mathbb{N}}$ is uniformly a-$\mathrm{T}$-menable. 

More generally, for fixed $q\in [1,\infty)$, $\bigsqcup_{m\in \mathbb{N}}\mathrm{Cay}(\mathbf{G}_m)$ admits a fibred coarse embedding into $L_q$, that means the Lebesgue $L_q$-space $L_q([0,1],\mathbb{R})$, if and only if $\partial_{\mathrm{Cay}}(\mathbf{G}_m)_{m\in \mathbb{N}}$ is uniformly a-$L_q$-menable.
\end{theorem}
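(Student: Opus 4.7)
The plan is to deduce Theorem~\ref{theorem=Corollary} from the main Theorem~\ref{mtheorem=MainTheorem} by supplying two additional ingredients: the closedness of amenability in the Cayley topology, and a quantitative averaging that, for an amenable marked group, converts a non-equivariant coarse embedding into $L_q$ (or a Hilbert space) into an equivariant one, with a new control pair depending only on the original. Applied to $\mathcal{M}=\{L_q\}$, Theorem~\ref{mtheorem=MainTheorem} should read, in effect, that $\bigsqcup_{m}\mathrm{Cay}(\mathbf{G}_m)$ admits a fibred coarse embedding into $L_q$ as a disjoint union if and only if $\bigcap_{\mathbf{G}_{\infty}\in \partial_{\mathrm{Cay}}(\mathbf{G}_m)_{m\in\mathbb{N}}}\mathcal{CP}^{\ast}_{\{L_q\}}(\mathbf{G}_{\infty})\ne\emptyset$. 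The task therefore reduces to showing that, under the amenability hypothesis on $(\mathbf{G}_m)_{m\in\mathbb{N}}$, this \emph{non-equivariant} uniform equi-coarse embeddability of $\partial_{\mathrm{Cay}}(\mathbf{G}_m)_{m\in\mathbb{N}}$ coincides with \emph{uniform a-$L_q$-menability}, i.e., with $\bigcap_{\mathbf{G}_{\infty}}\mathcal{CP}^{\sharp}_{\{L_q\}}(\mathbf{G}_{\infty})\ne\emptyset$.

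The converse implication is immediate, since equivariant coarse embeddings are in particular coarse embeddings. For the non-trivial direction, I would first observe that amenability passes to Cayley limits: Følner sets in any $\mathbf{G}_{\infty}\in \partial_{\mathrm{Cay}}(\mathbf{G}_m)_{m\in\mathbb{N}}$ can be transferred to Følner sets of $\mathbf{G}_m$ for large $m$ via the local identification of Cayley balls, and conversely, so every Cayley accumulation point inherits amenability. Next, for amenable $\mathbf{G}_{\infty}$ equipped with a coarse embedding $f\colon \mathbf{G}_{\infty}\to L_q$ with control pair $(\rho,\omega)$, I would average the displacement $g\mapsto \|f(x)-f(gx)\|_q^q$ against a left-invariant mean on $\mathbf{G}_{\infty}$ to obtain a function of negative type of order $q$; applying the classical GNS / Bekka--Cherix--Valette construction (for $q=2$) or a Mazur-map / ultrapower-barycenter argument (for general $q$) then produces an affine isometric action of $\mathbf{G}_{\infty}$ on an $L_q$-space whose orbit map is an equivariant coarse embedding with new control pair $(\rho',\omega')$ depending only on $(\rho,\omega)$. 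Running this uniformly over the Cayley boundary turns a common non-equivariant control pair into a common equivariant one, which is exactly the content of Definition~\ref{definiiton=Uniformlya-M-menable}.

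The principal obstacle will be the quantitative averaging step for $q\in[1,\infty)\setminus\{2\}$. In the Hilbert case the procedure is textbook and the dependence of $(\rho',\omega')$ on $(\rho,\omega)$ is transparent. For general $q$, one must pass through an ultrapower of $L_q$ and use a barycentric map to land back in an honest $L_q$-space, all while certifying that $\rho'$ remains a proper function determined only by $\rho$ and not by the specific $\mathbf{G}_{\infty}$; this uniform dependence is what allows the averaged control pairs to intersect non-trivially along the whole Cayley boundary. The edge case $q=1$, where $L_1$-geometry is the most delicate, will likely need separate bookkeeping; and one also has to confirm that, because Definition~\ref{definiiton=Uniformlya-M-menable} is phrased in terms of shared control pairs rather than pointwise properness, the quantitative output of the averaging matches the formulation required by the main theorem.
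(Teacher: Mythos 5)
Your plan rests on a misreading of Theorem~\ref{mtheorem=MainTheorem}: it does not state the non-equivariant equivalence you write with $\mathcal{CP}^{\ast}$. Part $(i).(2)$ already gives the \emph{equivariant} conclusion — for amenable $\mathbf{G}_m$ it yields $\bigcap_{\mathbf{G}_{\infty}}\mathcal{CP}^{\sharp}_{\mathcal{FS}_q(\mathcal{M})}(\mathbf{G}_{\infty})\ne\emptyset$ — because the F\o lner/Gromov averaging is already carried out inside its proof, via almost-fragmentary actions on $\ell_q$-products over F\o lner sets of $\mathbf{G}_m$, before passing to the metric ultraproduct (Subsection~\ref{subsection=Amenable} and Proposition~\ref{proposition=FromLocalToGlobalSubspace}). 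Consequently the averaging layer you propose is redundant for the forward direction: the paper's proof of Corollary~\ref{mcorollary=a-T-menable} simply cites $(i).(2)$ and $(ii)$, the only extra ingredient for $L_q$ being the Naor--Peres classification of closed subspaces of $L_q$-spaces, used to pass from $\mathcal{FS}_q(L_q)$ back to $L_q$ — exactly the step you flag as your ``principal obstacle.''

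More seriously, the intermediate equivalence ``fibred $\Leftrightarrow$ non-equivariant equi-coarse embeddability of the boundary'' is false in the $\Leftarrow$ direction as a consequence of the main theorem, and the argument as scaffolded does not establish the backward implication of Theorem~\ref{theorem=Corollary}. Theorem~\ref{mtheorem=MainTheorem}$(ii)$ takes an \emph{equivariant} control pair as input: the local trivializations $t_{g,R'_m}$ are defined by $z\mapsto z\cdot\alpha(\beta_{\mathbf{G}_m,\mathbf{G}_\infty,R_m}(xg^{-1}))$, and the compatibility condition $(2)$ of Definition~\ref{definition=GeneralizedFibredCoarseEmbeddings} is verified precisely by using the homomorphism property of $\alpha$. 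A non-equivariant coarse embedding of the boundary groups does not furnish compatible trivializations, so there is no implication from ``non-equivariant equi-coarse'' to ``fibred'' available. Your chain only closes because your step (B) (converting non-equivariant to equivariant for amenable limits) supplies the uniform a-$L_q$-menability that $(ii)$ actually needs — but then (A) is doing no work and should be discarded. The repair is straightforward: for the forward direction use Proposition~\ref{proposition=UniformCoarseEmbedding} (which is the correct statement underlying your step (A) in one direction, yielding non-equivariant equi-coarse embeddings into $\mathcal{UP}(L_q)$, then Krivine to return to $L_q$-spaces), followed by your amenability-closedness observation and the averaging; for the backward direction invoke $(ii)$ directly with $\ell_q(\{L_q\})=L_q$. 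This corrected version is a legitimate alternative to the paper's route for the forward direction — it moves the F\o lner averaging from the approximating groups $\mathbf{G}_m$ (where the paper places it) to the boundary groups $\mathbf{G}_\infty$ — but, as you anticipate, for $q\ne 2$ the averaging at the boundary has to pass through exactly the same $\ell_q$-product / ultraproduct / subspace machinery as the paper's direct argument, so no simplification is gained.
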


Some work has been done by other researchers before our results in the context of \textit{box spaces} for a \textit{RF} (Residually Finite) group. If a finitely generated infinite group $G$ with a finite generating set $S$ admits a chain $(N_m)_{m\in \mathbb{N}}$, $N_{m+1}\leqslant N_m$, of normal subgroups of finite index in $G$ such that $\bigcap_{m\in \mathbb{N}}N_m=\{e_G\}$, then the \textit{box space} of $G$ is defined by
\[
\Box G=\Box_{(N_m)_m}G=\coprod_{m\in \mathbb{N}}\mathrm{Cay}(G/N_m;S\ \mathrm{mod} \ N_m),
\]
where $\coprod_m$ denotes a coarse disjoint union (see \cite[Definition~2.17.$(2)$]{MimuraSako} and Subsection~\ref{subsection=FibredCoarseEmbeddings}). Chen--Wang--Wang \cite{ChenWangWang} showed that  $\Box G$ admits a fibred coarse embedding into a Hilbert space if and only if $G$ is a-$\mathrm{T}$-menable. They also showed that for a metric space $M$, if $G$ is a-$M$-menable, then $\Box G$ admits a fibred coarse embedding into $M$. The present paper supplies several examples that admit fibred coarse embeddings into Hilbert spaces, but that do not admit genuine coarse embeddings; compare with Example~\ref{example=Selberg}.

Here we stress that the following points are visible \textit{only after} extending the framework from the class of box spaces to our general class; see the definitions of RF/LEF/LEA groups in Definition~\ref{definition=RFLEFLEA}.

\begin{enumerate}[$(a)$]
  \item The Cayley boundary $\partial_{\mathrm{Cay}} (\mathbf{G}_m)_m$ may consist of \textit{infinitely many} points.
  \item Even when $\partial_{\mathrm{Cay}} (\mathbf{G}_m)_m$ is a singleton $\{\mathbf{G}_{\infty}\}$, the Cayley limit group $\mathbf{G}_{\infty}=\lim_{m}\mathbf{G}_m$ is in the class of \textit{LEA} (Locally Embeddable into Amenable groups) group when $G_m$, $m\in \mathbb{N}$, is amenable; it is in the class of \textit{LEF} (Locally Embedabble into Finite groups) group when $G_m$, $m\in \mathbb{N}$, is furthermore finite. In general, the implications
\[
\mathrm{RF}\quad \Longrightarrow \quad \mathrm{LEF}\quad \Longrightarrow \quad \mathrm{LEA} 
\]
hold, and \textit{none} of the implications \textit{can be reversed}; see \cite[Subsection~2.2]{MimuraSako}.
  \item Coarse properties of $\bigsqcup_{m\in \mathbb{N}}\mathrm{Cay} ( G_m,S_m)$ may be \textit{considerably affected} by the choice of the system $(S_m)_m$ of generators of $G_m$, even when it might look a slight change.
\end{enumerate}

To illustrate point $(c)$ above, we study the following example. Here we set 
\[
\mathbb{N}_{\mathrm{odd}}=\{3,5,7,\ldots \}.
\]
(This set denotes the set of \textit{odd} integers \textit{at least} $3$; this is for a technical reason to avoid using $2m+1$ everywhere in the example below.)

\begin{example}\label{example=SpecialLinear}
Fix  a prime $p$. For $n\in \mathbb{N}_{\geq 1}$, denote by $\mathbb{F}_{p^n}$ the finite field of order $p^n$. It is well known that the multiplicative group $\mathbb{F}_{p^n}^{\times}$ is cyclic; for each $p$ and each $n$, we fix a generator $t_n=t_{p,n}\in \mathbb{F}_{p^n}$  of it. Fix a sequence $(n_m)_{m\in \mathbb{N}_{\mathrm{odd}}}$ of positive integers such that $\lim_{m\to \infty}n_m=+\infty$.

Let $G_m=\mathrm{SL}(m,\mathbb{F}_{p^{n_m}})$. Then for $m\in \mathbb{N}_{\mathrm{odd}}$, we consider the following two systems $(S_m)_{m\in \mathbb{N}_{\mathrm{odd}}}$, $(T_m)_{m\in \mathbb{N}_{\mathrm{odd}}}$ of generators of $G_m$.

\begin{itemize}
  \item For $m\in \mathbb{N}_{\mathrm{odd}}$, $S_m=(\sigma^{(m)},\upsilon^{(m)},\tau^{(m)})$. Here 
\[
\sigma^{(m)} = 
\left(
\begin{array}{ccccc}
1 & 1 & 0 & \cdots & 0 \\
0 & 1 & 0 &    \        & \vdots \\
\vdots & 0 & 1 & \ddots & \vdots \\
\vdots & \ & \ddots & \ddots & 0 \\
0 & \cdots & \cdots & 0 & 1 \\
\end{array}
\right),
\quad 
\upsilon^{(m)} = 
\left(
\begin{array}{ccccc}
1 & t_{n_m} & 0 & \cdots & 0 \\
0 & 1 & 0 &     \       & \vdots \\
\vdots & 0 & 1 & \ddots & \vdots \\
\vdots & \ & \ddots & \ddots & 0 \\
0 & \cdots & \cdots & 0 & 1 \\
\end{array}
\right),
\]
and 
\[
\tau^{(m)} = 
\left(
\begin{array}{ccccc}
0 & \cdots & \cdots & 0& 1 \\
1 & 0         &  \          &  \  & 0 \\
0 & 1         & 0         &  \  & \vdots \\
\vdots & \ddots & \ddots & \ddots & \vdots \\
0 & \cdots & 0 & 1 & 0 \\
\end{array}
\right).
\]
Define $X'=X'_{p,(n_m)}=\bigsqcup_{m\in \mathbb{N}_{\mathrm{odd}}}\mathrm{Cay}(G_m;S_m)$.
  \item For $m\in \mathbb{N}_{\mathrm{odd}}$, $T_m=(\sigma^{(m)},\sigma'^{(m)},\upsilon^{(m)}, \tau^{(m)})$. Here $\sigma^{(m)}$, $\upsilon^{(m)}$ and $\tau^{(m)}$ are the same as above, and $\sigma'^{(m)}={}^t \sigma^{(m)}$ is the transpose of $\sigma^{(m)}$.

Define $Y'=Y'_{p,(n_m)}=\bigsqcup_{m\in \mathbb{N}_{\mathrm{odd}}}\mathrm{Cay}(G_m;T_m)$.
\end{itemize}
\end{example}

For the proof of the fact that $S_m$ and $T_m$ are, respectively, markings of $G_m$, see \cite[Remark~5.5]{MimuraSako}.

For these $X'$ and $Y'$, we have the following contrast.

\begin{corollary}[See Corollary~$\ref{corollary=SpecialLinearGroups}$ for more detailed statements.]\label{corollary=SpecialLinear}
Let $X'$ and $Y'$ be as in Example~$\ref{example=SpecialLinear}$.
\begin{enumerate}[$(1)$]
  \item $($\cite[Remark~5.10]{MimuraSako}$)$ The space $X'$ enjoys property A of G. Yu \cite{Yu}. In particular, $X'$ admits a coarse embedding into every infinite dimensional Banach space; see the discussion below.
  \item The space $Y'$ does $\mathrm{not}$ admit a fibred coarse embedding as a disjoint union into $\mathcal{B}_{\mathrm{type}>1}$, the class of all Banach spaces with $($$\mathrm{linear}$, also known as $\mathrm{Rademacher}$$)$ $\mathrm{type}$ $>1$; see $(4)$ of Example~$\ref{example=BanachSpaces}$.
\end{enumerate}
\end{corollary}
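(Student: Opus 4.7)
For (1), my plan is simply to cite \cite[Remark~5.10]{MimuraSako}, which establishes property~A of $X'$ directly; the consequence that $X'$ coarsely embeds into every infinite-dimensional Banach space is then a standard implication of property~A (via a coarse embedding into $c_0$).

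For (2), the plan is to apply Theorem~\ref{mtheorem=MainTheorem} with $\mathcal{M} = \mathcal{B}_{\mathrm{type}>1}$. Since each $G_m = \mathrm{SL}(m, \mathbb{F}_{p^{n_m}})$ is finite and hence amenable, the equivalence in the main theorem is available and reduces the problem to showing that $\partial_{\mathrm{Cay}}(G_m, T_m)_{m \in \mathbb{N}_{\mathrm{odd}}}$ is \emph{not} uniformly a-$\mathcal{B}_{\mathrm{type}>1}$-menable. I will prove this in the strong form by exhibiting a single accumulation point $\mathbf{G}_\infty \in \partial_{\mathrm{Cay}}(G_m, T_m)_m$ in $\mathcal{G}(4)$ which is not itself a-$\mathcal{B}_{\mathrm{type}>1}$-menable.

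To produce $\mathbf{G}_\infty$, I pass to a subsequence along which $(G_m, T_m)$ converges in $\mathcal{G}(4)$; compactness of $\mathcal{G}(4)$ guarantees existence. The structure analysis is as follows. Conjugating $\sigma^{(m)} = E_{1,2}(1)$ and $\sigma'^{(m)} = E_{2,1}(1)$ by powers of the cyclic shift $\tau^{(m)}$ produces all transvections $E_{i,i+1}(1)$, $E_{i+1,i}(1)$; iterated commutators with $\upsilon^{(m)} = E_{1,2}(t_{n_m})$ (again cycled by $\tau^{(m)}$) generate all $E_{i,j}(f(t_{n_m}))$ for polynomials $f$. Since $n_m \to \infty$, the minimal polynomial of $t_{n_m}$ over $\mathbb{F}_p$ has unbounded degree, so in the limit no polynomial relation in $t$ survives; likewise the finite-order relation $(\tau^{(m)})^m = 1$ disappears. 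Consequently $\mathbf{G}_\infty$ identifies with a $4$-marked subgroup of the stable elementary group $\mathrm{EL}(\infty, \mathbb{F}_p[t])$ (equipped with the shift), and in particular contains a copy of $\mathrm{EL}(3, \mathbb{F}_p[t])$.

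To conclude, I invoke the strong Banach property~(T) of $\mathrm{EL}(3, \mathbb{F}_p[t])$ with respect to $\mathcal{B}_{\mathrm{type}>1}$, following V.~Lafforgue's original strong property~(T) and its extensions to Banach spaces of type~$>1$ by Liao, de~Laat--de~la~Salle, and Mimura. Strong Banach~(T) forces every affine isometric action of $\mathrm{EL}(3, \mathbb{F}_p[t])$ on a space in $\mathcal{B}_{\mathrm{type}>1}$ to have bounded orbits, so this group is not a-$\mathcal{B}_{\mathrm{type}>1}$-menable; since a-$\mathcal{M}$-menability passes to subgroups, neither is $\mathbf{G}_\infty$. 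The main obstacle is the first step: rigorously matching the Cayley-limit group with an actual copy of $\mathrm{EL}(3, \mathbb{F}_p[t])$, uniformly in the four generators. The critical input is the growth of $\deg_{\mathbb{F}_p}(t_{n_m})$, which decouples the finite-field obstructions from the elementary-matrix relations and is precisely what fails in the alternative marking $S_m$ of part~(1).
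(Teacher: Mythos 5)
Your proposal is correct and follows essentially the same route as the paper: identify the Cayley limit of $(G_m;T_m)$ as (a marked version of) $\mathrm{SL}(\mathbb{Z},\mathbb{F}_p[t])\rtimes\mathbb{Z}$, note that it contains $\mathrm{SL}(3,\mathbb{F}_p[t])$, which has property $(\mathrm{F}_{\mathcal{B}_{\mathrm{type}>1}})$ by Lafforgue's strong Banach property $(\mathrm{T})$ and hence kills a-$\mathcal{B}_{\mathrm{type}>1}$-menability, and then apply $(i).(1)$ of Theorem~\ref{mtheorem=MainTheorem} (using that $\mathcal{F}_2(\mathcal{B}_{\mathrm{type}>1})\subseteq\mathcal{B}_{\mathrm{type}>1}$, a point worth stating explicitly). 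The only difference is that the paper outsources the identification of the limit group to \cite[Remark~5.10]{MimuraSako} rather than re-deriving it as you sketch.
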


For the first item, see also \cite[Corollary~B and  Proposition~2.22]{MimuraSako} in our Part I paper. In the current paper, we do not recall the definition of property A; see \cite{Yu} or \cite[Definition~2.19]{MimuraSako}. Yu \cite{Yu} showed that property A implies the coarse embeddability into a Hilbert space. By the Dvoretzky theorem \cite[Chapter~12]{BookBenyaminiLindenstrauss} and a theorem of M. I. Ostrovskii \cite{Ostrovskii}, it then follows that a locally finite generalized metric space with property A admits  a coarse embedding into every infinite dimensional Banach space. Thus we obtain the second assertion of $(1)$ above. At the other end of the spectrum, by $(2)$, the space $Y'$ above does \textit{not} admit a fibred coarse embedding as a disjoint union into a large class of Banach spaces, such as \textit{uniformly convex} Banach spaces (see $(7)$ of Example~\ref{example=BanachSpaces} for the definition). We refer the reader to \cite{BookTomczak-Jaegermann} and \cite{BookBenyaminiLindenstrauss}  as treatises on geometry of Banach spaces.

We investigate phenomena as in point $(c)$ to a greater extent by employing \textit{standard} (restricted) \textit{wreath products} $G\wr H$; see Subsection~\ref{subsection=WreathProducts} for the definition. By making use of the \textit{absorption trick}, observed by L.~Bartholdi and A.~Erschler \cite[Lemma~6.13]{BartholdiErschler} (we explain it in Subsection~\ref{subsection=AbsorptionLemma}), we obtain the following extreme example, which relates to \textit{non-exactness} of groups. See \cite{MimuraNonExact} and \cite{MimuraLW} for further developments in this direction.

\begin{theorem}[See Theorem~$\ref{mtheorem=NonExact}$ for the detailed statement.]\label{theorem=NonExact}
 There exist a sequence of finite groups $(\tilde{G}_n)_{n\in \mathbb{N}}$ with $\lim_{n\to \infty}\#(\tilde{G}_n)=\infty$ and $d\in \mathbb{N}$ such that the following holds true: There exist three systems $(S_n)_n$, $(T_n)_n$ and $(U_n)_n$ of $d$-markings of $\tilde{G}_n$ such that the following hold true:
\begin{enumerate}[$(1)$]
  \item The sequence $((\tilde{G}_n;S_n))_{n\in \mathbb{N}}$ converges in the Cayley topology to an $\mathrm{amenable}$ group. 
  \item The sequence $((\tilde{G}_n;T_n))_{n\in \mathbb{N}}$ converges in the Cayley topology to a group $\mathrm{without}$ property A. In other words, it is a $\mathrm{non}$-$\mathrm{exact}$ group. The Cayley limit group is, however, a-$\mathrm{T}$-menable. 
 \item The sequence $((\tilde{G}_n;U_n))_{n\in \mathbb{N}}$ converges in the Cayley topology to a $\mathrm{non}$-$\mathrm{exact}$ group; in addition, the Cayley limit group is $\mathrm{not}$ a-$\mathcal{M}$-menable for $\mathcal{M}=\mathcal{B}_{\mathrm{type}>1}$.
\end{enumerate}
\end{theorem}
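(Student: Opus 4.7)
The strategy is to realize, within a single sequence $(\tilde{G}_n)_n$ of finite groups, three distinct Cayley-limit behaviors by varying the choice of $d$-marking. The main ingredients are: the sequence of finite groups constructed by Osajda, Cayley-converging to a non-exact but a-$\mathrm{T}$-menable $\mathrm{LEF}$ group $A_\infty$; the refined sequence of Arzhantseva--Osajda, Cayley-converging to a non-exact $\mathrm{LEF}$ group $B_\infty$ that does not coarsely embed into any Banach space of type $>1$; and the wreath-product absorption trick of Bartholdi--Erschler recalled in Subsection~\ref{subsection=AbsorptionLemma}.

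Concretely, I would take
\[
\tilde{G}_n \;:=\; (A_n \times B_n) \wr F_n,
\]
where $F_n = \mathbb{Z}/N_n\mathbb{Z}$ with $N_n \to \infty$ chosen rapidly. After padding all three marking alphabets to a common cardinality $d$, the absorption mechanism allows each Osajda (or Arzhantseva--Osajda) generator $\alpha$ to be placed in $\tilde{G}_n$ in two ways: \emph{exposed}, as $(\alpha\cdot\delta_e, 1_{F_n})$, in which case it survives as a generator of $A_\infty$ (or $B_\infty$) in the Cayley limit; or \emph{absorbed}, as $(\alpha\cdot\delta_e, c)$ with $c$ a generator of $F_n$, in which case, for $n$ large, the $\alpha$-coordinate becomes locally indistinguishable from $0$ and only the shift $c$ remains in the Cayley limit. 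The three markings are then specified as:
\begin{itemize}
\item $(S_n)_n$: every Osajda and Arzhantseva--Osajda generator is absorbed; the Cayley limit collapses to a marked abelian (hence amenable) group supported on the $F_n$-base.
\item $(T_n)_n$: expose Osajda's generators and absorb Arzhantseva--Osajda's; the limit is a wreath-product extension of $A_\infty$ by an amenable group, hence non-exact (it retracts onto $A_\infty$) and still a-$\mathrm{T}$-menable (since a-$\mathrm{T}$-menability is preserved under wreath products with amenable base).
\item $(U_n)_n$: symmetrically expose Arzhantseva--Osajda's generators; the limit retracts onto $B_\infty$ and inherits both non-exactness and the failure of a-$\mathcal{B}_{\mathrm{type}>1}$-menability.
\end{itemize}

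\textbf{Verification and main obstacle.} Amenability in (1) is immediate, since the Cayley limit of finite (hence amenable) groups is amenable. Non-exactness in (2) and (3) is inherited from $A_\infty$, respectively $B_\infty$, sitting as a retract of the corresponding Cayley limit; a-$\mathrm{T}$-menability in (2) is preserved through the wreath-product envelope; failure of a-$\mathcal{B}_{\mathrm{type}>1}$-menability in (3) passes from the retract $B_\infty$ upward because a-$\mathcal{M}$-menability is hereditary under taking closed subgroups. The main obstacle is the \emph{quantitative} transfer in (3): one needs $B_\infty$ to sit undistortedly in the $(U_n)$-limit, so that an equivariant coarse embedding of the latter into $\mathcal{B}_{\mathrm{type}>1}$ would restrict to an equivariant coarse embedding of $B_\infty$. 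This undistortion is achieved by choosing $N_n$ large enough that, within the word-ball of radius $n$ in $\tilde{G}_n$, the $F_n$-coordinate does not wrap around, forcing the natural projection $\tilde{G}_n \to B_n$ to be isometric at scale $n$ and the Cayley limit to contain $B_\infty$ isometrically.
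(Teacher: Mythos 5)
Your plan for items (1) and (2) is sound in outline and uses essentially the same absorption mechanism as the paper: absorb all Osajda generators to get an amenable limit, expose them to get a non-exact, a-$\mathrm{T}$-menable limit via $G\wr(\text{amenable})$. But item (3) relies on an ingredient that does not exist, and this is a genuine gap.

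You posit a ``refined sequence of Arzhantseva--Osajda, Cayley-converging to a non-exact LEF group $B_\infty$ that does not coarsely embed into any Banach space of type $>1$,'' and you then want to inherit the failure of a-$\mathcal{B}_{\mathrm{type}>1}$-menability from $B_\infty$ sitting inside the Cayley limit. The problem is that the groups from \cite{Osajda}, \cite{ArzhantsevaOsajda} and \cite{OsajdaRF} are all \emph{a-$\mathrm{T}$-menable} (this is exactly the content of Remark~\ref{remark=Osajda}): they admit proper affine isometric actions on Hilbert space, hence are a-$\mathcal{B}_{\mathrm{type}>1}$-menable. Non-exactness does not obstruct a-$\mathrm{T}$-menability. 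There is no known LEF non-exact group whose a-$\mathcal{B}_{\mathrm{type}>1}$-menability fails ``intrinsically''; indeed producing such a group would be a major result. So the $B_\infty$ you invoke does not exist in the cited literature, and the conclusion of item (3) cannot be extracted from it.

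The paper resolves this by drawing the obstruction for (3) from a completely different source: the wreath base is not $\mathbb{Z}/N_n\mathbb{Z}$ but $\mathrm{SL}(2n+3,\mathbb{F}_{p^{l_n}})$ with two markings, namely $(\sigma_n,\upsilon_n,\tau_n)$ whose Cayley limit is the \emph{amenable} group $\mathrm{N}_>(\mathbb{Z},\mathbb{F}_p[t])\rtimes\mathbb{Z}$, and $(\sigma_n,\sigma'_n,\upsilon_n,\tau_n)$ whose Cayley limit is $\mathrm{SL}(\mathbb{Z},\mathbb{F}_p[t])\rtimes\mathbb{Z}\supseteq\mathrm{SL}(3,\mathbb{F}_p[t])$. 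The latter contains a subgroup with property $(\mathrm{F}_{\mathcal{B}_{\mathrm{type}>1}})$ by Lafforgue's strong Banach property $(\mathrm{T})$ (Theorem~\ref{theorem=StrongPropertyT}), and an infinite group with $(\mathrm{F}_{\mathcal{M}})$ can never be a-$\mathcal{M}$-menable. Non-exactness of the limit in (3) still comes from $G$ sitting as a subgroup of $G\wr(\cdots)$, but the failure of a-$\mathcal{B}_{\mathrm{type}>1}$-menability is supplied by the $\mathrm{SL}$ side of the wreath product, not by Osajda-type ingredients. Your outline would work if you replaced the imagined $B_\infty$ with a base-group family whose Cayley limit under the $(U_n)$-marking contains an infinite group with $(\mathrm{F}_{\mathcal{B}_{\mathrm{type}>1}})$, and whose Cayley limit under the $(S_n)$-, $(T_n)$-markings remains amenable.

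A minor additional note: you describe $A_\infty$ and $B_\infty$ as ``retracts'' of the Cayley limits. In a wreath product $G\wr H$, the factor $G$ is a subgroup but not a retract in general; fortunately you only need subgroup inheritance here (exactness passes to subgroups, and a-$\mathcal{M}$-menability passes to finitely generated subgroups since the restriction of a proper isometric action is proper), so this is a terminological slip rather than a logical error. But the missing $B_\infty$ is fatal to the argument as stated.
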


In Theorem~\ref{theorem=NonExact}, we employ a constructions of D. Osajda \cite{OsajdaRF} of a RF non-exact group. More precisely, we use the LEF property for that non-exact group. This part of \cite{OsajdaRF} is built on earlier work of Osajda \cite{Osajda} and Arzhantseva--Osajda \cite{ArzhantsevaOsajda}; see the first part of Subsection~\ref{subsection=NonExact}. See also Remark~\ref{remark=Osajda} for item $(2)$ above. 

Three examples as in Theorem~\ref{theorem=NonExact} provide three disjoint unions
\[
\bigsqcup_{n}\mathrm{Cay}(\tilde{G}_n:S_n),\quad \bigsqcup_{n}\mathrm{Cay}(\tilde{G}_n:T_n)\quad \textrm{and}\quad \bigsqcup_{n}\mathrm{Cay}(\tilde{G}_n:U_n),
\]
whose coarse geometric properties are \textit{noteworthily different to each other}; see discussions below Theorem~\ref{mtheorem=NonExact}. It may indicate that, beyond the world of box spaces, it is \textit{no longer} reasonable to write disjoint unions as $\bigsqcup_{n}G_n$ without expressing markings. 

In the results above, we furthermore consider classes of \textit{non-linear} metric spaces, such as certain classes of $\mathrm{CAT}(0)$ spaces. See Section~\ref{section=Precise} for the precise statements in the full generality.

We, moreover, observe that \textit{point $(a)$ above is striking in the study of fibred coarse embeddings}: Unlike amenability and property $(\mathrm{T})$, \textit{uniformity is not automatic for a-$\mathcal{M}$-menability}; compare with \cite[Proposition~3.4]{MimuraSako} and \cite[Proposition~5.1]{MOSSPartIII}. Owing to this observation, we answer the question of Yu (in private communication) which asks whether the fibred coarse embeddability into a Hilbert space is closed under taking finite direct products. The answer is that it is \textit{almost never true} for (coarse) disjoint unions:

\begin{proposition}\label{proposition=Products}
Let $(\Gamma_m)_{m\in \mathbb{N}}$ and $(\Lambda_n)_{n\in \mathbb{N}}$ be two sequence of connected graphs of uniformly bounded degree. Let $X_1=\bigsqcup_{m\in \mathbb{N}}\Gamma_m$ and $X_2=\bigsqcup_{n\in \mathbb{N}}\Lambda_n$. Endow $X_1\times X_2$ with the structure of a disjoint union
\[
X_1\times X_2=\bigsqcup_{m,n\in \mathbb{N}}(\Gamma_m\times \Lambda_n),
\]
where $\Gamma_m\times \Lambda_n$ is equipped with the $\ell_1$-metric from $d_{\Gamma_m}$ and $d_{\Lambda_n}$. Let $\mathcal{M}$ be a non-empty class of metric spaces such that $\mathcal{UP}(\mathcal{M})\subseteq \mathcal{M}$; see Subsection~$\ref{subsection=OperationsOnMetricSpaces}$ for the symbol $\mathcal{UP}(\mathcal{M})$.

Then $X_1\times X_2$ admits a fibred coarse embedding as a disjoint union into $\mathcal{M}$ $\mathrm{only}$ $\mathrm{if}$ $X_1$ and $X_2$ both admit $($$\mathrm{genuine}$$)$ coarse embeddings into $\mathcal{M}$. In particular, this assertion applies to the case where $\mathcal{M}=\mathcal{H}\mathrm{ilbert}$, that means, the class of all Hilbert spaces.
\end{proposition}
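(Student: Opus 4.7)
The plan is to deduce genuine coarse embeddings of $X_1$ and of $X_2$ into $\mathcal{M}$ from a fibred coarse embedding $F$ of the product $X_1\times X_2=\bigsqcup_{m,n}(\Gamma_m\times\Lambda_n)$ into $\mathcal{M}$. The driving observation is that, for the $\ell_1$-product metric, every slice $\Gamma_m\times\{y\}$ inside a component $\Gamma_m\times\Lambda_n$ is isometric to $\Gamma_m$, and an $r$-ball of that slice is contained in an $r$-ball of the product. Consequently, by situating slices inside components $\Gamma_m\times\Lambda_n$ with very large $n$, the local trivialization data of $F$ convert into genuine coarse-embedding data on arbitrarily large balls of $\Gamma_m$, all carrying the same control pair.

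First I would fix $m$, a base point $x_{m,0}\in\Gamma_m$, and a radius $r>0$. Because $F$ is a fibred coarse embedding taken \emph{as a disjoint union}, its exceptional set at radius $r$ meets only finitely many components $\Gamma_m\times\Lambda_n$. Hence one can choose diagonally an index $n(m,r)$ with $n(m,r)\to\infty$ as $r\to\infty$ such that $\Gamma_m\times\Lambda_{n(m,r)}$ lies outside that exceptional set, and pick any $y_{m,r}\in\Lambda_{n(m,r)}$. The local trivialization of $F$ on the ball $B_{\Gamma_m\times\Lambda_{n(m,r)}}((x_{m,0},y_{m,r}),r)$ restricts, via the isometric identification with $B_{\Gamma_m}(x_{m,0},r)\times\{y_{m,r}\}$, to a genuine map $g_{m,r}\colon B_{\Gamma_m}(x_{m,0},r)\to M_{m,r}$ with $M_{m,r}\in\mathcal{M}$ and with the ambient control pair $(\rho,\omega)$.

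I would then assemble these local maps along a non-principal ultrafilter $\mathcal{U}$ on $\mathbb{N}$. After normalizing $g_{m,r}(x_{m,0})$ to a base point, the pointed ultraproduct $M_m=\prod_\mathcal{U} M_{m,r}$ belongs to $\mathcal{UP}(\mathcal{M})\subseteq\mathcal{M}$; moreover $f_m(x):=[g_{m,r}(x)]_\mathcal{U}$ is well defined for every $x\in\Gamma_m$ (since $x\in B_{\Gamma_m}(x_{m,0},r)$ for all sufficiently large $r$), and the $(\rho,\omega)$-bounds survive the ultralimit. Because the control pair is $m$-independent, a second invocation of $\mathcal{UP}(\mathcal{M})\subseteq\mathcal{M}$ (a further pointed ultraproduct, or any other construction afforded by the $\mathcal{UP}$ operation) collects the family $(f_m)_m$ into a single genuine coarse embedding $X_1\to M\in\mathcal{M}$; the convention recorded just after Definition~\ref{definition=CoarseEmbeddings} is essential here, since no inequality has to be verified between points of distinct components $\Gamma_m,\Gamma_{m'}$. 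The argument for $X_2$ is symmetric, and the Hilbert case is immediate because Hilbert spaces are closed under pointed ultraproducts.

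The main technical burden is the diagonal choice of $n(m,r)$: it must simultaneously place $\Gamma_m\times\Lambda_{n(m,r)}$ outside the fibred exceptional set for radius $r$ and guarantee that, as $r\to\infty$, the slice trivializations $g_{m,r}$ inflate to exhaust $\Gamma_m$ in the ultralimit with the same uniform control pair. Once this bookkeeping is handled and the ultraproduct closure is applied twice, the remainder of the argument is routine.
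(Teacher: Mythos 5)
Your first two steps are, in substance, the paper's argument. The paper runs the proof contrapositively: it fixes $\Lambda_\infty$ in the rooted graph boundary $\partial_r(\Lambda_n)_n$, observes that every $\Gamma_m\times\Lambda_\infty$ is an accumulation point of the product family in the space of rooted graphs, and applies Proposition~\ref{proposition=RootedGraphUniformCoarseEmbedding}. Your slice construction of $g_{m,r}$ (using that the exceptional set at radius $r$ excludes only finitely many components $\Gamma_m\times\Lambda_n$, so for each $m$ one may take $n$ large) followed by the pointed ultraproduct over $r$ is exactly what sits inside the proof of that proposition, via Proposition~\ref{proposition=UniformCoarseEmbedding}; you have simply unpacked the black box and restricted to the isometric slices $\Gamma_m\times\{y\}$ directly rather than through the rooted-graph formalism. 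Up to that point the two routes coincide, and your bookkeeping (well-definedness of $f_m(x)=[g_{m,r}(x)]_{\mathcal U}$ for a non-principal $\mathcal U$, survival of the control pair $(\rho,\omega)$) is correct.

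The step that fails as written is the ``second invocation of $\mathcal{UP}(\mathcal{M})\subseteq\mathcal{M}$.'' A further pointed ultraproduct $\lim_{\mathcal V,m}(M_m,f_m(x_{m,0}))$ over the component index does not assemble the $f_m$ into one map: for $x\in\Gamma_{m_1}$ the only candidate for $H(x)$ is the class of a sequence differing from the base-point sequence in the single coordinate $m=m_1$, and since $\{m_1\}\notin\mathcal V$ for $\mathcal V$ non-principal, every component is collapsed to a point (an $m$-constant choice does not even typecheck, and any variant of it is unbounded against the base points). More structurally, $\mathcal{UP}(\mathcal{M})$ only yields pointed ultraproducts of a \emph{single} space, and the targets $M_m=\lim_{\mathcal U}\bigl(M,g_{m,r}(x_{m,0})\bigr)_r$ lie in different finite-distance components of the unpointed ultrapower of $M$, so closure under $\mathcal{UP}$ provides no amalgamation of them into one genuine metric space. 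What your steps actually establish --- and what the paper's proof extracts --- is that the family $(\Gamma_m)_m$ admits \emph{equi}-coarse embeddings into $\mathcal{UP}(\mathcal{M})\subseteq\mathcal{M}$, i.e.\ a single control pair valid for every component; the paper then closes the argument by contradiction with the assumed coarse non-embeddability of $X_1$. You should formulate the endgame that way (for $\mathcal{M}=\mathcal{H}\mathrm{ilbert}$ one can of course also take an $\ell_2$-sum of the targets to get a single genuine embedding, but that uses $\ell_2(\mathcal{M})\subseteq\mathcal{M}$ rather than anything afforded by $\mathcal{UP}$).
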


If all $\Gamma_m$ and $\Lambda_n$ are finite, then we may replace disjoint unions above with coarse disjoint unions. In this case, the product above is equivalent to the product as metric spaces and fibred coarse embeddings may be taken in the original sense.

The argument for the proof of Proposition~\ref{proposition=Products} provides the following exotic example as well; see also Theorem~\ref{mtheorem=SymmetricGroups} for another example.

\begin{theorem}\label{theorem=Exotic}
There exists a sequence $(\Gamma_l)_{l\in \mathbb{N}}$ of finite graphs of uniformly bounded degree such that all of the following hold true.
\begin{enumerate}[$(1)$]
  \item The sequence $(\Gamma_l)_{l}$ forms an $\mathrm{expander}$ $\mathrm{family}$; see Definition~$\ref{definition=EmbeddedExpanders}$.
  \item The disjoint union $\bigsqcup_{l\in \mathbb{N}}\Gamma_l$ does not admit a fibred coarse embedding as a disjoint union into $\mathcal{CAT}(0)_{<1}$, that means, the class of complete $\mathrm{CAT}(0)$ space with $\mathrm{Izeki}$--$\mathrm{Nayatani}$ $\mathrm{invariant}$ strictly less than $1$; see Definition~$\ref{definition=IzekiNayataniInvariant}$.  Neither does it admit a fibred coarse embedding as a disjoint union to Banach spaces that are $\mathrm{sphere}$ $\mathrm{equivalent}$ $($see below$)$ to a Hilbert space.
  \item There exists a complete $\mathrm{CAT}(0)$ space $M$ such that $\bigsqcup_{l\in \mathbb{N}}\Gamma_l$ admits a $\mathrm{biLipschitz}$ embedding into $M$, namely, it admits a coarse embedding with control pair $(\rho,\omega)$, where $\rho$ and $\omega$ are both linear functions.
\end{enumerate}
\end{theorem}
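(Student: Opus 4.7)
The plan is to apply the product construction of Proposition~\ref{proposition=Products} to an expander family that individually obstructs coarse embedding into the two target classes, and to build the CAT(0) target in item (3) by leveraging a cocompact isometric action on a CAT(0) symmetric space.

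To obtain the obstruction, I would let $G$ be a uniform arithmetic lattice with Kazhdan's property~(T) acting by isometries on its associated symmetric space $X$; for example, $G$ a cocompact lattice in $\mathrm{Sp}(2,1)$, so that $X$ is the complete CAT(0) quaternionic hyperbolic plane. Fix a finite generating set $S$ of $G$ and a descending chain $(N_n)_n$ of finite-index normal subgroups with $\bigcap_n N_n = \{e\}$, and set $\Lambda_n = \mathrm{Cay}(G/N_n ; S \bmod N_n)$. Then $\bigsqcup_n \Lambda_n$ is an expander family of uniformly bounded degree. By the standard property~(T) obstructions --- Izeki--Nayatani--Kondo for the class $\mathcal{CAT}(0)_{<1}$, and the sphere-equivalence reduction for Hilbert-sphere Banach spaces --- it admits no genuine coarse embedding into $\mathcal{CAT}(0)_{<1}$ nor into any Banach space sphere-equivalent to a Hilbert space. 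Fixing a bijection $\ell \mapsto (m(\ell), n(\ell)) \colon \mathbb{N} \to \mathbb{N}^2$, I then define $\Gamma_\ell = \Lambda_{n(\ell)} \,\Box\, \Lambda_{m(\ell)}$ with the $\ell_1$ Cartesian graph product metric.

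Item (1) is immediate: the Laplacian spectrum of a Cartesian graph product equals the pairwise sum of the factor spectra, so the spectral gap of $\Gamma_\ell$ is the minimum of the two factor gaps and is uniformly bounded below in $\ell$. For item (2) I would apply Proposition~\ref{proposition=Products} with $X_1 = X_2 = \bigsqcup_n \Lambda_n$, having first verified that both target classes $\mathcal{M}$ satisfy $\mathcal{UP}(\mathcal{M}) \subseteq \mathcal{M}$: ultralimits of CAT(0) spaces are CAT(0) by Kleiner--Leeb and the Izeki--Nayatani invariant is semicontinuous under $\mathcal{UP}$; sphere-equivalence to Hilbert is preserved by standard Banach-space ultraproduct arguments. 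Since $X_1$ itself does not coarsely embed into either class, $X_1 \times X_2 = \bigsqcup_\ell \Gamma_\ell$ does not admit a fibred coarse embedding into either.

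For item (3), I would exploit the convention that distances between distinct components of the disjoint union are $+\infty$: the biLipschitz condition only constrains intra-component distances, so it suffices to embed each $\Gamma_\ell$ biLipschitz into some CAT(0) space with distortion independent of $\ell$, and wedge the resulting pieces at a common base point to form a single complete CAT(0) space $M$. Cocompactness of $G \curvearrowright X$ gives a uniform quasi-isometry $G \to X$; picking, for each $n$, minimal-length coset representatives for $N_n$ inside a Dirichlet fundamental domain and taking the $\ell_2$-product for the two Cartesian factors yields a biLipschitz embedding $\Gamma_\ell \hookrightarrow X \times X$ with uniform distortion, provided the injectivity radius of $X/N_n$ exceeds $\mathrm{diam}(\Lambda_n)$ uniformly --- an arithmetic bound that can be arranged by passing to sufficiently deep congruence subgroups. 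The main obstacle will be exactly this injectivity-radius-versus-diameter comparison in (3); ensuring it may require a tailored choice of the chain $(N_n)_n$ or replacing $X$ by a product of symmetric spaces and buildings for a wider arithmetic lattice where the bound is known.
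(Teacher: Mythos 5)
Your approach to items (1) and (2) is in the same spirit as the paper's (a self-product of an expander sequence of Cayley graphs, combined with Proposition~\ref{proposition=Products} or its building block Proposition~\ref{proposition=UniformCoarseEmbedding}), though the paper works with the marked groups $\mathrm{SL}(2,\mathbb{F}_p)$ from Selberg's theorem and a Cayley graph of a direct product (upper triangular product), while you take Cartesian graph products of congruence quotients of a cocompact $\mathrm{Sp}(2,1)$-lattice. A minor point: the paper sidesteps the question of whether ``sphere-equivalent to Hilbert'' is $\mathcal{UP}$-stable by invoking the Banach clause of Proposition~\ref{proposition=UniformCoarseEmbedding}, whereas your invocation of Proposition~\ref{proposition=Products} literally requires $\mathcal{UP}(\mathcal{M})\subseteq\mathcal{M}$ and your justification is hand-wavy; you should switch to the Banach clause instead.

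The fatal gap is item (3). You propose to biLipschitz embed $\Gamma_\ell$ into $X\times X$ where $X$ is the CAT(0) symmetric space of $\mathrm{Sp}(2,1)$. But $X$ is a Hadamard manifold, hence $\delta(X)=0$ and $\delta(X\times X)=0$, so $X\times X\in\mathcal{CAT}(0)_{<1}$. A biLipschitz embedding is in particular a coarse embedding, which by Remark~\ref{remark=FromFibredToGenuine} yields a fibred coarse embedding as a disjoint union. Thus a successful item (3) in your form would directly \emph{contradict} your own item (2); the embedding you seek cannot exist, regardless of any injectivity-radius arrangement. The essential content of Theorem~\ref{theorem=Exotic}.(3) is precisely that the target $M$ must escape $\mathcal{CAT}(0)_{<1}$, i.e.\ have Izeki--Nayatani invariant equal to $1$, so that the Wang-type spectral gap inequality $\lambda_1(\Gamma,M)\geq(1-\delta(M))\lambda_1(\Gamma)$ degenerates and no longer obstructs the embedding. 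The paper handles this by invoking Kondo's explicit construction of a complete CAT(0) space $M_0$ with $\delta(M_0)=1$ into which an expander family with girth comparable to diameter embeds biLipschitzly, and then taking $M=(M_0\times M_0)_{\ell_2}$; the girth bound is supplied by Margulis's theorem for the specific $\mathrm{SL}(2,\mathbb{F}_p)$ expanders. Your expander family from $\mathrm{Sp}(2,1)$-quotients is not known to enjoy such a girth bound, so even if you replaced the target by a Kondo-type space you would still need to change the source graphs.
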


Here two Banach spaces are said to be \textit{sphere equivalent} if there exists a bijection $\Phi$ between the unit spheres such that $\Phi$ and $\Phi^{-1}$ are both uniformly continuous; see \cite{MimuraSphereEquivalence} for details.  
Many reasonable $\mathrm{CAT}(0)$ spaces, including Hilbert spaces, all Hadamard manifolds and all Euclidean buildings associated with simple algebraic groups, belong to the class $\mathcal{CAT}(0)_{<1}$; see a paper of T. Toyoda \cite{ToyodaCAT(0)} for other examples of elements in $\mathcal{CAT}(0)_{<1}$.

In Section~\ref{section=Precise}, we present the precise statements of our main results. In the last part of Section~\ref{section=Precise}, we scratch the idea to prove our main result (Theorem~\ref{mtheorem=MainTheorem}); there we in addition explain relationships to relevant work by other researchers, and the organization of the current paper.

\ 

\paragraph{\bf Notation and Conventions:} We use $G$ for a (non-marked) group and $\mathbf{G}$ for a marked group. We write the group unit of $G$ as $e_G$. A finite generating set $S$ of $G$ is regarded as an ordered set (sometimes an ordered multi-set) $S=(s_1,s_2,\ldots ,s_k)$ so that $(G;S)$ is seen as a marked group. A marked group $\mathbf{G}=(G;S)$ is said to be finite (respectively, amenable, and  a-$\mathrm{T}$-menable) if so is $G$. For $k\in \mathbb{N}_{\geq 1}$, we denote by $\mathbf{F_k}$ the \textit{free $k$-marked group}, namely, $\mathbf{F_k}=(F_k;a_1,\ldots ,a_k)$. Here $(a_1,\ldots ,a_k)$ denotes a free basis of $F_k$. For $R\in \mathbb{R}_{\geq 0}$, let $\lfloor R\rfloor$ denote the integer part of $R$. For $m\in \mathbb{N}_{\geq 1}$, let $[m]=\{1,2,\ldots,m\}$. We use the terminology \textit{isometries} for surjective ones; we use \textit{geodesics} for minimal ones, namely, a \textit{geodesic} from $y\in M$ to $z\in M$ is an isometric embedding $c\colon [0,d(y,z)]\to M$. We always exclude the empty-set from metric spaces. For a metric space $M$, we write the class $\{M\}$ consisting only of $M$ as $M$ for short. As mentioned in the introduction, we use the symbol
\[
\mathbb{N}_{\mathrm{odd}}=\{3,5,7,\ldots\}
\]
for the set of \textit{odd} integers \textit{at least $3$.} (This is a non-standard notation; nevertheless, we use it for simplicity of description.)

\section{Precise statements of main results and the organization of this paper}\label{section=Precise}

In this section, we collect our main results for the reader's convenience. Some of them require several terminologies for the statements. We suggest the reader first cast a brief glance at this section to obtain a feel for the main theorems in the present paper, and then proceed to  subsequent sections. He/she may look back on this section to recall the precise statement of some result when diving into the proof. In the last part of this section, we describe the organization of the present paper.

To state Theorem~\ref{mtheorem=MainTheorem}, we need to formulate several operations on classes of metric spaces:
\[
\mathcal{M}\quad \mapsto \quad \mathcal{UP}(\mathcal{M}),\ \mathcal{F}_q(\mathcal{M}),\ \mathcal{FS}_q(\mathcal{M})\quad\textrm{and} \quad \ell_q(\mathcal{M}),\quad \textrm{for }q\in[1,\infty).
\]
See Subsection~\ref{subsection=OperationsOnMetricSpaces}, and Subsections~\ref{subsection=ClosedUnderOperations} and \ref{subsection=ClosedUnderOperationsMetric}, respectively, for the definitions and examples. See also Definitions~\ref{definition=GeneralizedFibredCoarseEmbeddings} and \ref{definition=a-M-menable}, respectively, for $\mathcal{CP}^{\mathrm{fib}}$ and $\mathcal{CP}^{\sharp}$.

\begin{mthm}[Main Theorem]\label{mtheorem=MainTheorem}
Let $\mathcal{M}$ be a non-empty class of metric spaces. Let $(\mathbf{G}_m)_{m\in \mathbb{N}}$ be a sequence in $\mathcal{G}(k)$ $($$k\in \mathbb{N}_{\geq 1}$$)$.
\begin{enumerate}[$(i)$]
\item
\begin{enumerate}[$(1)$]
  \item Assume that all $\mathbf{G}_m$, $m\in \mathbb{N}$, are $\mathrm{finite}$. Then, for every $q\in [1,\infty)$, the following holds true: If $\bigsqcup_{m\in \mathbb{N}}\mathrm{Cay}(\mathbf{G}_m)$ admits a fibred coarse embedding into $\mathcal{M}$ as a disjoint union, then $\partial_{\mathrm{Cay}}(\mathbf{G}_m)_{m\in \mathbb{N}}$ is uniformly a-$\mathcal{F}_q(\mathcal{M})$-menable.

Moreover, it holds that for every $q\in [1,\infty)$,
\[
\mathcal{CP}_{\mathcal{M}}^{\mathrm{fib}}\left(\bigsqcup_{m}\mathrm{Cay}(\mathbf{G}_m)\right)\subseteq \bigcap_{\mathbf{G}_{\infty}\in \partial_{\mathrm{Cay}}(\mathbf{G}_m)_m}\mathcal{CP}^{\sharp}_{\mathcal{F}_q(\mathcal{M})}(\mathbf{G}_{\infty}).
\]
  \item Assume that all $\mathbf{G}_m$, $m\in \mathbb{N}$, are $\mathrm{amenable}$. Assume moreover that 
\begin{itemize}
  \item either there exists  $q\in (1,\infty)$  such that for every $M\in \mathcal{M}$, there exists an element $L$ in $\mathcal{F}_q(M)$ that is $\mathrm{uniquely}$ $\mathrm{geodesic}$, see Definition~$\ref{definition=GeodesicSpaces}$, or
  \item the class $\mathcal{M}$ consists only of Banach spaces $($with no restriction on $q\in [1,\infty)$$)$.
\end{itemize}
Then for every such $q$ in the first case $($respectively, for every $q\in [1,\infty)$ in the second case$)$ the following holds true: If $\bigsqcup_{m\in \mathbb{N}}\mathrm{Cay}(\mathbf{G}_m)$ admits a fibred coarse embedding into $\mathcal{M}$ as a disjoint union, then $\partial_{\mathrm{Cay}}(\mathbf{G}_m)_{m\in \mathbb{N}}$ is uniformly a-$\mathcal{FS}_q(\mathcal{M})$-menable. 

Moreover, it holds that for every such $q$ above,
\[
\mathcal{CP}_{\mathcal{M}}^{\mathrm{fib}}\left(\bigsqcup_{m}\mathrm{Cay}(\mathbf{G}_m)\right)\subseteq \bigcap_{\mathbf{G}_{\infty}\in \partial_{\mathrm{Cay}}(\mathbf{G}_m)_m}\mathcal{CP}^{\sharp}_{\mathcal{FS}_q(\mathcal{M})}(\mathbf{G}_{\infty}).
\]
\end{enumerate}
  \item For every $q\in [1,\infty)$, the following holds true: If the Cayley boundary $\partial_{\mathrm{Cay}}(\mathbf{G}_m)_{m\in \mathbb{N}}$ is uniformly a-$\mathcal{M}$-menable, then the disjoint union $\bigsqcup_{m\in \mathbb{N}}\mathrm{Cay}(\mathbf{G}_m)$ admits a fibred coarse embedding into $\ell_q(\mathcal{M})$ as a disjoint union.

Moreover, it holds that for every $q\in [1,\infty)$,
\[
\bigcap_{\mathbf{G}_{\infty}\in \partial_{\mathrm{Cay}}(\mathbf{G}_m)_m}\mathcal{CP}^{\sharp}_{\mathcal{M}}(\mathbf{G}_{\infty})\subseteq \mathcal{CP}_{\ell_q(\mathcal{M})}^{\mathrm{fib}}\left(\bigsqcup_{m}\mathrm{Cay}(\mathbf{G}_m)\right).\]

If $(\mathbf{G}_{m})_{m\in \mathbb{N}}$ is a convergent sequence, then we may replace $\ell_q(\mathcal{M})$ with the original class $\mathcal{M}$ in the assertions above; in that case, it holds that
\[
\mathcal{CP}^{\sharp}_{\mathcal{M}}(\mathbf{G}_{\infty})\subseteq \mathcal{CP}_{\mathcal{M}}^{\mathrm{fib}}\left(\bigsqcup_{m}\mathrm{Cay}(\mathbf{G}_m)\right),\]
where $\mathbf{G}_{\infty}$ is the Cayley limit group of $(\mathbf{G}_m)_m$.
\end{enumerate}
\end{mthm}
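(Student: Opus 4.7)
The plan is to treat the two directions of Theorem A separately, and further to split the forward direction $(i)$ into the finite and amenable cases.

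\textbf{Direction $(ii)$.} Assume $\partial_{\mathrm{Cay}}(\mathbf G_m)_m$ is uniformly a-$\mathcal M$-menable: for each $\mathbf G_\infty$ in the boundary there is an equivariant coarse embedding $f_\infty\colon G_\infty \to M_\infty \in \mathcal M$ with a common control pair $(\rho,\omega)$ and isometric action $\alpha_\infty$. I would build the desired fibred embedding of $\bigsqcup_m \mathrm{Cay}(\mathbf G_m)$ by local transport: for each scale $R$, compactness of $\mathcal G(k)$ and the definition of the Cayley boundary furnish $m_R$ such that for every $m \ge m_R$ the rooted labelled ball $B_R(e_{\mathbf G_m})$ is canonically identified with $B_R(e_{\mathbf G_\infty(m)})$ for some $\mathbf G_\infty(m) \in \partial_{\mathrm{Cay}}$. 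Left multiplication in $G_m$ promotes this to an identification of $B_R(x)$ for every vertex $x \in \mathrm{Cay}(\mathbf G_m)$, so I attach to each $x$ the fibre $M_{\infty(m)}$, pull back $f_{\infty(m)}$ along the local identification to define $f_x$, and let $\alpha_{\infty(m)}$ provide the transition isometries. Since different $m$ may select different boundary points, these targets have to be combined into an $\ell_q$-direct sum, which is precisely why the conclusion lands in $\ell_q(\mathcal M)$; if the sequence converges, a single target $M_\infty$ suffices and one stays inside $\mathcal M$, producing the sharper inclusion $\mathcal{CP}^\sharp_{\mathcal M}(\mathbf G_\infty) \subseteq \mathcal{CP}^{\mathrm{fib}}_{\mathcal M}(\bigsqcup_m \mathrm{Cay}(\mathbf G_m))$.

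\textbf{Direction $(i)(1)$, finite case.} Fix $\mathbf G_\infty \in \partial_{\mathrm{Cay}}$ with $\mathbf G_{m_j} \to \mathbf G_\infty$ and let $(H_x, f_x, t_{xy})$ denote the fibred data realising the hypothesised embedding with control pair $(\rho,\omega)$. For each $R$ and large $j$ the ball $B_R(e_{\mathbf G_\infty})$ is canonically identified with $B_R(e_{m_j})$, yielding partial maps $\hat f_j\colon B_R(e_{\mathbf G_\infty}) \to H_{e_{m_j}}$. Fix a non-principal ultrafilter $\mathcal U$ on $\mathbb N$ and form the pointed $L_q$-ultraproduct of $(H_{e_{m_j}}, f_{e_{m_j}}(e_{m_j}))_j$, which by the intended definition of $\mathcal F_q(\mathcal M)$ lies in that class. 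The assignment $g \mapsto [\hat f_j(g)]_{\mathcal U}$ is well-defined on all of $G_\infty$ as $R \to \infty$, preserves $(\rho,\omega)$ by transfer of inequalities, and is equivariant under $G_\infty$: left multiplication by a fixed $g \in G_\infty$ agrees with left multiplication in $G_{m_j}$ on every prescribed ball once $j$ is large, and the latter is exactly realised on the fibres by the transition isometry $t_{e_{m_j},\, g\cdot e_{m_j}}$, whose ultralimit is an isometric $G_\infty$-action on the target.

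\textbf{Direction $(i)(2)$, amenable case, main obstacle.} The ultralimit above yields equivariance only for the action induced by the transition isometries; when $\mathbf G_m$ is infinite there is no a priori reason this is intrinsic to $G_\infty$. To symmetrise I would Følner-average: for amenable $G_m$ fix a Følner sequence $(F_{m,n})_n$ in $G_m$ and replace $\hat f_j(g)$ by the barycentre of $\{t_{h,\,hg}(\hat f_j^{\,h}(g)) : h \in F_{m_j, n_j}\}$ for some $n_j \to \infty$. This is precisely where the two sub-hypotheses of case $(i)(2)$ intervene: a uniquely geodesic element of $\mathcal F_q(M)$ supports a Karcher barycentre, whereas a Banach target admits the ordinary convex combination for every $q\in [1,\infty)$. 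The Følner property makes the averaged maps asymptotically $G_\infty$-equivariant, and a second ultralimit absorbs the $o(1)$ error and places the target in $\mathcal{FS}_q(\mathcal M)$. The hardest step I anticipate is verifying that this Følner--barycentre procedure does not collapse the compression $\rho$: this is a quantitative convexity statement (uniform convexity in the Banach case, or an Izeki--Nayatani-type variance bound in the CAT$(0)$ setting) that must be tracked carefully across both ultralimits, and it is the only point where the choice of $q$ and the geometric hypothesis on $\mathcal M$ genuinely matter. Letting $\mathbf G_\infty$ range over the compact boundary, while every construction above uses only the fixed control pair $(\rho,\omega)$, then delivers uniform a-menability together with the stated inclusions of control-pair sets.
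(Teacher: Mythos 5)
Your direction $(ii)$ is essentially the paper's argument: in the convergent case one pulls a single equivariant embedding back through the partial isomorphisms of balls and lets the action supply the transition isometries, and in general compactness of $\partial_{\mathrm{Cay}}(\mathbf{G}_m)_m$ gives, for each $R$, a finite subcover by neighbourhoods of boundary points, whence countably many targets assembled into an $\ell_q$-product. That part is sound.

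The gap is in $(i)(1)$. You form the ultraproduct of the \emph{single} fibres over the basepoints $e_{m_j}$ and claim that the transition isometries $t_{e_{m_j},\,g\cdot e_{m_j}}$ ultralimit to an isometric action of $G_\infty$. They do not: the compatibility condition in Definition~\ref{definition=GeneralizedFibredCoarseEmbeddings} yields only the telescoping identity $t_{x,g_2x}\circ t_{g_2x,g_1g_2x}=t_{x,g_1g_2x}$, and $t_{g_2x,\,g_1g_2x}$ bears no relation to $t_{x,\,g_1x}$ (the trivializations are not assumed translation-equivariant), so $g\mapsto t_{e,ge}$ is not even approximately a homomorphism. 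If it were, the distinction between the non-equivariant conclusion of Proposition~\ref{proposition=UniformCoarseEmbedding} (target in $\mathcal{UP}(\mathcal{M})$) and the equivariant conclusion of $(i)(1)$ (target in $\mathcal{F}_q(\mathcal{M})$) would collapse. The missing idea is Gromov's trick: take the $\ell_q$-product of the fibres over \emph{all} of $G_m$, rescaled by $(1/\sharp(G_m))^{1/q}$, and let $g$ act by permuting coordinates while applying $t_{x,gx}$ fibrewise; the telescoping identity then makes this a genuine fragmentary action of $B_{\mathbf{G}_m}(e_{G_m},R'_m)$ by isometries, and only after this does the ultralimit (Proposition~\ref{proposition=FromLocalToGlobal}) produce an equivariant embedding. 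This is exactly why the target class $\mathcal{F}_q(\mathcal{M})$ is built from scaled finite products followed by ultraproducts. Your diagnosis that finiteness alone secures equivariance while infiniteness is the obstruction is backwards: finiteness is what makes the product over the whole group available, not what makes the single-fibre isometries compose.

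Relatedly, in $(i)(2)$ you average by taking barycentres inside the fibre, which leaves precisely the step you flag --- preserving the compression $\rho$ through barycentre-averaging --- unresolved, and it is not a routine convexity estimate. The paper instead takes the scaled $\ell_q$-product over a F{\o}lner set, so the compression survives verbatim up to the F{\o}lner error, at the cost of obtaining only an $\epsilon_m$-almost fragmentary action at the basepoint. The unique-geodesicity hypothesis is then used not to define barycentres but to ensure that the fixed-point sets of the isometries $\alpha(g_1)\circ\alpha(g_2)\circ\alpha(g_1g_2)^{-1}$ in the ultralimit are closed and geodesically convex, so the almost-action restricts to a genuine action on a convex subset, landing in $\mathcal{FS}_q(\mathcal{M})$ (Proposition~\ref{proposition=FromLocalToGlobalSubspace}; for Banach targets one takes affine spans instead, which is why no restriction on $q$ is needed there).
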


Theorem~\ref{mtheorem=MainTheorem}, in particular, applies to the case where $\mathcal{M}=\mathcal{H}\mathrm{ilbert}$, the class of all Hilbert spaces. Thus we obtain the former half of Theorem~\ref{theorem=Corollary} from Theorem~\ref{mtheorem=MainTheorem}. More generally, we have the following corollary. See Examples~\ref{example=BanachSpaces} and \ref{example=MetricSpaces} for main examples of the class $\mathcal{M}$ in the current paper.

\begin{mcor}\label{mcorollary=a-T-menable}
Let $(\mathbf{G}_m)_{m\in \mathbb{N}}$ be a sequence of $\mathrm{amenable}$ marked groups  in $\mathcal{G}(k)$. 
\begin{enumerate}[$(1)$]
  \item The disjoint union $\bigsqcup_{m\in \mathbb{N}}\mathrm{Cay}(\mathbf{G}_m)$ admits a fibred coarse embedding into a Hilbert space as a disjoint union if and only if $\partial_{\mathrm{Cay}}(\mathbf{G}_m)_{m\in \mathbb{N}}$ is uniformly a-$\mathrm{T}$-menable. 
  \item Let $\mathcal{M}$ be either of the following classes. Then, $\bigsqcup_{m\in \mathbb{N}}\mathrm{Cay}(\mathbf{G}_m)$ admits a fibred coarse embedding into $\mathcal{M}$ as a disjoint union if and only if $\partial_{\mathrm{Cay}}(\mathbf{G}_m)_{m\in \mathbb{N}}$ is uniformly a-$\mathcal{M}$-menable. 
\begin{enumerate}
 \item[$(2$-$1)$] For $q\in [1,\infty)$, $L_q$ denoting $L_q([0,1],\mathbb{R})$. \item[$(2$-$2)$] For $r\in (1,2]$ and for $C>0$, $\mathcal{B}_{r,C}^{\mathrm{type}}$ being the class defined in $(4)$ of Example~$\ref{example=BanachSpaces}$. 
 \item[$(2$-$3)$] For $\delta_0\in [0,1]$, $\mathcal{CAT}(0)_{\leq \delta_0}$ denoting the class of all complete $\mathrm{CAT}(0)$ spaces with Izeki--Nayatani invariant at most $\delta_0$; see Definition~$\ref{definition=IzekiNayataniInvariant}$. 
\end{enumerate}
\end{enumerate}
Furthermore, for $\mathcal{M}$ being the class $\mathcal{H}\mathrm{ilbert}$ or being as in $(2)$, we have that
\[
\mathcal{CP}_{\mathcal{M}}^{\mathrm{fib}}\left(\bigsqcup_{m}\mathrm{Cay}(\mathbf{G}_m)\right)=\bigcap_{\mathbf{G}_{\infty}\in \partial_{\mathrm{Cay}}(\mathbf{G}_m)_m}\mathcal{CP}^{\sharp}_{\mathcal{M}}(\mathbf{G}_{\infty}).
\]
\end{mcor}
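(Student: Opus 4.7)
The plan is to deduce Corollary~\ref{mcorollary=a-T-menable} directly from the Main Theorem (Theorem~\ref{mtheorem=MainTheorem}) by verifying that each of the classes $\mathcal{M}$ listed in the statement---$\mathcal{H}\mathrm{ilbert}$, $L_q$, $\mathcal{B}^{\mathrm{type}}_{r,C}$, and $\mathcal{CAT}(0)_{\leq \delta_0}$---is stable, for a suitable choice of the auxiliary parameter $q\in[1,\infty)$, under both of the operations $\mathcal{FS}_q(\cdot)$ and $\ell_q(\cdot)$ appearing in the two halves of Theorem~\ref{mtheorem=MainTheorem}. Once these closure statements are in hand, the ``only if'' direction follows from Theorem~\ref{mtheorem=MainTheorem}~(i)(2), since every $\mathbf{G}_m$ is amenable, and the ``if'' direction follows from Theorem~\ref{mtheorem=MainTheorem}~(ii); the coincidence of control-pair sets likewise falls out of the nested inclusions in those two parts.

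Concretely, for each case I will fix $q$ and verify the two closure properties $\mathcal{FS}_q(\mathcal{M})\subseteq \mathcal{M}$ and $\ell_q(\mathcal{M})\subseteq \mathcal{M}$. For $\mathcal{H}\mathrm{ilbert}$ I would take $q=2$: ultraproducts and $\ell_2$-sums of Hilbert spaces are Hilbert, and a closed linear subspace of a Hilbert space is a Hilbert space, so both closure properties are immediate. For $\mathcal{M}=L_q$ (class $(2$-$1)$) I take the same $q$: ultrapowers of $L_q$-spaces are represented as $L_q$ over a larger measure space (a classical fact from Banach space ultraproduct theory), $\ell_q$-sums of $L_q$-spaces embed isometrically in a single $L_q$-space, and closed linear subspaces (needed for $\mathcal{FS}_q$ vs.\ $\mathcal{F}_q$ when using the Banach-space option of Theorem~\ref{mtheorem=MainTheorem}~(i)(2)) are handled by invoking the Banach alternative, where no ``uniquely geodesic'' condition is needed.

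For $\mathcal{B}^{\mathrm{type}}_{r,C}$ with $r\in(1,2]$ (case $(2$-$2)$) I would again invoke the Banach-space branch of Theorem~\ref{mtheorem=MainTheorem}~(i)(2), so any $q\in[1,\infty)$ is allowed; I will then pick $q=r$ and appeal to the standard fact that the class of Banach spaces of type $r$ with type constant $\leq C$ is preserved under ultrapowers (the Rademacher-type inequality passes to the ultralimit) and under $\ell_r$-sums (the type inequality survives the $\ell_r$-combination). For the $\mathrm{CAT}(0)$ case $(2$-$3)$ I take $q=2$ and check the ``uniquely geodesic'' hypothesis of Theorem~\ref{mtheorem=MainTheorem}~(i)(2): ultralimits of complete $\mathrm{CAT}(0)$ spaces are uniquely geodesic complete $\mathrm{CAT}(0)$ spaces, and the Izeki--Nayatani invariant, being an infimum-of-supremum quantity over barycentric data, is upper-semicontinuous under ultralimits and also stable under $\ell_2$-product constructions, so the bound $\delta\le\delta_0$ propagates.

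The main obstacle I expect is precisely this last verification: showing that $\mathcal{CAT}(0)_{\leq \delta_0}$ is closed under $\mathcal{FS}_2$ and under $\ell_2$-sums. Closure under $\ell_2$-products is straightforward because the Izeki--Nayatani invariant behaves well under $\ell_2$-products of $\mathrm{CAT}(0)$ spaces, but passing to ultralimits requires a careful approximation argument for the barycenter maps that realise the supremum defining the invariant, together with a diagonal extraction to handle the infimum. If some subtlety appears there, the fallback is to restrict to a smaller class (e.g.\ by pre-composing with the observation that only a closed convex subspace of the ultralimit is visited) and absorb the loss into the Izeki--Nayatani bound. Everything else is a matter of bookkeeping, bundled up neatly by the two-sided control-pair inclusions already furnished by Theorem~\ref{mtheorem=MainTheorem}.
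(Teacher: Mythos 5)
Your overall strategy is the paper's own: reduce everything to Theorem~\ref{mtheorem=MainTheorem} by checking that each class is stable under $\mathcal{FS}_q(\cdot)$ and $\ell_q(\cdot)$ for a suitable $q$, and your verifications for $\mathcal{H}\mathrm{ilbert}$, $\mathcal{B}^{\mathrm{type}}_{r,C}$ and $\mathcal{CAT}(0)_{\leq\delta_0}$ are exactly the closure statements recorded in Examples~\ref{example=BanachSpaces} and \ref{example=MetricSpaces} (via Lemma~\ref{lemma=CAT(0)}); the uniquely-geodesic hypothesis needed in the $\mathrm{CAT}(0)$ case is supplied by Lemma~\ref{lemma=CAT(0)UniquelyGeodesic}, and the equality of control-pair sets does follow from chaining the two inclusions of Theorem~\ref{mtheorem=MainTheorem} once the closure properties hold.

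There is, however, a genuine gap in case $(2$-$1)$ for $q\neq 2$. The class there is the singleton $\{L_q([0,1],\mathbb{R})\}$, and the Banach branch of Theorem~\ref{mtheorem=MainTheorem}~$(i).(2)$ only yields uniform a-$\mathcal{FS}_q(L_q)$-menability, i.e.\ equivariant embeddings into closed affine subspaces of spaces in $\mathcal{F}_q(L_q)$. Choosing the Banach alternative removes the uniquely-geodesic hypothesis, but it does nothing about the fact that a closed linear subspace of an $L_q$-space is in general \emph{not} isometric to an $L_q$-space when $q\neq 2$; hence $\mathcal{FS}_q(L_q)\not\subseteq\{L_q\}$, and the closure claim fails at precisely the step you wave through with ``handled by invoking the Banach alternative.'' An additional argument is required to pass from such a subspace back to $L_q([0,1],\mathbb{R})$ itself with the same control pair; the paper does this by invoking the last assertion of \cite[Theorem~9.1]{NaorPeres}, which rests on the classification of separable closed subspaces of $L_q$-spaces. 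Without that (or an equivalent) input, your argument establishes $(2$-$1)$ only for $q=2$, where it degenerates to the Hilbert case.
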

Item $(1)$ in Corollary~\ref{mcorollary=a-T-menable} is essentially a special case of $(2$-$1)$ with $q=2$.

We provide a similar example to one in Example~\ref{example=SpecialLinear}.

\begin{example}\label{example=SpecialLinearZ}
Let $(l_m)_{m\in \mathbb{N}_{\mathrm{odd}}}$ be a sequence of integers at least $2$ such that $\lim_{m\to \infty}l_m=\infty$. For  $m\in \mathbb{N}_{\mathrm{odd}}$, set $H_m=\mathrm{SL}(m,\mathbb{Z}/l_m\mathbb{Z})$ and take two markings $P_m$, $Q_m$ as follows:
\begin{itemize}
  \item Set $P_m=(\sigma^{(m)},\tau^{(m)})$, where $\sigma^{(m)}$ and $\tau^{(m)}$ are the matrices with exactly the same entries of $0$ and $1$ as in, respectively, $\sigma^{(m)}$ and $\tau^{(m)}$ as in $(1)$ above. 

Define $V'=V'_{(l_m)}=\bigsqcup_{m\in \mathbb{N}_{\mathrm{odd}}}\mathrm{Cay}(H_m;P_m)$.
  \item Set $Q_m=(\sigma^{(m)},\sigma'^{(m)},\tau^{(m)})$, where $\sigma^{(m)}$  and $\tau^{(m)}$ are the same as $P_m$, and $\sigma'^{(m)}={}^t\sigma^{(m)}$.

Define $W'=W'_{(l_m)}=\bigsqcup_{m\in \mathbb{N}_{\mathrm{odd}}}\mathrm{Cay}(H_m;Q_m)$.
\end{itemize}
\end{example}
In a similar argument to one in \cite[Remark~5.5]{MimuraSako}, it follows that $P_m$ and $Q_m$ are both markings of $H_m$.

To state Corollary~\ref{corollary=SpecialLinearGroups}, for every prime $p$, set
\[
\delta(p)=1- \frac{1}{2\left(1-\frac{\sqrt{p}}{p+1}\right)} \quad (\in (0,\frac{1}{2}));
\]
see the discussion above Example~\ref{example=r-UniformlyConvex} and Remark~\ref{remark=IzekiNayatani} for the background of this constant. For $\delta_0\in (0,1]$, let $\mathcal{CAT}(0)_{<\delta_0}$ denote the class of all complete $\mathrm{CAT}(0)$ spaces whose Izeki--Nayatani invariants are strictly less than $\delta_0$.

\begin{corollary}\label{corollary=SpecialLinearGroups}
Let $X'$ and $Y'$, and $V'$ and $W'$ be, respectively as in Examples~$\ref{example=SpecialLinear}$ and $\ref{example=SpecialLinearZ}$.
\begin{enumerate}[$(1)$]
  \item The spaces $X'$ and $V'$ both have  property A.
  \item The spaces $Y',V',W'$ do not admit a fibred coarse embedding as disjoint unions into $(\prod_{<\aleph_0}\mathcal{QT})_{\ell_1}$ or into $(\prod_{<\aleph_0}\mathcal{M})_{\ell_2}$, where $\mathcal{QT}$ denotes the class of all quasi-trees and $\mathcal{M}$ is the class of all finite dimensional, complete, connected and simply connected Riemannian manifolds with strictly negative sectional curvature that are cocompact; see Remark~$\ref{remark=QuasiTree}$ and Definition~\ref{definition=FiniteProducts} for the definitions.
  \item The space $Y'=Y'_{p,(n_m)_m}$ does not admit a fibred coarse embedding as a disjoint union into $\mathcal{B}_{\mathrm{type}>1}$. Neither does $Y'$ admit a fibred coarse embedding as a disjoint union into $\mathcal{CAT}(0)_{<\delta(p)}$. 
  \item The space $W'$ does not admit a fibred coarse embedding as a disjoint union into $\mathcal{B}_{\beta<1/2}$; see $(5)$ of Example~$\ref{example=BanachSpaces}$.
\end{enumerate}
\end{corollary}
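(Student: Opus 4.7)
The plan is to apply Main Theorem~\ref{mtheorem=MainTheorem} to reduce each non-embeddability assertion to a rigidity statement about the Cayley limit groups, and then to invoke known fixed-point theorems for arithmetic groups such as $\mathrm{SL}(3,\mathbb{Z})$ and $\mathrm{EL}(3,\mathbb{F}_p[t])$. For item $(1)$, the case of $X'$ is \cite[Corollary~B]{MimuraSako}; the argument carries over verbatim to $V'$ after replacing $\mathbb{F}_{p^{n_m}}$ by $\mathbb{Z}/l_m\mathbb{Z}$, because the markings $P_m=(\sigma^{(m)},\tau^{(m)})$ still contain no transpose, so every word of length $\ell$ lies in a controlled unipotent-shift position that yields a Yu-style Følner cover of each $R$-ball.

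For items $(2)$--$(4)$, since the groups $G_m$ and $H_m$ are finite (and a fortiori amenable), Theorem~\ref{mtheorem=MainTheorem}(i) applies: a fibred coarse embedding as a disjoint union of $Y'$, $V'$, or $W'$ into a target class $\mathcal{M}$ forces uniform a-$\mathcal{F}_q(\mathcal{M})$-menability (respectively, a-$\mathcal{FS}_q(\mathcal{M})$-menability) of the Cayley boundary $\partial_{\mathrm{Cay}}$, for a suitable $q\in[1,\infty)$. Extracting a convergent subsequence in $\mathcal{G}(k)$, one obtains a Cayley limit $\mathbf{G}_\infty$ in which the cyclic permutation $\tau^{(m)}$ together with the elementary matrices $\sigma^{(m)}, \sigma'^{(m)}$ (and $\upsilon^{(m)}$ in the $Y'$ case) exhibits a marked copy of $\mathrm{EL}(3, \mathbb{Z})$ for $V'$ and $W'$, or of $\mathrm{EL}(3, \mathbb{F}_p[t])$ for $Y'$. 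These groups possess strong fixed-point properties which contradict a-$\mathcal{M}$-menability (a.~fortiori, a-$\mathcal{F}_q(\mathcal{M})$-menability): $\mathrm{SL}(3, \mathbb{Z})$ has Kazhdan's property $(\mathrm{T})$ together with strengthenings covering finite $\ell_1$-products of quasi-trees and $\ell_2$-products of rank-one symmetric spaces (yielding item $(2)$), and Banach spaces with $\beta < 1/2$ (yielding item $(4)$); the group $\mathrm{EL}(3, \mathbb{F}_p[t])$ satisfies the stronger Lafforgue--Liao fixed-point property on $\mathcal{B}_{\mathrm{type}>1}$, and an Izeki--Nayatani-type estimate furnishes the fixed-point property on $\mathcal{CAT}(0)_{<\delta(p)}$, which explains the quantitative constant $\delta(p)$ appearing in $(3)$.

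The principal difficulty is twofold. First, one must identify the Cayley limit carefully enough to exhibit the embedded marked arithmetic group: this uses crucially the action of the cyclic shift $\tau^{(m)}$ to spread the elementary matrices across the expanding dimension $m$, and for $Y'$ it uses $n_m\to\infty$ to introduce the polynomial variable $t$ via the generator $\upsilon^{(m)}$ carrying the field element $t_{n_m}$. Second, one must verify that the operation $\mathcal{M}\mapsto \mathcal{F}_q(\mathcal{M})$ (or $\mathcal{FS}_q(\mathcal{M})$) preserves the relevant quantitative invariant (type constant, $\beta$-invariant, or Izeki--Nayatani bound) so that the fixed-point theorems still apply to the enlarged class. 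Checking this stability, especially for $\mathcal{CAT}(0)_{<\delta(p)}$ under ultrapowers and $L_q$-integration, and for the finite-product classes $(\prod_{<\aleph_0}\mathcal{QT})_{\ell_1}$ and $(\prod_{<\aleph_0}\mathcal{M})_{\ell_2}$ of item $(2)$, is the most delicate part of the argument.
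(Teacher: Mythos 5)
Your treatment of items $(1)$, $(3)$ and $(4)$ is essentially the paper's: $(1)$ is Theorem~A of Part~I applied to the amenable Cayley limit groups $\mathrm{N}_{>}(\mathbb{Z},\mathbb{F}_p[t])\rtimes\mathbb{Z}$ and $\mathrm{N}_{>}(\mathbb{Z},\mathbb{Z})\rtimes\mathbb{Z}$, and $(3)$--$(4)$ follow from Theorem~\ref{mtheorem=MainTheorem}$(i).(1)$ together with Theorem~\ref{theorem=StrongPropertyT} and Remark~\ref{remark=IzekiNayatani}, via the observation that a group containing an infinite subgroup with $(\mathrm{F}_{\mathcal{M}})$ cannot be a-$\mathcal{M}$-menable; the stability of the relevant invariants under $\mathcal{F}_q$/$\mathcal{FS}_q$ is exactly what Examples~\ref{example=BanachSpaces} and \ref{example=MetricSpaces} record.

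Item $(2)$ is where your argument has a genuine gap. First, $V'$ appears in item $(2)$, and its Cayley limit group is $\mathrm{N}_{>}(\mathbb{Z},\mathbb{Z})\rtimes\mathbb{Z}$: the marking $P_m=(\sigma^{(m)},\tau^{(m)})$ contains \emph{no} transposed elementary matrix, so the limit is generated by upper-unipotent elementaries and the shift, and is amenable. It contains no copy of $\mathrm{EL}(3,\mathbb{Z})$, contrary to what you assert, and no fixed-point argument can possibly apply to it. Second, even for $Y'$ and $W'$, the fixed-point properties you invoke for $\mathrm{SL}(3,\mathbb{Z})$ acting on finite $\ell_1$-products of quasi-trees are not established anywhere in the paper and constitute a much deeper input than the statement needs. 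The paper's proof of $(2)$ is entirely different and non-equivariant: all three limit groups $\mathrm{SL}(\mathbb{Z},\mathbb{F}_p[t])\rtimes\mathbb{Z}$, $\mathrm{N}_{>}(\mathbb{Z},\mathbb{Z})\rtimes\mathbb{Z}$, $\mathrm{SL}(\mathbb{Z},\mathbb{Z})\rtimes\mathbb{Z}$ have \emph{infinite asymptotic dimension}, whereas every element of $(\prod_{<\aleph_0}\mathcal{QT})_{\ell_1}$ and of $(\prod_{<\aleph_0}\mathcal{M})_{\ell_2}$ has finite asymptotic dimension (Remark~\ref{remark=AsymptoticDimension}), and these classes are closed under $\mathcal{UP}$ (Remark~\ref{remark=QuasiTree}). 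Proposition~\ref{proposition=UniformCoarseEmbedding} converts a fibred coarse embedding of the disjoint union into a genuine coarse embedding of each boundary group into $\mathcal{UP}(\mathcal{M})$, and monotonicity of asymptotic dimension under coarse embeddings gives the contradiction. You should replace your equivariant argument for $(2)$ with this one.
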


For every prime $p$, the class $\mathcal{CAT}(0)_{<\delta(p)}$ as in $(3)$ above  includes $\mathcal{CAT}(0)_{\leq 0}$; the subclass $\mathcal{CAT}(0)_{\leq 0}$ contains all $($possibly infinite dimensional$)$ complete, connected and simply connected Riemannian manifolds with non-positive sectional curvature. Hence,  such spaces. After work \cite{BestvinaBrombergFujiwara} of Bestvina--Bromberg--Fujiwara, study of actions on finite products of quasi-trees has been paid an intensive attention.

The precise from of Theorem~\ref{theorem=NonExact} is stated in the following manner. To deduce Theorem~\ref{theorem=NonExact} from Theorem~\ref{mtheorem=NonExact}, fix $p$ and $(l_n)$, and let $\tilde{G}_n=G_n\wr \mathrm{SL}(2n+3,\mathbb{F}_{p^{l_n}})$.

\begin{mthm}\label{mtheorem=NonExact}
There exist a sequence of finite groups $(G_n)_{n\in \mathbb{N}}$ and $d\in \mathbb{N}$ such that the following holds true: For every prime $p$ and for every sequence $(l_n)_{n\in \mathbb{N}}$ of integers at least $2$ such that $\lim_{n\to \infty}l_n=\infty$, there exist three systems $(S_n)_n$, $(T_n)_n$ and $(U_n)_n$ of $d$-markings 
\begin{eqnarray*}
 S_n&=&(s_1^{(n)},s_2^{(n)},\ldots ,s_d^{(n)}), \\
 T_n&=&(t_1^{(n)},t_2^{(n)},\ldots ,t_d^{(n)}), \\
 U_n&=&(u_1^{(n)},u_2^{(n)},\ldots ,u_d^{(n)}), 
\end{eqnarray*}
of $(H_{n,p}(=H_{n,p,(l_n)_n})=G_n\wr \mathrm{SL}(2n+3,\mathbb{F}_{p^{l_n}}))_{n\in \mathbb{N}}$, such that the following hold true:
\begin{enumerate}[$(1)$]
  \item For every $n\in \mathbb{N}$ and for every $i \in [d]$, there exist $h_i=h_{n,p,i}\in H_{n,p}$ and $k_i=k_{n,p,i}\in H_{n,p}$ such that 
\[
h_i^{-1}s_i^{(n)} h_i=t_i^{(n)} \quad \textrm{and}\quad k_i^{-1}s_i^{(n)} k_i=u_i^{(n)}.
\]
  \item The sequence $((H_{n,p};S_n))_{n\in \mathbb{N}}$ converges in the Cayley topology to an amenable group. 
  \item The sequence $((H_{n,p};T_n))_{n\in \mathbb{N}}$ converges in the Cayley topology to a $\mathrm{non}$-$\mathrm{exact}$ group, but the Cayley limit group is a-$\mathrm{T}$-menable. 
 \item The sequence $((H_{n,p};U_n))_{n\in \mathbb{N}}$ converges in the Cayley topology to a $\mathrm{non}$-$\mathrm{exact}$ group. Moreover, the Cayley limit group is not a-$\mathcal{M}$-menable for $\mathcal{M}=\mathcal{B}_{\mathrm{type}>1}$ or $\mathcal{M}=\mathcal{CAT}(0)_{< \delta(p)}$. Here $\delta(p)$ is as in Corollary~$\ref{corollary=SpecialLinearGroups}$.
\end{enumerate}
\end{mthm}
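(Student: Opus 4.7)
The plan is to construct the three $d$-markings as element-wise conjugates inside each $H_{n,p}$, so that the underlying Cayley graphs are isomorphic while the Cayley limits differ drastically because the conjugating elements diverge in $L_n=\mathrm{SL}(2n+3,\mathbb{F}_{p^{l_n}})$. First I would invoke the Osajda--Arzhantseva--Osajda construction to obtain a LEF group $\Gamma_{\mathrm{NE}}$ that is non-exact but a-T-menable (the latter coming from the graphical small cancellation framework; compare Remark~\ref{remark=Osajda}). LEF supplies a sequence $(G_n;\Sigma_n)$ of finite marked groups with Cayley limit $\Gamma_{\mathrm{NE}}$; this is the base group of the wreath product $H_{n,p}=G_n\wr L_n$. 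The three markings each consist of two blocks of generators: a base block (in $\bigoplus_{L_n}G_n$, supported near the identity coset) and an acting block (in $L_n$).

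For $S_n$ I take the amenable-type acting generators $(\sigma^{(2n+3)},\upsilon^{(2n+3)},\tau^{(2n+3)})$ of $L_n$ from Example~\ref{example=SpecialLinear}, and base generators equal to $\Sigma_n$ placed at the identity coset; on the nose, the Cayley limit is a wreath-type extension of an amenable group by an amenable group. For $T_n$ I keep the same acting generators, but conjugate each base generator by a power $\tau^{(2n+3)\,k(n)}$ with $k(n)\to\infty$; this is precisely the Bartholdi--Erschler \emph{absorption} setup (Subsection~\ref{subsection=AbsorptionLemma}). The supports of the conjugated base generators split apart in the Cayley limit, producing a wreath-type group in which a diagonal copy of $\Gamma_{\mathrm{NE}}$ sits inside the base, hence non-exact, while remaining a-T-menable via the Cornulier--Stalder--Valette stability theorem. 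For $U_n$ I instead switch the acting block to the non-amenable-type marking $(\sigma^{(2n+3)},\sigma'^{(2n+3)},\upsilon^{(2n+3)},\tau^{(2n+3)})$ from Example~\ref{example=SpecialLinear}, combined with base generators again absorbed by conjugation; by Corollary~\ref{corollary=SpecialLinearGroups}(3)--(4) the acting part of the limit then obstructs a-$\mathcal{B}_{\mathrm{type}>1}$-menability and a-$\mathcal{CAT}(0)_{<\delta(p)}$-menability, and non-exactness persists from the absorbed $\Gamma_{\mathrm{NE}}$.

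By construction every generator of $T_n$ (respectively $U_n$) is obtained from the corresponding generator of $S_n$ by conjugating one of the two blocks by an element of $L_n$ (trivial on the other block), giving (1). Assertions (2)--(4) are then extracted from Cayley-limit computations that follow the ``absorption'' template: the Bartholdi--Erschler lemma identifies the limit of the absorbed wreath product as an explicit semidirect/wreath product of a limit of $\Gamma_{\mathrm{NE}}$-like factors with a limit of the $L_n$-markings, and the group-theoretic properties are then read off factor-by-factor (amenability is preserved by wreath products; non-exactness is inherited by overgroups; a-T-menability passes through the Cornulier--Stalder--Valette wreath product theorem).

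The main obstacle will be the last item. Verifying that the Cayley limit in case (4) genuinely fails to be a-$\mathcal{B}_{\mathrm{type}>1}$-menable (and also non-exact), and that the limit in case (3) truly is a-T-menable despite being non-exact, requires a careful identification of the Cayley limit as a specific marked wreath product. For (4) this ultimately reduces, via the approach of Corollary~\ref{corollary=SpecialLinearGroups}, to importing a Lafforgue-type strong property $(T)$ or Izeki--Nayatani obstruction from a convergent subsystem of $(L_n;\sigma^{(\cdot)},\sigma'^{(\cdot)},\upsilon^{(\cdot)},\tau^{(\cdot)})$; for (3) it relies on the delicate fact that a-T-menability for the $\Gamma_{\mathrm{NE}}$ produced by Osajda's small cancellation construction is preserved under the LEA operation and under the particular wreath-product limit obtained by absorption. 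Once those two convergence analyses are established, conjugacy in (1) and the amenable limit in (2) are comparatively routine.
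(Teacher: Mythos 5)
You have the absorption trick backwards, and this is a genuine gap that sinks items (2) and (3). The Bartholdi--Erschler absorption (Lemma~\ref{lemma=Absorption}) works by moving the \emph{different} base generators $g_1,\dots,g_k$ to positions $x_1^{(n)},\dots,x_k^{(n)}$ whose balls are \emph{mutually disjoint}; in the Cayley limit the conjugates of distinct $g_i,g_j$ then have disjoint supports and commute, so the base collapses to the abelian group $C_1\times\cdots\times C_k$ and $\Gamma_{\mathrm{NE}}$ vanishes entirely. Conversely, keeping all base generators at the \emph{same} coset (whether at $e$ or at a single drifting $\tau^{k(n)}$) is what preserves a copy of $\Gamma_{\mathrm{NE}}$ in the limit base. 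Your $S_n$ places the base generators together at the identity coset, so its Cayley limit is $\Gamma_{\mathrm{NE}}\wr\bigl(\mathrm{N}_>(\mathbb{Z},\mathbb{F}_p[t])\rtimes\mathbb{Z}\bigr)$ with non-amenable (indeed non-exact) base, \emph{not} an amenable group as claimed in (2). And if your $T_n$ really ``splits the supports apart,'' its limit has abelian base $(C_1\times\cdots\times C_k)\wr\bigl(\mathrm{N}_>\rtimes\mathbb{Z}\bigr)$, which is amenable, hence exact; the claimed ``diagonal copy of $\Gamma_{\mathrm{NE}}$'' in the absorbed base does not exist, so (3)'s non-exactness fails. (If instead all $T_n$-base generators are conjugated by one common $\tau^{k(n)}$ they do not split apart from each other, and then $S_n$ and $T_n$ yield the \emph{same} limit $\Gamma_{\mathrm{NE}}\wr(\mathrm{N}_>\rtimes\mathbb{Z})$, again contradicting (2).) The paper assigns the roles oppositely: $S_n$ spreads the base generators apart (absorption $\Rightarrow$ abelian base $\Rightarrow$ amenable limit), while $T_n$ and $U_n$ leave them all at the identity coset (no absorption $\Rightarrow$ $\Gamma_{\mathrm{NE}}$ survives $\Rightarrow$ non-exact limit).

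There is also a secondary defect you should not overlook: with acting block $(\sigma,\upsilon,\tau)$ for $S_n,T_n$ and $(\sigma,\sigma',\upsilon,\tau)$ for $U_n$, the three markings have different cardinalities, so they cannot all be $d$-markings for a single $d$ and condition~(1)'s element-by-element conjugation is not even well-posed. The paper resolves this by padding the acting block of $S_n$ and $T_n$ with the redundant generator $(\sigma_n)^{-1}$; then $U_n$ replaces $(\sigma_n)^{-1}$ by $\sigma'_n={}^t\sigma_n$, which is conjugate to $(\sigma_n)^{-1}$ inside $\mathrm{SL}(2n+3,\mathbb{F}_{p^{l_n}})$, giving the claimed element-wise conjugacy. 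Once you swap the roles of $S_n$ and $T_n$ (and add the padding), the rest of your outline---Osajda's LEF non-exact a-$\mathrm{T}$-menable group, the $\mathrm{SL}$ markings from Example~\ref{example=SpecialLinear}, CCJJV stability for a-$\mathrm{T}$-menability of $G\wr(\text{amenable})$, Lafforgue/Izeki--Nayatani obstructions for (4)---does match the paper's argument.
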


By the main result of our Part I paper \cite[Theorem~A]{MimuraSako}, the disjoint union $\bigsqcup_{n\in \mathbb{N}}\mathrm{Cay}(H_{n,p};S_n)$ has property A. By Theorem~\ref{theorem=Corollary}, $\bigsqcup_{n\in \mathbb{N}}\mathrm{Cay}(H_{n,p};T_n)$ admits a fibred coarse embedding into a Hilbert space. At the other end of the spectrum, by $(i)$ of Theorem~\ref{mtheorem=MainTheorem} (in the current paper), $\bigsqcup_{n\in \mathbb{N}}\mathrm{Cay}(H_{n,p};U_n)$ does $\mathrm{not}$ admit a fibred coarse embedding into $\mathcal{B}_{\mathrm{type}>1}$ or $\mathcal{CAT}(0)_{< \delta(p)}$. 

T. Pillon introduced a notion of \textit{fibred coarse amenability} \cite{Pillon} and showed that a box space of a group has this property if and only if the group has property A. In this aspect, it is furthermore plausible that $\bigsqcup_{n\in \mathbb{N}}\mathrm{Cay}(H_{n,p};T_n)$ and $\bigsqcup_{n\in \mathbb{N}}\mathrm{Cay}(H_{n,p};U_n)$ both  \textit{fail} to enjoy fibred property A. D. Sawicki \cite[Proposition~7.4]{SawickiPropertyA} also introduced a notion of \textit{piecewise property $A$} in the context of \textit{warped cones}, and showed a similar statement in that framework under certain conditions.

The method of constructing $(\Gamma_l)$ as in Theorem~\ref{theorem=Exotic} produces the following exotic example, which concerns markings of finite symmetric groups.

\begin{mthm}\label{mtheorem=SymmetricGroups}
There exist $(k_l)_{l\in \mathbb{N}}$ of a sequence of natural numbers at least $2$ with $\lim_{l\to \infty}k_l=\infty$ and two $($ordered$)$ systems of generators $(\Xi_l)_{l\in \mathbb{N}}$, $(\Omega_l)_{l\in \mathbb{N}}$ of symmetric groups $(\mathrm{Sym}(k_l))_{l\in \mathbb{N}}$ that satisfy all of the following.
\begin{enumerate}[$(1)$]
    \item For all $l\in \mathbb{N}$, $\sharp(\Xi_l)=8$ and $\sharp(\Omega_l)=9$. For each $l\in \mathbb{N}$, $\Omega_l$ is constructed by adding one extra element to $\Xi_l$.  
\item The disjoint union $\bigsqcup_{l\in \mathbb{N}}\mathrm{Cay}(\mathrm{Sym}(k_l);\Xi_l)$ has property $\mathrm{A}$.
  \item The disjoint union $\bigsqcup_{l\in \mathbb{N}}\mathrm{Cay}(\mathrm{Sym}(k_l);\Omega_l)$ does $\mathrm{not}$ admit a fibred coarse embedding as a disjoint union into any of these spaces: 
  \begin{itemize}
    \item Banach spaces of non-trivial type, and Banach spaces that are sphere equivalent to Banach spaces of non-trivial type.    
\item Elements in $\mathcal{CAT}(0)_{<1}$.
\end{itemize}
\end{enumerate}
\end{mthm}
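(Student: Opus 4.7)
The plan is to transfer the two-marking dichotomy appearing in Theorem~\ref{theorem=Exotic} and Theorem~\ref{mtheorem=NonExact} to symmetric groups by means of the absorption trick of Bartholdi--Erschler, together with the general machinery of Theorem~\ref{mtheorem=MainTheorem} and Theorem~A of Part~I~\cite{MimuraSako}. Concretely, I would first produce a sequence $(G_l)_l$ of finite groups with $d$-markings $R_l$ and $R'_l=R_l\cup\{r^{(l)}_{d+1}\}$ such that $((G_l;R_l))_l$ Cayley-converges to an amenable marked group (so that $\bigsqcup_l\mathrm{Cay}(G_l;R_l)$ has property~A by Part~I), while the Cayley boundary of $((G_l;R'_l))_l$ contains a group that fails to be a-$\mathcal{M}$-menable for $\mathcal{M}\in\{\mathcal{B}_{\mathrm{type}>1},\mathcal{CAT}(0)_{<1}\}$. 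Such a pair of markings is produced by exactly the method of Theorems~\ref{theorem=NonExact} and~\ref{theorem=Exotic}, using Osajda's and Arzhantseva--Osajda's LEF non-exact groups plus the spectral obstruction coming from expanders.

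Next, choose $k_l\to\infty$ and realize $G_l\hookrightarrow\mathrm{Sym}(k_l)$ via the regular representation $\lambda_l$, absorbed inside a wreath product $G_l\wr\mathrm{Sym}(m_l)\hookrightarrow\mathrm{Sym}(k_l)$ for an auxiliary small $m_l$. Using the absorption trick (cf.~Subsection~\ref{subsection=AbsorptionLemma}), replace each $r^{(l)}_i$ by a permutation $\xi^{(l)}_i\in\mathrm{Sym}(k_l)$ and pad with a fixed number of supplementary permutations acting nontrivially only on the $\mathrm{Sym}(m_l)$-coordinate. By choosing the padding so that the total count is $8$, we obtain a generating tuple $\Xi_l=(\xi^{(l)}_1,\ldots,\xi^{(l)}_8)$ of $\mathrm{Sym}(k_l)$; setting $\Omega_l=\Xi_l\cup\{\xi^{(l)}_9\}$ with $\xi^{(l)}_9$ corresponding to $r^{(l)}_{d+1}$ gives a $9$-marking obtained from $\Xi_l$ by adding one generator, verifying assertion~(1). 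The absorption is arranged so that, locally from the identity, $(\mathrm{Sym}(k_l);\Xi_l)$ looks like $(G_l;R_l)$ composed with an amenable ``background'' coming from $\mathrm{Sym}(m_l)$; in particular, the Cayley boundary in $\mathcal{G}(8)$ remains amenable, while the Cayley boundary of $(\mathrm{Sym}(k_l);\Omega_l)$ in $\mathcal{G}(9)$ still contains the bad group obtained in Step~1.

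The conclusion is now automatic: assertion~(2) follows from Theorem~A of~\cite{MimuraSako} applied to the amenable Cayley boundary under $\Xi_l$, and assertion~(3) follows by contrapositive from part~$(i)(1)$ of Theorem~\ref{mtheorem=MainTheorem} with $q=2$, since the Cayley boundary under $\Omega_l$ is not uniformly a-$\mathcal{F}_2(\mathcal{M})$-menable for the target classes; the sphere equivalence clause is then handled by invoking~\cite{MimuraSphereEquivalence}. The hard part is the middle step: arranging a single absorption procedure, uniform in $l$, that simultaneously (a)~produces generating sets of $\mathrm{Sym}(k_l)$ of fixed size $8$ and $9$, (b)~differs by adjoining exactly one generator, and (c)~preserves both Cayley limits with their opposite properties. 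In particular, the padding permutations needed to generate $\mathrm{Sym}(k_l)$ must themselves Cayley-converge to elements of an amenable marked group, which forces the wreath-product structure and the careful decoupling of the $G_l$-coordinate from the $\mathrm{Sym}(m_l)$-coordinate; this is the main technical obstacle of the construction.
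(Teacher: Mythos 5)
Your overall strategy (two markings, one with amenable Cayley boundary giving property~A, the other with a bad boundary giving non-embeddability) is correct, but the construction and the obstruction argument are genuinely different from the paper's, and your version has two concrete gaps.

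\textbf{The construction of the symmetric-group markings.} You propose embedding $G_l$ into $\mathrm{Sym}(k_l)$ via the regular representation inside a wreath product $G_l\wr\mathrm{Sym}(m_l)\hookrightarrow\mathrm{Sym}(k_l)$, with padding permutations ``acting nontrivially only on the $\mathrm{Sym}(m_l)$-coordinate.'' Such permutations all lie inside $G_l\wr\mathrm{Sym}(m_l)$, which is a proper subgroup of $\mathrm{Sym}(k_l)$ as soon as $G_l$ is nontrivial, so your tuples would \emph{not} generate $\mathrm{Sym}(k_l)$; the padding must include a generator that destroys the block system, and you have not supplied one, nor explained how doing so keeps the Cayley boundary under control. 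The paper avoids this via the encoding lemma (Lemma~\ref{lemma=SymmetricGroups}): for any marked group $(G;s_1,\dots,s_k)$ it takes the transpositions $\chi_{s_j}$ together with the right-translation permutations $\theta_{s_j}$, producing a $2k$-marking of $\mathrm{Sym}(G)$ that provably generates, and that is continuous in the Cayley topology (the Cayley limit is $\mathrm{Sym}_{<\aleph_0}(G_\infty)\rtimes G_\infty$, which is amenable whenever $G_\infty$ is). No wreath product or absorption trick is used here. The fixed sizes $8$ and $9$ arise because the paper starts from the upper triangular products $\mathbf{I}_l\in\mathcal{G}(4)$ and $\mathbf{J}_l\in\mathcal{G}(5)$ of $((G_m;S_m))_m$, resp.\ $((G_m;T_m))_m$, with $(\mathbb{Z}/n\mathbb{Z};1)_n$, giving $8=2\cdot 4$; and $\Omega_l$ is obtained by adjoining the single element $\theta_{b_1'}$ (deliberately omitting $\chi_{b_1'}$) to $\Xi_l$ so as to meet $\sharp(\Omega_l)=9$ and condition~(1). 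Your counting argument for $8$ and $9$ is internally consistent but not pinned down by any concrete construction.

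\textbf{The obstruction in assertion~(3).} You propose to apply Theorem~\ref{mtheorem=MainTheorem}.$(i)$.$(1)$ and deduce that the Cayley boundary is not uniformly a-$\mathcal{F}_2(\mathcal{M})$-menable because it contains a group failing a-$\mathcal{M}$-menability. For $\mathcal{M}=\mathcal{CAT}(0)_{<1}$ this does not go through: the paper only has the fixed-point property $(\mathrm{F}_{\mathcal{CAT}(0)_{<\delta(p)}})$ for $\mathrm{SL}(3,\mathbb{F}_p[t])$, and $\delta(p)<1/2$, so no limit group in sight is known to fail a-$\mathcal{CAT}(0)_{<1}$-menability. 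Moreover you suggest using Osajda's LEF non-exact group, which is a-$\mathrm{T}$-menable and hence is especially unsuited to this purpose. The paper proves~(3) differently: the upper triangular product puts the \emph{entire finite sequence} $((G_m;T_m))_m$ (up to a $\mathbb{Z}$-factor) inside the Cayley boundary; by Proposition~\ref{proposition=Embedded} this sequence admits embedded Banach $(\mathcal{B}_{\mathrm{type}>1},2)$-expanders; Propositions~\ref{proposition=EmbeddedExpanders}, \ref{proposition=SphereEquivalence} and~\ref{proposition=NonLinearSpectralGap} then show the boundary admits no equi-coarse embedding into Banach spaces of nontrivial type, their sphere-equivalence classes, or $\mathcal{CAT}(0)_{<1}$; and Proposition~\ref{proposition=UniformCoarseEmbedding} (the non-equivariant obstruction, not Theorem~\ref{mtheorem=MainTheorem}.$(i)$) finishes. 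This route is what buys the full $\mathcal{CAT}(0)_{<1}$ conclusion that your a-$\mathcal{M}$-menability argument cannot reach.
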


The construction as in Theorem~\ref{mtheorem=SymmetricGroups} is done in a completely explicit manner; see Subsection~\ref{subsection=ProofOfExotic} for details. For the proofs of Theorems~\ref{theorem=Exotic} and \ref{mtheorem=SymmetricGroups}, we utilize the notion of \textit{embedded expanders}; see Definition~\ref{definition=EmbeddedExpanders} and Proposition~\ref{proposition=Embedded}.

Our proof of Theorem~\ref{mtheorem=MainTheorem} is inspired by a trick by Gromov, \cite[Proposition~4.4]{deCornulierTesseraValette} for Hilbert spaces and \cite[Section~9]{NaorPeres} for general Banach spaces, as we will explain in Sections~\ref{section=Idea} and \ref{section=TheoremA(i)}. Independently to our results, S. Arnt \cite{Arnt} applied this trick in a special situation where the coarse disjoint union is a box space (in particular, all $G_m$, $m\in \mathbb{N}$, are finite) and the target class consists only of Banach spaces. For the case where $\mathcal{M}=\mathcal{H}\mathrm{ilbert}$, V. Alekseev and Finn-Sell \cite{AlekseevFinnSell} extended the framework of Theorem~\ref{mtheorem=MainTheorem} for the case where $(\mathbf{G}_m)_m$ is a LEF approximation of $\mathbf{G}_{\infty}$, see Definition~\ref{definition=RFLEFLEA}, to a sofic approximation of a sofic group. However, in that generality, only one direction (the direction of $(i)$ in Theorem~\ref{mtheorem=MainTheorem}) can be deduced; see the construction of a counterexample to the other direction by T. Kaiser \cite{Kaiser}, which is explained below Theorem~5.3 in the concerning reference \cite{Kaiser}. Compare also with our points $(a)$, $(b)$ with LEA approximations, and the case where $\mathcal{M}$ is general.

\ 

\paragraph{\bf Organization of the paper:} In Section~\ref{section=Preliminaries}, we briefly explain the space of marked groups and the Cayley topology, and the definition of fibred coarse embeddings (as a disjoint union). In Section~\ref{section=OperationsOnMetricSpaces}, we formulate several operations to classes of metric spaces and provide examples of our interest. We also provide a model example in Subsection~\ref{subsection=CAT(0)} to prove closeness properties under formation of these operations. In section~\ref{section=Idea}, we explain the key idea to non-linear version of Gromov's trick in relation to (pointed) metric ultraproducts. Section~\ref{section=TheoremA(i)} is devoted to the proof of  $(i)$ of Theorem~\ref{mtheorem=MainTheorem}. It is done by the non-linear version of Gromov's trick. In Section~\ref{section=TheoremA(ii)}, we prove $(ii)$ of Theorem~\ref{mtheorem=MainTheorem} and Corollary~\ref{mcorollary=a-T-menable} (and hence Theorem~\ref{theorem=Corollary} as well). Section~\ref{section=Gadgets} is for description of \textit{the absorption trick}, which plays a key role in the proof of Theorem~\ref{mtheorem=NonExact}. In Section~\ref{section=Examples}, we discuss various examples to apply Theorem~\ref{mtheorem=MainTheorem} (and Proposition~\ref{proposition=RootedGraphUniformCoarseEmbedding}), including the proofs of Corollary~\ref{corollary=SpecialLinearGroups} (and hence Corollary~\ref{corollary=SpecialLinear} as well) and Proposition~\ref{proposition=Products}. Theorem~\ref{mtheorem=NonExact} (and hence Theorem~\ref{theorem=NonExact} as well) is proved in Subsection~\ref{subsection=NonExact}; Theorem~\ref{theorem=Exotic} and Theorem~\ref{mtheorem=SymmetricGroups} are verified, respectively, in Subsections~\ref{subsection=UpperTriangular} and \ref{subsection=ProofOfExotic}.

\section{Preliminaries}\label{section=Preliminaries}
\subsection{Space of $k$-marked groups and Cayley topology}\label{subsection=CayleyTopology}
We recall basic facts of the Cayley topology from our Part I paper \cite{MimuraSako}; see Subsection~2.1 there for more details. Fix $k\in \mathbb{N}_{\geq 1}$. A $k$-marked group $\mathbf{G}=(G;S)=(G; s_1, s_2, \ldots, s_k)$ is a pair of a finitely generated group $G$ and an \textit{ordered} $k$-tuple $S=(s_1,\ldots ,s_k)$ of generators of $G$ (as a group). From a $k$-marked group $\mathbf{G}$, we construct two combinatorial objects, the \textit{Cayley diagram} $\mathrm{CayD}(\mathbf{G})$ and the \textit{Cayley graph} $\mathrm{Cay}(\mathbf{G})$ of $\mathbf{G}$ as follows. The former is defined as a diagram (edge-colored and edge-oriented graph), with the edge coloring set $[k]$, by setting the vertex set as $G$ and by putting edges of the form $(g,s_jg)$ with orientation from $g$ to $s_jg$ in color $j(\in [k])$ for every $j\in [k]$ and every $g\in G$. The latter is the graph (with no edge colorings or no edge orientations) constructed by forgetting the edge-colorings/orientations of $\mathrm{CayD}(\mathbf{G})$. Both of them are endowed with the shortest path metric $d_{\mathbf{G}}$ (in $\mathrm{CayD}(\mathbf{G})$, we ignore the edge-orientation to consider $d_{\mathbf{G}}$) on the vertex set $G$. In this way, we regard $\mathrm{CayD}(\mathbf{G})$ and $\mathrm{Cay}(\mathbf{G})$ as geometric objects. We also consider $\mathbf{G}$ itself as a metric space with this metric $d_{\mathbf{G}}$; in other words, $d_{\mathbf{G}}$ on $G$ is the right-invariant word metric with respect to $S$. 

For $\emptyset \ne Y\subseteq G$ and for $R\in \mathbb{N}_{\geq 1}$, denote by $\partial_{\mathbf{G}}(Y,R)$ the $R$-neighborhood of $Y$ in $d_{\mathbf{G}}$, namely, the set of all $h\in G$ such that there exists $g\in Y$ such that $d_{\mathbf{G}}(g,h)\leq R$. If $Y=\{g\}$, then we simply write $\partial_{\mathbf{G}}(\{g\},R)$ as $B_{\mathbf{G}}(g,R)$ (closed ball of radius $R$ centered at $g$). In this setting, we define $B_{\mathrm{CayD}(\mathbf{G})}(g,R)$ by restricting the vertex set of $\mathrm{CayD}(\mathbf{G})$ to $B_{\mathbf{G}}(g,R)$ and by taking the induced sub-diagram (more precisely, we collect all edges connecting vertices in $B_{\mathbf{G}}(g,R)$ with remembering its edge-colorings/orientations). By declaring $g$ to be the root, $B_{\mathrm{CayD}(\mathbf{G})}(g,R)$ has the  structure of a \textit{rooted diagram}. Note that $B_{\mathrm{CayD}(\mathbf{G})}(e_{G},R)$ \textit{completely remembers the multiplication table of $G$  up to word length $\lfloor R/2\rfloor$}. 

Denote by $\mathcal{G}(k)$ the set of all $k$-marked groups (up to marked group isomorphisms). This space is equipped with a natural topology, the \textit{Cayley topology},
which is metrizable and \textit{compact}. One definition of that topology is the induced topology of the product topology on $\{0,1\}^{F_k}$ to the set of all normal subgroups in $F_k$; there is a natural one-to-one correspondence between that subset of $\{0,1\}^{F_k}$ and $\mathcal{G}(k)$ by the standard marked quotient map $\mathbf{F_k}\twoheadrightarrow \mathbf{G}$. Another characterization of this topology is the topology of \textit{local} convergence (also known as the \textit{Gromov--Hausdorff} convergence in this setting) among rooted diagrams, as stated in the following lemma (Lemma~2.4 in \cite{MimuraSako}). Here for two groups $G,H$ and for two subsets $e_G\in K_1\subseteq G$ and $e_H\in K_2\subseteq H$, a map $\beta \colon K_1\to K_2$ is called a \textit{partial homomorphism} if 
\[
\textrm{for all $g_1,g_2 \in K_1$ \textit{such that} $g_1g_2\in K_1$,}\quad \beta(g_1g_2)=\beta(g_1)\beta(g_2)
\]
holds true. The map $\beta$ is called a \textit{partial isomorphism} if it is furthermore bijective.

The Cayley topology on $\mathcal{G}(k)$ is identical to the relative topology of the Chabauty topology for $F_k$; see \cite[Remark~2.5]{MimuraSako}.

\begin{lemma}\label{lemma=Neighborhood}
In $\mathcal{G}(k)$, $(\mathbf{G}_m)_{m\in \mathbb{N}}$ converges to $\mathbf{G}_{\infty}$ if and only if the following holds true:
\begin{align*}
\textrm{``}&\textrm{For every $m\in \mathbb{N}$, there exists $R_m\in \mathbb{N}$ such that $\lim_{m\to \infty}R_m=+\infty$ and } \tag{$\star$}\\
&\textrm{ $B_{\mathrm{CayD}(\mathbf{G}_m)}(e_{G_m},R_m)\cong B_{\mathrm{CayD}(\mathbf{G}_{\infty})}(e_{G_{\infty}},R_m)$ \textit{as rooted diagrams.''}} 
\end{align*}
Here an isomorphism of rooted diagrams means a graph automorphism that preserves edge-colorings $($in $[k]$$)$ and edge-orientations and that sends the root of the former diagram to the root of the latter. 

In other words, for $\mathbf{G}=(G;s_1,\ldots ,s_k)\in \mathcal{G}(k)$, if we define for each $R\in \mathbb{N}$,
\begin{align*}
N(\mathbf{G},R)=\{&\mathbf{H}=(H;t_1,\ldots ,t_k)\in \mathcal{G}(k): 
\textrm{the map $t_j\mapsto s_j$ induces}\\ 
&\textrm{a partial isomorphism} \quad \beta_{\mathbf{H},\mathbf{G},R}\colon B_{\mathbf{H}}(e_{H},R) \to B_{\mathbf{G}}(e_{{G}},R).\},
\end{align*}
then $\{N(\mathbf{G},R)\}_{R\in \mathbb{N}}$ forms an $($open$)$ neighborhood system of $\mathbf{G}$. 
\end{lemma}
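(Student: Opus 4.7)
The plan is to reduce the claim to the definition of the Cayley topology via the product topology on $\{0,1\}^{F_k}$. I identify each $\mathbf{G} \in \mathcal{G}(k)$ with the kernel $N_{\mathbf{G}} \trianglelefteq F_k$ of the marked surjection $\mathbf{F_k} \twoheadrightarrow \mathbf{G}$, $a_j \mapsto s_j$. Under this identification, a neighborhood base at $N_{\mathbf{G}}$ in the product topology is given, for $R \in \mathbb{N}$, by the sets
\[
U_R(\mathbf{G}) = \{\mathbf{H} \in \mathcal{G}(k) : N_{\mathbf{H}} \cap B_{\mathbf{F_k}}(e_{F_k}, R) = N_{\mathbf{G}} \cap B_{\mathbf{F_k}}(e_{F_k}, R) \}.
\]
I would prove that $\{N(\mathbf{G}, R)\}_R$ and $\{U_R(\mathbf{G})\}_R$ are mutually cofinal; concretely, that $N(\mathbf{G}, 2R+1) \subseteq U_R(\mathbf{G})$ and $U_{2R}(\mathbf{G}) \subseteq N(\mathbf{G}, R)$. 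This immediately yields both the neighborhood-base assertion and, by standard metric topology, the characterization of convergent sequences in the form $(\star)$.

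For $N(\mathbf{G}, 2R+1) \subseteq U_R(\mathbf{G})$: given $\mathbf{H} \in N(\mathbf{G}, 2R+1)$ with associated partial isomorphism $\beta = \beta_{\mathbf{H}, \mathbf{G}, 2R+1}$, a word $w = a_{i_1} \cdots a_{i_\ell} \in F_k$ of length $\ell \leq R$ evaluates in $H$ to an element whose partial products all lie in $B_{\mathbf{H}}(e_H, R)$; by partial multiplicativity and the bijectivity of $\beta$, the word $w$ represents $e_H$ if and only if it represents $e_G$, so $N_{\mathbf{H}}$ and $N_{\mathbf{G}}$ agree on $B_{\mathbf{F_k}}(e_{F_k}, R)$. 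For the reverse inclusion: given $N_{\mathbf{H}}$ and $N_{\mathbf{G}}$ agreeing on $B_{\mathbf{F_k}}(e_{F_k}, 2R)$, I define $\beta \colon B_{\mathbf{H}}(e_H, R) \to B_{\mathbf{G}}(e_G, R)$ by $t_{i_1} \cdots t_{i_\ell} \mapsto s_{i_1} \cdots s_{i_\ell}$ for $\ell \leq R$. Well-definedness follows since if two representatives $w_1, w_2 \in F_k$ of $F_k$-length $\leq R$ give the same element in $H$, then $w_1 w_2^{-1} \in N_{\mathbf{H}} \cap B_{\mathbf{F_k}}(e_{F_k}, 2R) = N_{\mathbf{G}} \cap B_{\mathbf{F_k}}(e_{F_k}, 2R)$; by the symmetric argument $\beta$ is injective, surjective onto $B_{\mathbf{G}}(e_G, R)$, and respects multiplication whenever the product lies in the ball.

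Finally, a partial group isomorphism $\beta$ as above automatically yields an isomorphism of rooted diagrams $B_{\mathrm{CayD}(\mathbf{H})}(e_H, R) \cong B_{\mathrm{CayD}(\mathbf{G})}(e_G, R)$: it sends color-$j$ oriented edges $(g, t_j g)$ to $(\beta(g), s_j \beta(g))$ by multiplicativity and preserves the root by construction. Conversely, any isomorphism of these rooted diagrams must send $t_j$ (the unique color-$j$ out-neighbor of the root) to $s_j$, and its restriction to the vertex set is then a partial group isomorphism by the same argument applied to paths of edges. The main bookkeeping obstacle is the factor of roughly $2$ between \emph{distance in the Cayley graph} and \emph{word length of a relator}: multiplying two elements in a radius-$R$ ball can require relators of $F_k$-length up to $2R+1$, which forces the rescaling $R \leftrightarrow 2R+1$ between the two neighborhood bases. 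Everything else is routine verification of definitions.
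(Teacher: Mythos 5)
Your argument is substantively correct, and it takes a genuinely different route from the paper: the paper's own proof of this lemma is a one-line deferral to Lemma 2.4 of the Part I paper \cite{MimuraSako} (together with the trivial reindexing that converts ``for every $R$ there exists $m_R$'' into a sequence $R_m\to+\infty$), whereas you give a self-contained derivation from the definition of the Cayley topology as the subspace topology inherited from the product topology on $\{0,1\}^{F_k}$. The identification of a neighborhood base $\{U_R(\mathbf{G})\}_R$ from the product topology, mutual cofinality with $\{N(\mathbf{G},R)\}_R$, and the dictionary between partial group isomorphisms and rooted diagram isomorphisms is exactly the content that \cite[Lemma 2.4]{MimuraSako} packages up, so your proof makes the lemma independent of the Part I paper.

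One small bookkeeping inaccuracy you should tighten: the inclusion $U_{2R}(\mathbf{G}) \subseteq N(\mathbf{G},R)$ is not quite right as stated. Agreement of $N_{\mathbf{H}}$ and $N_{\mathbf{G}}$ on the $2R$-ball of $F_k$ gives well-definedness and bijectivity of $\beta$ (two representatives of length $\le R$ differ by a relator of $F_k$-length $\le 2R$), but verifying \emph{partial multiplicativity} $\beta(h_1h_2)=\beta(h_1)\beta(h_2)$ with $h_1,h_2,h_1h_2\in B_{\mathbf{H}}(e_H,R)$ requires comparing $w_1w_2$ against an independently chosen word $w_3$ for $h_1h_2$, and $w_1w_2w_3^{-1}$ can have $F_k$-length up to $3R$. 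So the correct inclusion is $U_{3R}(\mathbf{G})\subseteq N(\mathbf{G},R)$. This does not affect mutual cofinality, hence not the conclusion, but the factor is $3$, not ``roughly $2$''; your closing heuristic slightly undersells the slop. Everything else (the easy direction, the translation between diagram-ball isomorphisms and partial group isomorphisms, and the passage from neighborhood bases to the sequential formulation $(\star)$) is correct.
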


\begin{proof}
For every $m\in \mathbb{N}$, set $R_m$ to be the largest $R$ such that $R\leq m$ and that $m\geq m_R$, where $m_R$ is as in \cite[Lemma~2.4]{MimuraSako}. Then, it follows that $\lim_{m\to \infty}R_m=+\infty$.
\end{proof}

We write the convergence in the Cayley topology as $\lim_{m\to \infty}\mathbf{G}_m=\mathbf{G}_{\infty}$ or $\mathbf{G}_m\stackrel{\mathrm{Cay}}{\to} \mathbf{G}_{\infty}$. The readers who are not familiar with the Cayley topology may consult Section~5 in our Part I paper \cite{MimuraSako}, specially Lemma~5.1 therein, for pedagogical examples of the Cayley convergence.

We also recall the definitions of RF/LEF/LEA groups; recall these abbreviations from the introduction.
\begin{definition}\label{definition=RFLEFLEA}
Let $G$ be a finitely generated group.
\begin{enumerate}[$(1)$]
  \item The group $G$ is said to be \textit{RF} if there exists a sequence $(N_m)_{m\in \mathbb{N}}$ of finite index normal subgroups of $G$ such that 
\[
\mathrm{liminf}_{m\to \infty}N_m(=\bigcup_{m\in \mathbb{N}}\bigcap_{n\in \mathbb{N}_{\geq m}}N_n)=\{e_G\}
\]
holds true.
  \item The group $G$ is said to be \textit{LEF} if for some (equivalently, every) marking $\mathbf{G}$ of $G$, there exists a Cayley convergent sequence consisting of finite marked groups that converges to $\mathbf{G}$ .
  \item The group $G$ is said to be \textit{LEA} if for some (equivalently, every) marking $\mathbf{G}$ of $G$, there exists a Cayley convergent sequence consisting of amenable marked groups that converges to $\mathbf{G}$ .
\end{enumerate}
We say a sequence $(\mathbf{G}_m)_m$ is a \textit{LEF} (respectively, \textit{LEA}) \textit{approximation} of $\mathbf{G}$ if it consists of finite (respectively, amenable) marked groups converging to $\mathbf{G}$ in the Cayley topology. A LEF approximation is moreover called an \textit{RF approximation} if it consists of \textit{marked group quotients}; namely, for every $m$, there exists a group quotient map $\varphi_m\colon G\twoheadrightarrow G_m$ that sends the marking $S=(s_1,\ldots ,s_k)$ of $\mathbf{G}$ to that $S_m=(s_1^{(m)},\ldots ,s_k^{(m)})$ of $\mathbf{G}_m$ with preserving the orders on them: $\varphi_m(s_j)=s_j^{(m)}$ for every $j\in [k]$. An RF approximation of $(G;S)$ is of the form $((G/N_m;S\ \mathrm{mod}\ N_m))_{m\in \mathbb{N}}$, where $(N_m)_{m\in \mathbb{N}}$ satisfies the conditions of $(1)$ of Definition~\ref{definition=RFLEFLEA}.
\end{definition}
In $(1)$ in the definition above, we may relax the condition of $\mathrm{liminf}_{m\to \infty}N_m=\{e_G\}$ to $\bigcap_{m}N_m=\{e_G\}$; indeed, if we set new $(N_m')_m$ as $N'_m=\bigcap_{n\in \mathbb{N}_{\leq m}}N_n$, then $\mathrm{liminf}_{m\to \infty}N'_m=\{e_G\}$ is equivalent to $\bigcap_{m}N'_m=\{e_G\}$. However, if we hope to have a RF approximation out of $(N_m)_m$ by taking marked group quotients, then the right condition is the former one, \textit{not} the latter.

\begin{remark}\label{remark=FinitelyPresented}
If a marked group $\mathbf{G}$ is \textit{finitely presented} (this is independent of the choice of markings), then the set of all marked group quotients of $\mathbf{G}$ forms an \textit{open} set. Hence, in that case, every LEF approximation  eventually is a RF approximation; see \cite{VershikGordon} and \cite[Subsection~2.1]{MimuraSako}.
\end{remark}

\subsection{Fibred coarse embeddings}\label{subsection=FibredCoarseEmbeddings}
Recall from the introduction the construction of the disjoint union $\bigsqcup_{m\in \mathbb{N}}X_m$ out of a sequence of metric spaces $(X_m,d_m)_{m\in \mathbb{N}}$. If every $X_m$ has finite diameter (the \textit{diameter} is defined as the supremum of the distances between two points in the metric space), then we may construct a \textit{coarse disjoint union} $\coprod_{m\in \mathbb{N}} X_m$, which is a (genuine) metric space. However, we do not go into details in this paper; instead, we refer the readers to \cite[Definition~2.17.$(2)$]{MimuraSako} on this notion.

In this paper, we study fibred coarse embeddings from the disjoint union constructed above. For this purpose, we relax the definition of the fibred coarse embeddings as follows. For a generalized metric space $X$, we say that $X$ is \textit{uniformly locally finite} if for every $R\in \mathbb{R}_{\geq 0}$, there exists $C\in \mathbb{N}$ such that every closed $R$-ball (for every center $x\in X$) has cardinality at most $C$. For a sequence of metric spaces $(X_m)_{m\in \mathbb{N}}$, we say that it is \textit{equi-uniformly locally finite} if every $X_m$ is uniformly locally finite and if moreover $C=C(R)$ is taken uniformly on $m\in \mathbb{N}$ for every $R\in \mathbb{R}_{\geq 0}$. If $(X_m)_{m\in \mathbb{N}}$ is equi-uniformly locally finite, then the disjoint union $X=\bigsqcup_{m\in \mathbb{N}}X_m$ is uniformly locally finite.

\begin{definition}\label{definition=GeneralizedFibredCoarseEmbeddings}
Let $\mathcal{M}$ be a non-empty class of metric spaces. Let $(X,d)=\bigsqcup_{m\in \mathbb{N}}X_m$ be the disjoint union of a  sequence of metric spaces $(X_m)_{m\in \mathbb{N}}$ that is equi-uniformly locally finite. Let $\rho,\omega \colon[0,\infty)\to [0,\infty)$ be two non-decreasing proper functions. 
\begin{enumerate}[$(i)$]
  \item We say that $X$ admits a \textit{$(\rho,\omega)$-fibred coarse embedding into $\mathcal{M}$ as a disjoint union} if there exists $M\in \mathcal{M}$ such that the following holds true: There exist 
\begin{itemize} 
\item a field of metric spaces $(M_x)_{x \in X}$ over $X$ such that 
each $M_x$ is isometric to $M$,
\item a section 
$s \colon X \to \bigsqcup_{x \in X} M_x$, (namely, $s(x) \in M_x$ for every $x\in X$),
\end{itemize}
such that for every $R \in \mathbb{R}_{\geq 0}$ there exists $m^{(R)}\in \mathbb{N}_{\geq 1}$ such that for each non-empty subset $C \subseteq X \setminus  \left(\bigsqcup_{\mathbb{N}_{<m^{(R)}}}X_m\right)$ of diameter at most $R$, there  
exists a ``trivialization''
$t_{C,R} \colon (M_x)_{x \in C} \to C \times M$
 such that the following holds. The restriction of $t_{C,R}$ to the fibre $M_x$, $x \in C$, is an isometry $t_{C,R}(x) \colon M_x \to M$ that satisfies
\begin{enumerate}[$(1)$]
\item
for every $x_1, x_2 \in C$, 
\[\rho(d(x_1, x_2)) \le d_M(t_{C,R}(x_1)(s(x_1)), t_{C,R}(x_2)(s(x_2))
\le \omega(d(x_1, x_2));
\]
\item
for every two subsets $C_1,C_2 \subseteq X \setminus  \left(\bigsqcup_{m\in \mathbb{N}_{<m^{(R)}}}X_m\right)$ of diameter at most $R$ with $C_1 \cap C_2 \neq \emptyset$,
there exists an isometry 
$t_{C_1, C_2,R} \colon M \rightarrow M$ such that 
$t_{C_1,R} (x) \circ t_{C_2,R}(x)^{-1} = t_{C_1,C_2,R}$
for all $x \in C_1 \cap C_2$.
\end{enumerate}
  \item We say $(\rho,\omega)$ is a \textit{control pair for fibred coarse embeddings as a disjoint union} for $X$ into $\mathcal{M}$ if there exists a $(\rho,\omega)$-fibred coarse embedding from $X$ to $\mathcal{M}$ as a disjoint union. Denote by $\mathcal{CP}_{\mathcal{M}}^{\mathrm{fib}}(X)$ the set of all control pairs above. The functions $\rho$ and $\omega$ are, respectively, called a \textit{compression function} and an \textit{expansion function} in the setting above.
\end{enumerate}
\end{definition}

We say that $X$ \textit{admits a fibred coarse embedding into} $M$ as a disjoint union if for some  pair $(\rho,\omega)$ of non-decreasing and  proper functions, the condition of $(i)$ is satisfied. This is equivalent to saying that
\[
\mathcal{CP}_{\mathcal{M}}^{\mathrm{fib}}(X)\ne \emptyset.
\]

Note that  if a non-empty subset $C$ of $X=\bigsqcup_{m\in \mathbb{N}}X_m$ is of bounded diameter, then there exists a unique $m\in \mathbb{N}$ such that $C\subseteq X_m$. 

\begin{remark}\label{remark=OriginalDefinition}
In the original formulation in \cite[Definition~2.1]{ChenWangYu} (for the case $\mathcal{M}$ being the class of all Hilbert spaces), for each $R\in \mathbb{N}$, we are allowed to choose a \textit{bounded} exceptional set $K$, and consider $C$ of diameter at most $R$ from $X\setminus K$. In our definition of fibred corase embeddability \textit{as a disjoint union}, we relax this process and allow to take $K=\bigsqcup_{m\in \mathbb{N}_{<m^{(R)}}}X_m$, the disjoint union of \textit{finitely many} components in $X=\bigsqcup_{m\in \mathbb{N}} X_m$.

Therefore, in Definition~\ref{definition=GeneralizedFibredCoarseEmbeddings}, if all $X_m$, $m\in \mathbb{N}$, are finite, then our notion of the fibred coarse embeddability as a disjoint union coincides with that of the fibred coarse embeddability in the original sense from a coarse disjoint union $\coprod_{m\in \mathbb{N}}X_m$. 

In this paper, we discuss quantitative aspects (control pairs) for fibred coarse embeddings (as a disjoint union) as well as qualitative aspects (the property itself). For this purpose, disjoint unions are more suited than coarse ones.\end{remark}

\begin{remark}\label{remark=FromFibredToGenuine}
The fibred coarse embeddability into $M$ (as a disjoint union) is weaker than the the (genuine) coarse embeddability. Indeed, if $f\colon X=\bigsqcup_{m}X_m \to M$ is a coarse embedding with control pair $(\rho,\omega)$, then set $m^{(R)}=0$ for all $R$ and $M_x=M$ for all $x\in X$. Let $s\colon X\to  \bigsqcup_{x\in X}M$ be $s(x)=f(x)$, and $t_{C,R}=\mathrm{id}_M$ for all $R$ and for all $C$ of diameter at most $R$. This gives rise to a $(\rho,\omega)$-fibred coarse embedding as a disjoint union into $M$.
\end{remark}

If $\mathcal{M}$ consists of Banach spaces, then we furthermore assume that all isometries in the conditions as in Definition~\ref{definition=GeneralizedFibredCoarseEmbeddings} are affine. However, by the Mazur--Ulam theorem \cite[Chapter~14.1]{BookBenyaminiLindenstrauss} and by formation of the Taylor complexification, this issue is not essential in many cases. Therefore, in the present paper, hereafter we do not discuss this matter.

\begin{remark}\label{remark=DependenceOnR}
Though it was implicit in the original formulation \cite[Definition~2.1]{ChenWangYu}, the ``trivialization'' $t_C=t_{C,R}$ in Definition~\ref{definition=GeneralizedFibredCoarseEmbeddings} \textit{is allowed to be incompatible on changing $R$}. More precisely, for $0\leq R_1<R_2$ and for $C\subseteq X \setminus  \left(\bigsqcup_{\mathbb{N}_{<m^{(R_2)}}}X_m\right)$ of diameter at most $R_1$, we do \textit{not} require that $t_{C,R_1}=t_{C,R_2}$. This observation is important in our proof of $(ii)$ of Theorem~\ref{mtheorem=MainTheorem}.
\end{remark}

We observe the following two lemmata. Here for a metric space $X$, $x\in X$ and $R\in \mathbb{R}_{> 0}$, denote by $B_X(x,R)$ the closed ball of radius $R$ centered at $x$.

\begin{lemma}\label{lemma=BallsFibredCoarseEmbedding}
Let $(X_m)_{m\in \mathbb{N}}$ be a sequence of metric spaces  that is equi-uniformly locally finite. Let $X = \bigsqcup_{m \in \mathbb{N}} X_m$. Let $\mathcal{M}$ be a non-empty class of metric spaces. Let $\rho,\omega\colon [0,\infty) \to [0,\infty)$ be two non-decreasing proper functions. Then, $X$ admits a $(\rho,\omega)$-fibred coarse embedding into $\mathcal{M}$ as a disjoint union if and only if there exists $M\in \mathcal{M}$ such that the following holds true: There exist 
\begin{itemize}
\item
a field of metric spaces $(M_x)_{x \in X}$ which are all isometric to $M$,
\item
there exists a section $s \colon X \to \bigsqcup_{x \in X} M_x$,
\item
a  sequence of non-negative real numbers $(R'_m)_{m\in \mathbb{N}}$ such that $\lim_{m\to \infty} R'_m=+\infty$,
\item
a local trivialization $t_{g,R'_m} \colon \bigsqcup_{x \in B_{X_m}(g, R'_m)} M_x \to B_{X_m}(g, R'_m) \times M$, for each $m \in \mathbb{N}$ and each 
$g \in X_m$,
\end{itemize}
such that the following hold.
\begin{enumerate}[$(1)$]
\item
For every $n\in \mathbb{N}$, for  every  $g \in X_m$ and every $x \in B_{X_m}(g, R'_m)$,
the restriction $t_{g,R'_m}(x) \colon M_x \to M$ of $t_{g,R'_m}$ is isometry;
\item
for every $x_1, x_2 \in B_{X_m}(g, R'_m)$, 
\[
\rho(d(x_1, x_2)) \le d_M(t_{g,R'_m}(x_1)(s(x_1)), t_{g,R'_m}(x_2)(s(x_2))
\le \omega(d(x_1, x_2));
\]
\item
if $B_{X_m}(g_1, R'_m) \cap B_{X_m}(g_2, R'_m) \neq \emptyset$, there exists an isometry 
$t_{g_1, g_2,R'_m} \colon M \rightarrow M$ such that 
$t_{g_1,R'_m} (x) \circ t_{g_2,R'_m}(x)^{-1} = t_{g_1,g_2,R}$
for all $x \in B_{X_m}(g_1, R'_m) \cap B_{X_m}(g_2, R'_m)$.
\end{enumerate} 
\end{lemma}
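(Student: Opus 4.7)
The strategy is to prove the equivalence by translating between the two parametrizations $R\mapsto m^{(R)}$ (the exceptional initial segment indexed by diameter) and $m\mapsto R'_m$ (the local radius indexed by component), essentially by inverting one into the other. The key geometric fact enabling both directions is that a non-empty subset $C$ of diameter at most $R$ lying in a single $X_m$ is contained in the ball $B_{X_m}(g,R)$ around any basepoint $g\in C$, while conversely such a ball $B_{X_m}(g,R'_m)$ is itself a subset of diameter at most $2R'_m$.

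For the direction from Definition~\ref{definition=GeneralizedFibredCoarseEmbeddings} to the ball formulation, I first replace $m^{(R)}$ by $\max_{R'\le R}m^{(R')}$ so that $R\mapsto m^{(R)}$ is non-decreasing, and then define
\[
R'_m:=\max\{r\in\mathbb{N}:m\ge m^{(2r)}\},
\]
with the convention $R'_m=0$ when the set is empty; since each $m^{(R)}$ is finite, $R'_m\to+\infty$. For each $g\in X_m$, the ball $B_{X_m}(g,R'_m)$ has diameter at most $2R'_m$ and avoids $\bigsqcup_{n<m^{(2R'_m)}}X_n$, so setting $t_{g,R'_m}:=t_{B_{X_m}(g,R'_m),\,2R'_m}$ is legitimate. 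Conditions (1) and (2) of the lemma are then immediate, and condition (3) follows from the cocycle condition of Definition~\ref{definition=GeneralizedFibredCoarseEmbeddings} applied to the two intersecting balls, both of which have diameter at most $2R'_m$ and lie outside the allowed exceptional initial segment.

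For the reverse direction, given $R\in\mathbb{R}_{\ge 0}$, define $m^{(R)}$ to be the smallest $m\in\mathbb{N}_{\ge 1}$ such that $R'_n\ge R$ for every $n\ge m$ (finite since $R'_m\to\infty$). For a non-empty $C\subseteq X_m$ of diameter at most $R$ with $m\ge m^{(R)}$, choose any basepoint $g\in C$; then $C\subseteq B_{X_m}(g,R)\subseteq B_{X_m}(g,R'_m)$, and I define $t_{C,R}$ as the restriction of $t_{g,R'_m}$ to $C$. The distortion bounds (1) of Definition~\ref{definition=GeneralizedFibredCoarseEmbeddings} follow from (2) of the lemma. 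For the cocycle condition, if $C_1\cap C_2\ne\emptyset$ then $C_1,C_2$ lie in a common $X_m$ with chosen basepoints $g_1,g_2$, and the enclosing balls $B_{X_m}(g_i,R'_m)$ also meet at that common point, so (3) of the lemma produces an isometry $t_{g_1,g_2,R'_m}$ independent of the intersection point; this serves as the required $t_{C_1,C_2,R}$.

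The proof is essentially bookkeeping; the one substantive subtlety I expect to navigate is that the cocycle isometry must not depend on the point where it is evaluated, and it is exactly this independence (guaranteed by condition (3) in either formulation) that makes the arbitrary choice of basepoint $g\in C$ in the reverse direction harmless, and that makes the two intersecting balls in the forward direction yield a well-defined global isometry.
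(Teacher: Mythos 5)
Your proof is correct and follows essentially the same route as the paper's: invert the parametrization $R\mapsto m^{(R)}$ into $m\mapsto R'_m$ (and back), take the trivializations of balls centered at basepoints, restrict them to arbitrary bounded subsets, and use the point-independence of the transition isometries in condition $(3)$ to render the arbitrary choice of basepoint harmless. Your extra care with the factor of $2$ (a ball of radius $R'_m$ has diameter up to $2R'_m$, so one should invoke the trivialization at parameter $2R'_m$) and with monotonizing $R\mapsto m^{(R)}$ is in fact slightly more precise than the paper's own writeup, which sets $t_{g,R'_m}=t_{B_{X_m}(g,R'_m),R'_m}$.
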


\begin{lemma}\label{lemma=Subsets}
In the setting of  Lemma~$\ref{lemma=BallsFibredCoarseEmbedding}$, let $Y=\bigsqcup_{n\in \mathbb{N}}Y_{m_n}$ be such that $(m_n)_{n\in \mathbb{N}}$ is a subsequence of $(m)_{m\in \mathbb{N}}$ and for each $n\in \mathbb{N}$, $Y_{m_n}$ is a non-empty subset of $X_{m_n}$ equipped with the induced metric. Then, if $X$ admits a fibred coarse embedding into $\mathcal{M}$ as a disjoint union with control pair $(\rho,\omega)$, then so does $Y$.
\end{lemma}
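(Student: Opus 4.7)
The plan is to restrict the data witnessing the fibred coarse embedding of $X$ directly to the subspace $Y$, with only a bookkeeping adjustment of the threshold function $m^{(R)}$ to account for the fact that the components of $Y$ are indexed by a subsequence $(m_n)_n$ of $(m)_m$.

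Concretely, fix a $(\rho,\omega)$-fibred coarse embedding of $X = \bigsqcup_{m\in\mathbb{N}} X_m$ into $M \in \mathcal{M}$, so that we have a field $(M_x)_{x\in X}$, a section $s\colon X \to \bigsqcup_{x\in X}M_x$, a threshold $m^{(R)} \in \mathbb{N}_{\geq 1}$, and trivializations $t_{C,R}$ satisfying conditions (1) and (2) of Definition~\ref{definition=GeneralizedFibredCoarseEmbeddings}. To construct the fibred coarse embedding of $Y = \bigsqcup_{n\in\mathbb{N}} Y_{m_n}$, first restrict the field $(M_x)_{x\in X}$ and the section $s$ to $Y \subseteq X$, which is unambiguous since $Y_{m_n} \subseteq X_{m_n}$ is equipped with the induced metric. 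Next define a new threshold function
\[
\tilde{m}^{(R)} = \min\bigl\{\, n \in \mathbb{N} \,:\, m_n \geq m^{(R)} \,\bigr\},
\]
which is well defined because $m_n \to \infty$. Then for every $n \geq \tilde{m}^{(R)}$ we have $m_n \geq m^{(R)}$, hence
\[
Y \setminus \Bigl(\bigsqcup_{n < \tilde{m}^{(R)}} Y_{m_n}\Bigr) \subseteq X \setminus \Bigl(\bigsqcup_{m < m^{(R)}} X_m\Bigr).
\]

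Now, given any non-empty $C \subseteq Y \setminus \bigsqcup_{n<\tilde{m}^{(R)}} Y_{m_n}$ of diameter at most $R$ (measured in the generalized metric of $Y$, which agrees with that of $X$ on $Y$), the inclusion above shows that $C$ is an admissible set for the fibred coarse embedding of $X$ at the same scale $R$. Define the trivialization of the restricted field over $C$ simply as the restriction of $t_{C,R}$ from the fibred embedding of $X$. Conditions (1) and (2) of Definition~\ref{definition=GeneralizedFibredCoarseEmbeddings} for $Y$ now follow immediately from the corresponding conditions for $X$: the two-sided control by $(\rho,\omega)$ on pairs in $C$ is inherited verbatim, and for two overlapping admissible subsets $C_1,C_2$ of $Y$, the same isometry $t_{C_1,C_2,R} \colon M \to M$ provided by the fibred coarse embedding of $X$ witnesses the cocycle condition on $C_1 \cap C_2$.

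There is no genuine obstacle; the only point requiring a little care is ensuring that the new threshold $\tilde{m}^{(R)}$ is chosen large enough so that admissible subsets of $Y$ at scale $R$ remain admissible subsets of $X$ at scale $R$, which is exactly what the definition of $\tilde{m}^{(R)}$ above guarantees. Finally, one should record at the start that the equi-uniform local finiteness of $(X_m)_m$ passes to $(Y_{m_n})_n$ with the induced metric, so that the hypothesis of Definition~\ref{definition=GeneralizedFibredCoarseEmbeddings} is satisfied for $Y$ and the conclusion $\mathcal{CP}^{\mathrm{fib}}_{\mathcal{M}}(X) \subseteq \mathcal{CP}^{\mathrm{fib}}_{\mathcal{M}}(Y)$ is meaningful.
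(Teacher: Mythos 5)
Your proof is correct and carries out exactly the restriction argument that the paper treats as immediate — indeed the paper's own proof consists of the single sentence that Lemma~\ref{lemma=Subsets} is obvious. The only substantive point is the re-indexing of the threshold function $m^{(R)}$ to account for the subsequence, and you handle that correctly by defining $\tilde{m}^{(R)}$ so that admissible sets for $Y$ at scale $R$ remain admissible for $X$.
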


\begin{proof}[Proofs of Lemma~$\ref{lemma=BallsFibredCoarseEmbedding}$ and Lemma~$\ref{lemma=Subsets}$]
Lemma~$\ref{lemma=Subsets}$ is obvious. 

To show that the $(\rho,\omega)$-fibred coarse embeddability as a disjoint union implies the conditions as in Lemma~\ref{lemma=BallsFibredCoarseEmbedding}, take $R\mapsto m^{(R)}$ as in Definition~\ref{definition=GeneralizedFibredCoarseEmbeddings}. set $R'_m=\min\{\sup\{r\in \mathbb{R}_{\geq 0}: m\geq m^{(r)}\},m\}$ for every $m\in \mathbb{N}$, where we set $m_0=0$. By construction, $\lim_{m\to \infty}R'_m=+\infty$. For $g\in X_m$, set $t_{g,R'_m}$ as $t_{B_{X_m}(g,R_m'),R_m'}$.

To show the converse direction, for each $R\in \mathbb{R}_{>0}$, set $m^{(R)}=\max\{m\in \mathbb{N}: R'_m< R\}+1$. Since $\lim_{m\to \infty}R'_m=+\infty$, it holds that $m_R\in \mathbb{N}$. For each $C\subseteq X\setminus \left(\bigsqcup_{m<m^{(R)}}X_m\right)$ of diameter at most $R$, there exist (unique) $m\in \mathbb{N}_{\geq m^{(R)}}$ and (non-unique) $g\in X_m$ such that $C\subseteq B_{X_m}(g,R)$. Since $R'_m\geq R$ for all $m\in \mathbb{N}_{\geq m^{(R)}}$, we may define $t_{C,R}$ as the restriction of $t_{g,R'_m}$ on $C$. There is an ambiguity on the choice of $g$; however, if we fix the choices for all $C$, then condition~$(3)$ as in Lemma~\ref{lemma=BallsFibredCoarseEmbedding} ensures condition~$(2)$ as in Definition~\ref{definition=GeneralizedFibredCoarseEmbeddings}. Recall also Remark~\ref{remark=DependenceOnR}.
\end{proof}

\subsection{Equivariant coarse embeddings and a-$\mathcal{M}$-menability}
In Section~4 in our Part I paper \cite{MimuraSako}, we recall the definition of a-$\mathrm{T}$-menability for finitely generated groups. Here we generalize this concept in terms of other target spaces. The following property should be stated as a-$\mathrm{F}_{\mathcal{M}}$-menability in the strict sense. However, through communications with Arnt, we have agreed to use the terminology of a-$\mathcal{M}$-menability to avoid messes on notation. In the following definition, recall that a marked group $\mathbf{G}$ is naturally equipped with the metric $d_{\mathbf{G}}$; see Subsection~\ref{subsection=CayleyTopology}.

\begin{definition}\label{definition=a-M-menable}
Let $\mathbf{G}$ be a marked group and $\mathcal{M}$ be a non-empty class of metric spaces. 
\begin{enumerate}[$(1)$]
  \item The marked group $\mathbf{G}$ is said to be \textit{a-$\mathcal{M}$-menable} if there exist $M\in \mathcal{M}$ and a coarse embedding $f\colon (G,d_{\mathbf{G}})\to M$ such that the following condition is satisfied: The map $f$ is of the form
\[
f(g)=y \cdot \alpha(g),
\]
where $\alpha\colon M\curvearrowleft G$ is a \textit{right} action by isometries and $y\in M$. We say that a coarse embedding $f$ is \textit{$($G-$)$equivariant} if it satisfies the condition above. 
 \item We say a finitely generated group $G$ is \textit{a-$\mathcal{M}$-menable} if for some (equivalently, all) marking $\mathbf{G}=(G;S)$ of $G$, $\mathbf{G}$ is a-$\mathcal{M}$-menable.
  \item The pair $(\rho,\omega)$ of two non-decreasing proper functions $[0,\infty)\to [0,\infty)$ is called an \textit{equivariant control pair} for $\mathbf{G}$ into $\mathcal{M}$ if there exist $M\in \mathcal{M}$ and an $G$-equivariant coarse embedding $f\colon G\to M$ such that $(\rho,\omega)$ is a control pair for $f$. In this case, we call $\rho$ and $\omega$, respectively, an \textit{equivariant compression function} and an \textit{equivariant expansion function} from $\mathbf{G}$ into $\mathcal{M}$.
 \item We denote by $\mathcal{CP}^{\sharp}_{\mathcal{M}}(\mathbf{G})$ be the set of all equivariant control pairs for $\mathbf{G}$ into $\mathcal{M}$.
\end{enumerate}
\end{definition}
In the definition above, we take a right action, not a left action, because we equip  marked groups with right-invariant metrics.

Let $\mathcal{H}\mathrm{ilbert}$ denote the class of all Hilbert spaces. Then the notion of a-$\mathcal{H}\mathrm{ilbert}$-menablity coincides with that of a-$\mathrm{T}$-menability.

\begin{remark}\label{remark=ControlledPair}
We warn that, unlike some other literature, the control pair $(\rho,\omega)$ is regarded as the pair of \textit{concrete} functions, not only as growth orders. In particular, if $(\rho,\omega)\in \mathcal{CP}^{\sharp}_{\mathcal{M}}(\mathbf{G})$ and if $C_1,C_2>0$, it does \textit{not} necessarily hold that $(C_1\rho,C_2\omega)\in \mathcal{CP}^{\sharp}_{\mathcal{M}}(\mathbf{G})$. If we consider a class $\mathcal{M}$ that is not necessarily closed under rescaling, this remark applies even when $C_1=C_2$. A similar issue to above applies to $\mathcal{CP}_{\mathcal{M}}(X)$ and  $\mathcal{CP}^{\mathrm{fib}}_{\mathcal{M}}(X)$. 

For a fixed finitely generated group $G$ and  for a fixed equivariant coarse embedding $f\colon G\to M$, equivariant compression functions for $f$ depends on markings of $\mathbf{G}$ up to constant multiplication. Therefore, the set $\mathcal{CP}^{\sharp}_{\mathcal{M}}(\mathbf{G})$ \textit{does} depend on the choice of markings; recall our discussion above. This observation is important because the Cayley boundary $\partial_{\mathrm{Cay}}(\mathbf{G}_m)_m$ of a sequence $(\mathbf{G}_m)_m$ may possibly consist of \textit{infinitely many} marked groups.
\end{remark}

\section{Several operations on (pointed) metric spaces}\label{section=OperationsOnMetricSpaces}

A \textit{pointed metric} space $(M,y)$ is a (genuine) metric space $M=(M,d_M)$ with a base point $y\in M$. We define certain operations on a class of metric spaces
\[
\mathcal{M}\quad \mapsto \quad \mathcal{UP}(\mathcal{M}),\ \mathcal{F}_q(\mathcal{M}),\ \mathcal{FS}_q(\mathcal{M})\quad \textrm{and} \quad \ell_q(\mathcal{M}),\quad \textrm{for }q\in[1,\infty),
\]
which appears in the statement of our main theorem, Theorem~\ref{mtheorem=MainTheorem}. In this section, we first give formulations of these operations; then we explain several classes of metric spaces that are closed under formation of these operations.

\subsection{Direct $\ell_q$-products and metric ultraproducts}\label{subsection=Ultraproduct}

The direct $\ell_q$-product of pointed metric spaces is defined as follows.

\begin{definition}\label{definition=ell_qSum}
Let $q\in [1,\infty)$. Let $B$ be a non-empty set that is at most countable. Let $(r_j)_{j\in B}$ be such that $r_j\in (0,\infty)$ for all $j\in B$. For a sequence $(M_j,d_j,y_j)_{j\in B}$ of pointed metric spaces, define the \textit{$($pointed$)$ $\ell_q$-product with scaling $(r_j)_j$}, denoted by $(\prod_{j\in B}(M_j,y_j,r_j))_{\ell_q}$, by 
\[
\left(\prod_{j\in B}(M_j,y_j,r_j)\right)_{\ell_q}=\left\{ (z_j)_{j\in B}: \left(\sum_{j\in B} (r_jd_j(z_j,y_j))^q\right)^{1/q}<\infty\right\}
\]
with the metric 
\[
d_{q,(r_j)_j}((z_j)_j,(w_j)_j)=\left(\sum_{j\in B} (r_jd_j(z_j,w_j))^q\right)^{1/q}, \quad (z_j)_j, (w_j)_j \in \left(\prod_{j\in B}(M_j,y_j,r_j)\right)_{\ell_q}\]
and with the base point $(y_j)_j$. 

If the scaling factor $(r_j)_j$ is all $1$ ($r_j=1$ for all $j\in B$), then we simply write $(\prod_{j\in B}(M_j,y_j,1))_{\ell_q}$ as $(\prod_{j\in B}(M_j,y_j))_{\ell_q}$. This space is called the \textit{$($pointed$)$ $\ell_q$-product} of $(M_j,y_j)_j$. (If $M_j$ are Banach spaces, then it is usually called the pointed $\ell_q$-sum.)

If $\sharp(B)<\infty$, then (the isometry type of) the resulting space $(\prod_{j\in B}(M_j,y_j,r_j))_{\ell_q}$ does not depend on the choice $(y_j)_j$ of base points. In that case, we write it as $(\prod_{j\in B}(M_j,r_j))_{\ell_q}$ for short.
\end{definition}

We now switch our subject to \textit{$($pointed$)$ metric ultraproducts}. An \textit{ultrafilter $\mathcal{U}$ over} $\mathbb{N}$ has a one-to-one correspondence to a probability mean $\nu$ (\textit{finitely additive} measure with $\nu(\mathbb{N})=1$) on $\mathbb{N}$ that is $\{0,1\}$-valued and is defined over all subsets of $\mathbb{N}$. The correspondence is given by setting that $A\in \mathcal{U}$ if and only if $\nu(A)=1$. The cofinite filter $\mathcal{U}_{\mathrm{cofin}}=\{A\subseteq \mathbb{N}:\sharp(\mathbb{N}\setminus A)<\infty\}$ is a filter, but not an ultrafilter. A \textit{non-principal} ultrafilter $\mathcal{U}$ is an ultrafilter that includes $\mathcal{U}_{\mathrm{cofin}}$ (as a subfilter). In what follows, fix a non-principal ultrafilter $\mathcal{U}$ over $\mathbb{N}$. 

For a sequence $(r_m)_{m\in \mathbb{N}}$ in $\mathbb{R}$ and for $r_{\infty} \in \mathbb{R}$, we say that $\lim_{\mathcal{U}}r_m=r_{\infty}$ if it holds that
\[
\textrm{for every $\epsilon>0$,}\quad \{m\in \mathbb{N}:|r_{\infty}-r_m|<\epsilon\} \in \mathcal{U}.
\]
By local compactness and Hausdorff property of $\mathbb{R}$, it is standard to show that every \textit{bounded} real sequence $(r_m)_{m\in \mathbb{N}}$ has a unique $\mathcal{U}$-limit. The limit in general depends on the choice of a non-principal ultrafilter $\mathcal{U}$. However, if $\lim_{m\to  \infty}r_m$ exists, then $\lim_{\mathcal{U}}r_m$ coincides with the limit above.

We now consider a sequence $((M_m,d_m,y_m))_{m\in \mathbb{N}}$ of pointed metric spaces. Set 
\[
\left(\prod_{m\in \mathbb{N}}(M_m,y_m)\right)_{\ell_{\infty}}=\{(z_m)_{m\in \mathbb{N}}: \sup_{m\in \mathbb{N}}d_m(z_m,y_m)<\infty\}
\]
and define  $d_{\mathcal{U}}$ by setting for $(z_m)_m,(w_m)_m\in (\prod_{m\in \mathbb{N}}(M_m,y_m))_{\ell_{\infty}}$,
\[
d_{\mathcal{U}}((z_m)_m,(w_m)_m)=\lim_{\mathcal{U}}d_m(z_m,w_m).
\]
This is a \textit{pseudo}-metric, namely, $d_{\mathcal{U}}$ does not separate points in general. To obtain a genuine metric space,introduce an equivalence relation $\sim_{d_{\mathcal{U}}\equiv 0}$ on $(\prod_{m\in \mathbb{N}}(M_m,y_m))_{\ell_{\infty}}$ by defining $(z_m)_m\sim_{d_{\mathcal{U}}\equiv 0} (w_m)_m$ by $d_{\mathcal{U}}((z_m)_m,(w_m)_m)=0$. Finally, the quotient space 
\[
\lim_{\mathcal{U}}(M_m,y_m)=\left(\prod_{m\in \mathbb{N}}(M_m,y_m)\right)_{\ell_{\infty}}/\sim_{d_{\mathcal{U}}\equiv 0}
\]
is equipped with a genuine metric $d_{\mathcal{U}}$. We call the resulting space the \textit{$($pointed$)$ metric ultraproduct} of $(M_m,y_m)$ with respect to $\mathcal{U}$. We write the equivalence class with respect to $\sim_{d_{\mathcal{U}}\equiv 0}$ of $(z_m)_m$ as $[(z_m)_m]_{\mathcal{U}}$.

\subsection{The classes $\mathcal{UP}(\mathcal{M})$,  $\mathcal{F}_q(\mathcal{M})$, $\mathcal{FS}_q(\mathcal{M})$ and $\ell_q(\mathcal{M})$}\label{subsection=OperationsOnMetricSpaces}

\begin{definition}\label{definition=GeodesicSpaces}
A metric space $M$ is called a \textit{geodesic space} if for every $x,y\in M$, there exists a geodesic $c\colon [0,d(x,y)] \to M$ connecting $x$ and $y$.

The space $M$ is moreover said to be \textit{uniquely geodesic} if for every $x, y\in M$, there exists a $\mathrm{unique}$ geodesic $c\colon [0,d(x,y)]\to M$ from $x$ to $y$. For a uniquely geodesic space $M$ and for $x,y\in M$, let $[x,y]$ be the (uniquely determined) geodesic from $x$ to $y$.
\end{definition}
Here, recall from the introduction that by geodesics, we mean minimal ones.

We give the definitions of $\mathcal{UP}(\mathcal{M})$,  $\mathcal{F}_q(\mathcal{M})$, $\mathcal{FS}_q(\mathcal{M})$ and $\ell_q(\mathcal{M})$ out of a given class $\mathcal{M}$. Here $q\in [1,\infty)$ is a fixed exponent.

\begin{definition}\label{definition=MetricUltraProduct}
Let $\mathcal{M}$ be a non-empty class of metric spaces. We define $\mathcal{UP}(\mathcal{M})$ to be the class of all pointed metric ultraproducts (after forgetting metric ultraproducts) of a single  space in $\mathcal{M}$. More precisely, it is the class of all spaces (isometric to those) of the form $\lim_{\mathcal{U}}(M,y_m)$. Here $M\in \mathcal{M}$ and for every $m\in \mathbb{N}$, $y_m\in M$; $\mathcal{U}$ runs over all non-principal ultrafilters on $\mathbb{N}$.
\end{definition}

\begin{definition}\label{definition=OperationOnMetricSpaces}
Let $\mathcal{M}$ be a non-empty class of metric spaces. Fix $q\in [1,\infty)$. We define the following two new classes, $\mathcal{F}_q(\mathcal{M})$ and $\mathcal{FS}_q(\mathcal{M})$, of metric spaces constructed from $\mathcal{M}$.
\begin{enumerate}[$(1)$]
  \item We define $\mathcal{F}_q(\mathcal{M})$ as the class of all metric spaces (that is isometric to ones) that are constructed by the following three steps.
\begin{itemize}
  \item (Step~1.) Take $M\in \mathcal{M}$.
  \item (Step~2.) Consider all metric spaces of the form $\left(\prod_{f\in F}(M, \frac{1}{(\sharp (F))^{1/q}})\right)_{\ell_q}$ for non-empty finite sets $F$. Here $(\frac{1}{(\sharp (F))^{1/q}})_{f\in F}$ means that we take the constant scaling factor $\frac{1}{(\sharp (F))^{1/q}}$. 
  \item (Step~3.) Take an arbitrary sequence $((N_m,y_m))_{m\in \mathbb{N}}$, where for all $m\in \mathbb{N}$, $N_m=N_m(F^{(m)})$ lies in the class of  all metric spaces constructed in Step~2 that is associated with a finite set $F^{(m)}$ such that $\lim_{m\to \infty}\sharp(F^{(m)})=\infty$ and $y_m\in N_m$. Construct all metric spaces of the form $\lim_{\mathcal{U}}(N_m,y_m)$ (after forgetting the base points) for non-principal ultrafilters $\mathcal{U}$ of $\mathbb{N}$.
\end{itemize}
  \item The new class $\mathcal{FS}_q(\mathcal{M})$ is defined if every element $L$ in $\mathcal{F}_q(\mathcal{M})$ is a geodesic space. If this is the case, then we construct $\mathcal{FS}_q(\mathcal{M})$ in the following way.
\begin{itemize}
  \item If $\mathcal{M}$ consists only of Banach spaces, then every element $L$ in $\mathcal{F}_q(\mathcal{M})$ has a structure of affine Banach spaces. Then set $\mathcal{FS}_q(\mathcal{M})$ as the class of all Banach spaces isometrically affinely isomorphic to non-empty closed affine subspaces of $L$ for all $L\in \mathcal{F}_q(\mathcal{M})$.
  \item Otherwise, define $\mathcal{FS}_q(\mathcal{M})$ to be the class of all metric spaces isometric to non-empty closed and geodesically convex subsets $L_0$ of $L$ (equipped with the induced metric from $L$) for all $L\in \mathcal{F}_q(\mathcal{M})$. Here a non-empty subset $L_0\subseteq L$ is said to be \textit{geodesically convex} if for every $z,w \in L_0$ and for every geodesic $c\colon [0,d(z,w)]\to L$ from $z$ to $w$ in $L$, $c$ (more precisely, the image $c([0,d(z,w)])$) is included in $L_0$.
\end{itemize}
\end{enumerate}
\end{definition}

\begin{definition}\label{definition=ell_q}
Let $\mathcal{M}$ and $q$ be as in Definition~$\ref{definition=OperationOnMetricSpaces}$.  Then, we define $\ell_q(\mathcal{M})$ as the class of all metric spaces (that is isometric to ones) of the form $(\prod_{j\in B}(M_j,y_j))_{\ell_q}$ (after forgetting the base point) for a non-empty at most countable sets $B$ and for  $M_j\in \mathcal{M}$ and  $y_j \in M_j$ for $j\in B$.
\end{definition}

Note that unlike the construction of $\ell_q(\mathcal{M})$, in Step~1 of the construction of $\mathcal{F}_q(\mathcal{M})$, we use a \textit{single} $M\in \mathcal{M}$ to take the $\ell_q$-product with scaling. (Similarly for $\mathcal{UP}(\mathcal{M})$.) The symbol $\mathcal{F}$ in $(1)$ of Deinfition~\ref{definition=OperationOnMetricSpaces} stands for \textit{finite} and \textit{F{\o}lner}.  The symbol $\mathcal{FS}$ in $(2)$ of Definition~\ref{definition=OperationOnMetricSpaces} stands for \textit{F{\o}lner} and \textit{subspaces} (or \textit{subsets}).

\begin{remark}\label{remark=ScalingFactor}
The scaling factor $(\frac{1}{(\sharp (F))^{1/q}})_{f\in F}$ in Step~2 as in $(1)$ of Definition~\ref{definition=OperationOnMetricSpaces} is chosen exactly in order to ensure that the diagonal embedding $M\hookrightarrow \left(\prod_{f\in F}(M, \frac{1}{(\sharp (F))^{1/q}})\right)_{\ell_q}$; $z\mapsto (z,z,\ldots ,z)$ is isometric.
\end{remark}

\subsection{A model example of closeness under formation of several operations}\label{subsection=CAT(0)}

In the next subsection, we discuss several examples of classes $\mathcal{M}$ of metric spaces which are closed under formation of (some of) operations
\[
\mathcal{M}\quad \mapsto \quad \mathcal{UP}(\mathcal{M}),\ \mathcal{F}_q(\mathcal{M}),\ \mathcal{FS}_q(\mathcal{M}) \quad \textrm{and} \quad \ell_q(\mathcal{M}),
\]
for appropriate $q\in [1,\infty)$. The goal of this subsection is to provide a model example to verify closeness above; in Subsections~\ref{subsection=ClosedUnderOperations} and \ref{subsection=ClosedUnderOperationsMetric}, we will omit the arguments for it because the basic idea is exactly the same as one in this subsection. Our pedagogical example in this subsection is the class of all \text{complete $\mathrm{CAT}(0)$ spaces}. The reader who is familiar with the argument for closeness may skip this subsection.

\begin{definition}\label{definition=CAT(0)}
A metric space $M$ is said to be $\mathrm{CAT}(0)$ if it is a geodesic space (see Definition~\ref{definition=GeodesicSpaces}) and if for every $x\in M$ and for every geodesic $c\colon [0,d(y,z)] \to M$ with $c(0)=y$ and $c(d(y,z))=z$ and for every $0\leq t\leq 1$, the following inequality
\[
d(x,c_{t})^2\leq (1-t)d(x,y)^2+td(x,z)^2-t(1-t)d(y,z)^2
\]
holds true, where $c_{t}$ denotes $c(t d(y,z))$.
\end{definition}
See \cite[Chapter~II.1]{BookBridsonHaefliger} for more details and for different characterizations.

As we mentioned above, the goal of this subsection is to prove the following.

\begin{lemma}\label{lemma=CAT(0)}
Let $\mathcal{CAT}(0)$ denote the class of all complete $\mathrm{CAT}(0)$ spaces. Then for $\mathcal{M}=\mathcal{CAT}(0)$, we have that
\[
\mathcal{UP}(\mathcal{M})\subseteq\mathcal{M},\quad \mathcal{FS}_2(\mathcal{M})\subseteq \mathcal{M}\quad \textrm{and}\quad \ell_2(\mathcal{M})\subseteq \mathcal{M}.
\]
\end{lemma}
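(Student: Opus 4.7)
The plan is to verify each of the three inclusions separately, using the fact that the defining midpoint inequality of $\mathrm{CAT}(0)$ (Definition~\ref{definition=CAT(0)}) is preserved by each operation once geodesics are constructed; completeness, the other ingredient, is then handled by a standard $\ell_2$-Cauchy argument in the product case and comes for free for ultralimits and closed subsets.

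First I would treat $\ell_2(\mathcal{M})\subseteq \mathcal{M}$. Given $M_j\in \mathcal{CAT}(0)$ with basepoints $y_j\in M_j$, let $L=(\prod_{j\in B}(M_j,y_j))_{\ell_2}$. Completeness of $L$ follows from a standard argument: a Cauchy sequence in $L$ is coordinatewise Cauchy, its coordinatewise limit is in $L$ by Fatou's lemma for $\ell_2$-sums, and convergence is in the $\ell_2$-metric. For two points $z=(z_j)$, $w=(w_j)$, pick geodesics $c^{(j)}\colon[0,d_j(z_j,w_j)]\to M_j$ and define $c(t)=(c^{(j)}(td_j(z_j,w_j)/d_L(z,w)))_j$ for $t\in[0,d_L(z,w)]$; a direct computation with the definition of $d_L$ shows this is a geodesic in $L$. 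The $\mathrm{CAT}(0)$ inequality at a third point $x=(x_j)$ reduces, after squaring both sides, to a sum over $j\in B$ of the corresponding inequality in each $M_j$, which holds by assumption.

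Next I would establish $\mathcal{UP}(\mathcal{M})\subseteq \mathcal{M}$. Fix $M\in \mathcal{CAT}(0)$, basepoints $(y_m)_m$ and a non-principal ultrafilter $\mathcal{U}$, and set $L=\lim_{\mathcal{U}}(M,y_m)$. Completeness of ultralimits of metric spaces is standard (a Cauchy sequence in $L$ can be diagonalized via $\mathcal{U}$ to represent its limit). For two points $[(z_m)]_{\mathcal{U}}$, $[(w_m)]_{\mathcal{U}}\in L$, pick for each $m$ a midpoint $p_m$ of $z_m,w_m$ in $M$; the sequence $(p_m)_m$ is bounded so $[(p_m)]_{\mathcal{U}}\in L$, and one checks it is a midpoint in $L$. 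Iterating this dyadically and taking limits (using completeness of $L$) produces a geodesic between any two points, so $L$ is geodesic. Finally, for any $[(x_m)]_{\mathcal{U}},[(z_m)]_{\mathcal{U}},[(w_m)]_{\mathcal{U}}\in L$, applying the $\mathrm{CAT}(0)$ midpoint inequality in $M$ for each $m$ and passing to $\lim_{\mathcal{U}}$ yields the same inequality in $L$; by a standard reduction (see \cite[Chapter~II.1]{BookBridsonHaefliger}), the midpoint version implies the full $\mathrm{CAT}(0)$ inequality for all $t\in[0,1]$.

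Finally I would deduce $\mathcal{FS}_2(\mathcal{M})\subseteq \mathcal{M}$. Rescaling a complete $\mathrm{CAT}(0)$ space by a positive constant preserves the class (both sides of the defining inequality scale as the square of distance). Combining this with the closures already proved, Steps~1--3 in Definition~\ref{definition=OperationOnMetricSpaces} keep us inside $\mathcal{CAT}(0)$, so $\mathcal{F}_2(\mathcal{CAT}(0))\subseteq \mathcal{CAT}(0)$; in particular every element of $\mathcal{F}_2(\mathcal{CAT}(0))$ is geodesic, so $\mathcal{FS}_2(\mathcal{CAT}(0))$ is well-defined. A closed geodesically convex subset $L_0$ of a complete $\mathrm{CAT}(0)$ space $L$ is complete (closed in a complete space), geodesic (by geodesic convexity the $L$-geodesics lie in $L_0$), and inherits the $\mathrm{CAT}(0)$ inequality because both sides involve only distances between points of $L_0$, which agree with those in $L$.

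The main obstacle, such as it is, is the ultralimit step: one must verify carefully that geodesics in $L=\lim_{\mathcal{U}}(M,y_m)$ exist and that the $\mathrm{CAT}(0)$ midpoint inequality actually characterizes $\mathrm{CAT}(0)$ (so that its ultralimit version suffices). Both are classical but should be referenced explicitly. The $\ell_2$ case is routine once one keeps track of the square-sum structure, and the subspace step is immediate.
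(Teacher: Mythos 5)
Your proposal is correct, but on the ultraproduct step it takes a genuinely different route from the paper. The paper's proof isolates a quantitative stability lemma (Lemma~\ref{lemma=CAT(0)UniquelyGeodesic}): any point $w$ whose distances to $x$ and $y$ are within $\epsilon$ of $td(x,y)$ and $(1-t)d(x,y)$ lies within a universal $\kappa(D,t,\epsilon)\to 0$ of the true division point of $[x,y]$. This lets the paper transfer the full $t$-parameter inequality of Definition~\ref{definition=CAT(0)} to the ultralimit directly: a point on a geodesic in $\lim_{\mathcal{U}}(M,y_m)$ is represented by a sequence $(w_m)$ of approximate division points, which are then replaced by exact ones $z_m$ at cost $\kappa$, and the coordinatewise inequalities pass to the limit. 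You instead transfer only the midpoint (CN/Bruhat--Tits) inequality --- which involves finitely many points and so passes to the ultralimit trivially --- and then invoke the classical fact that for a \emph{complete} metric space the CN inequality at some midpoint of each pair is equivalent to being $\mathrm{CAT}(0)$ (this also disposes of the worry that Definition~\ref{definition=CAT(0)} quantifies over \emph{all} geodesics, since the CN characterization forces unique geodesicity). Both arguments are valid; yours is shorter and more standard for $\mathrm{CAT}(0)$ specifically, while the paper's quantitative lemma is chosen deliberately because it adapts verbatim to $r$-uniformly convex spaces (see $(2)$ of Example~\ref{example=MetricSpaces} and Example~\ref{example=r-UniformlyConvex}), where a clean midpoint characterization is not available. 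The $\ell_2$-product and geodesically-convex-subset steps coincide with the paper's; the only small point to make explicit in the $\ell_2$ case is that geodesics in the product are exactly the coordinatewise geodesics (equivalently, appeal again to the CN characterization), so that verifying the inequality along the geodesic you construct suffices.
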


Since $\mathcal{F}_2(\mathcal{M})\subseteq \mathcal{FS}_2(\mathcal{M})$ in general, Lemma~\ref{lemma=CAT(0)} also implies that $\mathcal{F}_2(\mathcal{CAT}(0))\subseteq \mathcal{CAT}(0)$. A complete $\mathrm{CAT}(0)$ space is also called a \textit{Hadamard space}, but we do not use this terminology in the current paper.

The following lemma is a key to the proof of Lemma~\ref{lemma=CAT(0)}.

\begin{lemma}\label{lemma=CAT(0)UniquelyGeodesic}
Let $M\in \mathcal{CAT}(0)$. Then $M$ is uniquely geodesic.

Furthermore, for every $D>0$ and  $t\in [0,1]$ and for every $\epsilon>0$, there exists $\kappa=\kappa(D,t,\epsilon)>0$ with $\lim_{\epsilon\downarrow 0}\kappa=0$ $($for all fixed $D$ and for all fixed $t$$)$ such that the following holds true: Let $x,y\in M$ with $|d(x,y)-D|<\epsilon$. Let $z=c_t\in M$ for $c=[x,y]$. Then, for every $w\in M$ with
\[
|d(x,w)-td(x,y)|<\epsilon \quad \textrm{and} \quad |d(y,w)-(1-t)d(x,y)|<\epsilon,
\]
it holds that $d(z,w)\leq \kappa(D,t,\epsilon)$.
\end{lemma}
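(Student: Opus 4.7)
The plan is to apply the $\mathrm{CAT}(0)$ inequality of Definition~\ref{definition=CAT(0)} \emph{twice}: once with both external point and geodesic lying in the set of competing geodesics (to obtain uniqueness), and once with the geodesic $[x,y]$ and the point $w$ as the external vertex (to obtain the quantitative stability).

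For the uniqueness assertion, suppose $c_1,c_2\colon [0,d(y,z)]\to M$ are two geodesics from $y$ to $z$. Fix $t\in[0,1]$ and set $u_i=c_i(td(y,z))$ for $i=1,2$. Apply the $\mathrm{CAT}(0)$ inequality to the geodesic $c_1$ with external point $u_2$; since $d(u_2,y)=td(y,z)$ and $d(u_2,z)=(1-t)d(y,z)$, the right-hand side collapses to
\[
(1-t)t^2 d(y,z)^2+t(1-t)^2 d(y,z)^2-t(1-t)d(y,z)^2=0.
\]
Hence $d(u_1,u_2)=0$, so $u_1=u_2$; varying $t$ yields $c_1=c_2$. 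This gives unique geodesics.

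For the quantitative part, apply the $\mathrm{CAT}(0)$ inequality to the geodesic $c=[x,y]$ (reparametrized with endpoints $y'=x$, $z'=y$, so that the interpolation point is precisely $z=c_t$) with external point $w$. This yields
\[
d(w,z)^2\le (1-t)d(w,x)^2+t\,d(w,y)^2-t(1-t)d(x,y)^2.
\]
Write $d(x,y)=D+\delta_0$, $d(w,x)=td(x,y)+\delta_1$, $d(w,y)=(1-t)d(x,y)+\delta_2$ with $|\delta_0|<\epsilon$ and $|\delta_1|,|\delta_2|<\epsilon$. Substituting and expanding, the pure leading terms
\[
(1-t)(td(x,y))^2+t((1-t)d(x,y))^2-t(1-t)d(x,y)^2
\]
cancel exactly (this is the same algebraic identity as in the uniqueness step), and what remains consists of cross terms linear in the $\delta_i$'s with coefficients of order $D$, plus quadratic terms in the $\delta_i$'s. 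A straightforward bookkeeping gives
\[
d(w,z)^2\le C_1(D,t)\,\epsilon+C_2(t)\,\epsilon^2
\]
for explicit constants $C_1,C_2\ge 0$ depending only on $D$ and $t$. Setting $\kappa(D,t,\epsilon):=\bigl(C_1(D,t)\epsilon+C_2(t)\epsilon^2\bigr)^{1/2}$ then satisfies $\lim_{\epsilon\downarrow 0}\kappa(D,t,\epsilon)=0$ as required.

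The argument is essentially a direct computation; the only mild care needed is the correct identification of roles in the $\mathrm{CAT}(0)$ inequality (namely, using $[x,y]$ as the geodesic and $w$ as the external vertex), and then the observation that the ``model'' terms cancel identically, so that the whole bound is controlled by the errors $\delta_i$. No completeness is needed for this lemma, only the $\mathrm{CAT}(0)$ condition itself.
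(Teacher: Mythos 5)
Your proof is correct, and it takes a genuinely different route from the paper's. You apply the $\mathrm{CAT}(0)$ inequality of Definition~\ref{definition=CAT(0)} \emph{once, directly}: for uniqueness, you use $c_1$ as the geodesic and $u_2=c_2(td(y,z))$ as the external point, so that the right-hand side collapses to zero by the algebraic identity $(1-t)t^2+t(1-t)^2-t(1-t)=0$; for the quantitative estimate, you use $[x,y]$ as the geodesic and $w$ as the external point, expand in the perturbations $\delta_i$, and observe that the same identity kills the leading terms. The paper instead uses a midpoint argument: it takes a geodesic between the two competing points (or between $z$ and $w$), applies the $\mathrm{CAT}(0)$ inequality at $t=1/2$ twice — once from each of the two endpoints — and combines the resulting bounds with the triangle inequality to force a contradiction (respectively, an explicit estimate on $d(z,w)$). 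Your approach is shorter and more transparent for $\mathrm{CAT}(0)$. However, it depends critically on the exact cancellation of the model terms, which is a peculiarity of the constants in the $\mathrm{CAT}(0)$ inequality. The paper explicitly states that it chooses its proof so that it ``can be generalized to the case of $r$-uniformly convex metric spaces'' (needed for Example~\ref{example=MetricSpaces}$(2)$ and Example~\ref{example=r-UniformlyConvex}); in the $r$-uniformly convex inequality with constant $C$, the analogous model terms give $(1-t)t^r+t(1-t)^r-C^rt(1-t)$, which does \emph{not} vanish identically, so your direct argument would not carry over, while the midpoint argument does. For the lemma as stated your proof is complete; one very minor bookkeeping point is that your constant $C_1$ actually depends on $\epsilon$ through the bound $d(x,y)<D+\epsilon$, but this is harmless since you can absorb this into $\kappa(D,t,\epsilon)$ or restrict to $\epsilon<1$.
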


The latter part of the assertions of Lemma~\ref{lemma=CAT(0)UniquelyGeodesic} roughly states that, not only $M\in \mathcal{CAT}(0)$ is uniquely geodesic, but also for $x,y\in M$, all points $w\in M$ that satisfy
\[
d(x,w)\approx td(x,y) \quad \textrm{and} \quad d(y,w)\approx (1-t)d(x,y)
\]
are \textit{uniformly close} (in terms of $d(x,y)$ and $t$) to the point $z$ which divides $[x,y]$ internally in the ratio $t:(1-t)$.

\begin{proof}
We give the proof which can be generalized to the case of $r$-\textit{uniformly convex} metric spaces; see $(2)$ of Example~\ref{example=MetricSpaces}.

To prove unique geodesic property, let $x,y\in M$ and let $c^{(1)},c^{(2)}\colon [0,d(x,y)]\to M$ be two geodesic from $x$ to $y$. Fix $t\in [0,1]$. Take a geodesic $c'\colon [0,d(c^{(1)}_t,c^{(2)}_t)]\to M$ from $c^{(1)}_t$ to $c^{(2)}_t$. Apply the inequality as in Definition~\ref{definition=CAT(0)} with $t=1/2$ and $c=c'$ respectively for $(x,y,z)=(x,c^{(1)}_t,c^{(2)}_t)$ and for $(x,y,z)=(y,c^{(1)}_t,c^{(2)}_t)$. Then we have that
\begin{eqnarray*}
d(x,c'_{1/2})^2&\leq& t^2 d(x,y)^2 - \frac{1}{4} d(c^{(1)}_t,c^{(2)}_t)^2, \quad \textrm{and}\\
d(y,c'_{1/2})^2&\leq& (1-t)^2 d(x,y)^2 - \frac{1}{4} d(c^{(1)}_t,c^{(2)}_t)^2,
\end{eqnarray*}
If $d(c^{(1)}_t,c^{(2)}_t)>0$, then it would imply that
\begin{eqnarray*}
d(x,y)&\leq& d(x,c'_{1/2})+d(y,c'_{1/2})\\
&< & (t^2 d(x,y)^2)^{1/2}+((1-t)^2 d(x,y)^2)^{1/2}\\
&=& d(x,y);
\end{eqnarray*}
a contradiction. Therefore, $c^{(1)}\equiv c^{(2)}$, and we are done.

Next, we prove the latter assertion. Take a geodesic $[z,w]$ and let $u$ be the midpoint of it. Then in a similar way to one above, we have that
\begin{eqnarray*}
d(x,u)^2&\leq& \frac{1}{2}\left\{t^2 d(x,y)^2+(td(x,y)+\epsilon)^2\right\} - \frac{1}{4} d(z,w)^2, \quad \textrm{and}\\
d(y,u)^2&\leq& \frac{1}{2}\left\{(1-t)^2 d(x,y)^2+((1-t)d(x,y)+\epsilon)^2\right\} - \frac{1}{4} d(z,w)^2.
\end{eqnarray*}
Hence, we have that
\begin{eqnarray*}
d(x,y) &\leq& d(x,u)+d(y,u)\\
&\leq & \sqrt{\frac{1}{2} \left\{t^2d(x,y)^2+(td(x,y)+\epsilon)^2\right\} - \frac{1}{4} d(z,w)^2}\\
& &+\sqrt{\frac{1}{2}\left\{(1-t)^2d(x,y)^2+((1-t)d(x,y)+\epsilon)^2\right\} - \frac{1}{4} d(z,w)^2}.
\end{eqnarray*}
From the inequalities above, we may conclude the existence of $\kappa=\kappa(D,t,\epsilon)$  such that 
\[
d(z,w)<\kappa(=\kappa(D,t,\epsilon))
\] 
and that it satisfies
\[
\lim_{\epsilon\downarrow 0}\kappa(D,t,\epsilon)=0.
\]
Here our initial estimate of $\kappa$ depends on $d(x,y)$, $t$ and $\epsilon$; since $D-\epsilon\leq d(x,y)\leq D+\epsilon$, $\kappa$ may be expressed as a function on $D$, $t$ and $\epsilon$.
\end{proof}

Note that the function $\kappa=\kappa(D,t,\epsilon)$ above is \textit{universal}: It can be determined only from $\mathrm{CAT}(0)$ geometry (the inequality as in Definition~\ref{definition=CAT(0)}), and it does \textit{not} depend on the choices of the pair $(x,y)$.

\begin{proof}[Proof of Lemma~$\ref{lemma=CAT(0)}$]
Let $\mathcal{M}=\mathcal{CAT}(0)$. It is easy to see by Lemma~\ref{lemma=CAT(0)UniquelyGeodesic} that $\ell_2(\mathcal{M})\subseteq \mathcal{M}$. Indeed, every pointed $\ell_2$-product of complete and uniquely geodesic spaces is complete and uniquely geodesic. Moreover, since the inequality
\[
d(x,c_{t})^2\leq (1-t)d(x,y)^2+td(x,z)^2-t(1-t)d(y,z)^2
\]
as in Definition~\ref{definition=CAT(0)} is expressed \textit{only in terms of square sums} and since validity of it is \textit{stable under formation of rescalings}, we may confirm that this inequality remains valid for every resulting $\ell_2$-product space (possibly with rescalings). Indeed, take the square sum of inequalities which are obtained coordinatewise (recall that the resulting $\ell_2$-product space is uniquely geodesic as well).

Secondly, we will show that $\mathcal{UP}(\mathcal{M})\subseteq \mathcal{M}$. Standard arguments on metric ultraproducts show that every (pointed) metric ultraproduct of a geodesic metric space is geodesic, and a metric ultraproduct is always complete. Hence, what remains is to show the inequality as in Definition~\ref{definition=CAT(0)}. A basic philosophy to study metric ultraproducts is the following: An inequality with uniform constants \textit{on uniformly finitely many points} in metric spaces passes to metric ultraproducts. We will explain this philosophy in our example of the inequality for $\mathrm{CAT}(0)$ spaces, as in Definition~\ref{definition=CAT(0)}. Strictly speaking, this inequality is \textit{not} on uniformly finitely many points (because it involves a geodesic); however, Lemma~\ref{lemma=CAT(0)UniquelyGeodesic} enables us to reduce the inequality to two inequalities on \textit{there} (or four) \textit{points}.

Let $M=(M,d)\in \mathcal{M}$ and take a pointed metric ultraproduct $(M_{\mathcal{U}},d_{\mathcal{U}})=\lim_{\mathcal{U}}(M,y_m)$. Let $x^{(1)}_{\mathcal{U}}=[(x^{(1)}_m)_m]$ and $x^{(2)}_{\mathcal{U}}=[(x^{(2)}_m)_m]$ be two points in $M_{\mathcal{U}}$. Let $c_{\mathcal{U}}\colon [0,d_{\mathcal{U}}(x^{(1)}_{\mathcal{U}},x^{(2)}_{\mathcal{U}})]\to M_{\mathcal{U}}$ be a geodesic from $x^{(1)}_{\mathcal{U}}$ and $x^{(2)}_{\mathcal{U}}$. Let $u_{\mathcal{U}}=[(u_m)_m]$ be in $M_{\mathcal{U}}$. Let $t\in [0,1]$. Then, our goal is to prove that
\[
d_{\mathcal{U}}(u_{\mathcal{U}},c_{\mathcal{U},t})^2\leq (1-t)d_{\mathcal{U}}(u_{\mathcal{U}},x^{(1)}_{\mathcal{U}})^2+td_{\mathcal{U}}(u_{\mathcal{U}},x^{(2)}_{\mathcal{U}})^2-t(1-t)d_{\mathcal{U}}(x^{(1)}_{\mathcal{U}},x^{(2)}_{\mathcal{U}})^2,
\]
where $c_{\mathcal{U},t}=[(w_m)_m]$ is taken as $c_{\mathcal{U}}(t d_{\mathcal{U}}(x^{(1)}_{\mathcal{U}},x^{(2)}_{\mathcal{U}}))$. Let $D_{\mathcal{U}}=d_{\mathcal{U}}(x^{(1)}_{\mathcal{U}},x^{(2)}_{\mathcal{U}})$. 

We claim that  for every $\epsilon>0$, there exists $U_{\epsilon}\in \mathcal{U}$ such that the following holds: For every $m\in U_{\epsilon}$, 
\begin{eqnarray*}
& &|d(x_m^{(1)},x_m^{(2)})-D_{\mathcal{U}}|<\epsilon,\quad |d(x_m^{(1)},w_m)-td(x_m^{(1)},x_m^{(2)})|<\epsilon \\
\textrm{and} \quad& & |d(x_m^{(2)},w_m)-(1-t)d(x_m^{(1)},x_m^{(2)})|<\epsilon.
\end{eqnarray*}
Indeed, by definition of metric ultraproducts, there exists $V^{(1)}_{\epsilon}\in \mathcal{U}$ such that for every $m\in V^{(1)}_{\epsilon}$, it holds that $|d(x_m^{(1)},x_m^{(2)})-D_{\mathcal{U}}|<\epsilon$. Similarly, there exist $V^{(2)}_{\epsilon}\in \mathcal{U}$ and $V^{(3)}_{\epsilon}\in \mathcal{U}$ such that for every $m\in V^{(2)}_{\epsilon}\in \mathcal{U}$, $|d(x_m^{(1)},w_m)-tD_{\mathcal{U}}|<\epsilon$ holds and for every $m\in V^{(3)}_{\epsilon}\in \mathcal{U}$, $|d(x_m^{(2)},w_m)-(1-t)D_{\mathcal{U}}|<\epsilon$ is satisfied. Then set
\[
V_{\epsilon}=V^{(1)}_{\epsilon}\cap V^{(2)}_{\epsilon}\cap V^{(3)}_{\epsilon}.
\]
Since an ultrafilter corresponds to \textit{finitely additive} $\{0,1\}$-valued probability measures on $\mathbb{N}$, the membership of it is closed under formation of \textit{finitely many} intersections. Therefore, it follows that
\[
V_{\epsilon}\in \mathcal{U}.
\]
Finally, set
\[
U_{\epsilon}=V_{\frac{\epsilon}{2}}\quad (\in \mathcal{U});
\]
it is now easy to see that this $U_{\epsilon}$ satisfies all of the conditions of the claim above. This argument explains importance in the philosophy above to restirct ourselves to an inequality on \textit{uniformly finitely many} points in metric spaces.

Let $m\in U_{\epsilon}$. Apply Lemma~\ref{lemma=CAT(0)UniquelyGeodesic}. Then we have that $d(z_m,w_m)<\kappa(D_{\mathcal{U}},t,\epsilon)$ with 
\[
\lim_{\epsilon\downarrow 0}\kappa(D_{\mathcal{U}},t,\epsilon)=0,
\]
where $z_m$ is the point that divides $[x^{(1)}_m,x^{(2)}_m]$ internally in the ratio $t:(1-t)$. The key here is the function $\kappa$ does \textit{not} depend on the choice of $m\in U_{\epsilon}$; recall that $\kappa$ was determined only by $\mathrm{CAT}(0)$ geometry. We may apply the inequality as in Definition~\ref{definition=CAT(0)} for the triple $(z_m,x_m^{(1)},x_m^{(2)})$ (because the original space $M$ is $\mathrm{CAT}(0)$); hence we have that for every $m\in U_{\epsilon}$,
\[
d(u_m,z_m)^2\leq (1-t)d(u_m,x_m^{(1)})^2+td(u_m,x_m^{(2)})^2-t(1-t)d(x_m^{(1)},x_m^{(2)})^2.
\]
Since $d(z_m,w_m)\leq \kappa(D_{\mathcal{U}},t,\epsilon)$, we in addition have that
\[
d(u_m,w_m)\leq d(u_m,z_m)+\kappa(D_{\mathcal{U}},t,\epsilon).
\]
Finally, we let $\epsilon\downarrow 0$. Then by the two inequalities above, it follows from the definition of the metric ultraproduct that
\[
d_{\mathcal{U}}(u_{\mathcal{U}},c_{\mathcal{U},t})^2\leq (1-t)d_{\mathcal{U}}(u_{\mathcal{U}},x^{(1)}_{\mathcal{U}})^2+td_{\mathcal{U}}(u_{\mathcal{U}},x^{(2)}_{\mathcal{U}})^2-t(1-t)d_{\mathcal{U}}(x^{(1)}_{\mathcal{U}},x^{(2)}_{\mathcal{U}})^2.
\]
Therefore, we obtain our goal; it proves that $M_{\mathcal{U}}\in \mathcal{CAT}(0)$.

Once we showed $\ell_2(\mathcal{M})\subseteq \mathcal{M}$ and $\mathcal{UP}(\mathcal{M})\subseteq \mathcal{M}$, it is straightforward to verify that $\mathcal{F}_2(\mathcal{M})\subseteq \mathcal{M}$. Indeed, in general,
\[
\mathcal{F}_q(\mathcal{M})\subseteq \mathcal{UP}(\ell_q(\mathcal{M})).
\]
holds true for every $q\in [1,\infty)$.
Finally, to see that $\mathcal{FS}_2(\mathcal{M})\subseteq \mathcal{M}$, observe that all conditions for complete $\mathrm{CAT}(0)$ spaces (completeness, geodesic property and the inequality as in Definition~\ref{definition=CAT(0)}) pass to closed and geodesically convex subsets. Hence it is deduced from the inclusion $\mathcal{F}_2(\mathcal{M})\subseteq \mathcal{M}$.
\end{proof}

\begin{remark}\label{remark=FinitelyRepresentable}
If our metric space $M=E$ is a Banach space, then the philosophy as in the proof above on metric ultraproducts is stated in the following way: An inequality with uniform constants \textit{on uniformly finite dimensional subspaces} in Banach spaces passes to metric ultraproducts. 

This statement above for Banach spaces is formulated in a rigorous manner as follows: For every Banach space $E$, each metric ultrapower $E_{\mathcal{U}}=\lim_{\mathcal{U}}(E,0)$ of $E$ is \textit{finitely representable} in $E$; see the definition and discussions in \cite[Chapter~F]{BookBenyaminiLindenstrauss}. It morally means that, quantitative information of \textit{finite dimensional subspaces} of $E_{\mathcal{U}}$ may be approximated as  accurate as we hope by that of $E$.
\end{remark}

\subsection{Examples of classes of Banach spaces}\label{subsection=ClosedUnderOperations}
We discuss several examples of classes of metric spaces of our interest. They are main examples of $\mathcal{M}$ which is closed under formation of (some of) the following operations
\[
\mathcal{M}\quad \mapsto \quad \mathcal{UP}(\mathcal{M}),\ \mathcal{F}_q(\mathcal{M}),\ \mathcal{FS}_q(\mathcal{M}) \quad \textrm{and}\quad \ell_q(\mathcal{M}),
\]
for an appropriate exponent $q\in [1,\infty)$. In this subsection, we discuss certain classes of Banach space; in the next subsection, we deal with those of non-linear metric spaces.

The reader may consult the proof of Lemma~\ref{lemma=CAT(0)} for basic ideas behind the proofs of the closeness.

\begin{example}\label{example=BanachSpaces}
First we consider classes of Banach spaces.
\begin{enumerate}[$(1)$]
 \item Let $r\in [1,\infty)$. Then $\mathcal{M}=\ell_r(=\{\ell_r\})$ satisfies $\ell_q(\mathcal{M})\subseteq \mathcal{M}$ for $q=r$.
 \item More generally to $(1)$, let $\mathcal{M}=\mathcal{B}_{L_r}$ denote the class of all $L_r$-spaces (over all measure spaces). (We fix $\mathbb{R}$ or $\mathbb{C}$, and construct the class above over the fixed coefficient field.) Then, $\ell_q(\mathcal{M})\subseteq \mathcal{M}$ for $q=r$. Furthermore, Krivine showed that $\mathcal{UP}(\mathcal{M})\subseteq \mathcal{M}$; see the survey \cite{Heinrich}. It implies that $\mathcal{F}_r(\mathcal{M})\subseteq  \mathcal{M}$.

In  particular, by letting $r=2$, we observe that for $\mathcal{M}=\mathcal{H}\mathrm{ilbert}$ (the class of all Hilbert spaces), $\ell_2(\mathcal{M})\subseteq \mathcal{M}$ and $\mathcal{F}_2(\mathcal{M})\subseteq \mathcal{M}$; the proof of the latter item is much easier than that for the general $L_r$-space case. In that case, moreover, $\mathcal{FS}_2(\mathcal{M})\subseteq \mathcal{M}$ holds.
 \item More generally to $(2)$, let $\mathcal{M}=\mathcal{B}_{\mathrm{NC}L_r}$ denote the class of all non-commutative $L_r$-spaces (associated with all von-Neumann algebras). Then, $\ell_r(\mathcal{M})\subseteq \mathcal{M}$ and $\mathcal{UP}(\mathcal{M})\subseteq \mathcal{M}$; the latter follows from the work of Raynaud \cite{Raynaud}. It also holds that $\mathcal{F}_r(\mathcal{M})\subseteq \mathcal{M}$.
 \item A Banach space $E$ is said to be of \textit{non-trivial $($linear or Rademacher$)$ type} if there exists $r\in (1,2]$ and a constant $C>0$ such that the following holds true: For every $m\in \mathbb{N}_{\geq 1}$ and for every $(\xi_i)_{i\in [m]}$ in $E$, 
\[
\mathbb{E}_{(\epsilon_i)_i}\left[\|\sum_{i\in [m]} \epsilon_i \xi_i \|^r\right] \leq C^r \sum_{i\in [m]} \|\xi_i\|^r.
\]
If the inequality above is satisfied for fixed $r$ and $C$, we say that $E$ has a \textit{type $r$ with constant $C$}.
Here $\mathbb{E}_{(\epsilon_i)_i}[\cdot]$ means the expected value (average) over the uniform distribution of $(\epsilon_i)_{i\in \mathbb{N}}$ over $\{-1,1\}^m$.  Let $\mathcal{M}=\mathcal{B}_{\mathrm{type}>1}$ denote the class of all complex Banach spaces of non-trivial type. Then, $\mathcal{UP}(\mathcal{M})\subseteq \mathcal{M}$ and $\mathcal{F}_2(\mathcal{M})\subseteq \mathcal{FS}_2(\mathcal{M})\subseteq \mathcal{M}$ hold true. Indeed, having type $>1$ is stated in terms of \textit{conditions on finite dimensional subspaces}; recall Remark~\ref{remark=FinitelyRepresentable}.

Moreover, for a fixed $r\in (1,2]$ and $C>0$, if we consider the subclass $\mathcal{B}_{r,C}^{\mathrm{type}}$ of all complex Banach spaces that have type $r$ with constants $C$, then $\ell_r(\mathcal{B}_{r,C}^{\mathrm{type}})\subseteq \mathcal{B}_{r,C}^{\mathrm{type}}$. Indeed, the condition of the membership of $\mathcal{B}_{r,C}^{\mathrm{type}}$ is stated \textit{only in terms of $\ell_r$-sums}; recall the first part of the proof of Lemma~\ref{lemma=CAT(0)}. See \cite{BookTomczak-Jaegermann} for details of types of Banach spaces.

Celebrated work of V. Lafforgue \cite{Lafforgue1}, \cite{Lafforgue2} yield fixed point properties with respect to $\mathcal{B}_{\mathrm{type}>1}$; see $(1)$ of Theorem~\ref{theorem=StrongPropertyT}.
 \item N. Tomczak-Jaegermann \cite[Chapter~6]{BookTomczak-Jaegermann}, and T. de Laat and M. de la Salle \cite{deLaatdelaSalle} studied quantities $(d_k(E))_{k\in \mathbb{N}_{\geq 1}}$ and the class $\mathcal{B}_{\beta<1/2}$ (see also \cite[Formula $(1.1)$]{deLaatMimuradelaSalle}), which are defined as follows: For two  isomorphic (but not necessarily isometrically) Banach spaces $E_1$ and $E_2$, the \textit{Banach--Mazur distance} $d_{\mathrm{BM}}(E_1,E_2)$ is defined to be the infimum of $\|T\|\|T^{-1}\|$, where $T\colon E_1 \stackrel{\simeq}{\to} E_2$ runs over all isomorphisms between $E_1$ and $E_2$, and $\|\cdot\|$ denotes the operator norm. For a complex Banach space $E$, for each $k\in \mathbb{N}_{\geq 1}$, we define $d_k(E)$ by
\[
d_k(E)=\sup\left\{d_{\mathrm{BM}}(E',\ell_{2,\mathbb{C}}^{\mathrm{dim}_{\mathbb{C}} (E')}): \mathrm{dim}_{\mathbb{C}}(E')\leq k\right\},
\]
where $E'$ runs over all (complex) linear subspaces of $E$ with the condition above. Here $\ell_{2,\mathbb{C}}^m$ denotes the $m$-dimensional complex $\ell_2$-space for $m\in \mathbb{N}$. The class $\mathcal{B}_{\beta<1/2}$ is defined as the class of all complex Banach spaces $E$ for which  there exist $0<\beta<1/2$ and $C>0$ such that the condition
\[
\textrm{for all $k\in \mathbb{N}_{\geq 1}$,}\quad d_k(E)\leq Ck^{\beta} 
\]
is satisfied. Then, it follows that $\mathcal{UP}(\mathcal{B}_{\beta<1/2})\subseteq \mathcal{B}_{\beta<1/2}$ and that $\mathcal{F}_2(\mathcal{B}_{\beta<1/2})\subseteq \mathcal{FS}_2(\mathcal{B}_{\beta<1/2})\subseteq \mathcal{B}_{\beta<1/2}$. Moreover, for fixed $\beta \in (0,1/2)$ and $C>0$, if we denote by $\mathcal{B}_{\beta,C}$ the class of all complex Banach spaces such that the condition above holds for that pair $(\beta,C)$, then $\ell_2(\mathcal{B}_{\beta,C})\subseteq \mathcal{B}_{\beta,C}$ holds. The proofs of these inclusions above go along the same line as ones in $(4)$.

A fact states that a complex Banach space $E$ is of non-trivial type if and only if $\lim_{k\to \infty}k^{-1/2}d_k(E) =0$; see \cite{BookTomczak-Jaegermann}. In particular, $\mathcal{B}_{\beta<1/2} \subseteq \mathcal{B}_{\mathrm{type}>1}$. It is not known whether the inclusion above is strict.

de Laat--Mimura--de la Salle \cite{deLaatMimuradelaSalle} studied fixed point properties with respect to $\mathcal{B}_{\beta<1/2}$; see $(3)$ of Theorem~\ref{theorem=StrongPropertyT}.
\item Similar to $(4)$, for each $r\in [2,\infty)$ and each $C>0$, we define the class $\mathcal{B}_{r,C}^{\mathrm{cotype}}$ as that of all Banach spaces that satisfy the cotype $r$ inequality with constant $C$:
\[
\mathbb{E}_{(\epsilon_i)_i}\left[\|\sum_{i\in [m]} \epsilon_i \xi_i \|^r\right] \geq C^{-r} \sum_{i\in [m]} \|\xi_i\|^r.
\]
Here the expected value in the left-hand side is defined as one in $(4)$. Then for $\mathcal{M}=\mathcal{B}_{r,C}^{\mathrm{cotype}}$, in a similar way to one in $(4)$, it holds that $\ell_r(\mathcal{M})\subseteq \mathcal{M}$, $\mathcal{UP}(\mathcal{M})\subseteq \mathcal{M}$  and $\mathcal{F}_r(\mathcal{M})\subseteq \mathcal{FS}_r(\mathcal{M})\subseteq \mathcal{M}$. The union $\mathcal{B}_{\mathrm{cotype}<\infty}=\bigcup_{r,C}\mathcal{B}_{r,C}^{\mathrm{cotype}}$ equals the class of all Banach spaces of non-trivial cotype.
 \item A Banach space $E$ is said to be \textit{uniformly convex} if there exists a \textit{strictly positive} real-valued function $\Delta\colon (0,2]\to \mathbb{R}_{>0}$ such that the following holds: For every $\xi,\eta\in S(E)$ with $\xi\ne \eta$,
\[
1-\left\|\frac{\xi+\eta}{2}\right\|\geq \Delta(\|\xi-\eta\|).
\]
Here $S(E)$ denotes the unit sphere of $E$. For a fixed $r\in[2,\infty)$, if there exists $C>0$ such that $\Delta$ above satisfies that $\Delta(\epsilon)\geq C^r\epsilon^r$ for all $\epsilon\in (0,2]$, then we say that $E$ is uniformly convex with modulus of convexity of \textit{power type $r$}. Ball--Carlen--Lieb \cite[Proposition~7]{BallCarlenLieb} showed that the condition above is equivalent to saying that there exists $C'>0$ such that for all $\xi,\eta\in X$ and for all $t\in [0,1]$, the following inequality holds true:
\[
\left\|(1-t)\xi+t\eta\right\|^r \leq (1-t)\|\xi\|^r+t\|\eta\|^r-(C')^rt(1-t)\|\xi-\eta\|^r.
\]
They also made estimate between $C$ and $C'$ above. In this paper, we say a Banach space $E$ is \textit{$r$-uniformly convex with constant $C'$} if the inequality above is satisfied; this terminology is compatible with that of \textit{$r$-uniformly convex metric spaces} in a more general framework; see $(2)$ of Example~\ref{example=MetricSpaces}. 

A Banach space $E$ is said to be \textit{superreflexive} if every (equivalently, some) metric ultrapower $\lim_{\mathcal{U}}(E,0)$ is reflexive. Enflo's characterization states that $E$ is superreflexive if and only if $E$ is isomorphic to a uniformly convex Banach space. A theorem of G. Pisier \cite{PisierMartingale} shows that, moreover, for every superreflexive Banach $E$, there exists $r\in [2,\infty)$ such that $E$ is isomorphic to a uniformly convex Banach space with modulus of convexity of power type $r$. For $r\in [2,\infty)$, for $C'>0$ and for $D\geq 1$, we define the class $\mathcal{B}^{\mathrm{sr}}_{r,C',D}$ as that of all Banach spaces whose Banach--Mazur distance at most $D$ to $r$-uniformly convex Banach spaces with constant $C'$. Then, for $\mathcal{M}=\mathcal{B}^{\mathrm{sr}}_{r,C',D}$, it holds that $\ell_r(\mathcal{M})\subseteq \mathcal{M}$, $\mathcal{UP}(\mathcal{M})\subseteq \mathcal{M}$  and $\mathcal{F}_r(\mathcal{M})\subseteq \mathcal{FS}_r(\mathcal{M})\subseteq \mathcal{M}$. Indeed, without the condition of Banach--Mazur distances, they follow from a similar argument to one in $(4)$. It may be easily verified that the extra condition in terms of Banach--Mazur distances does not affect the closeness properties above.

By aforementioned theorems in \cite{BallCarlenLieb} and \cite{PisierMartingale}, the union $\mathcal{B}_{\mathrm{sr}}=\bigcup_{r,C',D}\mathcal{B}^{\mathrm{sr}}_{r,C',D}$ coincides with the class of all superreflexive Banach spaces. It is known that $\mathcal{B}_{\mathrm{sr}}\subseteq \mathcal{B}_{\mathrm{type>1}}\subseteq \mathcal{B}_{\mathrm{cotype}<\infty}$ and that both of the inclusions are strict; see \cite{BookTomczak-Jaegermann} and \cite{BookBenyaminiLindenstrauss}.
\end{enumerate}
\end{example}

We make a remark that  if $\mathcal{UP}(\mathcal{M})\subseteq \mathcal{M}$ holds, then in many cases this inclusion happens to be strict. For instance, let $\mathcal{M}=\mathcal{H}\mathrm{ilbert}$. Then as we argued in Example~\ref{example=BanachSpaces}, the inclusion above holds. It is a standard fact that a metric ultrapower of an infinite dimensional Banach space is always non-separable; see \cite{BookBenyaminiLindenstrauss}. Hence, the class $\mathcal{UP}(\mathcal{H}\mathrm{ilbert})$ does \textit{not} contain an infinite dimensional separable Hilbert space.

\subsection{Examples of classes of non-linear metric spaces}\label{subsection=ClosedUnderOperationsMetric}

In this subsection, we discuss classes of \textit{non-linear} metric spaces. Our main examples are subclasses of $\mathcal{CAT}(0)$ as in Lemma~\ref{lemma=CAT(0)}, as the class $\mathcal{CAT}(0)$ itself is too enormous. For instance, to the best knowledge of the authors, it might \textit{not} be known whether there exists an infinite RF (finitely generated) group that has the fixed point property with respect to the class $\mathcal{CAT}(0)$. In order to restrict to subclasses of $\mathcal{CAT}(0)$, we employ the following numerical invariant of a complete $\mathrm{CAT}(0)$ space.

\begin{definition}[Izeki--Nayatani invariant; \cite{IzekiNayatani}]\label{definition=IzekiNayataniInvariant}
Let $M\in \mathcal{CAT}(0)$. 
Let $\mathcal{P}_{<\aleph_0}(M)$ denote the set of all finitely supported probability measures on $M$ supported on more than one point. In other words, each $\mu\in \mathcal{P}_{<\aleph_0}(M)$ is of the form $\sum_{i=1}^k t_i\mathrm{Dirac}_{p_i}$ with $t_i>0$ for $i\in [k]$, $\sum_{i=1}^kt_i=1$ and $k\in \mathbb{N}_{\geq 2}$. Here $\mathrm{Dirac}_{p}$ means the Dirac mass at $p$. For such $\mu$, there exists a unique point $\overline{\mu}\in M$ that minimizes the function
\[
M\ni x \mapsto \sum_{i=1}^k t_i d_M(p_i,x)^2 \in \mathbb{R}_{\geq 0};
\]
this point $\overline{\mu}$ is called the \textit{barycenter} of $\mu$. For such $\mu$, define
\[
\delta(\mu)=\inf\left\{\frac{\left\|\sum_{i=1}^k t_i f(p_i)\right\|^2}{\sum_{i=1}^k t_i \|f(p_i)\|^2}: \|f(p_i)\|=d_M(p_i,\overline{\mu}),\  \|f(p_i)-f(p_j)\|\leq d_M(p_i,p_j) \right\}.
\]
Here $f$ runs over all maps from $\mathrm{supp}(\mu)$ to $L_2=L_2([0,1])$ that satisfies the two conditions indicated above, and $i$ and $j$ there vary all indices in $[k]$. 

The \textit{Izeki--Nayatani invariant} $\delta (M)$ is defined as
\[
\delta(M)=\sup_{\mu\in \mathcal{P}_{<\aleph_0}(M)}\delta(\mu).
\]
\end{definition}

The Izeki--Nayatani invariant takes values in $[0,1]$. For instance, if $M$ is a tree (or $\mathbb{R}$-tree), a Hilbert space or a (possibly infinite dimensional) \textit{Hadamard manifold} (that is, a complete, connected and  simply-connected Riemannian manifold with non-positive sectional curvature), then $\delta(M)$ is computed to be $0$; see \cite{IzekiNayatani}. As we mentioned in the introduction, many reasonable $\mathrm{CAT}(0)$ spaces $M$, such as all Euclidean buildings associated with simple algebraic groups, have $\delta(M)<1$; see \cite{ToyodaCAT(0)}. For this reason, we may regard $M\in \mathcal{CAT}(0)$ with $\delta(M)=1$ as a \textit{singular} $\mathrm{CAT}(0)$ space.

\begin{example}\label{example=MetricSpaces}
Here we discuss certain classes of \textit{non-linear} metric spaces. 
\begin{enumerate}[$(1)$]
\item Fix $\delta_0\in [0,1]$. We define a class
\[
\mathcal{CAT}(0)_{\leq \delta_0}=\{M:M \textrm{ is complete and $\mathrm{CAT}(0)$,\ $\delta(M)\leq \delta_0$}\}.
\]
Then for each $\delta_0$, the class $\mathcal{M}=\mathcal{CAT}(0)_{\leq \delta_0}$ satisfies
\[
\ell_2(\mathcal{M})\subseteq \mathcal{M},\ \mathcal{F}_2(\mathcal{M})\subseteq \mathcal{FS}_2(\mathcal{M})\subseteq \mathcal{M} \quad 
\textrm{and} \quad  \mathcal{UP}(\mathcal{M})\subseteq \mathcal{M}. 
\]
Indeed, it is known from \cite{IzekiNayatani} and \cite{Toyoda} that the Izeki--Nayatani invariant does \textit{not} increase by formation of metric ultraproducts or by taking closed and geodesically convex subsets. Also they showed that if for every $M_m\in \mathcal{CAT}(0)$, $m\in \mathbb{N}$, satisfies that $\delta(M)\leq \delta_0$, then for every choice $(y_m)_m$ of base points, the space $\left(\prod_{m\in \mathbb{N}}(M_m,y_m)\right)_{\ell_2}$ belongs to $\mathcal{CAT}(0)_{\leq \delta_0}$. Hence, the assertions above follow from Lemma~\ref{lemma=CAT(0)}.

For $\delta_0\in (0,1]$, we define the following class:
\[
\mathcal{CAT}(0)_{<\delta_0}=\bigcup_{\delta'<\delta_0}\mathcal{CAT}(0)_{\leq\delta'}.
\]
Then for $\mathcal{M}=\mathcal{CAT}(0)_{<\delta_0}$, it holds that
\[
\mathcal{F}_2(\mathcal{M})\subseteq \mathcal{FS}_2(\mathcal{M})\subseteq \mathcal{M} \quad 
\textrm{and} \quad  \mathcal{UP}(\mathcal{M})\subseteq \mathcal{M}.
\]
However, it does \textit{not} hold that $\ell_2(\mathcal{M})\subseteq \mathcal{M}$; recall the discussion below Definition~\ref{definition=ell_q}.

Izeki and Nayatani \cite{IzekiNayatani} studied fixed point properties with respect to $\mathcal{CAT}(0)_{<\delta_0}$ for certain $\delta_0$; see $(2)$ of Theorem~\ref{theorem=StrongPropertyT}.

\item Fix $r\in [2,\infty)$. Then, some $\ell_r$-analogue of item $(1)$ may be defined as follows: Let $C \in (0,1]$. A geodesic space $M$ is said to be \textit{$r$-uniformly convex with constant $C$} if for every $x\in M$ and for every geodesic $c\colon [0,d(y,z)] \to M$ with $c(0)=y$ and $c(d(y,z))=z$ and for every $0\leq t\leq 1$, the following inequality
\[
d(x,c_{t})^r\leq (1-t)d(x,y)^r+td(x,z)^r-C^rt(1-t)d(y,z)^r
\]
holds true, where $c_{t}$ denotes $c(t d(y,z))$. See also \cite{NaorSilberman}; compare with the inequality of $r$-uniformly convex Banach spaces in $(7)$ of Example~\ref{example=BanachSpaces}. The Clarkson inequality (see for instance \cite{BookBenyaminiLindenstrauss}) shows that $L_r$ is $r$-uniformly convex with a certain constant $C_r$. For fixed $C\in (0,C_r]$, we write the class of all complete $r$-uniformly convex (geodesic) spaces with constant $C$ as $\mathcal{UC}_{r,C}$. Note that $\mathcal{UC}_{2,1}=\mathcal{CAT}(0)$.

Note that we may modify the proof of Lemma~\ref{lemma=CAT(0)} to the current case. Indeed, the inequality above is stated only in terms of $\ell_r$-sums and that validity of it is stable under formation of rescalings. Furthermore, the proof of Lemma~\ref{lemma=CAT(0)UniquelyGeodesic} may be adapted to the current setting. Thus, we conclude that  for every $C\in (0,C_r]$, the class $\mathcal{M}=\mathcal{UC}_{r,C}$ satisfies that
\[
\ell_r(\mathcal{M})\subseteq \mathcal{M},\ \mathcal{F}_r(\mathcal{M})\subseteq \mathcal{FS}_r(\mathcal{M})\subseteq \mathcal{M} \quad \textrm{and} \quad \mathcal{UP}(\mathcal{M})\subseteq \mathcal{M}.
\]
In addition, it follows that every $M\in \mathcal{UC}_{r,C}$ is uniquely geodesic.

However, similar to $\mathcal{CAT}(0)$, the class $\mathcal{M}=\mathcal{UC}_{r,C}$ itself is too huge. We discuss some subclass in Example~\ref{example=r-UniformlyConvex}.
\end{enumerate}
\end{example}

In the next example, we discuss certain subclasses of $\mathcal{UC}_{r,C}$ into which fibred coarse embeddability may be reasonable to study. Before proceeding to it, we explain importance of the Izeki--Nayatani invariant of a complete $\mathrm{CAT}(0)$ space in relation to fixed point properties. Let $M\in \mathcal{CAT}(0)$. Let $\Gamma=(V(\Gamma),E(\Gamma),\mathbf{m})$ be a weighted finite connected  graph(we consider $\Gamma$ as a directed graph by considering each unoriented edge as two oriented edges). It means, $\mathbf{m}\colon E(\Gamma)\to \mathbb{R}_{>0}$, satisfies that $\mathbf{m}(v,w)=\mathbf{m}(w,v)$ for all $(v,w)\in E(\Gamma)$. For $v\in V(\Gamma)$, let $\mathbf{m}(v)=\sum_{w\in V(\Gamma): (v,w)\in E(\Gamma)}\mathbf{m}(v,w)$. For $\Gamma=(V(\Gamma),E(\Gamma),\mathbf{m})$, define the \textit{Wang-type non-linear spectral gap} with target in $M$ by
\[
\lambda_1(\Gamma,M)=\frac{1}{2}\inf_{f\colon V(\Gamma)\to M} \frac{\sum_{e=(v,w)\in E(\Gamma)}\mathbf{m}(v,w)d(f(v),f(w))^2}{\sum_{v\in V(\Gamma)}\mathbf{m}(v)d(f(v),\overline{f})^2}.
\]
Here $f$ runs over all non-constant maps $V(\Gamma)\to M$; $\overline{f}$ is the ($2$-)barycenter of $f(V(\Gamma))$ (recall Definition~\ref{definition=IzekiNayataniInvariant}). Namely, $\overline{f}$ denotes the unique point in $M$ that minimizes
\[
M\ni x \mapsto \sum_{v\in V(\Gamma)}\mathbf{m}(v)d(f(v),x)^2 \in \mathbb{R}_{\geq 0}.
\]
It is known that if $M=\mathbb{R}$, then $\lambda_1(\Gamma,M)$ equals $\lambda_1(\Gamma)$, the first strictly positive eigenvalue of the normalized Laplacian of $\Gamma$. The key property of $\delta(M)$ to the fixed point property is that for every $\Gamma$, the following inequality
\[
\lambda_1(\Gamma,M)\geq (1-\delta(M))\lambda_1(\Gamma)
\]
holds; see \cite[Proposition~6.3]{IzekiNayatani}.

Now fix $r\in [2,\infty)$. Let $M$ be a complete and $r$-uniformly convex metric space. Then, in a similar way to one above, we may define a \textit{Wang-type non-linear $r$-spectral gap} with target in $M$ for a weighted finite connected $\Gamma=(V(\Gamma),E(\Gamma),\mathbf{m})$ by
\[
\lambda^{(r)}(\Gamma,M)=\frac{1}{2}\inf_{f\colon V(\Gamma)\to M} \frac{\sum_{e=(v,w)\in E(\Gamma)}\mathbf{m}(v,w)d(f(v),f(w))^r}{\sum_{v\in V(\Gamma)}\mathbf{m}(v)d(f(v),\overline{f}^{(r)})^r}.
\]
Here $f$ runs over all non-constant maps $V(\Gamma)\to M$; $\overline{f}^{(r)}$ is the $r$-barycenter of $f(V(\Gamma))$, that means, a point that minimizes
\[
M\ni x \mapsto \sum_{v\in V(\Gamma)}\mathbf{m}(v)d(f(v),x)^r \in \mathbb{R}_{\geq 0}.
\]
By   $r$-uniform convexity and completeness of $M$, $\overline{f}^{(r)}$ uniquely exists.
In this setting, it might not be reasonable to require $\lambda^{(r)}(\Gamma,M)$ to be bounded from below by some scalar multiple of $\lambda^{(r)}(\Gamma,\mathbb{R})$. Instead, we consider a function which controls the behavior of $\lambda^{(r)}(\Gamma,M)$ for each weighted graph $\Gamma$ in terms of $\lambda^{(r)}(\Gamma,\mathbb{R})$. This formulation yields the following example of subclasses of $\mathcal{UC}_{r,C}$.

\begin{example}\label{example=r-UniformlyConvex}
Fix $r\in [2,\infty)$. Fix a non-decreasing function 
\[
\Psi\colon [0,2] \to [0,2]
\]
such that for all $t\in [0,2]$, $\Psi(t)\leq t$. For $C\in (0,C_r]$, let $\mathcal{UC}_{r,C}^{\Psi}$ be the class of all complete $r$-uniformly convex metric spaces with constant $C$ such that the following holds true: For every weighted finite connected graph $\Gamma$, it holds that
\[
\lambda^{(r)}(\Gamma,M)\geq \Psi(\lambda^{(r)}(\Gamma,\mathbb{R})).
\]

Then it may be showed that for every $\Psi$ (and for every $r$ and $C$), the subclass $\mathcal{M}=\mathcal{UC}_{r,C}^{\Psi}$ of $\mathcal{UC}_{r,C}$ satisfies that
\[
\ell_r(\mathcal{M})\subseteq \mathcal{M},\ \mathcal{F}_r(\mathcal{M})\subseteq \mathcal{FS}_r(\mathcal{M})\subseteq \mathcal{M} \quad \textrm{and} \quad \mathcal{UP}(\mathcal{M})\subseteq \mathcal{M}.
\]
Indeed, in a similar argument to one in the proof of Lemma~\ref{lemma=CAT(0)UniquelyGeodesic}, we may show some stability (with respect to approximations) of $r$-barycenters of maps. Then, for a fixed graph $\Gamma$ and for a fixed $\epsilon >0$, the following condition on $M$,
\[
\lambda^{(r)}(\Gamma,M)\geq  \epsilon
\]
can be essentially written as some inequalities with uniform constants on uniformly finitely many elements (the number is estimated in terms of $\# (V(\Gamma))$) on $M$. It then follows that $\mathcal{UP}(\mathcal{M})\subseteq \mathcal{M}$. To show that $\ell_r(\mathcal{M})\subseteq \mathcal{M}$, observe that the condition on $M$ above is stated only in terms of $\ell_r$-sums.
\end{example}

For instance, if $(r,C)=(2,1)$ and if $\Psi=\Psi_{\delta}$ is of the form 
\[
\Psi_{\delta}(t)=(1-\delta)t,
\]
for some $\delta \in [0,1]$, then it follows that
\[
\mathcal{UC}_{2,1}^{\Psi_{\delta}}\supseteq \mathcal{CAT}(0)_{\leq \delta}.
\]

As we mentioned above, non-linear spectral gaps relate to the study of fixed point properties; see Remarks~\ref{remark=IzekiNayatani} and \ref{remark=Bourdon}.

We make a remark that this construction of $\mathcal{UC}_{r,C}^{\Psi}$ is not new: It has been studied by Naor \cite{Naor} and other researchers. See also \cite{NaorSilberman}. In \cite{Naor}, Naor regarded a weighted finite graph as a symmetric stochastic matrix via the associated weighted adjacency matrix, and he considered \textit{$r$-Poincar\'{e} modulus}. Although the formulation may look different, it is essentially identical to ours.

\begin{remark}\label{remark=QuasiTree}
The main difference between $\mathcal{F}_q(\mathcal{M})$ and $\mathcal{UP}(\mathcal{M})$ is that the latter does not take (finite) $\ell_q$-products (or rescaling) before taking metric ultraproducts. Therefore, the latter procedure may preserve some ``dimension'' under certain conditions. First we consider the class  $\mathbb{R}\mathcal{T}$ of all $\mathbb{R}$-trees (namely, geodesic $0$-hyperbolic metric spaces). By the four-point condition of Gromov-hyperbolicity \cite[Chapter~III. Remark~1.21]{BookBridsonHaefliger}, it follows that $\mathcal{UP}(\mathbb{R}\mathcal{T})\subseteq \mathbb{R}\mathcal{T}$. Even if we consider a smaller class $\mathcal{T}$ of all simplicial trees (considered as geodesic spaces, possibly with uncountably many vertices), then $\mathcal{UP}(\mathcal{T})\subseteq \mathcal{T}$. This is because we may endow a metric ultraproduct with a simplicial structure by declaring vertices to be (equivalence classes of) bounded sequence of vertices; we draw edges between those with the limit distance $1$.

We consider the class $\mathcal{QT}$ of \textit{quasi-trees}, namely, graphs (considered as geodesic spaces, possibly with uncountably many vertices) that are quasi-isometric  (\cite[Chapter~I. Definition~8.14]{BookBridsonHaefliger}) to simplicial trees. By the argument above, we see that $\mathcal{UP}(\mathcal{QT})\subseteq \mathcal{QT}$; recall that we fix a single element of $\mathcal{M}$ and take pointed metric ultraproducts of it to construct $\mathcal{UP}(\mathcal{M})$.
\begin{definition}\label{definition=FiniteProducts}
Let $\mathcal{M}$ be a non-empty class of metric spaces and $q\in [1,\infty)$. Denote $(\prod_{<\aleph_0} \mathcal{M})_{\ell_q}$ by the class of all metric spaces (isometric to) \textit{finite} $\ell_q$-products $(\prod_{j\in F}M_j)_{\ell_q}$, where $1\leq \sharp(F)<\infty$ and  $M_j \in \mathcal{M}$ for all $j\in F$.
\end{definition}

Since for a fixed $m\in \mathbb{N}_{\geq 2}$, taking an $m$-fold product is compatible with taking a metric ultraproduct, we conclude that $\mathcal{UP}((\prod_{<\aleph_0} \mathcal{QT})_{\ell_1})\subseteq (\prod_{<\aleph_0} \mathcal{QT})_{\ell_1}$. (We may replace $\ell_1$ simultaneously with $\ell_q$ for each $q\in (1,\infty)$.)

Another construction is the following. Let $\mathcal{M}=M$ be a proper metric space that is cocompact. Here the properness means that all closed bounded balls are compact; $M$ is said to be \textit{cocompact} if the full isometry group of $M$ acts on $M$ cocompactly. Then, $\mathcal{UP}(M)=M$. Here the cocompactness assumption is needed in order to make control on choices of base points $(y_m)_m$ to take a pointed metric ultraproduct. 
\end{remark}


\begin{remark}\label{remark=AsymptoticDimension}
Here we make more precise on what ``dimension'' means in examples in Remark~\ref{remark=QuasiTree}. Gromov \cite{GromovAsymptotic} introduced an analogue of covering dimension in coarse geometry for a generalized metric space $M$. This concept  is called the \textit{asymptotic dimension}, written as $\mathrm{asdim}$; see \cite[Chapter~2.2]{BookNowakYu} for the definition. This is an invariant under coarse equivalence. Moreover, it is showed that if $f\colon X\to M$ is a coarse embedding, then it holds that
\[
\mathrm{asdim}(X)\leq \mathrm{asdim}(M).
\]
Also, a finite product of spaces with finite asymptotic dimension has finite asymptotic dimension. See \cite[Proposition~2.2.4, Theorem~2.2.5 and Example 2.4.1]{BookNowakYu} for details of these facts. Every tree has asymptotic dimension at most $1$ (\cite[Proposition~2.3.1]{BookNowakYu}; see also \cite[Proposition~10.2.1]{BookBuyaloSchroeder} for $\mathbb{R}$-trees). Hence,  every element in $(\prod_{<\aleph_0} \mathcal{QT})_{\ell_1}$ has finite asymptotic dimension. For $M$ a complete, connected and simply connected finite dimensional Riemannian manifold with sectional curvature  \textit{strictly negative}, results in 1.$\mathrm{E}'_1$ in \cite{GromovAsymptotic} implies the following: If such $M$ is cocompact, then $\mathrm{asdim}(M)<\infty$.
\end{remark}

\section{Idea of the proof: Metric ultraproducts and the key to non-linear version of Gromov's trick}\label{section=Idea}
We explain how metric ultraproducts play a role in the proofs of Proposition~\ref{proposition=UniformCoarseEmbedding} and $(i)$ of Theorem~\ref{mtheorem=MainTheorem}.
\subsection{Metric ultraproducts and proof of Proposition~\ref{proposition=UniformCoarseEmbedding}}\label{subsection=UniformCoarseEmbedding}
To illustrate the ideas, we first prove the following result.

\begin{lemma}\label{lemma=CoarseEmbedding}
Let $\mathcal{M}$ be a non-empty class of metric spaces. Let $(\mathbf{G}_m)_{m\in \mathbb{N}}$ be a convergent sequence in $\mathcal{G}(k)$ $($$k\in \mathbb{N}_{\geq 1}$$)$ and let $\mathbf{G}_{\infty}$ be the limit. Let $\rho,\omega\colon [0,\infty)\to [0,\infty)$ be two non-decreasing proper functions. Then, the following holds true: If $\bigsqcup_{m\in \mathbb{N}}\mathrm{Cay}(\mathbf{G}_m)$ admits a $($genuine$)$ coarse embedding into $\mathcal{M}$ with control pair $(\rho,\omega)$, then $\mathbf{G}_{\infty}$ admits a coarse embeddings into $\mathcal{UP}(\mathcal{M})$ with the same control pair $(\rho,\omega)$. 

If $\mathcal{M}$ consists only of Banach spaces, then the following holds true: If $\bigsqcup_{m\in \mathbb{N}}\mathrm{Cay}(\mathbf{G}_m)$ admits a coarse embedding into $\mathcal{M}$, then $\mathbf{G}_{\infty}$ admits a coarse embedding into the $\mathrm{original}$ $\mathrm{class}$ $\mathcal{M}$.
\end{lemma}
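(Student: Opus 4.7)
The plan is to assemble the individual restrictions $f|_{\mathbf{G}_m}$ into a single map to a pointed metric ultraproduct, exploiting that the Cayley convergence $\mathbf{G}_m\stackrel{\mathrm{Cay}}{\to}\mathbf{G}_\infty$ lets us match large balls around the identity in $\mathbf{G}_m$ with the corresponding ball in $\mathbf{G}_\infty$ (Lemma~\ref{lemma=Neighborhood}). Fix a non-principal ultrafilter $\mathcal{U}$ on $\mathbb{N}$, let $f\colon\bigsqcup_m\mathrm{Cay}(\mathbf{G}_m)\to M$ be the given $(\rho,\omega)$-coarse embedding with $M\in\mathcal{M}$, set $y_m:=f(e_{G_m})$, and form $M_{\mathcal{U}}:=\lim_{\mathcal{U}}(M,y_m)\in\mathcal{UP}(\mathcal{M})$. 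The goal is to construct $f_\infty\colon \mathbf{G}_\infty\to M_{\mathcal{U}}$ with control pair $(\rho,\omega)$.

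First I would use Lemma~\ref{lemma=Neighborhood} to identify each $g\in G_\infty$ with an eventual element of $G_m$. Writing $\mathbf{G}_\infty=(G_\infty;s_1,\ldots,s_k)$, choose any word $w_g$ in $\{s_1^{\pm 1},\ldots,s_k^{\pm 1}\}$ of length $|g|_\infty:=d_{\mathbf{G}_\infty}(e_{G_\infty},g)$ representing $g$, and let $g^{(m)}\in G_m$ be its evaluation under the marking of $\mathbf{G}_m$. For each pair $g,h\in G_\infty$ there then exists $m_0(g,h)\in\mathbb{N}$ with
\[
d_{\mathbf{G}_m}(g^{(m)},h^{(m)})=d_{\mathbf{G}_\infty}(g,h) \quad\text{for all }m\geq m_0(g,h),
\]
so this equality holds on a cofinite (hence $\mathcal{U}$-large) set. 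Defining $f_\infty(g):=[(f(g^{(m)}))_m]_{\mathcal{U}}$ is then well-posed: once $m\geq m_0(e_{G_\infty},g)$ we have $d_M(y_m,f(g^{(m)}))\leq\omega(|g|_\infty)$, so $(f(g^{(m)}))_m$ belongs to the bounded $\ell_\infty$-product used to build $M_{\mathcal{U}}$.

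Next I would verify the control pair. For arbitrary $g,h\in G_\infty$ the bound
\[
\rho(d_{\mathbf{G}_\infty}(g,h))\leq d_M(f(g^{(m)}),f(h^{(m)}))\leq\omega(d_{\mathbf{G}_\infty}(g,h))
\]
holds for all $m$ in a set belonging to $\mathcal{U}$, since it combines the fixed $(\rho,\omega)$-bound for $f$ with the eventual equality of distances above; passing to the $\mathcal{U}$-limit yields the same inequality with $d_{M_{\mathcal{U}}}(f_\infty(g),f_\infty(h))$ in the middle, which proves the first assertion. This is in the same spirit as the proof of Lemma~\ref{lemma=CAT(0)} that ``inequalities on uniformly finitely many points pass to metric ultraproducts''; the key mechanism is that $\mathcal{U}$-membership is closed under finite intersections.

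For the Banach-space statement I would first translate by $y_m$---equivalently, replace $f|_{\mathbf{G}_m}$ with $f|_{\mathbf{G}_m}-f(e_{G_m})$, which does not alter the control pair. Then $M_{\mathcal{U}}$ is the standard Banach ultrapower of $M$, and the image of $f_\infty$ generates a separable closed linear subspace $E\subseteq M_{\mathcal{U}}$. The hard part will be converting finite representability of $E$ in $M$ (Remark~\ref{remark=FinitelyRepresentable}) into a bona fide coarse embedding $\mathbf{G}_\infty\to M$: my plan is to enumerate $G_\infty=\bigcup_n F_n$ by an increasing exhaustion by finite sets, apply finite representability to the finite-dimensional subspace spanned by $f_\infty(F_n)$ with tolerance $1+1/n$, and extract via a further ultrafilter a single coarse embedding of $\mathbf{G}_\infty$ into $M$. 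Since the Banach case in the statement does not track the specific pair $(\rho,\omega)$, the harmless $(1+1/n)$-bilipschitz distortion at each stage is tolerated in the control pair of the resulting embedding.
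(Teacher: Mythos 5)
Your argument is correct and is essentially the paper's proof: you match balls in $\mathbf{G}_m$ with the corresponding ball in $\mathbf{G}_\infty$ via the Cayley convergence, send each $g$ to the $\mathcal{U}$-class of the images of its approximants $g^{(m)}$ (the paper's $y(g)_m=f((\beta_{\mathbf{G}_m,\mathbf{G}_\infty,R_m})^{-1}(g))$), and pass the $(\rho,\omega)$-inequalities to the ultralimit exactly as in the paper. For the Banach-space assertion the paper likewise defers to Ostrovskii; the only caveat is that your final step cannot literally be ``a further ultrafilter extraction'' (an ultralimit of maps into $M$ again lands in an ultrapower of $M$, not in $M$) --- one needs Ostrovskii's gluing of the uniformly embedded finite pieces, which is precisely what the cited reference provides and what the paper also leaves to it.
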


One key to the proof of Lemma~\ref{lemma=CoarseEmbedding} is $(\star)$ in Lemma~\ref{lemma=Neighborhood}: For each $m\in \mathbb{N}$, take $R_m \in \mathbb{N}$  as in there. Hence, $\beta_{\mathbf{G}_m,\mathbf{G}_{\infty},R_m}$ gives a complete identification between $R_m$-balls $B_{\mathrm{CayD}(\mathbf{G}_m)}(e_{G_m},R_m)$ and $B_{\mathrm{CayD}(\mathbf{G}_{\infty})}(e_{G_{\infty}},R_m)$; also, $R_m\to +\infty$ as $m\to \infty$. 
By employing this identification and by taking a pointed metric ultraproduct associated with a well-chosen sequence of base points $(y_m)_{m\in \mathbb{N}}$, we construct a coarse embedding $\mathrm{Cay}(\mathbf{G}_{\infty})\to \lim_{\mathcal{U}}(M,y_m)$ out of the original coarse embedding $\bigsqcup_{m\in \mathbb{N}}\mathrm{Cay}(\mathbf{G}_m) \to M$. The precise argument goes as follows.

\begin{proof}[Proof of Lemma~$\ref{lemma=CoarseEmbedding}$]
Let $M\in \mathcal{M}$. Suppose there exists a  coarse embedding 
\[
f\colon \bigsqcup_{m\in \mathbb{N}}\mathrm{Cay}(\mathbf{G}_m) \to M
\]
with control pair $(\rho,\omega)$. For every $m\in \mathbb{N}$, take $R_m$ as above. 

Now, for each $g\in \mathbf{G}_{\infty}$, we associate the following sequence $(y(g)_m)_{m\in \mathbb{N}}$ of points in $M$:
\[
y(g)_m=\left\{
\begin{array}{cl}
f((\beta_{\mathbf{G}_{m},\mathbf{G}_{\infty},R_m})^{-1}(g)), & \textrm{if $g\in B_{\mathrm{Cay}(\mathbf{G}_{\infty})}(e_{G_{\infty}},R_m)$,} \\
f(e_{G_{m}}), & \textrm{otherwise}.
\end{array}\right.
\]
By $(\star)$, we observe the following:
\begin{itemize}
  \item For every $g\in G_{\infty}$,
\[
\sup_{m\in \mathbb{N}} d_M(y(g)_m, f(e_{G_{m}})) \leq \omega (d_{\mathbf{G}_{\infty}}(e_{G_{\infty}},g)) (<\infty).
\]
  \item For every $g_1,g_2\in G_{\infty}$, let $m_{g_1,g_2}$ be the smallest $m$ such that for every $n\geq m$, it holds that $g_1,g_2 \in B_{\mathrm{Cay}(\mathbf{G}_{\infty})}(e_{G_{\infty}},R_{n})$. (Since $\lim_{m\to \infty}R_m=+\infty$, such $m$ exists.) Then, for all $m\geq m_{g_1,g_2}$, it holds that
\[
\rho(d_{\mathbf{G}_{\infty}}(g_1,g_2)) \leq d_M(y(g_1)_m, y(g_2)_m)\leq \omega (d_{\mathbf{G}_{\infty}}(g_1,g_2)).
\]
\end{itemize}
Finally, fix a non-principal ultrafilter $\mathcal{U}$ over $\mathbb{N}$ and  take the pointed metric ultraproduct $M_{\mathcal{U}}=\lim_{\mathcal{U}}(M,d_M,f(e_{G_{m}}))$; we define the following map
\[
f_{\infty}\colon (\mathrm{Cay}(\mathbf{G}_{\infty}),d_{\mathbf{G}_{\infty}})\to M_{\mathcal{U}};\quad g \mapsto [(y(g)_m)_{m\in \mathbb{N}}]_{\mathcal{U}}.
\]
By the two observations above, we conclude that this $f_{\infty}$ is well-defined, and that it is a coarse embedding with the same control pair $(\rho,\omega)$ as one for the original $f$.

If $M=E$ is a Banach space, then the arguments in the paper of Ostrovskii \cite{Ostrovskii} indicate a way to construct a coarse embedding from $\mathrm{Cay}(\mathbf{G}_{\infty})$ to \textit{the original} $E$ out of the metric ultraproduct construction above; this procedure will affect the control pair by some  multiplicative and additive factors.
\end{proof}

Lemma~\ref{lemma=CoarseEmbedding} can be generalized to the following proposition, which deals with the general case where the sequence of marked groups may not converge in the Cayley topology.

\begin{proposition}\label{proposition=UniformCoarseEmbedding}
Let $\mathcal{M}$ be a non-empty class of metric spaces. Let $(\mathbf{G}_m)_{m\in \mathbb{N}}$ be a sequence in $\mathcal{G}(k)$ $($$k\in \mathbb{N}_{\geq 1}$$)$. If $\bigsqcup_{m\in \mathbb{N}}\mathrm{Cay}(\mathbf{G}_m)$ admits a fibred coarse embedding into $\mathcal{M}$ as a disjoint union, then $\partial_{\mathrm{Cay}}(\mathbf{G}_m)_{m\in \mathbb{N}}$ $\mathrm{admits}$ $\mathrm{equi}$-$\mathrm{coarse}$ $\mathrm{embeddings}$ $\mathrm{into}$ $\mathcal{UP}(\mathcal{M})$; that means, 
\[
\bigcap_{\mathbf{G}_{\infty} \in \partial_{\mathrm{Cay}}(\mathbf{G}_m)_{m}} \mathcal{CP}^{\ast}_{\mathcal{UP}(\mathcal{M})}(\mathbf{G}_{\infty})\ne  \emptyset.\]

If $\mathcal{M}$ consists only of Banach spaces, then the following holds true: If $\bigsqcup_{m\in \mathbb{N}}\mathrm{Cay}(\mathbf{G}_m)$ admits a fibred coarse embedding into $\mathcal{M}$ as a disjoint union, then $\partial_{\mathrm{Cay}}(\mathbf{G}_m)_{m\in \mathbb{N}}$ admits equi-coarse embeddings into the $\mathrm{original}$ $\mathrm{class}$ $\mathcal{M}$.
\end{proposition}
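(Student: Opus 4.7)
The plan is to combine the metric ultraproduct trick from Lemma~\ref{lemma=CoarseEmbedding} with the ball-based reformulation of fibred coarse embeddings given in Lemma~\ref{lemma=BallsFibredCoarseEmbedding}, applying it separately along a Cayley-convergent subsequence for each accumulation point, while keeping the control pair fixed so that uniformity in $\mathbf{G}_{\infty}$ is automatic.

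First, I would apply Lemma~\ref{lemma=BallsFibredCoarseEmbedding} to the given fibred coarse embedding into $\mathcal{M}$ with control pair $(\rho,\omega)$, fixing $M\in\mathcal{M}$, a field $(M_x)_{x\in X}$ of copies of $M$, a section $s$, a sequence $R'_m\to+\infty$, and trivializations $t_{g,R'_m}$ on each ball $B_{\mathbf{G}_m}(g,R'_m)$. Specializing to $g=e_{G_m}$ yields for every $m\in\mathbb{N}$ a map
\[
f_m\colon B_{\mathbf{G}_m}(e_{G_m},R'_m)\longrightarrow M,\qquad x\longmapsto t_{e_{G_m},R'_m}(x)\bigl(s(x)\bigr),
\]
which, by condition~(2) of Lemma~\ref{lemma=BallsFibredCoarseEmbedding}, is a partial coarse embedding with control pair $(\rho,\omega)$.

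Next, fix $\mathbf{G}_{\infty}\in\partial_{\mathrm{Cay}}(\mathbf{G}_m)_{m}$ and choose a subsequence $(\mathbf{G}_{m_n})_n$ with $\mathbf{G}_{m_n}\stackrel{\mathrm{Cay}}{\to}\mathbf{G}_{\infty}$. By Lemma~\ref{lemma=Neighborhood}, there exist $R_n\to+\infty$ and partial isomorphisms $\beta_n\colon B_{\mathbf{G}_{m_n}}(e_{G_{m_n}},R_n)\xrightarrow{\sim} B_{\mathbf{G}_{\infty}}(e_{G_{\infty}},R_n)$. Set $\widetilde R_n=\min(R_n,R'_{m_n})$, so $\widetilde R_n\to+\infty$. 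Mimicking the proof of Lemma~\ref{lemma=CoarseEmbedding}, for $g\in G_{\infty}$ define
\[
y(g)_n=\begin{cases} f_{m_n}\bigl(\beta_n^{-1}(g)\bigr), & g\in B_{\mathbf{G}_{\infty}}(e_{G_{\infty}},\widetilde R_n),\\ f_{m_n}(e_{G_{m_n}}), & \text{otherwise.}\end{cases}
\]
Fix a non-principal ultrafilter $\mathcal{U}$ on $\mathbb{N}$ and form the pointed ultraproduct $M^{\infty}_{\mathcal{U}}=\lim_{\mathcal{U}}\bigl(M,f_{m_n}(e_{G_{m_n}})\bigr)\in\mathcal{UP}(\mathcal{M})$. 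Setting $f_{\infty}(g)=[(y(g)_n)_n]_{\mathcal{U}}$, the local identifications $\beta_n$ together with the bound $d_M(y(g)_n,f_{m_n}(e_{G_{m_n}}))\le\omega(d_{\mathbf{G}_{\infty}}(e_{G_{\infty}},g))$ for $n$ large enough show, exactly as in Lemma~\ref{lemma=CoarseEmbedding}, that $f_{\infty}$ is a well-defined coarse embedding with control pair $(\rho,\omega)$.

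Since $(\rho,\omega)$ is intrinsic to the input fibred coarse embedding and independent of the choice of $\mathbf{G}_{\infty}$, we conclude $(\rho,\omega)\in\bigcap_{\mathbf{G}_{\infty}}\mathcal{CP}^{\ast}_{\mathcal{UP}(\mathcal{M})}(\mathbf{G}_{\infty})$, which is the equi-coarse conclusion. For the Banach space case, I would invoke the Ostrovskii-type argument used at the end of the proof of Lemma~\ref{lemma=CoarseEmbedding} to descend each ultraproduct embedding to a coarse embedding into the original class $\mathcal{M}$; since Ostrovskii's procedure modifies the control pair by a fixed affine factor, the resulting pair is again common to all $\mathbf{G}_{\infty}$. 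The main obstacle I anticipate is the bookkeeping that different accumulation points need different subsequences and hence different ultraproducts; this is defused by the observation that $(\rho,\omega)$ is fixed before $\mathbf{G}_{\infty}$ is chosen, and by the ball-based formulation of Lemma~\ref{lemma=BallsFibredCoarseEmbedding}, which sidesteps the $R$-dependence of trivializations flagged in Remark~\ref{remark=DependenceOnR}.
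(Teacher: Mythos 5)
Your proposal is correct and follows essentially the same route as the paper: extract partial coarse embeddings $f_m$ from the local trivializations via Lemma~\ref{lemma=BallsFibredCoarseEmbedding}, transport them to $\mathbf{G}_{\infty}$ along the partial isomorphisms from $(\star)$, pass to a pointed metric ultraproduct as in Lemma~\ref{lemma=CoarseEmbedding}, and observe that the control pair $(\rho,\omega)$ is fixed in advance, which gives uniformity over all accumulation points. The only cosmetic difference is that the paper first invokes Lemma~\ref{lemma=Subsets} to reduce to a convergent sequence before taking $R_m'' = \min(R_m, R'_m)$, whereas you carry the subsequence indices explicitly; these are the same argument.
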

See Proposition~\ref{proposition=RootedGraphUniformCoarseEmbedding} for a further generalization to disjoint unions of connected graphs with uniformly bounded degree, not necessarily those of Cayley graphs.

\begin{proof}
Let $M\in \mathcal{M}$. Suppose there exists a fibred coarse embedding from $\bigsqcup_{m\in \mathbb{N}}\mathrm{Cay}(\mathbf{G}_m)$ into $M$ with control pair $(\rho,\omega)$. Let $\mathbf{G}_{\infty}\in \partial_{\mathrm{Cay}}(\mathbf{G}_m)_{m\in \mathbf{N}}$. By definition, there exists a subsequence $(\mathbf{G}_{m_n})_n$ of  $(\mathbf{G}_m)$ that converges to $\mathbf{G}_{\infty}$ in $\mathcal{G}(k)$. By Lemma~\ref{lemma=Subsets}, there exists a fibred coarse embedding from $\bigsqcup_{n}\mathrm{Cay}(\mathbf{G}_{m_n})$ into $M$ with control pair $(\rho,\omega)$. Thus, we may assume that $(\mathbf{G}_m)_{m\in \mathbb{N}}$ itself converges to $\mathbf{G}_{\infty}$.

For every $m\in \mathbb{N}_{\geq 1}$, take $R_m$ as in $(\star)$ and take $R'_m$ as in Lemma~\ref{lemma=BallsFibredCoarseEmbedding}. Let $R_m''$ be the minimum of  $R_m$ and $R'_m$. By construction, $\lim_{m\to \infty} R_m''=+\infty$. Take the local trivialization 
\[
t_{e_{G_{m}}, R'_{m}}\colon \bigsqcup_{x\in B_{\mathrm{Cay}(\mathbf{G}_{m})}(e_{G_{m}}, R'_{m})}  M_x \to B_{\mathrm{Cay}(\mathbf{G}_{m})}(e_{G_{m}}, R'_{m}) \times M.
\]
Define a map
\[
f_{m}\colon B_{\mathrm{Cay}(\mathbf{G}_{m})}(e_{G_{m}}, R'_{m}) \to M;\quad x\mapsto t_{e_{G_{m}}, R'_{m}} (x)( s(x)) .
\]
By $(2)$ in Lemma~\ref{lemma=BallsFibredCoarseEmbedding}, this $f_{m}$ is a coarse embedding with compression pair $(\rho,\omega)$. 

Then, we may modify the construction of $(y(g)_m)$ as in the proof of Lemma~\ref{lemma=CoarseEmbedding} by setting for every $m\in \mathbb{N}$, 
\[
y(g)_m=\left\{
\begin{array}{cl}
f_m((\beta_{\mathbf{G}_{m},\mathbf{G}_{\infty},R''_m})^{-1}(g)), & \textrm{if $g\in B_{\mathrm{Cay}(\mathbf{G}_{\infty})}(e_{G_{\infty}},R''_m)$,} \\
f_m(e_{G_{m}}), & \textrm{otherwise}.
\end{array}\right.
\]
Then it will complete our proof of Proposition~\ref{proposition=UniformCoarseEmbedding}.
\end{proof}

\begin{remark}\label{remark=RootedGraphs}
To prove these lemma and proposition, we do not use the property that $\beta_{\mathbf{G}_m,\mathbf{G}_{\infty},}$ is an isomorphism as rooted \textit{diagrams}; what we needed above is this map is an isomorphism as rooted (non-labelled, non-oriented) \textit{graphs}. From this point of view, we consider \textit{the space of rooted graphs with bounded degree} and generalize Proposition~\ref{proposition=UniformCoarseEmbedding} in the following manner; see Proposition~\ref{proposition=RootedGraphUniformCoarseEmbedding} for the conclusion.

Fix $k\in \mathbb{N}_{\geq 2}$, We set $\mathcal{R}(k)$ as the space of all connected graphs (without labellings/orientations) $(\Gamma,r_{\Gamma})$ with roots $r_{\Gamma}(\in V(\Gamma))$ such that the degrees of all vertices are at most $k$. We say $\phi\colon (\Gamma_1,r_{\Gamma_1})\stackrel{\simeq}{\to} (\Gamma_2,r_{\Gamma_2})$ is an isomorphism as \textit{rooted graphs} if $\phi(r_{\Gamma_1})=r_{\Gamma_2}$ and if $\phi$ is a graph isomorphism. In $\mathcal{R}(k)$, we identify two rooted graphs that are isomorphic in the sense above. We endow $\mathcal{R}(k)$ with the topology of \textit{local convergence as rooted graphs}. This means, $((\Gamma_m,r_{\Gamma_m}))_{m\in \mathbb{N}}$ converges to $(\Gamma_{\infty},r_{\Gamma_{\infty}})$ if for every $R\in \mathbb{N}_{\geq 1}$, there exists $m_R\in \mathbb{N}$ such that for every $m\geq m_R$, the $R$-balls $B_{\Gamma_m}(r_{\Gamma_m},R)$ and $B_{\Gamma_{\infty}}(r_{\Gamma_{\infty}},R)$, \textit{centered at roots}, are isomorphic as rooted graphs. The space $\mathcal{R}(k)$, equipped with this topology, is a \textit{compact} metrizable space. 

Consider a sequence $(\Gamma_m)_{m\in \mathbb{N}}$ of connected graphs with all degrees at most $k$. Then, each $\Gamma_m$ forms a (possibly, non-singleton) subset $\widetilde{\Gamma_m}=\{(\Gamma_m,v):v\in V(\Gamma_m)\}$ of $\mathcal{R}(k)$; we define the \textit{rooted graph boundary} of $(\Gamma_m)_m$ by the set of all possible accumulation points of $\bigcup_{m\in \mathbb{N}}\widetilde{\Gamma_m}$ in $\mathcal{R}(k)$ as $m\to  \infty$. We write it as $\partial_r (\Gamma_m)_{m\in \mathbb{N}}$.

\begin{proposition}\label{proposition=RootedGraphUniformCoarseEmbedding}
Let $k\in \mathbb{N}_{\geq 2}$. Let $\mathcal{M}$ be a non-empty class of metric spaces. Let $(\Gamma_m)_{m\in \mathbb{N}}$ be a sequence of connected graphs with all degrees at most $k$. If $\bigsqcup_{m\in \mathbb{N}}\Gamma_m$ admits a fibred coarse embedding into $\mathcal{M}$ as a disjoint union, then the rooted graph boundary $\partial_{r}(\Gamma_m)_{m\in \mathbb{N}}$ admits equi-coarse embeddings into $\mathcal{UP}(\mathcal{M})$; that means, 
\[
\bigcap_{(\Gamma_{\infty},r_{\infty}) \in \partial_{r}(\Gamma_m)_m} \mathcal{CP}_{\mathcal{UP}(\mathcal{M})}(\Gamma_{\infty})\ne  \emptyset.\]

If $\mathcal{M}$ consists only of Banach spaces, then the following holds true: If $\bigsqcup_{m\in \mathbb{N}}\Gamma_m$ admits a fibred coarse embedding into $\mathcal{M}$ as a disjoint union, then $\partial_{r}(\Gamma_m)_{m\in \mathbb{N}}$ admits equi-coarse embeddings into $\mathcal{M}$.
\end{proposition}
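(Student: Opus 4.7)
The plan is to adapt the proof of Proposition~\ref{proposition=UniformCoarseEmbedding} verbatim, with the Cayley rooted diagram isomorphisms $\beta_{\mathbf{G}_m,\mathbf{G}_\infty,R_m}$ replaced by the rooted graph isomorphisms that characterise convergence in $\mathcal{R}(k)$. The crux is that the argument in Subsection~\ref{subsection=UniformCoarseEmbedding} never uses the group multiplication, markings, or edge colourings of the Cayley diagrams; it only uses the existence of radii $R_m\to\infty$ and rooted identifications $\beta$ between $R_m$-balls around the distinguished basepoints. Local convergence in $\mathcal{R}(k)$ supplies exactly this data, once a sequence of roots is chosen that realises a given accumulation point.

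Concretely, fix $(\Gamma_\infty,r_\infty)\in\partial_r(\Gamma_m)_{m}$. By definition of the rooted graph boundary there exist a subsequence $(m_n)_{n\in\mathbb{N}}$ and vertices $v_n\in V(\Gamma_{m_n})$ such that $(\Gamma_{m_n},v_n)\to(\Gamma_\infty,r_\infty)$ in $\mathcal{R}(k)$; this furnishes radii $R_n\to\infty$ and rooted graph isomorphisms $\beta_n\colon B_{\Gamma_{m_n}}(v_n,R_n)\stackrel{\simeq}{\to}B_{\Gamma_\infty}(r_\infty,R_n)$. By Lemma~\ref{lemma=Subsets} the disjoint union $\bigsqcup_{n\in\mathbb{N}}\Gamma_{m_n}$ also admits a fibred coarse embedding into $\mathcal{M}$ with control pair $(\rho,\omega)$, so Lemma~\ref{lemma=BallsFibredCoarseEmbedding} provides radii $R'_n\to\infty$ and local trivializations $t_{v_n,R'_n}$ giving genuine coarse embeddings
\[
f_n\colon B_{\Gamma_{m_n}}(v_n,R'_n)\to M,\qquad x\mapsto t_{v_n,R'_n}(x)(s(x)),
\]
each with control pair $(\rho,\omega)$. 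Set $R''_n=\min(R_n,R'_n)$ and for every $g\in V(\Gamma_\infty)$ define
\[
y(g)_n=\begin{cases} f_n(\beta_n^{-1}(g)), & \textrm{if } g\in B_{\Gamma_\infty}(r_\infty,R''_n),\\ f_n(v_n), & \textrm{otherwise.}\end{cases}
\]
Fix a non-principal ultrafilter $\mathcal{U}$ on $\mathbb{N}$ and set $M_{\mathcal{U}}=\lim_{\mathcal{U}}(M,f_n(v_n))\in\mathcal{UP}(\mathcal{M})$. The two-bullet bookkeeping at the end of the proof of Lemma~\ref{lemma=CoarseEmbedding} then shows that $f_\infty\colon\Gamma_\infty\to M_{\mathcal{U}}$, $g\mapsto[(y(g)_n)_n]_{\mathcal{U}}$, is well defined and is a coarse embedding with control pair $(\rho,\omega)$. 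Since $(\rho,\omega)$ did not depend on the choice of $(\Gamma_\infty,r_\infty)$, it lies in the intersection claimed.

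The main technical subtlety---and the point where the Cayley proof is not literally transcribable---is that a rooted graph isomorphism between $R_n$-balls a priori preserves only the induced subgraph metric, not the ambient shortest-path metric of $\Gamma_{m_n}$ or $\Gamma_\infty$. This is resolved by the elementary observation that, whenever $g_1,g_2\in B_{\Gamma_\infty}(r_\infty,R''_n/2)$, any geodesic between them is trapped in $B_{\Gamma_\infty}(r_\infty,R''_n)$ (by the two triangle inequalities $d(r_\infty,c(t))\le t+d(r_\infty,g_1)$ and $d(r_\infty,c(t))\le D-t+d(r_\infty,g_2)$ for a geodesic $c\colon[0,D]\to\Gamma_\infty$); so ambient and subgraph distances agree and are preserved by $\beta_n$. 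Redefining $m_{g_1,g_2}$ as the first index for which $g_1,g_2\in B_{\Gamma_\infty}(r_\infty,R''_n/2)$, the remainder of the estimates is identical to the Cayley case. The Banach space refinement is then obtained exactly as in the proof of Lemma~\ref{lemma=CoarseEmbedding}, by invoking the result of Ostrovskii~\cite{Ostrovskii} to replace $M_{\mathcal{U}}$ with the original $M\in\mathcal{M}$ at the cost of multiplicative and additive constants in $(\rho,\omega)$.
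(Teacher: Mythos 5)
Your proposal is correct and follows essentially the same route as the paper, which proves this proposition simply by observing (Remark~\ref{remark=RootedGraphs}) that the argument for Proposition~\ref{proposition=UniformCoarseEmbedding} never uses the diagram structure and carries over verbatim once rooted diagram isomorphisms are replaced by rooted graph isomorphisms. Your explicit verification that ambient and induced-subgraph distances agree for pairs of points within half the radius of the root is a genuine point that the paper leaves implicit, and it is handled correctly.
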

\end{remark}

\subsection{Metric ultraproducts of fragmentary actions}\label{subsection=PartialActions}
In Subsection~\ref{subsection=UniformCoarseEmbedding}, we saw how to recover (non-equivariant) coarse embeddings from Cayley limit groups out of a (fibred) coarse embeddings of the disjoint union. 

In this subsection, we discuss some recovery procedure of \textit{equivariant} coarse embeddings. One important point here is that for this, what we need is \textit{not} the \textit{global} actions of the whole groups $G_m$, but \textit{local} actions of balls; compare with the proof of Lemma~\ref{lemma=CoarseEmbedding}. Here we give the definition of a \textit{fragmentary action} of a subset of a group, which is a local version of the action of the whole group.

\begin{definition}\label{definition=PartialAction}
Let $M$ be a metric space. Let $G$ be a group and $e_G\in K \subseteq G$ be a subset. A partial homomorphism from $K$ to the isometry group $\mathrm{Isom}(M)$ is called a \textit{fragmentary action} of $K$ on $M$. In other words, a right fragmentary action $\alpha \colon M\curvearrowleft K$ (where for all $g\in K$, $\alpha(g)$ is an isometry on $M$) satisfies the following property: For every $g_1,g_2\in K$ \textit{such that} $g_1g_2\in K$, 
\[
z\cdot \alpha(g_1g_2)=(z \cdot \alpha(g_1))\cdot \alpha(g_2)
\]
for all $z\in M$. 
\end{definition}
We use the word ``fragmentary'' because the terminology ``partial action'' is referred to a quite different concept in the literature.

\begin{proposition}\label{proposition=FromLocalToGlobal}
Let $\mathbf{G}_m\stackrel{\mathrm{Cay}}{\to} \mathbf{G}_{\infty}$. Let $\rho,\omega\colon [0,\infty)\to [0,\infty)$ be two non-decreasing proper functions. Assume that for every $m\in \mathbb{N}$, there exists $r_m \in \mathbb{N}_{\geq 1}$ with $\lim_{m\to \infty}r_m=+\infty$ such that the following holds: For every $m\in \mathbb{N}$, there exists an pointed isometric right $\mathrm{fragmentary}$ $\mathrm{action}$ $(\alpha_m,M_m,y_m)$ of $B_{\mathbf{G}_m}(e_{G_m},r_m)$ 
\[
\alpha\colon M_m \curvearrowleft B_{\mathbf{G}_m}(e_{G_m},r_m),\quad y_m \in M_m
\]
such that the orbit map of $y_m$ is an $($equivariant$)$ coarse embedding of $(B_{\mathbf{G}_m}(e_{G_m},r_m),d_{\mathbf{G}_m})$ with $($equivariant$)$ control pair $(\rho,\omega)$. 

Then, for every non-principal ultrafilter $\mathcal{U}$ over $\mathbb{N}$, there exists a pointed isometric right action $(\alpha_{\mathcal{U}},M_{\mathcal{U}},y_{\mathcal{U}})$ of $G_{\infty}$ such that the orbit map of $y_{\mathcal{U}}$ is an $($equivariant$)$ coarse embedding of $(\mathbf{G}_{\infty},d_{\mathbf{G}_{\infty}})$ with equivariant control pair $(\rho,\omega)$. Here $M_{\mathcal{U}}=\lim_{\mathcal{U}}(M_m,y_m)$ and $y_{\mathcal{U}}=[(y_m)_m]_{\mathcal{U}}$.
\end{proposition}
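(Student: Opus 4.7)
The plan is to build a genuine isometric right action of $G_\infty$ on $M_\mathcal{U} = \lim_\mathcal{U}(M_m, y_m)$ by assembling the fragmentary actions $\alpha_m$ through the partial isomorphisms supplied by the Cayley convergence. Lemma~\ref{lemma=Neighborhood} produces $R_m \to \infty$ and partial isomorphisms $\beta_m := \beta_{\mathbf{G}_m, \mathbf{G}_\infty, R_m} \colon B_{\mathbf{G}_m}(e_{G_m}, R_m) \to B_{\mathbf{G}_\infty}(e_{G_\infty}, R_m)$. For each $g \in G_\infty$, once $m$ is large enough that $R_m \geq 2|g|_{\mathbf{G}_\infty}$ and $r_m \geq |g|_{\mathbf{G}_\infty}$, set $g_m := \beta_m^{-1}(g)$ (and $g_m := e_{G_m}$ otherwise). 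A standard shortest-word argument, applying $\beta_m^{-1}$ to partial products of a minimal $\mathbf{G}_\infty$-word for $g$ and running it in reverse inside $B_{\mathbf{G}_m}(e_{G_m}, R_m/2)$, shows $|g_m|_{\mathbf{G}_m} = |g|_{\mathbf{G}_\infty}$, so $g_m$ lies in the domain $B_{\mathbf{G}_m}(e_{G_m}, r_m)$ of $\alpha_m$. The same bookkeeping, applied to the triples $(g_1, g_2, g_1 g_2)$ and $(g_1, g_2^{-1}, g_1 g_2^{-1})$, yields $(g_1 g_2)_m = g_{1,m} g_{2,m}$ and $d_{\mathbf{G}_m}(g_{1,m}, g_{2,m}) = d_{\mathbf{G}_\infty}(g_1, g_2)$ for all sufficiently large $m$.

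With these transported elements in hand, I would define the map $\alpha_\mathcal{U}(g) \colon M_\mathcal{U} \to M_\mathcal{U}$ by
\[
[(z_m)_m]_\mathcal{U} \cdot \alpha_\mathcal{U}(g) := [(z_m \cdot \alpha_m(g_m))_m]_\mathcal{U}.
\]
Boundedness of the right-hand sequence in terms of $y_m$, independence of representatives, and distance-preservation all follow from each $\alpha_m(g_m)$ being an isometry together with the uniform bound $d_{M_m}(z_m \cdot \alpha_m(g_m), y_m) \leq d_{M_m}(z_m, y_m) + \omega(|g|_{\mathbf{G}_\infty})$. The right-action axiom $z_\mathcal{U} \cdot \alpha_\mathcal{U}(g_1 g_2) = (z_\mathcal{U} \cdot \alpha_\mathcal{U}(g_1)) \cdot \alpha_\mathcal{U}(g_2)$ is checked by combining $(g_1 g_2)_m = g_{1,m} g_{2,m}$ with the fragmentary-action identity $z_m \cdot \alpha_m(g_{1,m} g_{2,m}) = (z_m \cdot \alpha_m(g_{1,m})) \cdot \alpha_m(g_{2,m})$, both valid on a cofinite (hence $\mathcal{U}$-large) set of $m$. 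Invertibility of each $\alpha_\mathcal{U}(g)$, supplying the surjectivity required by the paper's convention on isometries, then follows from $\alpha_\mathcal{U}(g^{-1})$ being a two-sided inverse.

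Finally, the orbit map $g \mapsto y_\mathcal{U} \cdot \alpha_\mathcal{U}(g)$ inherits the control pair $(\rho, \omega)$: for $m$ sufficiently large, $d_{\mathbf{G}_m}(g_{1,m}, g_{2,m}) = d_{\mathbf{G}_\infty}(g_1, g_2)$ by the first paragraph, so the hypothesis on each $\alpha_m$-orbit map yields
\[
\rho(d_{\mathbf{G}_\infty}(g_1, g_2)) \leq d_{M_m}(y_m \cdot \alpha_m(g_{1,m}), y_m \cdot \alpha_m(g_{2,m})) \leq \omega(d_{\mathbf{G}_\infty}(g_1, g_2))
\]
on a set in $\mathcal{U}$, and one passes to $\lim_\mathcal{U}$. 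The main obstacle I anticipate is purely organisational: several ``for $m$ sufficiently large'' thresholds---ensuring $R_m \geq 2|g|_{\mathbf{G}_\infty}$ and $r_m \geq |g|_{\mathbf{G}_\infty}$, that $\beta_m^{-1}$ respects the relevant products and inverses, and that the induced word-length equalities hold---must each be verified separately, but since each defines a cofinite subset of $\mathbb{N}$, their finitely many intersections automatically lie in the ultrafilter $\mathcal{U}$, so no essential difficulty remains beyond this coordination.
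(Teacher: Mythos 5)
Your proof follows essentially the same route as the paper's: you transport $g \in G_\infty$ back to $g_m := \beta_m^{-1}(g)$ for $m$ large (or to $e_{G_m}$ otherwise), define $\alpha_{\mathcal{U}}(g)$ coordinatewise on the ultraproduct, and then verify well-definedness, the right-action axiom, and the control pair by passing to $\mathcal{U}$-limits exactly as the paper's $\alpha'_m$ construction does. The only difference is that you spell out the word-length bookkeeping and the cofiniteness bookkeeping in more detail, which the paper leaves implicit.
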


Compare the statement of Proposition~\ref{proposition=FromLocalToGlobal} with the standard argument, for instance, in a survey of Y. Stalder \cite[Theorem 3.12]{Stalder}.
\begin{proof}
For every $m\in \mathbb{N}$, take $R_m \in \mathbb{N}$ and $\beta_{\mathbf{G}_{m},\mathbf{G}_{\infty},R_m}$ as in $(\star)$. Set $R''_m=\min\{R_m,r_m\}$. For each $g\in G_{\infty}$, define $ \alpha'_m(g)\colon M_m \to M_m; \ z\mapsto z \cdot \alpha'_m(g)$ by 
\[
z \cdot \alpha'_m(g)=\left\{
\begin{array}{cl}
z \cdot \alpha_m((\beta_{\mathbf{G}_{m},\mathbf{G}_{\infty},R_m})^{-1}(g)), & \textrm{if $g\in B_{\mathbf{G}_{\infty}}(e_{G_{\infty}},R''_m)$,} \\
z, & \textrm{otherwise}.
\end{array}\right.
\]
By construction, the restriction of $\alpha'_m$ on $B_{\mathbf{G}_{\infty}}(e_{G_{\infty}},R''_m)$ is a fragmentary action.

Finally, for every $g\in G_{\infty}$, define $\alpha_{\mathcal{U}}(g)\colon M_{\mathcal{U}} \to M_{\mathcal{U}}$ by 
\[
[(z_m)_m]_{\mathcal{U}} \cdot \alpha_{\mathcal{U}}(g) =[(z_m \cdot \alpha'_m(g))_m]_{\mathcal{U}}\quad \textrm{for every $[(z_m)_m]_{\mathcal{U}} \in M_{\mathcal{U}}$.}
\]
It is straightforward to check that this is well-defined. Since $\lim_{m\to \infty}R''_m=+\infty$, this $\alpha_{\mathcal{U}}$ is a (\textit{global}) action of $G_{\infty}$ on $M_{\mathcal{U}}$ (by isometries). By assumption, it furthermore holds that for every $g_1,g_2\in G_{\infty}$,
\[
\rho(d_{\mathbf{G}_{\infty}}(g_1,g_2))\leq d_{M_{\mathcal{U}}}(y_{\mathcal{U}}\cdot \alpha_{\mathcal{U}}(g_1),y_{\mathcal{U}}\cdot \alpha_{\mathcal{U}}(g_2))\leq \omega(d_{\mathbf{G}_{\infty}}(g_1,g_2)),
\]
as desired; compare with the proofs of Lemma~\ref{lemma=CoarseEmbedding} and Proposition~\ref{proposition=UniformCoarseEmbedding}.
\end{proof}

\subsection{Key to the non-linear version of Gromov's trick}\label{subsection=QuasiAction}
Proposition~\ref{proposition=FromLocalToGlobal} will be used for the proof of $(i)$.$(1)$ of Theorem~\ref{mtheorem=MainTheorem}. To deal with $(i)$.$(2)$, we employ the following definition.

\begin{definition}\label{definition=AlmostPartialAction}
Let $G$ be a group and $e_G\in K\subseteq G$ be a subset. Let $M$ be a metric space and $y\in M$. Let $\epsilon\geq 0$. We say that a map $\alpha\colon K\to \mathrm{Isom}(M)$ is an \textit{$\epsilon$-almost fragmentary} (right) \textit{action at $y$} if the following condition is fulfilled: For every $g_1,g_2\in K$ such that $g_1g_2\in K$,
\[
d(y\cdot \alpha(g_1g_2), (y\cdot \alpha(g_1))\cdot \alpha(g_2))\leq \epsilon.
\]
\end{definition}
If $K=G$ and $\alpha$ is a $0$-almost fragmentary at $y$, then $\alpha\colon G\to \mathrm{Isom}(M)$ gives rise to a genuine action \textit{on the $G$-orbit $\{y\cdot \alpha(g):g\in G\}$ of $y$}.

\begin{proposition}\label{proposition=FromLocalToGlobalSubspace}
Let $\mathbf{G}_m\stackrel{\mathrm{Cay}}{\to} \mathbf{G}_{\infty}$. Let $\rho,\omega\colon [0,\infty)\to [0,\infty)$ be two non-decreasing proper functions. Assume that for every $m\in \mathbb{N}$, there exist $r_m \in \mathbb{N}_{\geq 1}$ with $\lim_{m\to \infty}r_m=+\infty$, a sequence $(\rho_m,\omega_m)_m$ of two non-decreasing proper functions $[0,\infty)\to [0,\infty)$, $\epsilon_m\geq 0$ with $\lim_{m\to \infty} \epsilon_m=0$ and $y_m\in M_m$ such that the following conditions hold: 
\begin{itemize}
  \item For every $m\in \mathbb{N}$, there exists an $\epsilon_m$-$\mathrm{almost}$ $($right$)$ $\mathrm{fragmentary}$ $\mathrm{action}$ $\alpha_m$ $\mathrm{at}$ $y_m$ of $B_{\mathbf{G}_m}(e_{G_m},r_m)$ $($by isometries$)$ on $M_m$.
  \item Two sequences $(\rho_m)_m$ and $(\omega_m)_m$, respectively, converge to $\rho$ and $\omega$ pointwise.
  \item For every $m\in \mathbb{N}$ and for every $g_1,g_2\in B_{\mathbf{G}_m}(e_{G_m},r_m)$, it holds that
\[
\rho_m(d_{\mathbf{G}_m}(g_1,g_2)) \leq d_{M_m}(y_m\cdot \alpha_m(g_1), y_m\cdot \alpha_m(g_2)) \leq \omega_m(d_{\mathbf{G}_m}(g_1,g_2)).
\]
\end{itemize}
Assume that there exists a non-principal ultrafilter $\mathcal{U}$ over $\mathbb{N}$ such that $M_{\mathcal{U}}=\lim_{\mathcal{U}}(M_m,y_m)$ is $\mathrm{uniquely}$ geodesic.

Then, for every such $\mathcal{U}$ over $\mathbb{N}$, there exist a closed and $\mathrm{geodesically}$ $\mathrm{convex}$ subset $L_0$ of $M_{\mathcal{U}}$ and an isometric right $($genuine$) $action $(\alpha_{\mathcal{U}},L_0)$ of $G_{\infty}$ that satisfy all of the following conditions: 
\begin{itemize}
  \item For $y_{\mathcal{U}}=[(y_m)_m]_{\mathcal{U}}$, it holds that $\{y_{\mathcal{U}}\cdot\alpha_{\mathcal{U}}(g):g\in G_{\infty}\}\subseteq L_0$.
  \item The orbit map of $y_{\mathcal{U}}$ by $\alpha_{\mathcal{U}}$ is an $($equivariant$)$ coarse embedding of $(\mathbf{G}_{\infty},d_{\mathbf{G}_{\infty}})$ $($into $L_0$$)$ with equivariant control pair $(\rho,\omega)$. 
\end{itemize}
Here we equip $L_0$ with the induced metric from that of $M_{\mathcal{U}}$.
\end{proposition}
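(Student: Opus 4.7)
The plan is to transfer the local data on $(M_m, y_m)$ to the ultraproduct $M_\mathcal{U}$ via the rooted-diagram isomorphisms coming from $\mathbf{G}_m \stackrel{\mathrm{Cay}}{\to} \mathbf{G}_\infty$, produce candidate isometries $\alpha_\mathcal{U}(g)$ of the whole $M_\mathcal{U}$ by coordinatewise ultralimit, and then use the unique geodesic property of $M_\mathcal{U}$, together with $\epsilon_m \to 0$, to cut down to a closed geodesically convex subset $L_0$ on which these isometries assemble into a genuine action of $G_\infty$.

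Concretely, for each $m$ pick $R_m$ as in $(\star)$ of Lemma~\ref{lemma=Neighborhood} and set $R''_m = \min\{R_m, r_m\}$, so that $R''_m \to +\infty$. For each $g \in G_\infty$ define $\alpha'_m(g) \in \mathrm{Isom}(M_m)$ to equal $\alpha_m(\beta_{\mathbf{G}_m, \mathbf{G}_\infty, R''_m}^{-1}(g))$ when $g \in B_{\mathbf{G}_\infty}(e_{G_\infty}, R''_m)$ and the identity otherwise, exactly as in the proof of Proposition~\ref{proposition=FromLocalToGlobal}. The rule $[(z_m)_m]_\mathcal{U} \cdot \alpha_\mathcal{U}(g) = [(z_m \cdot \alpha'_m(g))_m]_\mathcal{U}$ then defines an isometry $\alpha_\mathcal{U}(g)$ of $M_\mathcal{U}$, and I set $z(g)_\mathcal{U} = y_\mathcal{U} \cdot \alpha_\mathcal{U}(g)$. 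For any fixed $g_1, g_2 \in G_\infty$, once $R''_m$ is large enough that a minimal $\mathrm{Cay}(\mathbf{G}_\infty)$-path from $g_1$ to $g_2$ stays in the ball on which $\beta_{\mathbf{G}_m,\mathbf{G}_\infty,R''_m}$ is a partial isomorphism of rooted diagrams, one has $d_{\mathbf{G}_m}(\beta^{-1}(g_1), \beta^{-1}(g_2)) = d_{\mathbf{G}_\infty}(g_1, g_2)$. Plugging this into the hypothesis and passing to the $\mathcal{U}$-limit, the pointwise convergences $\rho_m \to \rho$ and $\omega_m \to \omega$ produce
\[
\rho(d_{\mathbf{G}_\infty}(g_1, g_2)) \le d_\mathcal{U}(z(g_1)_\mathcal{U}, z(g_2)_\mathcal{U}) \le \omega(d_{\mathbf{G}_\infty}(g_1, g_2)).
\]

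The main obstacle is that each $\alpha_m$ is only $\epsilon_m$-almost fragmentary \emph{at the single point} $y_m$, and an isometry need not displace different points by comparable amounts; hence the isometries $\alpha_\mathcal{U}(g)$ need not satisfy $\alpha_\mathcal{U}(g_1 g_2) = \alpha_\mathcal{U}(g_1)\alpha_\mathcal{U}(g_2)$ on all of $M_\mathcal{U}$. However, for every $h, g_1, g_2 \in G_\infty$ lying in the relevant ball, two applications of $\epsilon_m$-almost fragmentariness at $y_m$ (to the pairs $(hg_1, g_2)$ and $(h, g_1)$), combined with the fact that $\alpha_m(g_2)$ is an isometry, yield
\[
d\bigl(y_m \cdot \alpha_m(hg_1g_2),\ (y_m \cdot \alpha_m(h)) \cdot \alpha_m(g_1) \cdot \alpha_m(g_2)\bigr) \le 2\epsilon_m.
\]
Combined with the single-application inequality $d(y_m \cdot \alpha_m(hg_1g_2), (y_m \cdot \alpha_m(h)) \cdot \alpha_m(g_1g_2)) \le \epsilon_m$ and taking $\mathcal{U}$-limits, this shows that $\alpha_\mathcal{U}(g_1 g_2)$ and $\alpha_\mathcal{U}(g_1)\alpha_\mathcal{U}(g_2)$ agree on the whole orbit $\mathcal{O} = \{z(g)_\mathcal{U} : g \in G_\infty\}$, and that $\alpha_\mathcal{U}(g)$ permutes $\mathcal{O}$ via $z(h)_\mathcal{U} \mapsto z(hg)_\mathcal{U}$.

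The closing step is to take $L_0$ to be the closed geodesically convex hull of $\mathcal{O}$ in $M_\mathcal{U}$; by definition this is closed and geodesically convex. In the uniquely geodesic space $M_\mathcal{U}$, two isometries that coincide on a set $A$ must carry the unique geodesic between any two points of $A$ to one and the same geodesic, so iterating and taking closures they coincide on the closed geodesically convex hull of $A$. This promotes $\alpha_\mathcal{U}(g_1g_2) = \alpha_\mathcal{U}(g_1)\alpha_\mathcal{U}(g_2)$ from $\mathcal{O}$ to all of $L_0$, and a parallel argument shows that each $\alpha_\mathcal{U}(g)$, being an isometry which permutes $\mathcal{O}$ and sends geodesics to geodesics, preserves $L_0$. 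The restrictions $\alpha_\mathcal{U}(g)|_{L_0}$ therefore assemble into a genuine right isometric action of $G_\infty$ on $L_0$, which contains $\mathcal{O}$ and hence $y_\mathcal{U}$, and whose orbit map at $y_\mathcal{U}$ is the $(\rho, \omega)$-equivariant coarse embedding of $\mathbf{G}_\infty$ required by the conclusion.
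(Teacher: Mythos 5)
Your proof is correct and follows essentially the paper's route: both arguments construct $\alpha_{\mathcal{U}}(g)$ by coordinatewise ultralimit exactly as in Proposition~\ref{proposition=FromLocalToGlobal}, and both exploit the unique geodesic property through the same key observation, namely that in a uniquely geodesic space the agreement set of two isometries (equivalently, the fixed-point set of the defect isometry $\alpha_{\mathcal{U}}(g_1)\alpha_{\mathcal{U}}(g_2)\alpha_{\mathcal{U}}(g_1g_2)^{-1}$) is closed and geodesically convex. The only cosmetic difference is the choice of $L_0$: you take the smallest closed geodesically convex set containing the orbit $\mathcal{O}$, while the paper takes the intersection $\bigcap_{g_1,g_2}L_{g_1,g_2}$ of all those fixed-point sets, which a priori may be larger; both contain $\mathcal{O}$, are $\alpha_{\mathcal{U}}(G_\infty)$-invariant, and carry the required genuine action, so either works.
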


\begin{proof}
For each $g\in G_{\infty}$, the construction of $\alpha_{\mathcal{U}}(g)\colon M_{\mathcal{U}}\to M_{\mathcal{U}}$ is exactly the same as one in the proof of Proposition~\ref{proposition=FromLocalToGlobal}. Indeed, since each $\alpha_m(h)$, for $h\in B_{\mathbf{G}_m}(e_{G_m},r_m)$, is isometric, $\alpha_m$ is $\epsilon$-almost fragmentary action at $y$ and the ``orbit map'' of $y_m$ by $\alpha_m$ is a coarse embedding with control pair $(\rho_m,\omega_m)$, it follows that for each $g\in G_{\infty}$,
\[
\sup_{m\in \mathbb{N}}d_{M_m}( z_m \cdot \alpha'_m(g),y_m) <\infty \quad\textrm{for every $(z_m)_m \in \left(\prod_m (M_m,y_m)\right)_{\ell_{\infty}}$;} 
\]
recall that $\rho_m$ and $\omega_m$ are non-decreasing.
The construction of $\alpha_{\mathcal{U}}(g)$ above is well-defined, and $\alpha_{\mathcal{U}}(g)$ is an isometry. We, however, warn that in general, $\alpha_{\mathcal{U}}(gh)$ may \textit{not} coincide with $\alpha_{\mathcal{U}}(g)\circ \alpha_{\mathcal{U}}(h)$ (the composition is from left to right) as maps $M_{\mathcal{U}}\to M_{\mathcal{U}}$.

Nevertheless, we observe that $\alpha_{\mathcal{U}} \colon G_{\infty}\to \mathrm{Isom}(M_{\mathcal{U}})$ is \textit{$0$-almost} fragmentary action \textit{at $y_{\mathcal{U}}$} because $\lim_{m\to \infty}\epsilon_m=0$. Therefore, it is a \textit{genuine} action on $L'=\{y_{\mathcal{U}}\cdot \alpha_{\mathcal{U}}(g):g\in G_{\infty}\}$. For every $g_1,g_2\in G_{\infty}$, define 
\[
L_{g_1,g_2}=\{z \in M_{\mathcal{U}}: z \cdot (\alpha(g_1)\circ \alpha(g_2)\circ \alpha(g_1g_2)^{-1})=z\}.
\]
Because $\alpha(g_1)\circ \alpha(g_2)\circ \alpha(g_1g_2)^{-1}$ is an isometry and we assume that $M_{\mathcal{U}}$ is \textit{uniquely} geodesic, each $L_{g_1,g_2}$ is a closed and \textit{geodecially  convex} subset of $M_{\mathcal{U}}$ with $L'\subseteq L_{g_1,g_2}$. (Observe that every isometry sends geodesics to geodesics.) Finally, take
\[
L_0=\bigcap_{g_1,g_2\in G_{\infty}}L_{g_1,g_2} \quad (\supseteq L').
\]
Then $L_0=L_0\cdot \alpha_{\mathcal{U}}(G_{\infty})$ holds, and $\alpha_{\mathcal{U}}$ gives rise to a \textit{genuine} action on $L_0$. We rewrite the restriction of $\alpha_{\mathcal{U}}$ on $L_0$ as $\alpha_{\mathcal{U}}\colon L_0\curvearrowleft G_{\infty}$; it satisfies the required two conditions.
\end{proof}

\begin{remark}\label{remark=AffineGeodesic}
We may remove the assumption of the unique geodesic property on $M_{\mathcal{U}}=E_{\mathcal{U}}$ if $\mathcal{M}=\mathcal{E}$ consists only of Banach spaces. Indeed, if we assume that all $\alpha_m$ are complex affine, then take $L_0$ as the closure of the algebraic complex affine span of $L'$; this $L_0$ is a non-empty complex affine subspace of $E_{\mathcal{U}}$. Even if we do not assume it, the Mazur--Ulam theorem states that all $\alpha_m$ are real affine. Then we can take a desired $L_0$ as a non-empty real affine subspace of $E_{\mathcal{U}}$. 
\end{remark}

\section{From fibred coarse embeddings to equivariant embeddings of groups in the Cayley boundary}\label{section=TheoremA(i)}

In this section, we prove item $(i)$ of Theorem~\ref{mtheorem=MainTheorem}. As mentioned in the introduction, our idea of the proof(s) is based on a trick of Gromov. We first demonstrate the proof of $(i)$.$(1)$ in Subsection~\ref{subsection=Finite}. Then we proceed to the proof of $(i)$.$(2)$ in Subsection~\ref{subsection=Amenable}.

\subsection{Proof for finite marked group sequences}\label{subsection=Finite}
We already know from Proposition~\ref{proposition=UniformCoarseEmbedding} the way to recover (non-equivariant) coarse embeddings of groups in the Cayley  boundary from local information from the fibred coarse embedding. The point in our proof is how to  recover moreover \textit{equivariant} coarse embeddings. The key tool here is Proposition~\ref{proposition=FromLocalToGlobal}.

\begin{proof}[Proof of $(i)$.$(1)$ of Theorem~$\ref{mtheorem=MainTheorem}$]

Similar to the proof of Proposition~\ref{proposition=UniformCoarseEmbedding}, we may assume that $(\mathbf{G}_m)_{m\in \mathbb{N}}$ is a convergent sequence. Let $\mathbf{G}_{\infty}$ be the Cayley limit of it. Assume that $\bigsqcup_{m\in \mathbb{N}}\mathrm{Cay}(\mathbf{G}_m)$ admits a fibred coarse embedding into $M$, $M\in \mathcal{M}$, with control pair $(\rho,\omega)$. Take as $s$, $R'_m$, $t_{g,R'_m}$, $t_{g_1,g_2,R_m'}$ as in Lemma~\ref{lemma=BallsFibredCoarseEmbedding}. Fix $q\in [1,\infty)$.

For each $m\in \mathbb{N}$, set 
\[
M_{m,q}=\left(\prod_{x\in G_m}(M,\left(\frac{1}{\sharp(G_m)}\right)^{1/q})\right)_{\ell_q}; 
\]
recall Definition~\ref{definition=ell_qSum}. 

For every $g\in B_{\mathbf{G}_{m}}(e_{G_m},R'_m)$, define $\alpha_m(g)\colon M_{m,q}\to M_{m,q}$ by
\[
(z_x)_{x\in G_m} \cdot \alpha_m(g)=(t_{x,gx,R'_m}(z_{gx}))_{x\in G_m}, \quad \textrm{for $(z_x)_x \in M_{m,q}$.}
\]
We claim the following.

\begin{lemma}\label{lemma=ClaimFinite}
\begin{enumerate}[$(1)$]
 \item This $\alpha_m$ is a fragmentary action $($by isometries$)$
\[
M_{m,q} \curvearrowleft B_{\mathbf{G}_m}(e_{G_m},R'_m).
\]
 \item Let $y_m=(y_{m,x})_{x\in G_m}\in M_{m,q}$, where $y_{m,x}=t_{x,R'_m}(x)(s(x))$ for every $x\in G_m$. Then the orbit map of $y_m$ by the fragmentary action $\alpha_m$ is an $($equivariant$)$ coarse embedding from $B_{\mathbf{G}_m}(e_{G_m},R'_m)$ into $M_{m,q}$ with control pair $(\rho,\omega)$.
\end{enumerate}
\end{lemma}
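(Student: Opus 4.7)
The plan is to verify both items by direct computation, using the three conditions of Lemma~\ref{lemma=BallsFibredCoarseEmbedding} together with the fact that $d_{\mathbf{G}_m}$ is the right-invariant word metric, so that for all $x\in G_m$ and $g\in G_m$ one has $d_{\mathbf{G}_m}(gx,x)=d_{\mathbf{G}_m}(g,e_{G_m})$. The key observation that makes every $B_{\mathbf{G}_m}(e_{G_m},R'_m)$-orbit well-behaved is that for $g\in B_{\mathbf{G}_m}(e_{G_m},R'_m)$ and any $x\in G_m$, the points $x$ and $gx$ both lie in $B_{\mathbf{G}_m}(x,R'_m)\cap B_{\mathbf{G}_m}(gx,R'_m)$, so the transition isometry $t_{x,gx,R'_m}$ is defined and, more importantly, it compatibly identifies the local trivializations at $x$ and $gx$ via $t_{x,R'_m}(y)=t_{x,gx,R'_m}\circ t_{gx,R'_m}(y)$ for all $y$ in the intersection.

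For item $(1)$, isometry of $\alpha_m(g)$ on $M_{m,q}$ follows because $\alpha_m(g)$ is the composition of the coordinate permutation $x\mapsto gx$ (which preserves the $\ell_q$-norm since the scaling factor $(\sharp(G_m))^{-1/q}$ is constant) with the coordinatewise isometries $t_{x,gx,R'_m}\colon M\to M$ given by condition $(3)$. For the fragmentary-action (partial homomorphism) relation, fix $g_1,g_2\in B_{\mathbf{G}_m}(e_{G_m},R'_m)$ with $g_1g_2\in B_{\mathbf{G}_m}(e_{G_m},R'_m)$ and compute coordinatewise: $\bigl(((z_x)_x\cdot\alpha_m(g_1))\cdot\alpha_m(g_2)\bigr)_x=t_{x,g_2x,R'_m}\circ t_{g_2x,g_1g_2x,R'_m}(z_{g_1g_2x})$, so the desired equality with $((z_x)_x\cdot\alpha_m(g_1g_2))_x=t_{x,g_1g_2x,R'_m}(z_{g_1g_2x})$ reduces to the cocycle identity
\[
t_{x,g_1g_2x,R'_m}=t_{x,g_2x,R'_m}\circ t_{g_2x,g_1g_2x,R'_m}.
\]
This is where condition $(3)$ is used twice, and the hypothesis $g_1,g_2,g_1g_2\in B_{\mathbf{G}_m}(e_{G_m},R'_m)$ guarantees, via right-invariance, that the point $g_2x$ lies in all three $R'_m$-balls $B_{\mathbf{G}_m}(x,R'_m)$, $B_{\mathbf{G}_m}(g_2x,R'_m)$, $B_{\mathbf{G}_m}(g_1g_2x,R'_m)$, so that the pairwise compatibility isometries can be composed.

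For item $(2)$, the same cocycle observation applied with $g_1=x$, $g_2=gx$ evaluated at $gx\in B_{\mathbf{G}_m}(x,R'_m)\cap B_{\mathbf{G}_m}(gx,R'_m)$ yields $\bigl(y_m\cdot\alpha_m(g)\bigr)_x=t_{x,gx,R'_m}\bigl(t_{gx,R'_m}(gx)(s(gx))\bigr)=t_{x,R'_m}(gx)(s(gx))$. Then for $g_1,g_2\in B_{\mathbf{G}_m}(e_{G_m},R'_m)$,
\[
d_{M_{m,q}}(y_m\cdot\alpha_m(g_1),y_m\cdot\alpha_m(g_2))^q=\frac{1}{\sharp(G_m)}\sum_{x\in G_m}d_M\bigl(t_{x,R'_m}(g_1x)(s(g_1x)),t_{x,R'_m}(g_2x)(s(g_2x))\bigr)^q,
\]
and condition $(2)$ of Lemma~\ref{lemma=BallsFibredCoarseEmbedding}, applied in the ball $B_{\mathbf{G}_m}(x,R'_m)$ (which contains both $g_1x$ and $g_2x$ by right-invariance), squeezes each summand between $\rho(d_{\mathbf{G}_m}(g_1x,g_2x))^q=\rho(d_{\mathbf{G}_m}(g_1,g_2))^q$ and $\omega(d_{\mathbf{G}_m}(g_1,g_2))^q$. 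Averaging and taking the $q$-th root gives the bound with control pair $(\rho,\omega)$. The only genuine subtlety is the cocycle step, but it is forced by condition $(3)$ once right-invariance is used to place $g_2x$ inside all relevant balls; every other step is bookkeeping.
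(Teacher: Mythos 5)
Your proof is correct and follows essentially the same route as the paper's; the only cosmetic difference is that in part $(1)$ you evaluate the transition isometries at the common point $g_2x$ whereas the paper uses $g_1g_2x$, both of which lie in all three relevant $R'_m$-balls. Everything else — the coordinatewise reduction, the cocycle identity forced by condition $(3)$, the observation $(y_m\cdot\alpha_m(g))_x=t_{x,R'_m}(gx)(s(gx))$, and the averaging argument using the scaling factor from Remark~\ref{remark=ScalingFactor} — matches the paper's argument.
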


\begin{proof}[Proof of Lemma~$\ref{lemma=ClaimFinite}$]
Since all $t_{x,gx,R'_m}$ are isometries, $\alpha_m(g)$ is an isometry for all $g\in G_m$. Assume that $g_1,g_2,g_1g_2\in B_{\mathbf{G}_m}(e_{G_m},R'_m)$. Then since for each $x\in G_m$, $g_1g_2x \in B_{\mathbf{G}_m}(g_1g_2x,R'_m)\cap B_{\mathbf{G}_m}(g_2x,R'_m)\cap B_{\mathbf{G}_m}(x,R'_m)$, it holds that
\begin{eqnarray*}
&& t_{x, g_2 x,R'_m} \circ t_{g_2 x, g_1 g_2 x, R'_m} \\
&=& t_{x,R'_m}(g_1 g_2 x) \circ (t_{g_2 x,R'_m}(g_1 g_2 x))^{-1} \circ t_{g_2 x,R'_m}(g_1g_2 x) \circ (t_{g_1g_2 x,R'_m}(g_1g_2 x))^{-1}\\
&=& t_{x,R'_m}(g_1g_2 x) \circ (t_{g_1g_2 x,R'_m}(g_1 g_2 x))^{-1}\\
&=& t_{x, g_1 g_2 x,R'_m}.
\end{eqnarray*}
Therefore, we have that by setting $w_x=t_{x,g_1x,R'_m}(z_{g_1x})$,
\begin{eqnarray*} 
((z_x)_x \cdot \alpha(g_1)) \cdot \alpha(g_2)&=&
(t_{x, g_2 x,R'_m} (w_{g_2x}))_x \\
&=&
(t_{x, g_2 x,R'_m} \circ t_{g_2 x, g_1 g_2 x,R'_m}(z_{g_1g_2 x}))_x\\
&=&
(t_{x, g_1 g_2 x,R'_m}(z_{g_1 g_2 x}))_x\\
&=& (z_x)_x\cdot \alpha(g_1 g_2).
\end{eqnarray*}
This proves $(1)$. 

For $(2)$, observe that for every $g\in B_{\mathbf{G}_m}(x,R'_m)$ and every $x\in G_m$,
\[
d_M(y_{m,x},t_{x,gx,R'_m}(y_{m,gx}))=d_M(t_{x,R'_m}(x)(s(x)), t_{x,R'_m}(gx)(s(gx)).
\]
By assumption and by recalling Remark~\ref{remark=ScalingFactor}, we verify $(2)$.
\end{proof}

By applying Proposition~\ref{proposition=FromLocalToGlobal} with $r_m=R'_m$, we obtain from Lemma~\ref{lemma=ClaimFinite} an \textit{equivariant} coarse embedding from $\mathbf{G}_{\infty}$ into $\lim_{\mathcal{U}}(M_{m,q},y_m)$ with equivariant control pair $(\rho,\omega)$. Since $\lim_{\mathcal{U}}(M_{m,q},y_m) \in \mathcal{F}_q(\mathcal{M})$, it proves the desired assertions.
\end{proof}

\subsection{Non-linear version of Gromov's trick and proof for amenable group sequences}\label{subsection=Amenable}
In order to extend the argument as in Subsection~\ref{subsection=Finite} to the  case of amenable marked group sequences, we employ a \textit{F{\o}lner set} of $G_m$ instead of $G_m$ itself and utilize Proposition~\ref{proposition=FromLocalToGlobalSubspace}. This idea dates back to Gromov, and well known if $\mathcal{M}=\mathcal{H}\mathrm{ilbert}$. We extend this framework to possibly non-linear settings.

For $\epsilon>0$ and for $R\in \mathbb{N}$, an \textit{$(\epsilon,R)$-F{\o}lner set} $F$ of a marked group $\mathbf{G}$ is a non-empty \textit{finite} subset of $G$ that satisfies 
\[
\frac{\sharp(\partial_{\mathbf{G}}(F,R))}{\sharp (F)}<\epsilon.
\]
Amenability of $G$ is characterized by the existence of $(\epsilon,R)$-F{\o}lner sets for all $\epsilon(>0)$ and for all $R$ (this property does not depend on the choices of markings of $G$). 

\begin{proof}[Proof of $(i)$.$(2)$ of Theorem~$\ref{mtheorem=MainTheorem}$]
We describe the modifications needed from the proof $(i)$.$(1)$ of Theorem~\ref{mtheorem=MainTheorem}. Fix $q\in [1,\infty)$.
For each $m\in \mathbb{N}$, choose $\delta_m>0$ appropriately (we will specify later) and take an $(\delta_m,R'_m)$-F{\o}lner set $F^{(m)}$ of $\mathbf{G}_m$. Set
\[
M_{m,q}=\left(\prod_{x\in F^{(m)}}(M,\left(\frac{1}{\sharp(F^{(m)})}\right)^{1/q})\right)_{\ell_q}.
\]
For every $g\in B_{\mathbf{G}_m}(e_{G_m},R'_m)$, let $\alpha_m(g)\colon M_{m,q}\to M_{m,q}$ be $(z_x)_{x\in F^{(m)}} \cdot \alpha_m(g)=(w_x)_x$ for $(z_x)_x \in M_{m,q}$. Here $w_x \in M$ is defined by
\[
w_x=
\left\{\begin{array}{cl}
t_{x,gx,R'_m}(z_{gx}), & \quad \textrm{if $gx\in F^{(m)}$,} \\
z_{x}, & \quad \textrm{otherwise.} 
\end{array}\right.
\]
We claim the following.

\begin{lemma}\label{lemma=ClaimAmenable}
Let $y_m=(y_{m,x})_{x\in G_m}\in M_{m,q}$, where $y_{m,x}=t_{x,R'_m}(x)(s(x))$ for every $x\in G_m$.  Let $\delta'_{m,q}=\delta_m^{1/q}\omega(R_m')$.
\begin{enumerate}[$(1)$]
 \item For every $g\in B_{\mathbf{G}_m}(e_{G_m},R'_m)$, $\alpha_m(g)$ is an isometry. 
  \item This $\alpha_m$ is a $3\delta'_{m,q}$-almost fragmentary action of $B_{\mathbf{G}_m}(e_{G_m},\lfloor R'_m/2\rfloor)$ at $y_m$; recall Definition~$\ref{definition=AlmostPartialAction}$.
 \item  For every $g_1,g_2\in B_{\mathbf{G}_m}(e_{G_m},\lfloor R'_m/2\rfloor)$,  it holds that
\[
(1-2\delta_m)\rho(d_{\mathbf{G}_m}(g_1,g_2))\leq d_{M_m}(y_m\cdot \alpha_m(g_1), y_m\cdot \alpha_m(g_2)) \leq \omega(d_{\mathbf{G}_m}(g_1,g_2))+2\delta'_{m,q}.
\]
\end{enumerate}
\end{lemma}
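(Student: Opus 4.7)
The plan is to exploit the F{\o}lner structure of $F^{(m)}$ together with the cocycle identity for the local trivializations (established in the proof of Lemma~\ref{lemma=ClaimFinite}) to view $\alpha_m(g)$ as an approximate ``shift by $g$'' on $M_{m,q}$: genuinely isometric on the interior $F^{(m)} \cap g^{-1}F^{(m)}$ and trivial on the boundary. Once this picture is in place, each of (1)--(3) reduces to a normalized $\ell_q$-computation in which the bulk behavior matches the desired estimate and the F{\o}lner hypothesis $\sharp(\partial_{\mathbf{G}_m}(F^{(m)},R_m'))/\sharp(F^{(m)}) < \delta_m$ controls every boundary correction by the scaling $\delta_m^{1/q}\omega(R_m')=\delta_{m,q}'$.

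For (1), partition $F^{(m)} = A_g \sqcup B_g$ with $A_g = F^{(m)} \cap g^{-1}F^{(m)}$ and $B_g = F^{(m)} \setminus A_g$. On $A_g$, the action pulls the coordinate $z_{gx}$ back via the metric isometry $t_{x,gx,R_m'}$, which exists by Lemma~\ref{lemma=BallsFibredCoarseEmbedding} because $|g| \leq R_m'$ places $gx$ in $B_{\mathrm{Cay}(\mathbf{G}_m)}(x,R_m')$; since $x \mapsto gx$ is a bijection of $A_g$ onto $gA_g \subseteq F^{(m)}$ and each $t_{x,gx,R_m'}$ is a metric isometry, the coordinate contribution $d_M(z_{gx},z'_{gx})^q$ is preserved. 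On $B_g$ the map is the identity, preserving $d_M(z_x,z'_x)^q$ trivially. Summing with the normalized $\ell_q$-weight $1/\sharp(F^{(m)})$ gives a distance-preserving self-map of $M_{m,q}$.

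For (2), fix $g_1, g_2 \in B_{\mathbf{G}_m}(e_{G_m}, \lfloor R_m'/2 \rfloor)$, so that $g_1 g_2 \in B_{\mathbf{G}_m}(e_{G_m}, R_m')$. At a coordinate $x$ with $x$, $g_2 x$, $g_1 g_2 x$ all in $F^{(m)}$, the three points have pairwise $d_{\mathbf{G}_m}$-distances at most $R_m'$, so the cocycle relation
\[
t_{x, g_2 x, R_m'} \circ t_{g_2 x, g_1 g_2 x, R_m'} = t_{x, g_1 g_2 x, R_m'}
\]
from the proof of Lemma~\ref{lemma=ClaimFinite} forces the $x$-coordinates of $y_m \cdot \alpha_m(g_1 g_2)$ and $(y_m \cdot \alpha_m(g_1)) \cdot \alpha_m(g_2)$ to agree. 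The remaining ``bad'' coordinates lie in the union of the three F{\o}lner boundary sets associated to $g_1$, $g_2$, and $g_1 g_2$, of total relative cardinality at most $3\delta_m$ in $F^{(m)}$. On each such coordinate the two values differ by at most $\omega(R_m')$ in the $M$-distance, since both are within $\omega(R_m')$ of the reference point $y_{m,x}$. Summing in the normalized $\ell_q$-metric yields an error bounded by $(3\delta_m)^{1/q}\omega(R_m') \leq 3\delta_m^{1/q}\omega(R_m') = 3\delta_{m,q}'$.

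For (3), on each coordinate $x \in A_{g_1} \cap A_{g_2}$ the points $g_1 x$ and $g_2 x$ both lie in $B_{\mathrm{Cay}(\mathbf{G}_m)}(g_1 x, R_m')$ (since $g_2 g_1^{-1}$ has word length at most $R_m'$), at mutual $\mathbf{G}_m$-distance $d_{\mathbf{G}_m}(g_1, g_2)$. After using the cocycle identity to cancel a composition of trivializations, the coordinate-wise $M$-distance between $y_m \cdot \alpha_m(g_1)$ and $y_m \cdot \alpha_m(g_2)$ at $x$ equals $d_M(t_{g_1 x, g_2 x, R_m'}(s(g_2 x)), s(g_1 x))$, which by Lemma~\ref{lemma=BallsFibredCoarseEmbedding}.(2) lies in $[\rho(d_{\mathbf{G}_m}(g_1, g_2)), \omega(d_{\mathbf{G}_m}(g_1, g_2))]$. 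Since $\sharp(A_{g_1} \cap A_{g_2})/\sharp(F^{(m)}) \geq 1 - 2\delta_m$ and the contribution from the remaining coordinates is bounded by $\omega(R_m')$, the normalized $\ell_q$-distance satisfies the lower bound $(1-2\delta_m)^{1/q}\rho(d_{\mathbf{G}_m}(g_1, g_2)) \geq (1-2\delta_m)\rho(d_{\mathbf{G}_m}(g_1, g_2))$ and, via the triangle inequality in $\ell_q$, the upper bound $\omega(d_{\mathbf{G}_m}(g_1, g_2)) + 2\delta_{m,q}'$. The main obstacle is the precise accounting of the three F{\o}lner-boundary sets and their interaction with the $\ell_q$-normalization so that the constants $3\delta_{m,q}'$ in (2) and $(1-2\delta_m)$, $2\delta_{m,q}'$ in (3) emerge exactly; this is what will allow the errors to vanish in the ultraproduct limit of Proposition~\ref{proposition=FromLocalToGlobalSubspace} and produce an equivariant coarse embedding of $\mathbf{G}_\infty$ with control pair $(\rho, \omega)$.
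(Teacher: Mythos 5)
Your overall strategy matches the paper's for parts (2) and (3): decompose $F^{(m)}$ into a F{\o}lner-good region on which the cocycle identity from Lemma~\ref{lemma=ClaimFinite} gives exact coordinatewise agreement, bound the bad region in cardinality via the F{\o}lner hypothesis and in $M$-distance by roughly $\omega(R_m')$, and finish with the rescaled $\ell_q$ computation. Two steps in your estimates, however, do not go through as written and would need to be repaired to reach the stated constants.

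First, in (2) you justify the per-coordinate error bound $\omega(R_m')$ on the bad set by observing that both entries lie within $\omega(R_m')$ of the reference point $y_{m,x}$. Routing the comparison through $y_{m,x}$ only yields $2\omega(R_m')$ by the triangle inequality, which gives $6\delta'_{m,q}$ rather than $3\delta'_{m,q}$ (and similarly $4\delta'_{m,q}$ rather than $2\delta'_{m,q}$ in (3)). The paper's proof avoids this loss by noting that at a bad coordinate both entries are of the form $t_{x,R_m'}(\gamma x)(s(\gamma x))$ with $\gamma,\gamma'\in\{e_{G_m},g_1,g_2,g_1g_2\}$, so the expansion bound can be applied \emph{directly} to the pair, giving $\omega(d_{\mathbf{G}_m}(\gamma,\gamma'))$ without passing through the reference. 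Second, in (1) your verification reindexes the sum over $A_g$ by $y=gx$ and then asserts it recombines with the sum over $B_g$ to give the full $\ell_q$-distance; but $gA_g = gF^{(m)}\cap F^{(m)}$ and $B_g = F^{(m)}\setminus g^{-1}F^{(m)}$ are in general neither disjoint nor a cover of $F^{(m)}$ (only their cardinalities sum to $\sharp F^{(m)}$), so the two sub-sums do not reconstitute the original and the distance-preservation argument breaks down. In fact, as literally defined, $\alpha_m(g)$ forgets the coordinates in $F^{(m)}\setminus(gA_g\cup B_g)$ and duplicates those in $gA_g\cap B_g$, so surjectivity and injectivity both need a small repair (e.g., routing $B_g$ bijectively onto $F^{(m)}\setminus gA_g$ instead of fixing it pointwise); the paper hides this behind ``by construction,'' but your explicit argument for (1) as stated is not correct.
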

\begin{proof}[Proof of Lemma~$\ref{lemma=ClaimAmenable}$]
Item $(1)$ is by construction. For $(2)$, let $g_1,g_2\in B_{\mathbf{G}_m}(e_{G_m},\lfloor R'_m/2\rfloor)$ such that $g_1g_2\in B_{\mathbf{G}_m}(e_{G_m},\lfloor R'_m/2\rfloor)$. Let $F_{\mathrm{good}}^{(m)}=F^{(m)}\cap (g_1^{-1}F^{(m)})\cap (g_2^{-1}F^{(m)})\cap ((g_1g_2)^{-1}F^{(m)})$ and $F_{\mathrm{bad}}^{(m)}=F^{(m)}\setminus F_{\mathrm{good}}^{(m)}$. Then, by the F{\o}lner property for $F$, $\sharp(F_{\mathrm{bad}}^{(m)})\leq 3\delta_m \sharp (F^{(m)})$. Note that by the proof of Lemma~\ref{lemma=ClaimFinite}, for all $x\in F_{\mathrm{good}}^{(m)}$,
\[
((y_m \cdot \alpha(g_1))\cdot \alpha(g_2))(x)=(y_m \cdot \alpha(g_1g_2))(x),
\]
where $(\cdot)(x)$ indicates the $x$-th coordinate. 

Now let $x\in F_{\mathrm{bad}}^{(m)}$. Then, similar to one above, we have that 
\begin{eqnarray*}
& &d_M((y_m \cdot \alpha(g_1))\cdot \alpha(g_2))(x), (y_m \cdot \alpha(g_1g_2))(x))\\
&\leq& \max\{\omega(d_{\mathbf{G}_m}(\gamma,\gamma')):\gamma,\gamma'\in \{e_{G_m},g_1,g_2,g_1g_2\}\} \\
&=& \omega\left(\max\{d_{\mathbf{G}_m}(\gamma,\gamma'):\gamma,\gamma'\in \{e_{G_m},g_1,g_2,g_1g_2\}\}\right) \\
&\leq& \omega(R'_m).
\end{eqnarray*}
By recalling that we take the scaling factor $(1/\sharp(F^{(m)}))^{1/q}$ to construct $M_{m,q}$ from $M$, we conclude that
\begin{eqnarray*}
d_{M_{m,q}}((y_m \cdot \alpha(g_1))\cdot \alpha(g_2),y_m \cdot \alpha(g_1g_2)))
&\leq& \left(3\delta_m \sharp(F^{(m)})\cdot \frac{\omega(R'_m)^q}{\sharp(F^{(m)})}\right)^{\frac{1}{q}}\\
&\leq& 3\delta_m^{1/q}\omega(R'_m),
\end{eqnarray*}
as desired. Item $(3)$ will be proved in a manner quite similar to one above.
\end{proof}

For given $q\in [1,\infty)$, $\omega\colon [0,\infty)\to [0,\infty)$ and $(R_m')_m$, we can choose $(\delta_m)_m$, $\delta_m>0$, such that $\lim_{m\to \infty}\delta_m=0$ and $\lim_{m\to \infty}\delta'_{m,q}=0$. Finally, apply Proposition~\ref{proposition=FromLocalToGlobalSubspace} with $r_m=\lfloor R_m'/2\rfloor$, $\epsilon_m=\max\{2\delta_m, 3\delta'_{m,q}\}$ and $(\rho_m,\omega_m)=((1-\epsilon_m)\rho,\omega+\epsilon_m)$, and we thus obtain the conclusion. If $\mathcal{M}$ only consists of Banach spaces, then consult also Remark~\ref{remark=AffineGeodesic}.
\end{proof}

By setting $\mathbf{G}_m\equiv \mathbf{G}$ for a fixed amenable group and by restricting embeddings to genuine coarse embeddings (recall Remark~\ref{remark=FromFibredToGenuine}), we in particular have the following corollary. It may be regarded as a \textit{non-linear version of Gromov's trick}. Although this may have been previously observed by other researchers, we include it for the sake of convenience of the readers; compare with \cite[Theorem~9.1]{NaorPeres} for the case of Banach spaces.

\begin{corollary}\label{corollary=NonLinearGromov}
Let $\mathcal{M}$ be a non-empty class of metric spaces that satisfies the conditions as in Theorem~$\ref{mtheorem=MainTheorem}$.$(i)$.$(2)$. Assume that for some of such $q$, it holds that $\mathcal{FS}_q(\mathcal{M})\subseteq \mathcal{M}$. Then for every $\mathrm{amenable}$ marked group $\mathbf{G}$, it holds that
\[
\mathcal{CP}_{\mathcal{M}}^{\ast}(\mathbf{G})=\mathcal{CP}_{\mathcal{M}}^{\sharp}(\mathbf{G}).
\]
\end{corollary}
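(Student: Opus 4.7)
The plan is to reduce the corollary to Theorem~\ref{mtheorem=MainTheorem}.$(i)$.$(2)$ applied to the constant sequence $\mathbf{G}_m\equiv \mathbf{G}$, exactly as foreshadowed in the paragraph preceding the statement. The inclusion $\mathcal{CP}^{\sharp}_{\mathcal{M}}(\mathbf{G})\subseteq \mathcal{CP}^{\ast}_{\mathcal{M}}(\mathbf{G})$ is tautological, since an equivariant coarse embedding is in particular a coarse embedding, so all the content lies in the reverse inclusion.

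Given $(\rho,\omega)\in \mathcal{CP}^{\ast}_{\mathcal{M}}(\mathbf{G})$, fix a coarse embedding $f\colon \mathbf{G}\to M$ with $M\in\mathcal{M}$ realizing this control pair. First I would set $\mathbf{G}_m\equiv \mathbf{G}$ for every $m\in \mathbb{N}$; then $\partial_{\mathrm{Cay}}(\mathbf{G}_m)_m=\{\mathbf{G}\}$ (every subsequence of a constant sequence converges to the same value in $\mathcal{G}(k)$), and the amenability hypothesis of Theorem~\ref{mtheorem=MainTheorem}.$(i)$.$(2)$ is met. Applying $f$ componentwise produces a (genuine) coarse embedding $\bigsqcup_{m}\mathrm{Cay}(\mathbf{G}_m)\to M$ with control pair $(\rho,\omega)$, and by Remark~\ref{remark=FromFibredToGenuine} this is, a fortiori, a fibred coarse embedding as a disjoint union into $\mathcal{M}$ with the same control pair; in other words $(\rho,\omega)\in \mathcal{CP}^{\mathrm{fib}}_{\mathcal{M}}\bigl(\bigsqcup_{m}\mathrm{Cay}(\mathbf{G}_m)\bigr)$.

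Next I would invoke the quantitative inclusion
\[
\mathcal{CP}_{\mathcal{M}}^{\mathrm{fib}}\left(\bigsqcup_{m}\mathrm{Cay}(\mathbf{G}_m)\right)\subseteq \bigcap_{\mathbf{G}_{\infty}\in\partial_{\mathrm{Cay}}(\mathbf{G}_m)_m}\mathcal{CP}^{\sharp}_{\mathcal{FS}_q(\mathcal{M})}(\mathbf{G}_{\infty})=\mathcal{CP}^{\sharp}_{\mathcal{FS}_q(\mathcal{M})}(\mathbf{G})
\]
furnished by Theorem~\ref{mtheorem=MainTheorem}.$(i)$.$(2)$ for the prescribed exponent $q$. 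This places $(\rho,\omega)$ in $\mathcal{CP}^{\sharp}_{\mathcal{FS}_q(\mathcal{M})}(\mathbf{G})$. The auxiliary hypothesis $\mathcal{FS}_q(\mathcal{M})\subseteq \mathcal{M}$ then upgrades this to $(\rho,\omega)\in \mathcal{CP}^{\sharp}_{\mathcal{M}}(\mathbf{G})$, closing the loop.

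I do not anticipate any real obstacle: the corollary is essentially a bookkeeping consequence extracting a non-linear Gromov trick from the main theorem, with all genuine work (the F\o{}lner-set averaging and the passage through a metric ultraproduct that recovers a global action of the limit group) already hidden inside Theorem~\ref{mtheorem=MainTheorem}.$(i)$.$(2)$. The only delicate points are the two observations highlighted above: that the Cayley boundary of a constant sequence is a singleton, and that the hypothesis $\mathcal{FS}_q(\mathcal{M})\subseteq \mathcal{M}$ is precisely what is needed to convert a-$\mathcal{FS}_q(\mathcal{M})$-menability, with the control pair preserved, back into a-$\mathcal{M}$-menability with the same control pair.
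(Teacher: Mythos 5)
Your proposal is correct and follows exactly the route the paper indicates (constant sequence $\mathbf{G}_m\equiv\mathbf{G}$, Remark~\ref{remark=FromFibredToGenuine} to pass from a genuine to a fibred coarse embedding with the same control pair, then the quantitative inclusion of Theorem~\ref{mtheorem=MainTheorem}.$(i)$.$(2)$ combined with $\mathcal{FS}_q(\mathcal{M})\subseteq\mathcal{M}$). The paper itself gives only the one-sentence sketch preceding the corollary, so your write-up is simply a more detailed version of the same argument.
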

On the other hand, for non-amenable marked groups, $\mathcal{CP}_{\mathcal{M}}^{\sharp}(\mathbf{G})$ is much restrictive than $\mathcal{CP}_{\mathcal{M}}^{\ast}(\mathbf{G})$. For instance, E. Guentner and J. Kaminker \cite[proposition~4.2]{GuentnerKaminker} showed that for every $a\in (0,1)$, there exists $C>0$ such that $(Cr^a,r) \in \mathcal{CP}_{\mathcal{H}\mathrm{ilbert}}^{\ast}(\mathbf{F_2})$. However, they \cite[Theorem~5.3]{GuentnerKaminker} also proved that if there exist $a\in (1/2,1]$ and $C>0$ such that $(Cr^a,r) \in \mathcal{CP}_{\mathcal{H}\mathrm{ilbert}}^{\sharp}(\mathbf{G})$, then $\mathbf{G}$ must be amenable.

\section{From equivariant equi-coarse embeddings of the Cayley boundary to fibred coarse embeddings}\label{section=TheoremA(ii)}
Here we prove $(ii)$ of Theorem~\ref{mtheorem=MainTheorem}. Unlike the proofs in Section~\ref{section=TheoremA(i)}, we do not need to impose conditions on $G_m$, $m\in \mathbb{N}$. First, we provide the proof where $(\mathbf{G}_m)_m$ is a convergent sequence.

\begin{proof}[Proof of $(ii)$ of Theorem~$\ref{mtheorem=MainTheorem}$ for the case where $\sharp(\partial_{\mathrm{Cay}}(\mathbf{G}_m)_m)=1$]

Let $\mathbf{G}_{\infty}$ be the Cayley limit of $(\mathbf{G}_m)_m$. For each $m\in \mathbb{N}$, take $R_m$ and $\beta_{\mathbf{G}_m,\mathbf{G}_{\infty},R_m}$ as in Lemma~\ref{lemma=Neighborhood}. Assume that there exist $M\in \mathcal{M}$ and an equivariant coarse embedding from $\mathbf{G}_{\infty}$ into $M$ with equivariant control pair $(\rho,\omega)$, Let $\alpha\colon M\curvearrowleft G_{\infty}$ be an action by isometries and $y\in M$ such that the orbit map $G_{\infty}\ni g_{\infty}\mapsto y\cdot \alpha(g)\in M$ gives the (equivariant) coarse embedding above. Write as  $X=\bigsqcup_{m\in \mathbb{N}}\mathrm{Cay}(\mathbf{G}_m)$.

Let $R'_m=\lfloor R_m/2\rfloor$ for every $m\in \mathbb{N}$ and  $M_x=M$ for every $x\in X$. Define a section $s\colon X\to \bigsqcup_{x\in X}M$ by $s(x)=y (\in M=M_x)$ for every $x\in X$. Now for $m\in \mathbb{N}$, $g\in \mathbf{G}_m$, define a local trivialization $t_{g,R'_m}\colon \bigsqcup_{x\in B_{\mathbf{G}_m}(g,R'_m)}M\to B_{\mathbf{G}_m}(g,R'_m)\times M$ by
\[
(t_{g,R'_M}(x))(z)=z\cdot \alpha (\beta_{\mathbf{G}_m,\mathbf{G}_{\infty},R_m}(xg^{-1})),\quad \textrm{for $x\in B_{\mathbf{G}_m}(g,R'_m)$ and $z\in M$.}
\]
Here note that since $G_m$ acts on $\mathrm{CayD}(\mathbf{G}_m)$ by right, $B_{\mathbf{G}_m}(g,R'_m) g^{-1}=B_{\mathbf{G}_m}(e_{G_m},R'_m)$. 

In what follows, we will verify conditions $(1)$--$(3)$ of Lemma~\ref{lemma=BallsFibredCoarseEmbedding}. For $(1)$, for each $x\in B_{\mathbf{G}_m}(g,R'_m)$, the map $t_{g,R'_M}(x)\colon M\to M$ is an isometry. For $x_1,x_2\in B_{\mathbf{G}_m}(g,R'_m)$,
\begin{eqnarray*}
& &d_M((t_{g,R'_M}(x_1))(s(x_1)),(t_{g,R'_M}(x_2))(s(x_2)))\\
&=& d_M(y\cdot \alpha (\beta_{\mathbf{G}_m,\mathbf{G}_{\infty},R_m}(x_1g^{-1})),y\cdot \alpha (\beta_{\mathbf{G}_m,\mathbf{G}_{\infty},R_m}(x_2g^{-1}))
\end{eqnarray*}
Since 
\begin{eqnarray*}
& &d_{\mathbf{G}_{\infty}}(\beta_{\mathbf{G}_m,\mathbf{G}_{\infty},R_m}(x_1g^{-1}),\beta_{\mathbf{G}_m,\mathbf{G}_{\infty},R_m}(x_2g^{-1}))\\
&=& d_{\mathbf{G}_m}(x_1g^{-1},x_2g^{-1})\\
&=& d_{\mathbf{G}_m}(x_1,x_2),
\end{eqnarray*}
it follows $(2)$. Finally, we check $(3)$. Let $B_{\mathbf{G}_m}(g,R'_m)\cap B_{\mathbf{G}_m}(g',R'_m)\ne \emptyset$. For each $x\in B_{\mathbf{G}_m}(g_1,R'_m)\cap B_{\mathbf{G}_m}(g_2,R'_m)$, 
\begin{eqnarray*}
((t_{g_1,R'_M}(x))\circ (t_{g_2,R'_M}(x))^{-1})(z)
&=& (z\cdot \alpha((\beta_{\mathbf{G}_m,\mathbf{G}_{\infty},R_m}(xg_2^{-1}))^{-1})) \cdot \alpha(\beta_{\mathbf{G}_m,\mathbf{G}_{\infty},R_m}(xg_1^{-1}))\\
&=& (z\cdot \alpha((\beta_{\mathbf{G}_m,\mathbf{G}_{\infty},R_m}(g_2x^{-1}))) \cdot \alpha(\beta_{\mathbf{G}_m,\mathbf{G}_{\infty},R_m}(xg_1^{-1}))\\
&=& z \cdot \alpha(\beta_{\mathbf{G}_m,\mathbf{G}_{\infty},R_m}(g_2x^{-1})\beta_{\mathbf{G}_m,\mathbf{G}_{\infty},R_m}(xg_1^{-1}))\\
&=& z\cdot \alpha(\beta_{\mathbf{G}_m,\mathbf{G}_{\infty},R_m}(g_2x^{-1}xg_1^{-1}))\\
&=& z\cdot \alpha(\beta_{\mathbf{G}_m,\mathbf{G}_{\infty},R_m}(g_2g_1^{-1})).
\end{eqnarray*}
Indeed, here we observe that $\beta_{\mathbf{G}_m,\mathbf{G}_{\infty},R_m}$ is a partial isomorphism from $B_{\mathbf{G}_m}(e_{G_m},R_m)$ to $B_{\mathbf{G}_{\infty}}(e_{G_{\infty}},R_m)$ and that $g_2g_1^{-1} \in B_{\mathbf{G}_m}(e_{G_m},2R'_m)\subseteq B_{\mathbf{G}_m}(e_{G_m},R_m)$. The expression in the very below side of the equalities above is independent of $x\in B_{\mathbf{G}_m}(g_1,R'_m)\cap B_{\mathbf{G}_m}(g_2,R'_m)$. It proves $(3)$, and hence our proof completes. Moreover, it follows from our arguments that
\[
(\rho,\omega)\in \mathcal{CP}_{M}^{\mathrm{fib}}(X).
\]
\end{proof}

We proceed to the proof of the general case; we here employ the class $\ell_q(\mathcal{M})$. Recall the definition of an open neighborhood $N(\mathbf{G},R)$ of $\mathbf{G}$ from Lemma~\ref{lemma=Neighborhood}.

\begin{proof}[Proof of $(ii)$ of Theorem~$\ref{mtheorem=MainTheorem}$ in full generality]  For each $R\in \mathbb{N}$, $\{N(\mathbf{H},R):\mathbf{H}\in \partial_{\mathrm{Cay}}(\mathbf{G}_m)_m\}$ is an open cover of $\partial_{\mathrm{Cay}}(\mathbf{G}_m)_m$. By compactness of $\partial_{\mathrm{Cay}}(\mathbf{G}_m)_m$, there exist $i(R)\in \mathbb{N}$ and $\mathbf{H}_{0}^{(R)},\ldots ,\mathbf{H}_{i(R)}^{(R)}$ such that
\[
\bigcup_{i=0}^{i(R)}N(\mathbf{H}_{i}^{(R)},R)\supseteq \partial_{\mathrm{Cay}}(\mathbf{G}_m)_m.
\]
Let $H_{0}^{(R)},\ldots ,H_{i(R)}^{(R)}$ be, respectively, the underlying groups of $\mathbf{H}_{0}^{(R)},\ldots ,\mathbf{H}_{i(R)}^{(R)}$.
By definition of $\partial_{\mathrm{Cay}}(\mathbf{G})_m$, for each $R$, there exists $n_R\in \mathbb{N}_{\geq 1}$ such that,
\[
\bigcup_{i=0}^{i(R)}N(\mathbf{H}_{i}^{(R)},R)\supseteq \overline{(\mathbf{G}_m)_m}^{\mathrm{Cay}} \setminus \{\mathbf{G}_0,\ldots ,\mathbf{G}_{n_R-1}\}
\]
holds, where $\overline{\{\ \}}^{\mathrm{Cay}}$ denotes the closure in the Cayley topology. Note that for $R=0$, then the left-hand side above,  in fact, includes $\overline{(\mathbf{G}_m)_m}^{\mathrm{Cay}}$.
For each $R\in \mathbb{N}$ and  for every $m\in \mathbb{N}_{\geq n_R}$, choose $0\leq i\leq i(R)$ such that $\mathbf{G}_m\in N(\mathbf{H}_{i}^{(R)},R)$ (if there exist at least two such $i$, choose the smallest $i$). We write this $i$ as $i_m^{(R)}$.

Set a new disjoint union as
\[
X'=\bigsqcup_{R\in \mathbb{N}}\left(\bigsqcup_{0\leq i\leq i(R)} \left(\bigsqcup_{n_R\leq m \leq n_{R+1}+R\ :\ i_m^{(R)}=i} \mathrm{Cay}(\mathbf{G}_m) \right)\right).
\]

Now assume that $\partial_{\mathrm{Cay}}(\mathbf{G}_m)_m$ is uniformly a-$\mathcal{M}$-menable; there exists a pair $(\rho,\omega)$ of non-decreasing proper functions $[0,\infty)\to [0,\infty)$ such that
\[
(\rho,\omega)\in \bigcap_{\mathbf{H}\in \partial_{\mathrm{Cay}}(\mathbf{G}_m)_m} \mathcal{CP}^{\sharp}_{\mathcal{M}}(\mathbf{H}).
\]
In particular, for every $R\in \mathbb{N}$ and for every $0\leq i\leq i(R)$, there exist $M_{i}^{(R)}\in \mathcal{M}$, $y_{i}^{(R)}\in M_i^{(R)}$ and an action $\alpha_{i}^{(R)}\colon M_{i}^{(R)}\curvearrowleft H_i^{(R)}$ such that the orbit map $H_i^{(R)}\ni h \mapsto y_i^{(R)}\cdot \alpha_i^{(R)}(h) \in M_i^{(R)}$ is an (equivariant) coarse embedding with equivariant control pair $(\rho,\omega)$. Fix $q\in [1,\infty)$ and define
\[
M_q=\left(\prod_{R\in \mathbb{N}}\left(\prod_{0\leq i\leq i(R)} (M_{i}^{(R)},y_i^{(R)})\right)\right)_{\ell_q}.
\]
Note that this is an (at most) countable $\ell_q$-product; hence $M_q\in \ell_q(\mathcal{M})$. 

By Lemma~\ref{lemma=Subsets}, it suffices to construct a fibred coarse embedding as a disjoint union from $X'$ into $M_q$. Let $(M_q)_x=M_q$ for all $x\in X'$ and $s\colon X'\to \bigsqcup_{x\in X'}M_q $ be $s(x)=(y_{j}^{(r)})_{r,j}$. For each $n_R\leq m\leq n_{R+1}+R$ with $i_m^{(R)}=i$, consider the component $\mathrm{Cay}(\mathbf{G}_m)$ in $X'$ associated with these $R$ and $i$. Set $R'_m=\lfloor R/2 \rfloor$ and construct $t_{g,R_m'}$ by
\[
(t_{g,R_m'}(x))((z)_{r,j})=(w_{r,j})_{r,j}
\]
for $x\in B_{\mathbf{G}_m}(g,R_m')$ and for $(z)_{r,j}\in M_q$, where,
\[
w_{r,j}=
\left\{
\begin{array}{cl}
z_{R,i}\cdot \alpha_i^{(R)}(\beta_{\mathbf{G}_m,\mathbf{H}_{i}^{(R)},R}(xg^{-1})), & \textrm{if $(r,j)=(R,i)$},\\
z_{r,j}, & \textrm{otherwise}.
\end{array}\right.
\]
Then in a similar argument to one in the previous proof for the case where $\sharp(\partial_{\mathrm{Cay}}(\mathbf{G}_m))=1$, we may verify conditions $(1)$--$(3)$ in Lemma~\ref{lemma=BallsFibredCoarseEmbedding}; recall also Remark~\ref{remark=DependenceOnR}. Furthermore, we obtain that
\[
(\rho,\omega) \in \mathcal{CP}_{M_q}^{\mathrm{fib}}\left(\bigsqcup_{m\in \mathbb{N}}\mathrm{Cay}(\mathbf{G}_m)\right).
\]
\end{proof}

\begin{proof}[Proof of Corollary~$\ref{mcorollary=a-T-menable}$]
For the case where $\mathcal{M}=\mathcal{H}\mathrm{ilbert}$, $\mathcal{B}_{r,C}^{\mathrm{type}}$ or $\mathcal{CAT}(0)_{\leq \delta_0}$, the assertions immediate follow from Theorem~\ref{mtheorem=MainTheorem}; see also arguments in Examples~\ref{example=BanachSpaces} and \ref{example=MetricSpaces}. For $\mathcal{M}=L_q$,  Naor and Y. Peres employed the classification of separable closed subspaces of $L_q$-spaces and indicated a way to coming back to $L_q$ from $\mathcal{FS}_q(L_q)$; see the last assertion of \cite[Theorem~9.1]{NaorPeres} for more details.
\end{proof}

\section{The absorption trick}\label{section=Gadgets}
In this section, we explain the \textit{absorption trick}, which appeared in the work of Bartholdi and Erschler \cite{BartholdiErschler}. We employ this trick to prove Theorem~\ref{mtheorem=NonExact}.

\subsection{Standard (restricted) wreath products}\label{subsection=WreathProducts}
We recall the definition of \textit{standard} (restricted) \textit{wreath products}; see also \cite[Proposition~2.11]{MimuraSako}.  For two groups $G$ and $H$, $G\wr H$ is defined to be $\left(\bigoplus_{h\in H}G\right)\rtimes H$, where $H$ acts by permutation of coordinates by right. For $g\in G$ and $h\in H$, by $g\delta_h$ we denote the element in $\bigoplus_{h\in H}G$ whose $h$-entry is $g$ and all of the other entries are $e_G$. We use $\mathbf{e}$ for the group unit of $\bigoplus_{h\in H}G$. If $\mathbf{G}=(G;s_1,\ldots ,s_k)$ and $\mathbf{H}=(H;t_1,\ldots ,t_l)$ are two marked groups, then we endow $G\wr H$ with the standard $(k+l)$-marking as follows:
\[
((s_1\delta_{e_H},e_H),(s_2\delta_{e_H},e_H),\ldots ,(s_k\delta_{e_H},e_H),(\mathbf{e},t_1),(\mathbf{e},t_2),\ldots ,(\mathbf{e},t_l)).
\]
We write the marked group of $G\wr H$ with the standard marking above as $\mathbf{G}\wr\mathbf{H}$. Then, for $\mathbf{G}_m\to \mathbf{G}_{\infty}$ and $\mathbf{H}_n\to \mathbf{H}_{\infty}$ (respectively in $\mathcal{G}(k)$ and $\mathcal{G}(l)$) in the Cayley topology, we have that as $\min\{m,n\}\to \infty$,
\[
\mathbf{G}_m\wr \mathbf{H}_n\quad \stackrel{\mathrm{Cay}}{\longrightarrow} \quad \mathbf{G}_{\infty}\wr \mathbf{H}_{\infty} \quad \textrm{in $\mathcal{G}(k+l)$};
\]
see ${\S}$2.4. Theorem in \cite{VershikGordon} or  \cite[Lemma~4.6]{MimuraNonExact}. This may be clear to the reader who is familiar with a relationship between wreath products and random walks.

\subsection{The absorption trick}\label{subsection=AbsorptionLemma}

The following lemma enables us to \textit{absorb a group into some abelian group by taking the wreath product by an infinite group}. For this reason, we call the idea of it \textit{the absorption trick}. The original form in the paper of Bartholdi and Erschler \cite{BartholdiErschler} stated it in terms of permutational (restricted) wreath products; here we formulate it for a simpler case.

\begin{lemma}[A prototype of the \textit{absorption trick}; Special case of Lemma~$6.13$ in Bartholdi--Erschler \cite{BartholdiErschler}]\label{lemma=Absorption}
Let $G$ be a finitely generated group and fix $(g_1,\ldots ,g_k)$ a marking of $G$. For each $j\in [k]$, let $C_j$ be the cyclic group of the same order as for $g_j$. Then, for every infinite and finitely generated group $P$, there exists a system of marking $(S_m)_{m\in \mathbb{N}}$ of $G\wr P$ with fixed size such that
\[
(G\wr P;S_m) \quad \stackrel{\mathrm{Cay}}{\longrightarrow}\quad (C_1\times C_2\times \cdots \times C_k)\wr P,
\]
with a suitable marking of the Cayley limit group.
\end{lemma}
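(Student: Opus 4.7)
The plan is to spread the $k$ generators of $G$ across $P$-translates so that they commute automatically in $G \wr P$ by disjointness of their supports, and to arrange things so that the Cayley limit reinterprets these newly-commuting copies as the abelian group $C_1 \times \cdots \times C_k$. First I would fix a marking $(t_1, \ldots, t_l)$ of $P$ and, using infiniteness of $P$, inductively choose for each $m$ elements $p_m^{(1)}, \ldots, p_m^{(k)} \in P$ satisfying
\[
\lim_{m \to \infty} |p_m^{(i)} (p_m^{(j)})^{-1}|_P = +\infty \quad \text{for all } i \neq j \in [k]
\]
(at each inductive step, picking the next $p_m^{(j)}$ outside a finite union of balls in $P$, which is possible since $P$ is infinite and its balls are finite). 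The marking would then be
\[
S_m = \bigl( (g_1 \delta_{p_m^{(1)}}, e_P),\ldots,(g_k \delta_{p_m^{(k)}}, e_P),\ (\mathbf{e}, t_1),\ldots,(\mathbf{e}, t_l) \bigr),
\]
of constant size $k + l$; conjugating the first $k$ entries by words in the $(\mathbf{e},t_\ell)$'s produces all $(g_j \delta_q, e_P)$ for $q \in P$, so $S_m$ indeed generates $G \wr P$. The target marking of $(C_1 \times \cdots \times C_k) \wr P$ would be the standard one built from chosen generators $c_j$ of $C_j$ and the same $t_\ell$'s.

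To establish Cayley convergence, I would fix $R \in \mathbb{N}$ and, by Lemma~\ref{lemma=Neighborhood}, reduce to showing that for all sufficiently large $m$, every word $\xi = x_1 \cdots x_L$ of length $L \leq 2R$ in the generators represents the identity in $(G \wr P; S_m)$ iff it does so in $(C_1 \times \cdots \times C_k) \wr P$. Letting $w_{i-1} \in P$ denote the product of the $P$-components of $x_1, \ldots, x_{i-1}$, a direct semidirect-product expansion gives $\xi = (F_\xi, w_L)$ with
\[
F_\xi = \prod_{i\,:\,x_i = s_{m, j_i}^{\epsilon_i}} g_{j_i}^{\epsilon_i}\, \delta_{w_{i-1} p_m^{(j_i)}} \in \bigoplus_{p \in P} G,
\]
taken in order of increasing $i$. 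Choosing $m$ so large that $|p_m^{(i)}(p_m^{(j)})^{-1}|_P > 4R$ for all $i \neq j$ makes $(w, j) \mapsto w p_m^{(j)}$ injective on $\{w \in P : |w|_P \leq 2R\} \times [k]$; then at each coordinate $q \in P$ all contributions to $F_\xi$ share a common pair $(w, j)$, and the condition $F_\xi(q) = e_G$ reduces to $g_j^{\sum_i \epsilon_i} = e_G$, i.e.\ $\sum_i \epsilon_i \equiv 0 \pmod{|g_j|}$. The parallel computation in the limit group produces the identical bookkeeping but with conditions $\sum_i \epsilon_i \equiv 0 \pmod{|C_j|}$ indexed by the same pairs $(w, j)$, because in the limit all generators share the coordinate $w$ while the direct-product structure of $C_1 \times \cdots \times C_k$ decouples the single per-coordinate condition into $k$ independent cyclic ones. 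The hypothesis $|g_j| = |C_j|$ then aligns the two families of relations, and the $P$-part condition $w_L = e_P$ is verified identically in both groups.

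The main obstacle, and the place where the choice of $p_m^{(j)}$'s does its work, will be the reconciliation of two apparently mismatched pictures of the base group $\bigoplus_{p \in P} G$: in $(G \wr P; S_m)$ each pair $(w, j)$ occupies its own coordinate $w p_m^{(j)}$, whereas in the limit all pairs sharing the same $w$ collapse to the single coordinate $w$. The plan is to show that these two pictures cancel out, because on one side the separation turns the non-abelian $G$-structure into $k$ independent cyclic conditions (one per $j$), while on the other side the abelianness of $C_1 \times \cdots \times C_k$ produces those same $k$ conditions from a single coordinate. Verifying that no accidental coincidence $w p_m^{(j_1)} = w' p_m^{(j_2)}$ creates a relation in $(G \wr P; S_m)$ absent from the limit, and conversely that every limit relation is realised, is where most of the careful bookkeeping concentrates; the quantitative separation condition on the $p_m^{(j)}$'s has been chosen precisely to make this bookkeeping work for each fixed radius $R$.
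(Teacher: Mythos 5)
Your proposal is correct and follows essentially the same approach as the paper's proof: spread the $G$-generators out to well-separated $P$-translates so that, within balls of a given radius, the only group-theoretic constraints that survive are the cyclic relations on each $g_j$, matching $C_1\times\cdots\times C_k$. The paper states the same idea more tersely—it phrases the separation via mutually disjoint balls $B_{\mathrm{CayD}(P;T)}(x_j^{(m)},m)$ and then asserts that conjugates $\tau^{-1}\sigma\tau$ of distinct $(S_m)_1$-generators commute inside the $m$-ball, delegating the remaining bookkeeping to the argument of its Lemma~5.1—whereas you write out the semidirect-product expansion and the injectivity of $(w,j)\mapsto wp_m^{(j)}$ explicitly, which is exactly what makes the paper's commutativity claim (and the subsequent ball isomorphism) hold. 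Your account is therefore a correct and somewhat more self-contained rendering of the same argument.
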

For the sake of completeness, we include (idea of) the proof. See \cite[Subsection~5.2]{MimuraNonExact} for a more detailed demonstration for $P=\mathbb{Z}$.

\begin{proof}
Fix a marking $T=(t_1,\ldots ,t_l)$ of $P$. Since $P$ is infinite, for every $m\in \mathbb{N}$, there exists $e_{P}=x_1^{(m)},x_2^{(m)},\ldots ,x_k^{(m)}\in P$ such that $B_{\mathrm{CayD}(P;T)}(x_j^{(m)},m)$, $j\in [k]$, are mutually disjoint. Now, define a system $(S_m)_{m\in \mathbb{N}}$ of markings of $G\wr P$ by 
\[
S_m=((g_1\delta_{e_P},e_P),(g_2\delta_{x_2^{(m)}},e_P),\ldots ,(g_k\delta_{x_k^{(m)}},e_P),(\mathbf{e},t_1),\ldots,(\mathbf{e},t_l)),
\]
where $\mathbf{e}$ means the group unit of $\bigoplus_{P}G$. Let $(S_m)_1$ be the set of the first $k$ elements in the marking $S_m$. Then the following holds true: \textit{For   $\gamma_1, \gamma_2$ elements in $G\wr P$ of the form $\tau^{-1} \sigma \tau$, $\sigma\in (S_m)_1$, $\tau\in P$, if $\gamma_1,\gamma_2$ and $\gamma_1\gamma_2$ are all contained in the ball $B_{(G\wr P;S_m)}(e_{G\wr P},m)$ of radius $m$, then $\gamma_1$ and $\gamma_2$ commute}. By a similar reasoning to one in the proof of \cite[Lemma~5.1]{MimuraSako}, we conclude that as $m\to \infty$,
\[
(G\wr P;S_m)\quad \stackrel{\mathrm{Cay}}{\longrightarrow}\quad  (C_1\times C_2 \times \cdots \times C_k)\wr P, 
\]
with a suitable marking of the Cayley limit group. See \cite[the proof of Lemma~5.3]{MimuraNonExact} for more details.
\end{proof}

Since the constant sequence of $G\wr P$ with a fixed standard marking converge to itself, Lemma~\ref{lemma=Absorption} can be utilized as a source of producing two systems of different markings of a group that produce Cayley limit groups of quite different nature. For instance, we have the following.

\begin{lemma}[A variant of the absorption trick]\label{lemma=AbsorptionZ}
Let $G$ be a LEF group. Let $(G_m)_{m\in \mathbb{N}}$ be a sequence of finite groups that is obtained from the underlying groups of a LEF approximation of $G$ $($with a fixed marking$)$. Then, there exist two different systems of markings $(S_m)_m$ and $(T_m)_m$ of $(G_m \wr (\mathbb{Z}/m\mathbb{Z}))_{m\in \mathbb{N}_{\geq 3}}$ such that the following two conditions hold true:
\begin{itemize}
  \item The sequence $(G_m \wr (\mathbb{Z}/m\mathbb{Z});S_m)_{m\in \mathbb{N}_{\geq 3}}$ converges in the Cayley topology  to a solvable marked group.
  \item The sequence $(G_m \wr (\mathbb{Z}/m\mathbb{Z});T_m)_{m\in \mathbb{N}_{\geq 3}}$ converges in the Cayley topology  to $G\wr \mathbb{Z}$ with a suitable marking.
\end{itemize}
\end{lemma}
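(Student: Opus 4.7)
The plan is to adapt the absorption trick of Lemma~\ref{lemma=Absorption} to the setting where the infinite base group $P$ is replaced by $\mathbb{Z}/m\mathbb{Z}$; the crucial point is that, although $\mathbb{Z}/m\mathbb{Z}$ is finite, its diameter grows with $m$, which is all the room needed to run the absorption argument on balls of radius tending to infinity. Fix a marking $\mathbf{G}=(G;g_1,\ldots,g_k)$ of $G$ and a LEF approximation $\mathbf{G}_m=(G_m;g_1^{(m)},\ldots,g_k^{(m)})\stackrel{\mathrm{Cay}}{\to}\mathbf{G}$ as in the hypothesis. For $T_m$, I would take the standard marking
\[
T_m = ((g_1^{(m)}\delta_0,0),\ldots,(g_k^{(m)}\delta_0,0),(\mathbf{e},1))
\]
induced by $\mathbf{G}_m$ and the standard generator of $\mathbb{Z}/m\mathbb{Z}$. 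Since $(\mathbb{Z}/m\mathbb{Z};1)\stackrel{\mathrm{Cay}}{\to}(\mathbb{Z};1)$ and $\mathbf{G}_m\stackrel{\mathrm{Cay}}{\to}\mathbf{G}$, the continuity of the (restricted) wreath product with respect to Cayley convergence recalled in Subsection~\ref{subsection=WreathProducts} immediately gives $(G_m\wr\mathbb{Z}/m\mathbb{Z};T_m)\stackrel{\mathrm{Cay}}{\to} G\wr\mathbb{Z}$ with no further work.

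For $S_m$, I would choose points $x_1^{(m)}=0,x_2^{(m)},\ldots,x_k^{(m)}\in\mathbb{Z}/m\mathbb{Z}$ (for concreteness, $x_j^{(m)}=(j-1)\lfloor m/(3k)\rfloor$) such that the balls $B_{\mathbb{Z}/m\mathbb{Z}}(x_j^{(m)},r_m)$ are mutually disjoint, where $r_m=\lfloor m/(4k)\rfloor\to\infty$. Set
\[
S_m = ((g_1^{(m)}\delta_{x_1^{(m)}},0),\ldots,(g_k^{(m)}\delta_{x_k^{(m)}},0),(\mathbf{e},1)).
\]
Conjugation by powers of $(\mathbf{e},1)$ moves the $\delta$-indices freely through $\mathbb{Z}/m\mathbb{Z}$, so $S_m$ is indeed a marking of $G_m\wr\mathbb{Z}/m\mathbb{Z}$. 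The claim to be verified is that $(G_m\wr\mathbb{Z}/m\mathbb{Z};S_m)\stackrel{\mathrm{Cay}}{\to} (C_1\times\cdots\times C_k)\wr\mathbb{Z}$, equipped with the analogous marking $((c_j\delta_0,0))_{j\in[k]}$ together with $(\mathbf{e},1)$, where $c_j$ is a generator of $C_j$ (any fixed choice of base positions in $\mathbb{Z}$ works since the base $\bigoplus_{\mathbb{Z}}(C_1\times\cdots\times C_k)$ is abelian). Since $(C_1\times\cdots\times C_k)\wr\mathbb{Z}$ is metabelian, hence solvable, this yields the desired solvable limit.

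I would verify the Cayley convergence via the partial-isomorphism criterion of Lemma~\ref{lemma=Neighborhood}. The key observation, mirroring the proof of Lemma~\ref{lemma=Absorption}, is that for any two conjugates $h^{-s}\gamma_i h^{s}$ and $h^{-t}\gamma_j h^{t}$ (with $h=(\mathbf{e},1)$ and $\gamma_j=(g_j^{(m)}\delta_{x_j^{(m)}},0)$) arising within the ball of radius $R$ around the identity, one has $|s|,|t|\leq R$, so for $m$ with $r_m>R$ the spacing guarantees that their $\delta$-supports are disjoint whenever $(i,s)\neq(j,t)$; hence they commute in $\bigoplus_{\mathbb{Z}/m\mathbb{Z}} G_m$ regardless of whether $G_m$ is abelian. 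Combined with the LEF property (which identifies $\langle g_j^{(m)}\rangle$ with $\langle g_j\rangle=C_j$ on balls of radius $>R$ for $m$ large) and the convergence $(\mathbb{Z}/m\mathbb{Z};1)\stackrel{\mathrm{Cay}}{\to}(\mathbb{Z};1)$, this yields a rooted-diagram isomorphism of the radius-$R$ balls in $\mathrm{CayD}(G_m\wr\mathbb{Z}/m\mathbb{Z};S_m)$ and in the Cayley diagram of the proposed limit. The main obstacle is the routine but careful radius bookkeeping: for each target $R$, three distinct approximation radii (the LEF radius for $\mathbf{G}_m\to\mathbf{G}$, the parameter $r_m$, and the radius up to which $\mathbb{Z}/m\mathbb{Z}$ agrees with $\mathbb{Z}$) must all exceed a common function of $R$ simultaneously. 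Since all three tend to infinity, this is automatic; writing the estimates out explicitly is the most tedious part, but the combinatorial content is entirely contained in the proof of Lemma~\ref{lemma=Absorption}.
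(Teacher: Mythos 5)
Your proof proposal takes essentially the same route as the paper: the standard marking $T_m$ gives convergence to $G\wr\mathbb{Z}$ by continuity of the wreath product under Cayley convergence, while the spread-out marking $S_m$ forces the conjugated generators to have disjoint supports on large balls, so the limit is $(C_1\times\cdots\times C_k)\wr\mathbb{Z}$, a metabelian hence solvable group. One small arithmetic slip: with $x_j^{(m)}=(j-1)\lfloor m/(3k)\rfloor$ and $r_m=\lfloor m/(4k)\rfloor$, consecutive centers are only about $m/(3k)$ apart while $2r_m\approx m/(2k) > m/(3k)$, so those balls actually overlap; choosing, say, $x_j^{(m)}=(j-1)\lfloor m/k\rfloor$ (or shrinking $r_m$ to something like $\lfloor m/(8k)\rfloor$, as the paper effectively does) fixes this, and nothing else in the argument changes.
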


We will make use of another variant of the absorption trick in the proof of Theorem~\ref{mtheorem=NonExact}.
\begin{proof}
Fix $(g_1,\ldots ,g_k)$ a marking of $G$. For every $m\in \mathbb{N}_{\geq 3}$, let $(g_1^{(m)},\ldots ,g_k^{(m)})$ be the corresponding marking of $G_m$ in the LEF approximation. Note that $((\mathbb{Z}/m\mathbb{Z};1))_{m\in \mathbb{N}_{\geq 3}}$ converges to $(\mathbb{Z};1)$ in the Cayley topology; here we can take $R_m=\lfloor (m-1)/2 \rfloor$ as in $(\star)$ in Lemma~\ref{lemma=Neighborhood}. For every $m\in \mathbb{N}_{\geq 3}$, set
\[
r_m=\min\{R_m, \left\lfloor\frac{\mathrm{diam}(\mathrm{CayD}(\mathbb{Z}/m\mathbb{Z});1)}{4k}\right\rfloor \}.
\]
Then, $\lim_{m\to \infty}r_m=+\infty$. There exist $0=x_1^{(m)}, x_2^{(m)},\ldots ,x_k^{(m)} \in \mathbb{Z}/m\mathbb{Z}$ such that $B_{\mathrm{CayD}(\mathbb{Z}/m\mathbb{Z};1)}(x_j^{(m)},r_m)$, $j\in [k]$, are mutually disjoint. Finally, define two systems $(S_m)_m$ and $(T_m)_m$ of markings of $(G_m \wr (\mathbb{Z}/m\mathbb{Z}))_{m\in \mathbb{N}_{\geq 3}}$ by
\begin{eqnarray*}
S_m&=&((g_1^{(m)}\delta_{e_{\mathbb{Z}/m\mathbb{Z}}},e_{\mathbb{Z}/m\mathbb{Z}}),(g_2^{(m)}\delta_{x_2^{(m)}},e_{\mathbb{Z}/m\mathbb{Z}}),\ldots ,(g_k^{(m)}\delta_{x_k^{(m)}},e_{\mathbb{Z}/m\mathbb{Z}}),(\mathbf{e},1)),\\
T_m&=&((g_1^{(m)}\delta_{e_{\mathbb{Z}/m\mathbb{Z}}},e_{\mathbb{Z}/m\mathbb{Z}}),(g_2^{(m)}\delta_{e_{\mathbb{Z}/m\mathbb{Z}}},e_{\mathbb{Z}/m\mathbb{Z}}),\ldots ,(g_k^{(m)}\delta_{e_{\mathbb{Z}/m\mathbb{Z}}},e_{\mathbb{Z}/m\mathbb{Z}}),(\mathbf{e},1)),
\end{eqnarray*}
where $\mathbf{e}$ means the group unit of $\bigoplus_{\mathbb{Z}/m\mathbb{Z}}G_m$. (Hence $T_m$ is the standard marking of $G_m\wr(\mathbb{Z}/m\mathbb{Z})$.) Then, we have that
\begin{eqnarray*}
(G_m\wr (\mathbb{Z}/m\mathbb{Z});S_m) &\stackrel{\mathrm{Cay}}{\longrightarrow}&  (C_1\times C_2 \times \cdots \times C_k)\wr \mathbb{Z}, \\
(G_m\wr (\mathbb{Z}/m\mathbb{Z});T_m) &\stackrel{\mathrm{Cay}}{\longrightarrow}&  G\wr \mathbb{Z}, 
\end{eqnarray*}
where for every $j\in [k]$, $C_j$ is the cyclic group of the same order as for $g_j$. 
\end{proof}

\begin{remark}\label{remark=Gruenberg}
K. W. Gruenberg \cite{Gruenberg} showed that a wreath product $G\wr H$ with an infinite $H$ is \textit{never} RF unless $G$ is abelian. Hence, our construction  as in Lemma~\ref{lemma=AbsorptionZ} may be available only after we extend the framework from RF approximations to LEF ones. In addition, if $G$ is not abelian, then the Cayley convergence of $(G_m\wr P_m;S_m)$ to the amenable marked group $(C_1\times  \cdots \times C_k)\wr P$ above is a LEF approximation, but \textit{not} a RF one. This is  because $C_1\times \cdots \times C_k$ is abelian but $G_m$ for large $m$ is not.
\end{remark}

We refer the reader to \cite{MimuraNonExact} for a further development on the absorption trick.

\section{Examples}\label{section=Examples}
\subsection{Special linear groups}\label{subsection=SpecialLinear}
Here we discuss coarse properties of $X',Y',V'$ and $W'$ as in Examples~\ref{example=SpecialLinear} and \ref{example=SpecialLinearZ}. In our Part I paper \cite[Remark~5.10]{MimuraSako}, we observed that
\begin{eqnarray*}
(G_m;S_m)\stackrel{\mathrm{Cay}}{\longrightarrow} \mathrm{N}_{>}(\mathbb{Z},\mathbb{F}_p[t])\rtimes \mathbb{Z}&,& (G_m;T_m)\stackrel{\mathrm{Cay}}{\longrightarrow} \mathrm{SL}(\mathbb{Z},\mathbb{F}_p[t])\rtimes \mathbb{Z},\\
(H_m;P_m)\stackrel{\mathrm{Cay}}{\longrightarrow} \mathrm{N}_{>}(\mathbb{Z},\mathbb{Z})\rtimes \mathbb{Z}&,& (H_m;Q_m)\stackrel{\mathrm{Cay}}{\longrightarrow} \mathrm{SL}(\mathbb{Z},\mathbb{Z})\rtimes \mathbb{Z},
\end{eqnarray*}
with respectively suitable markings of the Cayley limit groups. Here for a unital commutative ring $A$ (associative), the group $\mathrm{SL}(\mathbb{Z},A)$ denotes the union of $\mathrm{SL}(K,A)=\{g\in \mathrm{Mat}_{K\times K}(A):\mathrm{det}(g)=1\}$ over all finite non-empty sets $K\subseteq \mathbb{Z}$ (via the natural inclusion $\mathrm{SL}(K,A)\hookrightarrow \mathrm{SL}(\mathbb{Z},A)$). Similarly, $\mathrm{N}_{>}(\mathbb{Z},A)$ denotes the union of 
\[
\mathrm{N}_{>}(K,A)=\{g\in \mathrm{Mat}_{K\times K}(A):(g)_{i,i}=1 \textrm{ for all $i\in K$},\ (g)_{i,j}=0 \textrm{ for all $i>j$, $i,j\in K$}\}\]
over all finite non-empty sets $K\subseteq \mathbb{Z}$. Here $>$ is the standard total order on $\mathbb{Z}$. The actions of $\mathbb{Z}$ in the semi-direct products above are given by the right translation of $\mathbb{Z}$ on the coordinate set $\mathbb{Z}$. In the Part I paper \cite[Remark~5.10]{MimuraSako}, we deduced property A for $X'$ and $V'$ by amenability of the Cayley limit groups $\mathrm{N}_{>}(\mathbb{Z},\mathbb{F}_p[t])\rtimes \mathbb{Z}$ and $\mathrm{N}_{>}(\mathbb{Z},\mathbb{Z})\rtimes \mathbb{Z}$. 

\begin{definition}\label{definition=FixedPointProperty}
Let $\mathcal{M}$ be a non-empty class of metric spaces. We say that a group $G$ has property $(\mathrm{F}_{\mathcal{M}})$ if for every $M\in \mathcal{M}$, every action $\alpha\colon M\curvearrowleft G$ by isometries admits a global fixed point.
\end{definition}

The following are showed by several researchers.
\begin{theorem}\label{theorem=StrongPropertyT}
\begin{enumerate}[$(1)$]
\item $($V. Lafforgue \cite{Lafforgue1}, \cite{Lafforgue2}$)$ For every prime $p$ and for every $n\in \mathbb{N}_{\geq 3}$, the group $\mathrm{SL}(n,\mathbb{F}_p[t])$ has property $(\mathrm{F}_{\mathcal{B}_{\mathrm{type}>1}})$.
\item $($Izeki--Nayatani \cite{IzekiNayatani}$)$ For every prime $p$ and for every $n\in \mathbb{N}_{\geq 3}$, the group $\mathrm{SL}(n,\mathbb{F}_p[t])$ has property $(\mathrm{F}_{\mathcal{CAT}(0)_{\leq 0}})$.
\item $($de Laat--Mimura--de la Salle \cite{deLaatMimuradelaSalle}$)$ For every $E\in \mathcal{B}_{\beta<1/2}$, there exists $N_E\in \mathbb{N}_{\geq 3}$ such that for every $n\in \mathbb{N}_{\geq N_E}$, the group $\mathrm{SL}(n,\mathbb{Z})$ has property $(\mathrm{F}_E)$.
\end{enumerate}
\end{theorem}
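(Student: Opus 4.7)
The three assertions are known results from the literature, so my plan is to organize the proof as a road-map into the cited papers, explaining in each case which theorem to invoke and the adaptation needed.

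For item $(1)$, the strategy is to apply V.~Lafforgue's strong property $(\mathrm{T})$ with coefficients in Banach spaces of non-trivial type. First I would recall that Lafforgue proves strong $(\mathrm{T})$ for $\mathrm{SL}(3, F)$, where $F$ is any non-archimedean local field; specializing to $F = \mathbb{F}_p((t^{-1}))$, and using that $\mathrm{SL}(3, \mathbb{F}_p[t])$ is a lattice in $\mathrm{SL}(3, \mathbb{F}_p((t^{-1})))$, one transfers strong $(\mathrm{T})$ (and hence $(\mathrm{F}_{\mathcal{B}_{\mathrm{type}>1}})$) to the arithmetic group by the induction procedure of Lafforgue. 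For $n \geq 4$, one then uses that $\mathrm{SL}(n, \mathbb{F}_p[t])$ is boundedly generated by conjugates of embedded copies of $\mathrm{SL}(3, \mathbb{F}_p[t])$ (along with a Mautner-type argument), and the fixed point property is preserved under such generation.

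For item $(2)$, I would combine two ingredients from Izeki--Nayatani. The Wang-type inequality $\lambda_1(\Gamma, M) \geq (1 - \delta(M)) \lambda_1(\Gamma)$ recalled above Example~\ref{example=r-UniformlyConvex} lets one convert ordinary spectral gaps into non-linear spectral gaps with target in $M \in \mathcal{CAT}(0)_{\leq \delta_0}$ for any $\delta_0 < 1$. In particular, since $\mathrm{SL}(n, \mathbb{F}_p[t])$ has Kazhdan's property $(\mathrm{T})$ for $n \geq 3$, by applying this inequality to a generating finite set and using the standard fixed point criterion (any isometric action without fixed point produces a sequence of almost-fixed vectors, hence a sequence of maps to $M$ with vanishing Rayleigh quotient, contradicting the uniform positive bound on $\lambda_1(\Gamma, M)$ for $\delta(M) \leq 0$), one deduces property $(\mathrm{F}_{\mathcal{CAT}(0)_{\leq 0}})$.

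For item $(3)$, the plan is to invoke directly the main theorem of de Laat--Mimura--de la Salle, which establishes a quantitative form of strong property $(\mathrm{T})$ for $\mathrm{SL}(n, \mathbb{Z})$ for sufficiently large $n$ (depending on the growth exponent of $d_k(E)$), with coefficients in any Banach space $E \in \mathcal{B}_{\beta < 1/2}$. The output is precisely the fixed point property $(\mathrm{F}_E)$. The threshold $N_E$ arises from the interplay between the exponent $\beta < 1/2$ controlling $d_k(E)$ and the rank needed to apply their spectral-gap arguments on uniformly almost invariant vectors in the corresponding $E$-valued induced representations.

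The main obstacle in packaging these together into a single unified statement is that the three results use quite different technologies: Lafforgue's original Banach-space strong $(\mathrm{T})$ for rank-one-plus-extra situations, Izeki--Nayatani's barycentric invariant $\delta(M)$ for $\mathrm{CAT}(0)$ targets, and the refined Banach-space analysis of de Laat--Mimura--de la Salle that controls the growth of finite-dimensional subspaces. Each yields a fixed-point property, but via genuinely different proofs; no further work is required here beyond careful citation.
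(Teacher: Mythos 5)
Your items $(1)$ and $(3)$ are handled the same way the paper handles them, namely by citation to \cite{Lafforgue1}, \cite{Lafforgue2} and \cite{deLaatMimuradelaSalle}, and that is acceptable since the paper itself offers no further argument there.

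Your argument for item $(2)$, however, has a genuine gap. You propose to deduce $(\mathrm{F}_{\mathcal{CAT}(0)_{\leq 0}})$ from Kazhdan's property $(\mathrm{T})$ by ``applying the Wang-type inequality to a generating finite set'' and running the almost-invariant-vector argument. This does not work: property $(\mathrm{T})$ is a statement about unitary representations on Hilbert spaces and does not by itself yield a positive non-linear spectral gap with target in an arbitrary $M\in\mathcal{CAT}(0)_{\leq 0}$ (indeed, there exist Kazhdan groups acting without fixed points on infinite-dimensional Hadamard manifolds, which lie in $\mathcal{CAT}(0)_{\leq 0}$, so the implication you invoke is false as stated). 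The inequality $\lambda_1(\Gamma,M)\geq(1-\delta(M))\lambda_1(\Gamma)$ concerns the \emph{link graphs} of a simplicial complex on which the group acts, and the fixed point criterion of \cite[Theorem~1.1]{IzekiNayatani} requires (a) a proper \emph{cocompact} action on such a complex and (b) the explicit verification that the links have $\lambda_1>1/2$. The paper's argument therefore proceeds differently: one takes a \emph{uniform} lattice in $\mathrm{SL}(n,\mathbb{F}_p((t^{-1})))$ acting on the Bruhat--Tits building, uses that the links are the same as for $\mathrm{PGL}(n,\mathbb{Q}_p)$ so that $\lambda_1=1-\frac{\sqrt{p}}{p+1}>\frac{1}{2}$, applies Izeki--Nayatani to get the fixed point property for uniform lattices, and then — since $\mathrm{SL}(n,\mathbb{F}_p[t])$ is a \emph{non-uniform} lattice — transfers the conclusion by the $L_2$-induction of \cite[Section~8]{BaderFurmanGelanderMonod}. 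Both the link eigenvalue computation and the induction step are missing from your proposal and cannot be replaced by property $(\mathrm{T})$.
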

Indeed, $(2)$ is deduced from the following argument: First, we consider a uniform lattice in $\mathrm{SL}(n,\mathbb{F}_p((t^{-1})))$. Consider the first strictly positive Laplace eigenvalue $\lambda_1$ for the link graph associated to it; recall the argument above Example~\ref{example=r-UniformlyConvex}. Then, exactly the same estimate as one for a uniform lattices in $\mathrm{SL}(n,\mathbb{Q}_p)$ applies. This is because local information is the same for buildings associated with $\mathrm{PGL}(n,\mathbb{Q}_p)$ and for those associated with $\mathrm{PGL}(n,\mathbb{F}_p((t^{-1})))$.  By \cite[Section~6, Example~1]{IzekiNayatani}, the estimate is given as
\[
\lambda_1=1-\frac{\sqrt{p}}{p+1}.
\]
For every prime $p$, the estimate above of $\lambda_1$ is strictly bigger than $1/2$. Then, by \cite[Theorem~1.1]{IzekiNayatani}, every uniform lattice in $\mathrm{SL}(n,\mathbb{F}_p((t^{-1})))$ has property $(\mathrm{F}_{\mathcal{CAT}(0)_{\leq 0}})$. Even though $\mathrm{SL}(n,\mathbb{F}_p[t])$ is a non-uniform lattice in $\mathrm{SL}(n,\mathbb{F}_p((t^{-1})))$, we obtain the same conclusion as in $(2)$ through $L_2$-induction process; see \cite[Section~8]{BaderFurmanGelanderMonod}.

Item $(1)$ of Theorem~\ref{theorem=StrongPropertyT} has been generalized to other higher rank lattices over non-archimedean fields; see \cite{Liao}.

\begin{remark}\label{remark=IzekiNayatani}
On $(2)$ of Theorem~\ref{theorem=StrongPropertyT}, with the aid of \cite[Proposition~6.3]{IzekiNayatani}, the following strengthening holds true: For every prime $p$ and for every $n\in \mathbb{N}_{\geq 3}$, the group $\mathrm{SL}(n,\mathbb{F}_p[t])$ has property $(\mathrm{F}_{\mathcal{M}})$, where $\mathcal{M}=\mathcal{CAT}(0)_{< \delta(p)}$ and  $\delta(p)$ is described above Corollary~\ref{corollary=SpecialLinear}. The key here is for every $\delta_0<\delta(p)$, it holds that
\[
(1-\delta_0)\left(1-\frac{\sqrt{p}}{p+1}\right)>\frac{1}{2}.
\]
By combining this with the inequality 
\[
\lambda_1(\Gamma,M)\geq (1-\delta(M))\lambda_1(\Gamma)
\]
as in the argument above Example~\ref{example=r-UniformlyConvex}, we have that for every $M$ in $\mathcal{CAT}(0)_{<\delta(p)}$, the Wang-type non-linear spectral gap of the link graph with target in $M$ is greater than $1/2$. Then, \cite[Theorem~1.1]{IzekiNayatani} applies and establishes property $(\mathrm{F}_M)$. 
\end{remark}

\begin{proof}[Proof of Corollary~$\ref{corollary=SpecialLinear}$]
Item $(1)$ follows from the main result \cite[Theorem~A]{MimuraSako} of our Part I paper, because the Cayley limit groups for $X'$ and $V'$ are both amenable. To show $(2)$, observe that  $\mathrm{SL}(\mathbb{Z},\mathbb{F}_p[t])\rtimes \mathbb{Z}$, $\mathrm{N}_{>}(\mathbb{Z},\mathbb{Z})\rtimes \mathbb{Z}$, $\mathrm{SL}(\mathbb{Z},\mathbb{Z})\rtimes \mathbb{Z}$ have infinite asymptotic dimension. Then combine it with Proposition~\ref{proposition=UniformCoarseEmbedding} and Remark~\ref{remark=AsymptoticDimension}. Items $(3)$ and $(4)$ both follow from $(i).(1)$ of Theorem~\ref{mtheorem=MainTheorem} (and Corollary~\ref{mcorollary=a-T-menable}) and Theorem~\ref{theorem=StrongPropertyT}, together with Remark~\ref{remark=IzekiNayatani}. Note that if a group $G$ contains an isomorphic copy of an infinite group with property $(\mathrm{F}_{\mathcal{M}})$, then $G$ \textit{fails} to be a-$\mathcal{M}$-menable.
\end{proof}

We make a remark that similar constructions to ones above are available for even numbers $m$ by slight modification; see the last part of \cite[Remark~5.10]{MimuraSako}.

\begin{remark}\label{remark=Bourdon}
The proof of $(2)$ of Theorem~\ref{theorem=StrongPropertyT} by Izeki and Nayatani \cite{IzekiNayatani} has been generalized to the case of fixed point properties with respect to $r$-uniformly convex metric spaces. More precisely, if 
\[
\Psi(\lambda^{(r)}(\Gamma,\mathbb{R}))> T
\]
is satisfied for a large enough $T=T(r,C)$ (for instance, $T=1/2$ works for several situations), then we may establish property $(\mathrm{F}_{\mathcal{UC}_{r,C}^{\Psi}})$ for the corresponding group. In this manner, results in \cite{Bourdon} may be utilized to demonstrate certain fixed point properties for groups acting on $\tilde{A}_2$-buildings.
\end{remark}

\subsection{Three markings one of whose limit is amenable but the others are non-exact}\label{subsection=NonExact}
We prove Theorem~\ref{mtheorem=NonExact}. The main ingredient is a remarkable result by Osajda \cite{OsajdaRF} of the existence of a (finitely generated) \textit{RF group that is non-exact}. In fact, what we need in our construction is the LEF property. This property is deduced in a much simpler way than the full argument in \cite{OsajdaRF}: Indeed, it is automatic because this group is constructed as a limit in the Cayley topology of RF groups, and \textit{such groups are always LEF}. In \cite{Osajda} and \cite{ArzhantsevaOsajda}, discussion on the LEF property was not explicitly written. In aforementioned work of Osajda \cite{OsajdaRF}, the construction that satisfies the condition above was given. 

\begin{remark}\label{remark=Osajda}
Osajda pointed out to the authors that although it is implicit in his paper,  the resulting group (RF but non-exact) in \cite{OsajdaRF} is furthermore a-$\mathrm{T}$-menable. To see this, he used a method developed in \cite{Osajda} to transfer wall structures on the finite presented graphical small cancellation groups in his construction at all finitary stages to that on the infinitely presented limit group. See also \cite{Osajda} and \cite{ArzhantsevaOsajda}. We employ this a-$\mathrm{T}$-menability in our proof of Theorem~\ref{mtheorem=NonExact}. 
\end{remark}

An outline of our construction as in Theorem~\ref{mtheorem=NonExact} goes as follows: We combine the absorption trick in Subsection~\ref{subsection=AbsorptionLemma}  with our examples as in Subsection~\ref{subsection=SpecialLinear} (Example~\ref{example=SpecialLinear}),
\begin{eqnarray*}
(\mathrm{SL}(m,\mathbb{F}_{p^{n_m}});\sigma^{(m)},\upsilon^{(m)},\tau^{(m)})&\stackrel{\mathrm{Cay}}{\longrightarrow}& \mathrm{N}_{>}(\mathbb{Z},\mathbb{F}_p[t])\rtimes \mathbb{Z} ,\\
(\mathrm{SL}(m,\mathbb{F}_{p^{n_m}});\sigma^{(m)},\sigma'^{(m)},\upsilon^{(m)},\tau^{(m)})&\stackrel{\mathrm{Cay}}{\longrightarrow}& \mathrm{SL}(\mathbb{Z},\mathbb{F}_p[t])\rtimes \mathbb{Z},
\end{eqnarray*}
with respect to suitable markings of the Cayley limit groups. Here recall that the former limit group $\mathrm{N}_{>}(\mathbb{Z},\mathbb{F}_p[t])\rtimes \mathbb{Z}$ is amenable, whereas the latter $\mathrm{SL}(\mathbb{Z},\mathbb{F}_p[t])\rtimes \mathbb{Z}$ contains a copy of $\mathrm{SL}(3,\mathbb{F}_p[t])$, which has property $(\mathrm{F}_{\mathcal{M}})$ for $\mathcal{M}=\mathcal{B}_{\mathrm{type}>1}$ and $\mathcal{M}=\mathcal{CAT}(0)_{<\delta(p)}$.

\begin{proof}[Proof of Theorem~$\ref{mtheorem=NonExact}$]
Let $G$ be the (finitely generated) RF group without property A constructed in \cite{OsajdaRF}, and $S=(g_1,\ldots ,g_k)$ be a $k$-marking of $G$. Take $(\mathbf{G}_n)_{n\in \mathbb{N}}$  a RF approximation of $(G;S)$ (in fact, what we need here in principle are a LEF group without property A and  a LEF approximation of it; see also \cite{Osajda} and \cite{ArzhantsevaOsajda}). For every $n\in \mathbb{N}$, write $\mathbf{G}_n=(G_n;g_1^{(n)},\ldots ,g_k^{(n)})$. Recall from Remark~\ref{remark=Osajda} that this $G$ is a-$\mathrm{T}$-menable.

Recall two systems $(\sigma^{(m)},\upsilon^{(m)},\tau^{(m)})_{m\in \mathbb{N}_{\mathrm{odd}}}$ and $(\sigma^{(m)},\sigma'^{(m)},\upsilon^{(m)},\tau^{(m)})_{m\in \mathbb{N}_{\mathrm{odd}}}$ of markings of $(\mathrm{SL}(m,\mathbb{F}_{p^{n_m}}))_{m\in \mathbb{N}_{\mathrm{odd}}}$ from Example~\ref{example=SpecialLinear}. Set $m=2n+3$, and rewrite $n_m$ and $\sigma^{(m)},\sigma'^{(m)},\upsilon^{(m)},\tau^{(m)}$, respectively, as $l_n$ and $\sigma_n,\sigma'_n,\upsilon_n,\tau_n$. Hence, we have two markings $(\sigma_n,\upsilon_n,\tau_n)$ and $(\sigma_n,\sigma'_n,\upsilon_n,\tau_n)$ of $\mathrm{SL}(2n+3,\mathbb{F}_{p^{l_n}})$. 

Let $H_{n,p}=G_n\wr \mathrm{SL}(2n+3,\mathbb{F}_{p^{l_n}})$. Let $d=k+4$. Then,
\[
(\mathrm{SL}(2n+3,\mathbb{F}_{p^{l_n}});\sigma_n,\upsilon_n,\tau_n)\quad\stackrel{\mathrm{Cay}}{\longrightarrow}\quad \mathrm{N}_{>}(\mathbb{Z},\mathbb{F}_p[t])\rtimes \mathbb{Z},
\]
with the suitable marking of $\mathrm{N}_{>}(\mathbb{Z},\mathbb{F}_p[t])\rtimes \mathbb{Z}$. For each $n\in \mathbb{N}$, take $R_n\in \mathbb{N}$ as in $(\star)$ in Lemma~\ref{lemma=Neighborhood} associated with the convergence above. Let 
\[
r_n=\min\{R_n, \left\lfloor\frac{\mathrm{diam}(\mathrm{CayD}(\mathrm{SL}(2n+3,\mathbb{F}_{p^{l_n}});\sigma_n,\upsilon_n,\tau_n))}{4k}\right\rfloor \}.
\]
Then $\lim_{m\to \infty}r_n=+\infty$. By definition of $r_n$, for each $n\in \mathbb{N}$, there exists $x_1^{(n)}=e_{\mathrm{SL}(2n+3,\mathbb{F}_{p^{l_n}})}, x_2^{(n)},\ldots ,x_{k}^{(n)} \in \mathrm{SL}(2n+3,\mathbb{F}_{p^{l_n}})$ such that the $r_n$-balls in the Cayley diagram $\mathrm{CayD}(\mathrm{SL}(2n+3,\mathbb{F}_{p^{l_n}});\sigma_n,\upsilon_n,\tau_n)$ centered at $x_j^{(n)}$, $j\in [k]$, are mutually disjoint. Finally, for every $n\in \mathbb{N}$, set a marking $S_n$ of $H_{n,p}$ as 
\begin{align*}
S_n=(&(g_1^{(n)}\delta_{e_{\mathrm{SL}(2n+3,\mathbb{F}_{p^{l_n}})}},e_{\mathrm{SL}(2n+3,\mathbb{F}_{p^{l_n}})}),(g_2^{(n)}\delta_{x_2^{(n)}},e_{\mathrm{SL}(2n+3,\mathbb{F}_{p^{l_n}})})\ldots,(g_k^{(n)}\delta_{x_k^{(n)}},e_{\mathrm{SL}(2n+3,\mathbb{F}_{p^{l_n}})}),\\
&(\mathbf{e},\sigma_n),(\mathbf{e},(\sigma_n)^{-1}),(\mathbf{e},\upsilon_n),(\mathbf{e},\tau_n)),
\end{align*}
where $\mathbf{e}$ is the group unit of $\bigoplus_{\mathrm{SL}(2n+3,\mathbb{F}_{p^{l_n}})}G_n$. 
(The $(\mathbf{e},(\sigma_n)^{-1})$ above is redundant as a marking; this element is added only in order to meet assertion $(1)$ of the theorem.) 

For the other two markings $(T_n)_n$ and $(U_n)_n$, without employing $x_1^{(n)},\ldots ,x_k^{(n)}$, we simply set
\begin{align*}
T_n=(&(g_1^{(n)}\delta_{e_{\mathrm{SL}(2n+3,\mathbb{F}_{p^{l_n}})}},e_{\mathrm{SL}(2n+3,\mathbb{F}_{p^{l_n}})}),\ldots,(g_k^{(n)}\delta_{e_{\mathrm{SL}(2n+3,\mathbb{F}_{p^{l_n}})}},e_{\mathrm{SL}(2n+3,\mathbb{F}_{p^{l_n}})}),\\
&(\mathbf{e},\sigma_n),(\mathbf{e},(\sigma_n)^{-1}),(\mathbf{e},\upsilon_n),(\mathbf{e},\tau_n)),\\
U_n=(&(g_1^{(n)}\delta_{e_{\mathrm{SL}(2n+3,\mathbb{F}_{p^{l_n}})}},e_{\mathrm{SL}(2n+3,\mathbb{F}_{p^{l_n}})}),\ldots,(g_k^{(n)}\delta_{e_{\mathrm{SL}(2n+3,\mathbb{F}_{p^{l_n}})}},e_{\mathrm{SL}(2n+3,\mathbb{F}_{p^{l_n}})}),\\
&(\mathbf{e},\sigma_n),(\mathbf{e},\sigma'_n),(\mathbf{e},\upsilon_n),(\mathbf{e},\tau_n)).
\end{align*}
Then as $n\to \infty$, respectively with suitable markings of the Cayley limit groups, we have the following Cayley convergences:
\begin{eqnarray*}
(H_{n,p};S_n) &\stackrel{\mathrm{Cay}}{\longrightarrow}&   (C_1\times C_2 \times \cdots \times C_k)\wr (\mathrm{N}_{>}(\mathbb{Z},\mathbb{F}_p[t])\rtimes \mathbb{Z}), \\
(H_{n,p};T_n) &\stackrel{\mathrm{Cay}}{\longrightarrow}&  G \wr (\mathrm{N}_{>}(\mathbb{Z},\mathbb{F}_p[t])\rtimes \mathbb{Z}), \\
(H_{n,p};U_n) &\stackrel{\mathrm{Cay}}{\longrightarrow}&   G \wr (\mathrm{SL}(\mathbb{Z},\mathbb{F}_p[t])\rtimes \mathbb{Z}), 
\end{eqnarray*}
where $C_1,\ldots, C_k$ are as in Lemma~\ref{lemma=Absorption}. Indeed, the first Cayley convergence follows from a variant of the \textit{absorption trick}; compare with Lemmata~\ref{lemma=Absorption} and \ref{lemma=AbsorptionZ}. By Theorem~\ref{theorem=StrongPropertyT} and Remark~\ref{remark=IzekiNayatani}, we confirm $(2)$ and $(4)$; note that exactness of countable discrete groups passes to subgroups. To see $(3)$, recall Remark~\ref{remark=Osajda} and the fact that in a short exact sequence of countable discrete groups,
\[
1\quad \longrightarrow \quad G_1 \quad \longrightarrow \quad G_2 \quad \longrightarrow \quad G_3 \quad \longrightarrow \quad  1,
\]
$G_2$ is a-$\mathrm{T}$-menable if $G_1$ is a-$\mathrm{T}$-menable and if $G_3$  is \textit{amenable}; see \cite[Example~6.1.6]{CCJJV}. 
Assertion $(1)$ is by construction; observe that $(\sigma_n)^{-1}$ and $\sigma'_n$ are conjugate in $\mathrm{SL}(2n+3,\mathbb{F}_{p^{l_n}})$. It ends our proof. 
\end{proof}

\subsection{Embedded Banach expanders}
In this subsection, we give a definition of \textit{embedded Banach expanders}.
\begin{definition}\label{definition=EmbeddedExpanders}
Let $\mathcal{E}$ be a non-empty class of Banach spaces and fix $q\in [1,\infty)$. A sequence of finite connected graphs $(\Gamma_m)_{m\in \mathbb{N}}$ of uniformly bounded degree is said to admit \textit{embedded Banach $(\mathcal{E},q)$-expanders} if there exist a subsequence $(m_n)_{n\in \mathbb{N}}$ of $(m)_m$ and a sequence of finite connected graphs $(\Lambda_{m_n})_{n\in \mathbb{N}}$ such that all of the following hold true:
\begin{itemize}
 \item There exists $D>0$ such that for each $n\in \mathbb{N}$, there exists an \textit{injective} map $\iota_{m_n}\colon V(\Lambda_{m_n})\to V(\Gamma_{m_n})$ between the vertex sets such that the map $\iota_{m_n}\colon$$(V(\Lambda_{m_n}),d_{\Lambda_{m_n}})\to (V(\Gamma_{m_n}),d_{\Gamma_{m_n}})$ is $D$-Lipschitz.
 \item There exists $d\in \mathbb{N}_{\geq 2}$ such that for every $n$, each vertex of $\Lambda_{m_n}$ has degree at most $d$.
 \item The number $\sharp(V(\Lambda_{m_n}))$ tends to $\infty$ as $n\to \infty$. \item (\textit{Poincar\'{e}-type inequality}) For every $E\in \mathcal{E}$, there exists $C_E>0$ such that the following holds true: For every $n\in \mathbb{N}$ and for every map $f_{m_n}\colon V(\Lambda_{m_n})\to E$, it holds that
\[
\frac{1}{\sharp(V(\Lambda_{m_n}))}\sum_{v\in V(\Lambda_{m_n})}\|f_{m_n}(v)-m(f_{m_n})\|^q  \leq C_E  \left(\frac{1}{\sharp(V(\Lambda_{m_n}))}\sum_{e=(v,w)\in E(\Lambda_{m_n})} \|f_{m_n}(v)-f_{m_n}(w)\|^q\right), 
\]
where $m(f_{m_n})_n$ denotes the \textit{mean} of $f_{m_n}$: 
\[
m(f_{m_n})=\frac{1}{\sharp(V(\Lambda_{m_n}))}\left(\sum_{v\in V(\Lambda_{m_n})}f_{m_n}(v)\right)\quad (\in E).
\]
The sum on the right-hand side of the inequality above runs over all edges $e\in E(\Lambda_{m_n})$ in $\Lambda_{m_n}$, and for each $e\in E(\Lambda_{m_n})$, by writing $e=(v,w)$ we express that $e$ connects the vertices $v$ and $w$.
\end{itemize}
We say that $(\Gamma_m)_{m\in \mathbb{N}}$ is a family of \textit{Banach $(\mathcal{E},q)$-expanders} if we can take $m_n=m$ and $\Lambda_{m_n}=\Gamma_m$ (that also means that $\iota_{m}=\mathrm{id}_{V(\Gamma_m)}$) for every $n\in \mathbb{N}$.
\end{definition}
The concept of ordinary expanders is one with $(\mathcal{E},q)=(\mathcal{H}\mathrm{ilbert},2)$. It is known from work of Q. Cheng \cite{Cheng} that the condition of being  Banach $(\mathcal{E},q)$-expanders does not depend on the choice of the exponent $q\in [1,\infty)$. Also, the Poincar\'{e}-type inequality above naturally relates to (unweighted and non-normalized version of) Wang-type non-linear spectral gaps; see Example~\ref{example=r-UniformlyConvex}. Here we use the mean of $f$, instead of the $r$-barycenter, thanks to the linear structure of the Banach space $E$.

The following is a variant of the well-known fact asserting that expanders do not admit a coarse embedding into a Hilbert space. For the sake of completeness, we provide a proof; compare with the proof of \cite[Theorem~5.6.5]{BookNowakYu}.
\begin{proposition}\label{proposition=EmbeddedExpanders}
Let $\mathcal{E}$ be a non-empty class of Banach spaces and let $q\in [1,\infty)$. If a sequence of finite connected graphs $(\Gamma_m)_{m\in \mathbb{N}}$ of uniformly bounded degree admits embedded Banach $(\mathcal{E},q)$-expanders $(\Lambda_{m_n})_{n\in \mathbb{N}}$, then for the disjoint union $\bigsqcup_{m\in \mathbb{N}}(\Gamma_m,d_{\Gamma_m})$ does not admit a coarse embedding into $\mathcal{E}$.

In particular, if $(\Gamma_m)_{m\in \mathbb{N}}$ admits  embedded $($ordinary$)$ expanders, then $\bigsqcup_{m\in \mathbb{N}}(\Gamma_m,d_{\Gamma_m})$ does not admit a coarse embedding into a Hilbert space.
\end{proposition}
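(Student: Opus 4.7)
The plan is to argue by contradiction. Suppose there exist $E\in\mathcal{E}$ and a coarse embedding $f\colon\bigsqcup_m \Gamma_m\to E$ with control pair $(\rho,\omega)$. Let $B$ denote the uniform bound on the degrees of the $\Gamma_m$, let $d\in\mathbb{N}_{\geq 2}$ and $D>0$ be the constants associated with the embedded expander family $(\Lambda_{m_n})_n$ via $(\iota_{m_n})_n$, and let $C_E$ be the Poincar\'{e}-type constant for $E$. For every $n$, set $f_n:=f\circ\iota_{m_n}\colon V(\Lambda_{m_n})\to E$ and apply the Poincar\'{e}-type inequality to $f_n$; the strategy is to show its right-hand side stays bounded while its left-hand side must grow without bound.

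For the right-hand side, every edge $(v,w)\in E(\Lambda_{m_n})$ satisfies $d_{\Lambda_{m_n}}(v,w)=1$, so the $D$-Lipschitz property of $\iota_{m_n}$ gives $d_{\Gamma_{m_n}}(\iota_{m_n}(v),\iota_{m_n}(w))\leq D$ and hence $\|f_n(v)-f_n(w)\|\leq\omega(D)$. Together with $|E(\Lambda_{m_n})|\leq d\,|V(\Lambda_{m_n})|/2$ (handshake lemma, degrees bounded by $d$), the right-hand side is at most $C_E d\,\omega(D)^q/2$, independently of $n$.

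For the left-hand side, a standard convexity estimate (applying $\|a-b\|^q\leq 2^{q-1}(\|a\|^q+\|b\|^q)$ to $a=f_n(v_1)-m(f_n)$, $b=f_n(v_2)-m(f_n)$ and averaging) yields
\[
\frac{1}{|V(\Lambda_{m_n})|}\sum_{v}\|f_n(v)-m(f_n)\|^q\;\geq\;\frac{1}{2^{q}|V(\Lambda_{m_n})|^{2}}\sum_{v_1,v_2}\|f_n(v_1)-f_n(v_2)\|^{q}.
\]
To bound the pairwise-distance double sum from below, fix $R\in\mathbb{N}$. Every closed ball of radius $R$ in $\Gamma_{m_n}$ has at most $(B+1)^R$ vertices, so the injectivity of $\iota_{m_n}$ implies that the number of pairs $(v_1,v_2)\in V(\Lambda_{m_n})^{2}$ with $d_{\Gamma_{m_n}}(\iota_{m_n}(v_1),\iota_{m_n}(v_2))\leq R$ is at most $(B+1)^{R}|V(\Lambda_{m_n})|$. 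Since $|V(\Lambda_{m_n})|\to\infty$, for all $n$ sufficiently large (depending on $R$) at least half of the pairs have $d_{\Gamma_{m_n}}$-distance of images greater than $R$, and for such pairs the coarse embedding lower bound gives $\|f_n(v_1)-f_n(v_2)\|\geq\rho(R)$. Consequently the double sum is at least $\tfrac{1}{2}\rho(R)^{q}|V(\Lambda_{m_n})|^{2}$.

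Combining the two estimates in the Poincar\'{e} inequality yields, for every $R$ and all sufficiently large $n$,
\[
\frac{\rho(R)^{q}}{2^{q+1}}\;\leq\;\frac{C_{E}d\,\omega(D)^{q}}{2}.
\]
The right-hand side is independent of $R$, while properness of $\rho$ forces $\rho(R)\to+\infty$, a contradiction. The case of ordinary expanders corresponds to $(\mathcal{E},q)=(\mathcal{H}\mathrm{ilbert},2)$. The main obstacle in this plan is that $\iota_{m_n}$ is only assumed Lipschitz, not bi-Lipschitz, so $\Lambda$-distances cannot be transferred to $\Gamma$-distances from below; the volume/counting argument using injectivity and bounded degree of $\Gamma_{m_n}$ is what circumvents this and guarantees that \emph{most} pairs in $V(\Lambda_{m_n})$ land far apart in $\Gamma_{m_n}$.
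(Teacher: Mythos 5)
Your proof is correct and takes essentially the same route as the paper: bound the right-hand side of the Poincaré-type inequality via the $D$-Lipschitz condition and $\omega$, then derive a contradiction with properness of $\rho$ from injectivity of $\iota_{m_n}$, uniform bounded degree, and $\sharp(V(\Lambda_{m_n}))\to\infty$. The only difference is one of exposition: the paper compresses the final ``concentration around the mean contradicts properness'' step into a single sentence, whereas you spell it out explicitly with the convexity estimate and the volume-counting argument showing most pairs land far apart; that detail is exactly what the paper is implicitly invoking (cf.\ the proof of Theorem 5.6.5 in Nowak--Yu, to which the paper points).
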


\begin{proof}
Suppose that there exists a coarse embedding $f\colon \bigsqcup_{m\in \mathbb{N}}(\Gamma_m,d_{\Gamma_m}) \to E$ with control pair $(\rho,\omega)$. Then for every $n\in \mathbb{N}$ and for every $v,w\in V(\Lambda_{m_n})$ adjacent in $\Lambda_{m_n}$, it holds that $\|f(\iota(v))-f(\iota(w))\|\leq \omega(D)$. By the Poincar\'{e}-type inequality in the conditions above, we therefore have that
\[
\frac{1}{\sharp(V(\Lambda_{m_n}))}\sum_{v\in V(\Lambda_{m_n})}\|f(\iota_{m_n}(v))-m((f\circ \iota_{m_n})|_{V(\Lambda_{m_n})})\|^q  \leq C_E d \cdot \omega(D)^q.
\]
Since the right-hand side of the inequality above is independent of $n$, the images $f(\iota_{m_n}(V(\Lambda_{m_n})))$ must be concentrated around its mean $m((f\circ \iota_{m_n})|_{V(\Lambda_{m_n})})$. It contradicts the properness of $\rho$ as $n\to \infty$, because $\iota_{m_n}$ is injective, $\sharp(V(\Lambda_{m_n}))\to \infty$, and $(\Gamma_m)_m$ is of uniformly bounded degree.
\end{proof}

The proof above works for a more general setting of graphs that admit \textit{weakly embedded expanders}; see \cite{ArzhantsevaTessera}.

The following is deduced from \cite[Theorem~A]{MimuraSphereEquivalence}; compare with \cite[Theorem~1.10]{Naor} and  \cite[Appendix~A]{Ozawa}.
\begin{proposition}\label{proposition=SphereEquivalence}
Let $F$ be a Banach space and let $q\in [1,\infty)$. Let $E$ be a Banach space that is $\mathrm{sphere}$ $\mathrm{equivalent}$ to $F$, namely, there exists a bijection $\Phi\colon S(F)\to S(E)$ between unit spheres such that $\Phi$ and $\Phi^{-1}$ are both uniformly continuous. If a sequence of finite connected graphs $(\Gamma_m)_{m\in \mathbb{N}}$ admits embedded Banach $(F,q)$-expanders, then it admits embedded Banach $(E,q)$-expanders.
\end{proposition}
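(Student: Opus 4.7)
The plan is to observe that sphere equivalence is a property of Banach spaces alone, and that every ingredient in Definition~\ref{definition=EmbeddedExpanders} except the Poincar\'{e}-type inequality is purely combinatorial or metric. First I would fix the data witnessing that $(\Gamma_m)_m$ admits embedded Banach $(F,q)$-expanders: a subsequence $(m_n)_n$, graphs $(\Lambda_{m_n})_n$, $D$-Lipschitz injections $\iota_{m_n}\colon V(\Lambda_{m_n})\to V(\Gamma_{m_n})$, a uniform degree bound $d$, and the fact that $\sharp V(\Lambda_{m_n})\to\infty$. None of these is affected by a change of target Banach space, so the same data will witness the desired property for $E$ once a Poincar\'{e}-type inequality for $E$-valued maps is supplied.

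The heart of the argument is therefore the following standalone transfer statement: if $(\Lambda_n)_{n\in\mathbb{N}}$ is a sequence of finite connected graphs of uniformly bounded degree with $\sharp V(\Lambda_n)\to\infty$ satisfying the Poincar\'{e}-type inequality with some constant $C_F$ for all maps into $F$, then for every Banach space $E$ sphere equivalent to $F$ there exists $C_E>0$ for which the analogous inequality holds for all maps into $E$. This is essentially the content of \cite[Theorem~A]{MimuraSphereEquivalence}. My second step would be to invoke that theorem directly: the hypothesis provides $(\Lambda_{m_n})_n$ as a Banach $(F,q)$-expander family in the free-standing sense (uniformly bounded degree, growing size, and Poincar\'{e} inequality for $F$-valued maps), and the theorem outputs the very same sequence as a Banach $(E,q)$-expander family, yielding the desired $C_E$.

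Combining the two ingredients, the tuple $(m_n, \Lambda_{m_n}, \iota_{m_n}, D, d)$ together with the transferred constant $C_E$ verifies all four clauses of Definition~\ref{definition=EmbeddedExpanders} for $(\Gamma_m)_m$ with target $E$. The main obstacle in the whole chain lies inside \cite{MimuraSphereEquivalence} rather than in the present reduction: since $\Phi$ and $\Phi^{-1}$ are only defined on unit spheres and are merely uniformly (not Lipschitz, not linear) continuous, one cannot directly push forward an $F$-valued map by $\Phi$ to produce a useful $E$-valued map, and conversely. Overcoming this requires truncation and normalization arguments, combined with Cheng's theorem \cite{Cheng} on independence of the exponent $q\in[1,\infty)$ for the Banach expander property, all of which are carried out in the reference. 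For the current proposition, once that theorem is granted, the conclusion is nothing more than an assembly of the witnessing data.
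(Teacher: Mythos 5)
Your proposal is correct and follows essentially the same route as the paper: the paper's entire justification is the citation to \cite[Theorem~A]{MimuraSphereEquivalence}, and your reduction (the embedding data $(m_n,\Lambda_{m_n},\iota_{m_n},D,d)$ is target-independent, so only the Poincar\'{e}-type inequality needs to be transferred, which is exactly what that theorem supplies) is the right unwinding of that citation.
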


Recall that \cite[Proposition~6.3]{IzekiNayatani} shows the following inequality for (Wang-type) non-linear sepctral gaps: For $M\in \mathcal{CAT}(0)$ and  for every weighted finite connected graph $\Gamma$,
\[
\lambda_1(\Gamma,M)\geq (1-\delta(M))\lambda_1(\Gamma).
\]
From this, the following may be showed in a similar manner to one in the proof of Proposition~\ref{proposition=EmbeddedExpanders}.

\begin{proposition}\label{proposition=NonLinearSpectralGap}
If a sequence of finite connected graphs of uniformly bounded degree $(\Gamma_m)_{m\in \mathbb{N}}$ admits embedded  expanders, then  $\bigsqcup_{m\in \mathbb{N}}\Gamma_m$ does not admit a coarse embedding into $\mathcal{CAT}(0)_{<1}$.
\end{proposition}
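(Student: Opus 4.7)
The plan is to argue by contradiction along the lines of the proof of Proposition~\ref{proposition=EmbeddedExpanders}, replacing the linear Poincar\'{e} inequality by the non-linear version provided by the Izeki--Nayatani inequality recalled above the statement. Suppose that there exist $M\in \mathcal{CAT}(0)_{<1}$ and a coarse embedding $f\colon \bigsqcup_{m\in \mathbb{N}}\Gamma_m\to M$ with control pair $(\rho,\omega)$. By definition of $\mathcal{CAT}(0)_{<1}$, there is some $\delta_0\in [0,1)$ with $\delta(M)\leq \delta_0$. By hypothesis of embedded expanders, fix a subsequence $(m_n)_n$ and finite connected graphs $\Lambda_{m_n}$ of degree at most some $d$, together with $D$-Lipschitz injections $\iota_{m_n}\colon (V(\Lambda_{m_n}),d_{\Lambda_{m_n}})\to(V(\Gamma_{m_n}),d_{\Gamma_{m_n}})$ and $\sharp V(\Lambda_{m_n})\to \infty$.

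Next I would translate the $\ell_2$-Poincar\'{e}-type inequality of Definition~\ref{definition=EmbeddedExpanders} (with $(\mathcal{E},q)=(\mathcal{H}\mathrm{ilbert},2)$) into a uniform lower bound $\lambda_1(\Lambda_{m_n})\geq \lambda_{\ast}>0$ on the first non-zero eigenvalue of the normalized Laplacian of $\Lambda_{m_n}$ (equivalently, on the Wang-type non-linear spectral gap with target $\mathbb{R}$), using that vertex degrees are uniformly bounded by $d$ to pass between uniform and degree weights. Then by the Izeki--Nayatani inequality
\[
\lambda_1(\Lambda_{m_n},M)\geq (1-\delta(M))\lambda_1(\Lambda_{m_n})\geq (1-\delta_0)\lambda_{\ast}>0,
\]
which is a \emph{uniform} non-linear spectral gap of the $\Lambda_{m_n}$ with target $M$.

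Now I would apply this inequality to the map $f\circ \iota_{m_n}\colon V(\Lambda_{m_n})\to M$. For each edge $(v,w)\in E(\Lambda_{m_n})$ the Lipschitz property of $\iota_{m_n}$ gives $d_{\Gamma_{m_n}}(\iota_{m_n}(v),\iota_{m_n}(w))\leq D$, so by the expansion control $d_M(f(\iota_{m_n}(v)),f(\iota_{m_n}(w)))\leq \omega(D)$. Combined with the Wang-type inequality, this yields a bound of the form
\[
\sum_{v\in V(\Lambda_{m_n})}\mathbf{m}(v)\,d_M\!\left(f\circ \iota_{m_n}(v),\overline{f\circ \iota_{m_n}}\right)^{2}\leq \frac{d\,\omega(D)^{2}}{(1-\delta_{0})\lambda_{\ast}}\cdot \sharp V(\Lambda_{m_n}),
\]
so the average squared distance from $f\circ \iota_{m_n}(V(\Lambda_{m_n}))$ to its barycenter stays bounded uniformly in $n$.

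The final step is the standard expander-type contradiction: because $\iota_{m_n}$ is injective, $\sharp V(\Lambda_{m_n})\to\infty$, and $\Gamma_{m_n}$ has uniformly bounded degree, the set $\iota_{m_n}(V(\Lambda_{m_n}))\subseteq V(\Gamma_{m_n})$ must contain pairs whose $d_{\Gamma_{m_n}}$-distance tends to infinity with $n$; combined with properness of $\rho$, this forces the $M$-distances $d_M(f(\iota_{m_n}(v)),f(\iota_{m_n}(w)))$ to be unbounded, contradicting the uniform concentration bound above. The main conceptual obstacle is the first step—translating the Poincar\'{e} inequality of Definition~\ref{definition=EmbeddedExpanders} into a clean eigenvalue lower bound that feeds into the Izeki--Nayatani estimate with the correct weighting convention; once that is in place, the rest of the argument is a routine variant of the proof of Proposition~\ref{proposition=EmbeddedExpanders}.
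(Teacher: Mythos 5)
Your proposal is correct and takes essentially the same route the paper indicates: the paper's (very terse) proof simply says to combine the Izeki--Nayatani inequality $\lambda_1(\Gamma,M)\geq(1-\delta(M))\lambda_1(\Gamma)$ with the argument of Proposition~\ref{proposition=EmbeddedExpanders}, which is precisely what you do by replacing the linear Poincar\'e inequality with the uniform Wang-type nonlinear spectral gap $(1-\delta_0)\lambda_*$, bounding edges via $\omega(D)$, and invoking properness of $\rho$ against injectivity and bounded degree. The weighting/mean-vs-barycenter subtlety you flag is real but standard, and poses no obstruction under the bounded-degree hypothesis.
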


Mendel and Naor \cite{MendelNaor2015} constructed a complete $\mathrm{CAT}(0)$ space $M$ and a sequence of graphs $(\Gamma_m)_m$ such that $(\Gamma_m)_m$ forms an expander family with respect to $M$, but that expanders coming from random graphs are not expanders with respect to $M$. This $M$ must have the Izeki--Nayatani invariant $1$.

\subsection{Uniformity is \textit{not} automatic for a-$\mathcal{M}$-menability}
For a non-empty class of metric spaces, we say that a non-empty set $K\subseteq \mathcal{G}(k)$ is \textit{pointwise a-$\mathcal{M}$-menable} if every $\mathbf{G}\in K$ is a-$\mathcal{M}$-menable. Concerning amenability and property $(\mathrm{T})$, uniformity is \textit{automatic} for Cayley-compact subsets, namely, the pointwise property automatically implies the uniform one; see \cite[Proposition~3.4]{MimuraSako} and \cite[Proposition~5.1]{MOSSPartIII}. In contrast, concerning a-$\mathcal{M}$-menablity, uniformity is \textit{not} automatic, as the example below indicates.

\begin{example}\label{example=Selberg}
The classical ping-pong argument shows that $\mathbf{F_2}\cong (G_0;\left(\begin{array}{cc}1 & 2\\ 0&1\end{array}\right), \left(\begin{array}{cc}1 & 0\\ 2&1\end{array}\right))$ as marked groups, where $G_0(\simeq F_2)$ is the group generated by these two elements. For each \textit{odd} prime $p$, consider the $\mathrm{mod}$ $p$ reduction. Then $G_0$ maps \textit{onto} $\mathrm{SL}(2,\mathbb{F}_p)$ and 
\[
(\mathrm{SL}(2,\mathbb{F}_p); \left(\begin{array}{cc}1 & 2\\ 0&1\end{array}\right)\ \mathrm{mod}\ p, \left(\begin{array}{cc}1 & 0\\ 2&1\end{array}\right)\ \mathrm{mod}\ p) \quad \stackrel{\mathrm{Cay}}{\longrightarrow} \quad \mathbf{F_2},
\]
as $p\to \infty$. We write the marked group in the left-hand side as $\mathbf{G}_p$. Then $K=\{\mathbf{G}_p:p \textrm{ odd prime.}\}\cup \{\mathbf{F_2}\}$ is a compact subset in $\mathcal{G}(2)$. This set $K$ is pointwise a-$\mathrm{T}$-menable, but \textit{not} uniformly a-$\mathrm{T}$-menable. Indeed, for the latter assertion, by work of A. Selberg \cite{Selberg}, it follows that $(\mathrm{Cay}(\mathbf{G}_p))_{p}$ forms an expander family; see also \cite{BookLubotzky}. By Proposition~\ref{proposition=EmbeddedExpanders}, there does not exist a common pair $(\rho,\omega)$ that serves as a control pair of all of the $\mathbf{G}_p$, $p$ odd primes.
\end{example}
In this example, the obstruction to uniformity is the coarse non-embeddability, \textit{not} the equivariant one of the sequence. Hence, we are able to utilize this observation to prove Proposition~\ref{proposition=Products} in the following manner.

\begin{proof}[Proof of Proposition~$\ref{proposition=Products}$]
By the way of contradiction. Assume that $\bigsqcup_{m\in \mathbb{N}}\Gamma_m$ does not admit a coarse embedding into $\mathcal{M}$. Choose an element $\Lambda_{\infty}$ in the rooted graph boundary $\partial_{r}(\Lambda_n)_{n\in \mathbb{N}}$; recall the definition from Remark~\ref{remark=RootedGraphs}. Then, for every $m\in \mathbb{N}$, the subsequence $(\Gamma_m\times \Lambda_n)_{n\in \mathbb{N}}$, with changing roots, has $\Gamma_m\times \Lambda_{\infty}$  as an accumulation point in the space of rooted graphs. Hence by Proposition~\ref{proposition=RootedGraphUniformCoarseEmbedding}, in particular, $(\Gamma_m\times \Lambda_{\infty})_{m\in \mathbb{N}}$, must admit equi-coarse embeddings into $\mathcal{M}$. This contradicts coarse non-embeddability of $\bigsqcup_{m\in \mathbb{N}}\Gamma_m$ into $\mathcal{M}$.
\end{proof}

\subsection{Upper triangular products}\label{subsection=UpperTriangular}
We saw in the previous subsection that by taking the disjoint union $\bigsqcup_{m,n\in \mathbb{N}}(\Gamma_m\times \Lambda_n)$, \textit{we can embed a copy of each $\Gamma_m$ and $\Lambda_n$ $($as an isometrically embedded subgraph$)$ in the rooted graph boundary}. In what follows, we slightly modify this construction and call the resulting object the \textit{upper triangular product}. We exhibit it in the context of the space of marked groups. 

Let $(\mathbf{G}_m)_{m\in \mathbb{N}}\subseteq \mathcal{G}(k_1)$ and $(\mathbf{H}_n)_{n\in \mathbb{N}}\subseteq \mathcal{G}(k_2)$. For each $\mathbf{G}=(G;s_1,\ldots, s_{k_1})$ and $\mathbf{G}=(H;t_1,\ldots, t_{k_2})$, define the \textit{direct product marked group} $\mathbf{G}\times \mathbf{H}$ by
\[
\mathbf{G}\times \mathbf{H}=(G\times H;(s_1,e_H),\ldots ,(s_{k_1},e_H),(e_G,t_1),\ldots ,(e_G,t_{k_2}))(\in \mathcal{G}(k_1+k_2)).
\]
\begin{definition}
The \textit{upper triangular product} of $\bigsqcup_{m\in \mathbb{N}} \mathrm{Cay}(\mathbf{G}_m)$ and $\bigsqcup_{n\in \mathbb{N}} \mathrm{Cay}(\mathbf{H}_n)$ is defined by
\[
\bigsqcup_{(m,n)\in \mathbb{N}\times \mathbb{N},m\leq n} \mathrm{Cay}(\mathbf{G}_m\times \mathbf{H}_n)
\]
equipped with the total order given by comparison firstly on $n$ and secondly on $m$ on the index set $\{(m,n)\}$; namely, $(0,0)<(0,1)<(1,1)<(0,2)<(1,2)<(2,2)<(0,3)<\cdots$. If $(\mathbf{G}_m)_m$ and $(\mathbf{H}_n)_n$ are indexed by sets that are respectively order isomorphic to $(\mathbb{N},>)$, then we modify the order accordingly.

We write the sequence $(\mathbf{G}_m\times \mathbf{H}_n)_{(m,n)\in \mathbb{N}^2,\ m\leq n}$ in $\mathcal{G}(k_1+k_2)$, with the enumeration with respect to the order above, identified with that by $l\in \mathbb{N}$, as $(\mathbf{G}_m)_m\bigtriangledown(\mathbf{H}_n)_n$. 
\end{definition}

Note that for the upper triangular product,
\[
\partial_{\mathrm{Cay}}((\mathbf{G}_m)_m\bigtriangledown(\mathbf{H}_n)_n)= \left(\bigcup_{m\in \mathbb{N}}(\mathbf{G}_m\times \partial_{\mathrm{Cay}}(\mathbf{H}_n)_n)\right)\cup \left(\bigcup_{\mathbf{G}_{\infty}\in \partial_{\mathrm{Cay}}(\mathbf{G}_m)_m}(\mathbf{G}_{\infty}\times \partial_{\mathrm{Cay}}(\mathbf{H}_n)_n)\right)
\]
In this way, \textit{we can embed $($isomorphic and isometric copies of$)$ $(\mathbf{G}_m)_m$ in the Cayley boundary} (as subgroups of respectively suitable Cayley boundary groups).

\begin{proof}[Proof of Theorem~$\ref{theorem=Exotic}$]
Take the sequence of marked groups $(\mathbf{G}_p)_p$ over odd primes $p$ as in Example~\ref{example=Selberg}, and construct the upper triangular product $(\mathbf{H}_l)_{l\in \mathbb{N}}=(\mathbf{G}_p)_p \bigtriangledown(\mathbf{G}_p)_p$. Set $\Gamma_l$ as $\mathrm{Cay}(\mathbf{H}_l)$. Since $(\mathrm{Cay}(\mathbf{G}_p))_p$ forms an expander family, so does $(\Gamma_l)_{l\in \mathbb{N}}$. The Cayley boundary of that sequence contains an isometric copy of $(\mathrm{Cay}(\mathbf{G}_p))_p$; hence by Proposition~\ref{proposition=UniformCoarseEmbedding} together with Propositions~\ref{proposition=EmbeddedExpanders}, \ref{proposition=SphereEquivalence} and \ref{proposition=NonLinearSpectralGap}, we confirm the second assertion. To see the third assertion, G. A. Margulis showed that there exists $c>0$ such that for all odd prime $p$,
\[
\mathrm{girth}(\mathrm{Cay}(\mathbf{G}_p))\geq c \cdot\mathrm{diam}(\mathrm{Cay}(\mathbf{G}_p))
\]
holds, where the \textit{girth} of a connected graph is the length of shortest cycle; see \cite[Appendix~A]{BookDavidoffSarnakValette}. For such a sequence of finite graphs $(\mathrm{Cay}(\mathbf{G}_p))_p$, T. Kondo \cite{Kondo} constructed  a complete $\mathrm{CAT}(0)$ space $M_0=M_0((\mathrm{Cay}(\mathbf{G}_p))_p)$ such that the disjoint union $\bigsqcup_{p}\mathrm{Cay}(\mathbf{G}_p)$ embeds biLipschitzly into $M_0$. Therefore, the disjoint union of $(\Gamma_l)_l$ admits a biLipschitz embedding into $M=(M_0\times M_0)_{\ell_2}$.
\end{proof}

\subsection{Embedded expanders from fixed point property, and exotic examples from symmetric groups}\label{subsection=ProofOfExotic}
Here we prove Theorem~\ref{mtheorem=SymmetricGroups}. First we prove the following proposition, which may be of its own interest. It may be regarded as a generalization of \cite[Corollary~1.2]{MOSSPartIII} of our Part III paper.
\begin{proposition}\label{proposition=Embedded}
Let $(\mathbf{G}_m=(G_m;s_1^{(m)},\ldots ,s_k^{(m)}))_{m\in \mathbb{N}}$ be a Cayley convergent sequence consisting of finite marked groups and $\mathbf{G}=(G;s_1,\ldots ,s_k)$ be the limit. Let $\mathcal{E}$ be a non-empty class of Banach spaces that satisfies both of the following two conditions:
\begin{enumerate}[$(1)$]
  \item There exists $q\in [1,\infty)$ such that for every $E\in \mathcal{E}$, it holds that $\ell_q(\mathbb{N},E)\in \mathcal{E}$.
  \item The class $\mathcal{E}$ can be written as a union of subclasses 
\[
\mathcal{E}=\bigcup_{\lambda}\mathcal{E}_{\lambda}
\]
such that each such subclass $\mathcal{E}_{\lambda}$ satisfies the following: For every $(E_m)_{m\in \mathbb{N}}$ with $E_m\in \mathcal{E}_{\lambda}$ for every $m$, there exists a non-principal ultrafilter $\mathcal{U}$ over $\mathbb{N}$ such that $\lim_{\mathcal{U}}(E_m,0) \in \mathcal{E}_{\lambda}$.
\end{enumerate}
Assume that $G$ contains an infinite subgroup $H$ with property $(\mathrm{F}_{\mathcal{E}})$. Then the sequence of Cayley graphs $(\mathrm{Cay}(\mathbf{G}_m))_{m\in \mathbb{N}}$ admits embedded Banach $(\mathcal{E},q)$-expanders.
\end{proposition}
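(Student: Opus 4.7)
My plan is to realize $\Lambda_m$ as the Cayley graph of a subgroup of $G_m$ that ``tracks'' $H$, use the fixed point property $(\mathrm{F}_{\mathcal{E}})$ on an $H$-affine action built from a hypothetical counterexample to the Poincar\'{e} inequality, and extract a contradiction by a projection-and-summation trick. In detail:

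First I pick a finite generating set $(h_1,\dots,h_d)$ of $H$, write each $h_i$ as a word of length at most $D$ in $s_1,\dots,s_k$, and define $h_i^{(m)}\in G_m$ by the corresponding word in the images $s_j^{(m)}$. Let $H_m:=\langle h_1^{(m)},\dots,h_d^{(m)}\rangle\leq G_m$ and $\Lambda_m:=\mathrm{Cay}(H_m; h_1^{(m)},\dots,h_d^{(m)})$. The set inclusion $\iota_m\colon V(\Lambda_m)\hookrightarrow V(\Gamma_m)$ is clearly injective, every vertex of $\Lambda_m$ has degree at most $2d$, and $\iota_m$ is $D$-Lipschitz since one $h_i^{(m)}$-edge is traversed by at most $D$ generator steps in $\Gamma_m$. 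Cayley convergence (Lemma~\ref{lemma=Neighborhood}) implies that for each $R$ the ball $B_H(e_H,R)$ embeds injectively into $H_m$ for $m$ large; since $H$ is infinite, $\sharp H_m\to\infty$. Moreover, because the relations of $H$ are eventually satisfied in $H_m$ by Cayley convergence, I may arrange, after thinning the sequence, that there are surjective homomorphisms $\phi_m\colon H\twoheadrightarrow H_m$ with $\phi_m(h_i)=h_i^{(m)}$ (for general $H$, condition~(2) allows to globalize partial homomorphisms on an ultrapower).

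Next, for the Poincar\'{e}-type inequality, suppose for contradiction that it fails for some $E\in\mathcal{E}$. After passing to a subsequence indexed by $n$, there exist $f_n\colon H_{m_n}\to E$ with $m(f_n)=0$ and $\|f_n\|_{V_n}=1$, where $V_n:=\ell_q(H_{m_n},E)$ with normalized counting measure and $\lambda_n$ is the left regular representation of $H_{m_n}$, such that $\epsilon_n:=\max_i\|\lambda_n(h_i^{(m_n)})f_n-f_n\|_{V_n}\to 0$. Thin once more to choose positive scalars $T_n$ with $\sum_n(T_n\epsilon_n)^q<\infty$ and $\sum_n T_n^q=\infty$ (e.g.\ $T_n^q:=1/(n^2\epsilon_n^q)$ after making $\epsilon_n$ fall faster than $1/n^{1/q}$). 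Form $V:=\bigoplus_n^{\ell_q} V_n\cong\ell_q(\mathbb{N},E)$, which by condition~(1) lies in $\mathcal{E}$. The direct-sum representation $\pi(h):=\bigoplus_n \lambda_n(\phi_n(h))$ is an isometric action of $H$ on $V$, and $b(h):=\bigl(T_n(f_n-\lambda_n(\phi_n(h))f_n)\bigr)_n$ is a $\pi$-cocycle with values in $V$: for $h$ of word length $L$, $\|b(h)\|_V^q\leq L^q\sum_n(T_n\epsilon_n)^q<\infty$. Together this defines an affine isometric action $\alpha(h)v:=\pi(h)v+b(h)$ of $H$ on $V\in\mathcal{E}$.

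Now invoke $(\mathrm{F}_{\mathcal{E}})$: there must be a fixed point $w=(w_n)_n\in V$. Componentwise, $(\lambda_n(\phi_n(h))-I)(w_n-T_n f_n)=0$ for every $h\in H$, so by surjectivity of $\phi_n$ the vector $w_n-T_n f_n$ lies in the $H_{m_n}$-invariants of $V_n$, i.e.\ in the constant functions; write $w_n=T_n f_n+c_n$ with $c_n\in E$. The mean-subtraction projection $P_n(g):=g-m(g)$ on $V_n$ has operator norm $\leq 2$ by Jensen's inequality, and $P_n(w_n)=T_n f_n$ since $m(f_n)=0$; hence $T_n=\|P_n(w_n)\|_{V_n}\leq 2\|w_n\|_{V_n}$. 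Summing, $\|w\|_V^q\geq 2^{-q}\sum_n T_n^q=+\infty$, contradicting $w\in V$. Therefore, for every $E\in\mathcal{E}$, a uniform Poincar\'{e} constant $C_E$ exists, and the family $(\Lambda_{m_n})_n$ constitutes the desired embedded $(\mathcal{E},q)$-expanders in $(\Gamma_m)_m$.

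The main obstacle is the construction of the genuine homomorphism $\phi_m\colon H\twoheadrightarrow H_m$: this is automatic when $H$ is finitely presented, but in general (e.g., for Osajda's group, which motivates the statement) one must replace the $\ell_q$-direct sum $V$ by the metric ultraproduct $\lim_{\mathcal{U}}(V_n,0)$, where condition~(2) ensures that the ambient space remains in some $\mathcal{E}_\lambda\subseteq\mathcal{E}$. On the ultraproduct, the partial homomorphisms are globally defined modulo $\mathcal{U}$, and the cocycle/projection/summation argument above goes through with the same structural content; a secondary delicate point is the initial ``thinning to fast decay'' in $\epsilon_n$, which is needed to reconcile the two conditions $\sum(T_n\epsilon_n)^q<\infty$ and $\sum T_n^q=\infty$ on the scaling.
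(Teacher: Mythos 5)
Your core mechanism for the Poincar\'{e}-type inequality --- assuming a violating sequence $(f_n)$, rescaling by $T_n$ with $\sum_n(T_n\epsilon_n)^q<\infty$ but $\sum_nT_n^q=\infty$, building an affine isometric action of $H$ on $\bigoplus_n^{\ell_q}V_n\cong\ell_q(\mathbb{N},E)\in\mathcal{E}$, and contradicting the existence of a fixed point via the mean-subtraction projection --- is correct and is a genuinely different (more self-contained) route than the paper's, which instead passes from $(\mathrm{F}_{\mathcal{E}})$ to property $(\mathrm{T}_{\mathcal{E}})$ in the sense of Bader--Furman--Gelander--Monod and then quotes the $(\tau)$-type constant machinery from Part I (Proposition~\ref{proposition=Tau}). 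Two remarks even on this part: you pick a finite generating set of $H$ without justification, whereas the proposition only assumes $H$ is an infinite subgroup with $(\mathrm{F}_{\mathcal{E}})$; finite generation requires the Guichardet-type Lemma~\ref{lemma=Guichardet}, which is exactly where condition $(1)$ first enters. Also, your contradiction only rules out a subsequence of violations, so you should note that each individual finite connected graph satisfies the inequality with some constant, whence a uniform $C_E$ exists for the whole (sub)sequence.

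The genuine gap is the case where $H$ is not finitely presented, which is the case of interest (Osajda's group). Your argument needs honest surjections $\phi_n\colon H\twoheadrightarrow H_{m_n}$ to define the representation $\pi=\bigoplus_n\lambda_n\circ\phi_n$; Cayley convergence only provides partial homomorphisms on balls. Your proposed repair --- replacing the $\ell_q$-direct sum by the metric ultraproduct $\lim_{\mathcal{U}}(V_n,0)$ --- does not work: an element of the ultraproduct is a class of \emph{bounded} sequences with norm $\lim_{\mathcal{U}}\|\cdot\|$, so the divergence $\sum_nT_n^q=\infty$ that produces your contradiction has no ultraproduct analogue. Concretely, for the cocycle $\bigl(T_n(f_n-\lambda_n f_n)\bigr)_n$ to represent an element of the ultraproduct its norms must be bounded, and under your choice $\sum_n(T_n\epsilon_n)^q<\infty$ they tend to $0$; the limiting action is then linear and fixes $0$, yielding no contradiction, while the rescaling $T_n=\epsilon_n^{-1}$ leads you to need precisely the spectral gap you are trying to prove. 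The correct use of condition $(2)$ is the paper's: by the Gromov--Schoen openness argument (Proposition~\ref{proposition=GromovSchoen}), for a subclass $\mathcal{E}_\lambda$ containing $\ell_q(\mathbb{N},E)$ the set of marked groups with $(\mathrm{F}_{\mathcal{E}_\lambda})$ is open in the Cayley topology, so some \emph{finitely presented} marked lift $\tilde{\mathbf{H}}\twoheadrightarrow\mathbf{H}$ has $(\mathrm{F}_{\mathcal{E}_\lambda})$; by Remark~\ref{remark=FinitelyPresented} the groups $H_m$ are eventually genuine marked quotients of $\tilde{H}$, and your cocycle argument then runs verbatim with $\tilde{H}$ in place of $H$. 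Without this step (or an equivalent substitute), your proof covers only finitely presented $H$.
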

By combining this with Proposition~\ref{proposition=EmbeddedExpanders}, we deduce that the disjoint union $\bigsqcup_{m\in \mathbb{N}}\mathrm{Cay}(\mathbf{G}_m))$ of such a sequence does not admit a coarse embedding into $\mathcal{E}$.

To prove Proposition~\ref{proposition=Embedded}, we employ the following three results. 

\begin{lemma}\label{lemma=Guichardet}
Assume that a non-empty class of Banach spaces $\mathcal{E}$ satisfies condition $(1)$ as in Proposition~$\ref{proposition=Embedded}$. Then, if a countable discrete group $H$ satisfies property $(\mathrm{F}_{\mathcal{E}})$, then $H$ is finitely generated.
\end{lemma}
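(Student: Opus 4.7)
The plan is to argue by contrapositive, implementing Guichardet's classical construction in the Banach space setting. Assume $H$ is countable but not finitely generated. Enumerate $H = \{g_1, g_2, \ldots\}$ and set $H_n = \langle g_1, \ldots, g_n \rangle$; each $H_n$ is a finitely generated, hence proper, subgroup of $H$, and $H = \bigcup_{n\in\mathbb{N}} H_n$. For each $k$, properness of $H_k$ allows me to pick $h_k \in H \setminus H_k$, and then $h_k \notin H_n$ for all $n \leq k$.

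Next, I fix any nonzero $E_0 \in \mathcal{E}$ and a unit vector $v_0 \in E_0$. By condition $(1)$, the space $E := \ell_q(\mathbb{N}, E_0)$ lies in $\mathcal{E}$. Since $\coprod_n H/H_n$ is a countable set, I isometrically identify $E$ with the $\ell_q$-direct sum $\bigl(\bigoplus_{n\in\mathbb{N}} \ell_q(H/H_n, E_0)\bigr)_{\ell_q}$ by choosing a bijection between $\coprod_n H/H_n$ and $\mathbb{N}$. On each summand I use the quasi-regular representation $\pi_n(g)\xi(xH_n) = \xi(g^{-1}xH_n)$, which is isometric because it permutes an $E_0$-labelled copy of $H/H_n$. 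The direct sum $\pi = \bigoplus_n \pi_n$ is then an isometric linear $H$-action on $E$.

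For the cocycle, I set $b_n(g) = v_0 \cdot (\delta_{gH_n} - \delta_{H_n}) \in \ell_q(H/H_n, E_0)$. A direct computation verifies $b_n(gh) = \pi_n(g)b_n(h) + b_n(g)$, so $b(g) := (b_n(g))_n$ is a $1$-cocycle for $\pi$ and defines an affine isometric action $\alpha(g)\xi = \pi(g)\xi + b(g)$ on $E$. For each fixed $g \in H$, one has $g \in H_n$ for all sufficiently large $n$, which forces $b_n(g) = 0$ eventually; hence $b(g)$ genuinely lies in the $\ell_q$-direct sum.

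The crucial step is to show that the orbit of $0$ under $\alpha$ is unbounded, which contradicts property $(\mathrm{F}_{\mathcal{E}})$ since any fixed point of an isometric action would make every orbit bounded. For the elements $h_k$ above, $h_k \notin H_n$ for $n = 0, 1, \ldots, k$ gives $\|b_n(h_k)\|_{\ell_q(H/H_n, E_0)}^q = 2$ on each of these summands, so $\|b(h_k)\|_E^q \geq 2(k+1) \to \infty$. The main technical points to verify carefully will be the isometric identification of $E$ with the $\ell_q$-direct sum of the coset spaces (so that membership in $\mathcal{E}$ really does transfer via condition $(1)$) and the cocycle identity on each summand; both are routine but need the correct choice of conventions for left versus right actions and for the base point in $H/H_n$.
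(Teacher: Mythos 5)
Your proof is correct and is precisely the generalization of Guichardet's argument (Proposition 2.4.1 and Corollary 2.4.2 in Bekka--de la Harpe--Valette) that the paper invokes in one line: you replace $\ell^2$ with $\ell_q$, use an arbitrary $E_0 \in \mathcal{E}$ in place of $\mathbb{C}$, and reindex the $\ell_q$-sum over the countable set $\coprod_n H/H_n$ to land in $\ell_q(\mathbb{N}, E_0) \in \mathcal{E}$ via condition (1). The cocycle computation, the eventual vanishing of $b_n(g)$, and the unboundedness estimate all check out; the only cosmetic discrepancy is that the paper's convention uses right isometric actions while yours are on the left, which is immaterial after replacing $g$ by $g^{-1}$.
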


\begin{proof}
Generalize the proof of \cite[Proposition~2.4.1 and Corollary~2.4.2]{BookBekkadelaHarpeValette}.
\end{proof}

\begin{proposition}\label{proposition=Tau}
Assume that a non-empty class of Banach spaces $\mathcal{E}$ satisfies condition $(1)$ as in Proposition~$\ref{proposition=Embedded}$. Let $\mathbf{H}=(H;T)$ be an infinite marked group such that $H$ has property $(\mathrm{F}_{\mathcal{E}})$. Let $(\mathbf{H}_{n}=(H_n;T_n))_{n\in \mathbb{N}}$ be a sequence of finite marked group quotients $($recall the definition from Definition~$\ref{definition=RFLEFLEA}$$)$ such that $\lim_{n\to \infty}\sharp(H_n)=\infty$. 

Then, the sequence $(\mathrm{Cay}(\mathbf{H}_n))_{n\in\mathbb{N}}$ forms a family of Banach $(\mathcal{E},q)$-expanders.
\end{proposition}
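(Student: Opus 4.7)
The plan is to argue by contradiction, adapting to the Banach-space fixed-point setting the classical proof that property~$(\mathrm{T})$ yields expansion of finite quotients. Supposing the conclusion fails, there is some $E\in\mathcal{E}$ for which no uniform Poincar\'e constant works; after renormalization and passage to a subsequence (still indexed by $n$), one would obtain mean-zero maps $f_n\colon H_n\to E$ with $\|f_n\|_q:=\bigl(\sum_{v\in H_n}\|f_n(v)\|_E^q\bigr)^{1/q}=1$ and with $\epsilon_n:=\max_{s\in T}\|\pi_n(s)f_n-f_n\|_q$ tending to $0$ arbitrarily fast. Here $\pi_n\colon H\to\mathrm{Isom}(V_n)$ denotes the right regular representation of $H$ on $V_n:=\ell_q(H_n,E)$ through the quotient $H\twoheadrightarrow H_n$, $g\mapsto\bar g$, acting by $(\pi_n(g)\xi)(v):=\xi(v\bar g)$. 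A further refinement of the subsequence secures $\sum_n \epsilon_n^{q}<\infty$.

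Next, these data would be assembled into a single affine isometric action of $H$ on an element of $\mathcal{E}$. Let $W:=\ell_q(\mathbb{N},V_n)$; identifying $\bigsqcup_n H_n$ with $\mathbb{N}$, this space is isometric to $\ell_q(\mathbb{N},E)$, which lies in $\mathcal{E}$ by condition~$(1)$. Let $\pi:=\bigoplus_n\pi_n$ act diagonally on $W$, and define $b\colon H\to W$ by $b(g):=\bigl(\pi_n(g)f_n-f_n\bigr)_n$. Writing $g$ as a word of length $\ell$ in $T$ and telescoping gives $\|\pi_n(g)f_n-f_n\|_q\leq \ell\,\epsilon_n$, which together with summability of $\epsilon_n^q$ guarantees $b(g)\in W$ for every $g\in H$; the cocycle identity $b(gh)=\pi(g)b(h)+b(g)$ is immediate. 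Consequently $\alpha(g)\xi:=\pi(g)\xi+b(g)$ is an affine isometric action of $H$ on $W\in\mathcal{E}$.

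Property $(\mathrm{F}_\mathcal{E})$ for $H$ would then produce a fixed point of $\alpha$, equivalently, a vector $\xi=(\xi_n)_n\in W$ satisfying $b(g)=\pi(g)\xi-\xi$. Reading this componentwise, $f_n-\xi_n$ is $\pi_n$-invariant for each $n$; since $H$ acts transitively on $H_n$ through the quotient, the $\pi_n$-invariant elements of $\ell_q(H_n,E)$ are exactly the constant functions, and hence $f_n-\xi_n\equiv c_n\in E$. Membership $\xi\in W$ forces $\|\xi_n\|_q\to 0$; the mean-zero condition on $f_n$ gives $\sum_v\xi_n(v)=-|H_n|c_n$, and H\"older's inequality yields $\|c_n\cdot\mathbf{1}\|_{V_n}=|H_n|^{1/q}\|c_n\|_E\leq\|\xi_n\|_q$. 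The triangle inequality then produces $\|f_n\|_q\leq\|\xi_n\|_q+\|c_n\cdot\mathbf{1}\|_{V_n}\leq 2\|\xi_n\|_q\to 0$, contradicting $\|f_n\|_q=1$.

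The main obstacle is arranging $b(g)\in W$ for every $g\in H$: this is what compels the extraction of an $\ell_q$-summable subsequence of $(\epsilon_n)$ and, in turn, explains why only condition~$(1)$ of Proposition~\ref{proposition=Embedded} (the $\ell_q$-closure) is needed; no ultraproduct closure from condition~$(2)$ is invoked, which is consistent with the hypotheses of Proposition~\ref{proposition=Tau}. The remainder is bookkeeping that plays the mean-zero constraint against the $\ell_q$-integrability of $\xi$ via H\"older, to extract the constant $c_n$ forced by transitivity and close the contradiction.
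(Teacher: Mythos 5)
Your argument is correct, and it is essentially a direct, self-contained unpacking of what the paper does by citation. The paper's own proof is two steps of references: first it invokes Bader--Furman--Gelander--Monod to pass from property $(\mathrm{F}_{\mathcal{E}})$ to property $(\mathrm{T}_{\ell_q(\mathbb{N},E)})$ for each $E\in\mathcal{E}$ (this is exactly where condition $(1)$ enters, just as in your construction of $W\cong\ell_q(\mathbb{N},E)$), and then it quotes the $(\tau)$-type constant machinery of Lemma~6.8 of Part~I to convert that spectral-gap statement into the Poincar\'e-type inequality. Your proof fuses these two steps into one contradiction argument: the would-be counterexample functions $f_n$ are assembled directly into a cocycle $b$ for the $\ell_q$-sum of the quotient representations, and the fixed point of the resulting affine isometric action is played off against the normalization $\|f_n\|_q=1$ via transitivity, the mean-zero condition, and H\"older. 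This buys a proof that does not depend on the reader chasing the external references, at the cost of redoing (a special case of) the BFGM implication by hand. Two small points you should make explicit: (i) since each individual finite connected graph admits a finite Poincar\'e constant for any fixed $E$ (by a diameter argument), the violating indices necessarily tend to infinity, which is what legitimizes the extraction of the subsequence with $\sum_n\epsilon_n^q<\infty$; and (ii) the conclusion also requires the trivial verifications that the degrees are uniformly bounded by $2\,\sharp(T)$ and that $\sharp(H_n)\to\infty$, which the paper records and you omit.
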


\begin{proof}
By \cite[3.a]{BaderFurmanGelanderMonod}, $H$ has property $(\mathrm{T}_{\mathcal{E}})$ in the sense of Bader--Furman--Gelander-Monod. For each $E\in \mathcal{E}$. in particular, $H$ has property $(\mathrm{T}_{\ell_q(\mathbb{N},E)})$. This implies that \textit{the $(\tau)$-type constant} associated with $(\mathbf{H},\ell_q(\mathbb{N},E))$, defined in our Part I paper \cite[Definition~6.6.$(2)$]{MimuraSako}, is strictly positive. Then, in a similar argument to one in the proof of \cite[Lemma~6.8]{MimuraSako} (by replacing the square sums there with $q$-sums), we deduce that $(\mathrm{Cay}(\mathbf{H}_n))_{n\in\mathbb{N}}$ satisfies the Poincar\'{e}-type inequality as in Definition~\ref{definition=EmbeddedExpanders}. By construction, degrees are bounded by $2k$, and $(\infty>)\sharp(H_n)\to \infty$.
\end{proof}

\begin{proposition}\label{proposition=GromovSchoen}
Assume that a non-empty class of Banach spaces $\mathcal{E}$ satisfies condition $(2)$ as in Proposition~$\ref{proposition=Embedded}$ with $\mathcal{E}_{\lambda}=\mathcal{E}$. Let $\mathbf{H}=(H;T)$ be an infinite marked group such that $H$ has property $(\mathrm{F}_{\mathcal{E}})$. Then there exists a $\mathrm{finitely}$ $\mathrm{presented}$ marked group $\tilde{\mathbf{H}}$ such that it has property $(\mathrm{F}_{\mathcal{E}})$ and there exists a marked quotient map $\tilde{\mathbf{H}}\twoheadrightarrow \mathbf{H}$. 
\end{proposition}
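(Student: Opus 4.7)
The natural plan is to approximate $\mathbf{H}$ by finitely presented marked quotients and run an ultralimit argument in the style of Shalom--Gromov--Schoen. Fix the marking $T=(t_1,\ldots,t_k)$ of $\mathbf{H}=(F_k/N;T)$, write $N=\langle\langle r_1,r_2,\ldots\rangle\rangle$ as the normal closure of a countable set of relators, and set $\tilde{\mathbf{H}}_n:=(F_k/\langle\langle r_1,\ldots,r_n\rangle\rangle;T)$. Each $\tilde{\mathbf{H}}_n$ is finitely presented and admits a tautological marked quotient map $\tilde{\mathbf{H}}_n\twoheadrightarrow\mathbf{H}$; moreover $(\tilde{\mathbf{H}}_n)_n\to\mathbf{H}$ in the Cayley topology, because any equality of two words of length at most $R$ in $\mathbf{H}$ involves only finitely many of the relators and hence persists in $\tilde{\mathbf{H}}_n$ for all large $n$, while every non-identity word of $\mathbf{H}$ is automatically non-identity in the \emph{larger} group $\tilde H_n$. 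The goal therefore reduces to proving that $\tilde{\mathbf{H}}_n$ has property $(\mathrm{F}_{\mathcal{E}})$ for all sufficiently large $n$.

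I would argue by contradiction. Assume there is an infinite subset $I\subseteq\mathbb{N}$ such that, for every $n\in I$, the group $\tilde H_n$ admits an affine isometric action $\alpha_n$ on some $E_n\in\mathcal{E}$ without a global fixed point. For each $n\in I$, set $D_n:=\inf_{y\in E_n}\max_{t\in T}\|y\cdot\alpha_n(t)-y\|$; assuming for the moment $D_n>0$, rescale the norm of $E_n$ by $D_n^{-1}$ so that $D_n=1$, and pick $y_n\in E_n$ with $\max_{t\in T}\|y_n\cdot\alpha_n(t)-y_n\|<1+1/n$. Choose a non-principal ultrafilter $\mathcal{U}$ concentrated on $I$ and form $E_\infty:=\lim_\mathcal{U}(E_n,y_n)$, which lies in $\mathcal{E}$ by hypothesis $(2)$ with $\mathcal{E}_\lambda=\mathcal{E}$. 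Since each generator of $F_k$ displaces $y_n$ by at most $2$, the $F_k$-lifts $\tilde\alpha_n$ assemble into an isometric $F_k$-action on $E_\infty$ via
\[
[(z_n)]_\mathcal{U}\cdot\alpha_\infty(g):=[(z_n\cdot\tilde\alpha_n(g))]_\mathcal{U};
\]
and for each fixed $i$ the element $r_i$ acts trivially on $E_n$ as soon as $n\ge i$, so $\alpha_\infty(r_i)=\mathrm{id}_{E_\infty}$, which means $\alpha_\infty$ descends to an isometric $H$-action on $E_\infty$. Now $(\mathrm{F}_{\mathcal{E}})$ for $H$ supplies a fixed point $z=[(z_n)]_\mathcal{U}\in E_\infty$, i.e.\ $\lim_\mathcal{U}\max_{t\in T}\|z_n\cdot\alpha_n(t)-z_n\|=0$, which flatly contradicts the normalization $D_n=1$ along a $\mathcal{U}$-large set.

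The main obstacle I anticipate is the degenerate case $D_n=0$ for infinitely many $n$: then $\alpha_n$ admits almost-fixed points but no genuine fixed point, and the rescaling step becomes vacuous. The remedy I have in mind is to replace $\alpha_n$ by the induced affine isometric action on the quotient Banach space $E_n/V_n$, where $V_n\subseteq E_n$ is the closed affine subspace obtained by collapsing the almost-invariant directions; this forces strictly positive minimal displacement on the quotient, after which the argument above applies, provided that the quotient still belongs to $\mathcal{E}$ — a mild closure condition valid for every class of Banach spaces considered elsewhere in this paper. An alternative workaround, when quotients are not available, is to direct-sum $\alpha_n$ with a small auxiliary action of strictly positive displacement. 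A minor bookkeeping point is that the rescaling step tacitly uses closure of $\mathcal{E}$ under positive rescaling of the norm; this should be declared as a standing convention.
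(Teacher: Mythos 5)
Your overall architecture matches the paper's: build the chain $\tilde{\mathbf{H}}_n=(F_k/\langle\langle r_1,\ldots,r_n\rangle\rangle;T)$ of finitely presented marked quotients converging to $\mathbf{H}$, and deduce that $\tilde{\mathbf{H}}_n$ has $(\mathrm{F}_{\mathcal{E}})$ for all large $n$ from an \emph{openness} statement for $(\mathrm{F}_{\mathcal{E}})$ in the Cayley topology. The paper simply cites \cite[Theorem~1.5]{Stalder} for this openness (noting that the Hilbert-space proof generalizes once $\mathcal{E}$ is ultraproduct-closed) and concludes; you attempt to re-derive that openness from scratch by the ultralimit argument, which is the right idea and is correct in the case $D_n>0$.

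The gap, which you flag yourself, is the case $D_n=0$: an affine isometric action can have almost-fixed points while having no fixed point (this is in fact the typical failure of a fixed-point property), and then both your rescaling and your final contradiction become vacuous. Neither of your proposed remedies is licensed by the hypothesis. The Proposition only assumes closure of $\mathcal{E}$ under pointed metric ultraproducts (condition $(2)$ with $\mathcal{E}_\lambda=\mathcal{E}$); it does \emph{not} grant closure under quotient Banach spaces nor under $\ell_q$-sums with arbitrary auxiliary summands, so ``collapse the almost-invariant directions'' and ``direct-sum with an auxiliary positive-displacement action'' are both unavailable as stated. Moreover the first is not made precise (which closed subspace, and why would the quotient action acquire strictly positive displacement?), and for the second it is unclear what fixed-point-free action of $\tilde H_n$ with uniformly positive displacement on a space in $\mathcal{E}$ you would even feed in. The missing ingredient — and exactly the substantive content of the cited \cite[Theorem~1.5]{Stalder} — is Gromov's rescaling trick: given a fixed-point-free affine isometric action $\alpha$ of $\tilde H_n$ on $E\in\mathcal{E}$ whose displacement function $f(y)=\max_{t\in T}\|y\cdot\alpha(t)-y\|$ has infimum $0$, apply Ekeland's variational principle to the Lipschitz, everywhere-positive, $0$-infimum function $f$ to produce base points $y^{(m)}$ and scales $\mu_m=f(y^{(m)})^{-1}$ so that the pointed metric ultralimit $\lim_{\mathcal{V}}\left((E,\mu_m\|\cdot\|),y^{(m)}\right)$ — which lies in $\mathcal{E}$, being an ultraproduct of spaces each linearly isometric to $E$ — carries an induced affine isometric $\tilde H_n$-action with infimum displacement $\geq 1/2$. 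Replacing $\alpha_n$ by this ``uniform'' action reduces to your $D_n>0$ case, after which your ultralimit contradiction goes through verbatim.
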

\begin{proof}
This follows from a well-known \textit{Gromov--Schoen argument}; see the survey \cite{Stalder} of  Stalder. More precisely, \cite[Theorem~1.5]{Stalder} implies that the subset of all marked groups in $\mathcal{G}(k)$ with property $(\mathrm{F}_{\mathcal{E}})$ forms an \textit{open} subset in the Cayley topology. Here $k=\sharp(T)$. If $\mathbf{H}$ itself is finitely presented, then we are done. Otherwise, there exists a Cayley convergent sequence $(\tilde{\mathbf{H}}_m)_{m\in\mathbb{N}}$ to $\mathbf{H}$
\[
\tilde{\mathbf{H}}_0=\mathbf{F_k}\twoheadrightarrow \tilde{\mathbf{H}}_1 \twoheadrightarrow \tilde{\mathbf{H}}_2 \twoheadrightarrow \cdots  \twoheadrightarrow \tilde{\mathbf{H}}_m \twoheadrightarrow \cdots \quad \stackrel{\mathrm{Cay}}{\longrightarrow}\quad  \mathbf{H}
\]
consisting of \textit{finitely presented} marked groups, constructed by putting relations of $\mathbf{H}$ one by one. By the openness property above, there must exist $m\in\mathbb{N}$ such that $\tilde{\mathbf{H}}_m$ has property $(\mathrm{F}_{\mathcal{E}})$. This $\tilde{\mathbf{H}}_m$ is a desired $\tilde{\mathbf{H}}$.
\end{proof}
On Proposition~\ref{proposition=GromovSchoen}, the case where $\mathcal{E}=\mathcal{H}\mathrm{ilbert}$ was proved by Shalom \cite{Shalom}; see also \cite{KorevaarSchoen}. In this case, property $(\mathrm{F}_{\mathcal{H}\mathrm{ilbert}})$ (for countable discrete groups) is equivalent to the celebrated \textit{property $(\mathrm{T})$} of D. Kazhdan; see \cite{BookBekkadelaHarpeValette}  on property $(\mathrm{T})$, including this equivalence (the Delorme--Guichardet theorem).

\begin{proof}[Proof of Proposition~$\ref{proposition=Embedded}$]
By Lemma~\ref{lemma=Guichardet}, $H$ is finitely generated. Fix a finite generating set $T=(t_1,\ldots ,t_l)$ of $H$. Then, each $t_j$, $j\in[l]$, may be written as a product of elements in $S=(s_1,\ldots ,s_k)$; fix such an expressions for each $j\in [l]$. For each $m\in \mathbb{N}$, $t_j^{(m)}$, $j\in [l]$, be the element in $G_m$ constructed by replacing $s_i$ with $s_i^{(m)}$ in that expression for all $i\in [k]$. Let $H_m(\leqslant G_m)$ be the group generated by these $t_1^{(m)},\ldots ,t_k^{(m)}$. Then for every $m\in \mathbb{N}$, $H_m$ is finite, and 
\[
(H_m;t_1^{(m)},\ldots ,t_k^{(m)}) \quad \stackrel{\mathrm{Cay}}{\longrightarrow} \quad \mathbf{H}.
\]
Now fix $E\in \mathcal{E}$. Then by condition $(1)$ and $(2)$, there exists a subclass  $\mathcal{E}_{\lambda}$ as in $(2)$ of $\mathcal{E}$ that contains $\ell_q(\mathbb{N},E)$. We apply Proposition~\ref{proposition=GromovSchoen} to $\mathcal{E}_{\lambda}$ and take \textit{finitely presented} marked lift $\tilde{\mathbf{H}}$ of $\mathbf{H}$ with property $(\mathrm{F}_{\mathcal{E}_{\lambda}})$. Then by finite presentation of $\tilde{\mathbf{H}}$, the set of all marked group quotients of $\tilde{\mathbf{H}}$ is an \textit{open} neighborhood of $\mathbf{H}$; recall Remark~\ref{remark=FinitelyPresented}. In particular, the sequence $((H_m;t_1^{(m)},\ldots ,t_k^{(m)}))_{m}$ eventually consists of marked group quotient of $\tilde{\mathbf{H}}$. Therefore, Proposition~\ref{proposition=Tau} applies and $(\Lambda_m)_m=(\mathrm{Cay}(H_m;t_1^{(m)},\ldots ,t_k^{(m)}))_{m}$ forms a Banach $(E,q)$-expander family. (Strictly speaking, for small $m$, the marked group might not be a marked group quotients of $\tilde{\mathbf{H}}$. However, since these are only finitely many, they do not affect the Banach $(E,q)$-expander property.) Because this holds for each $E\in \mathcal{E}$, $(\Lambda_m)_m$ forms a Banach $(\mathcal{E},q)$-expander family.

Finally we go back to the original graphs $(\Gamma_m)_{m\in\mathbb{N}}=(\mathrm{Cay}(\mathbf{G}_m))_{m\in \mathbb{N}}$. First, the vertex set $V(\Lambda_m)=H_m$ injects into $V(\Gamma_m)=G_m$ via $\iota_m\colon H_m \hookrightarrow G_m$ (as a subgroup $H_m\leqslant G_m$). Moreover, by construction of $(t_1^{(m)},\ldots ,t_l^{(m)})$, there exists $D>0$ such that for every $m\in \mathbb{N}$, the map $(\Lambda_m,d_{\Lambda_m})\to (\Gamma_m,d_{\Gamma_m})$ induced by $\iota_m$ is $D$-Lipschitz. This ends our proof.
\end{proof}

Before proceeding to the proof of Theorem~\ref{mtheorem=SymmetricGroups}, we state the following lemma, which enables us to \textit{encode} information of a Cayley convergence into symmetric groups. Here, for a non-empty set $B$, denote by $\mathrm{Sym}(B)$ the full symmetric group, and by $\mathrm{Sym}_{<\aleph_0}(B)$ the symmetric group with finite support, namely, the group of all permutations on $B$ that fix all but finitely many elements in $B$. For $l\in \mathbb{N}_{\geq 1}$, we abbereviate $\mathrm{Sym}([l])$ as $\mathrm{Sym}(l)$.

\begin{lemma}[Encoding into symmetric groups]\label{lemma=SymmetricGroups}
Let $k\in \mathbb{N}_{\geq 1}$. Let $(\mathbf{G}_{m})_{m\in \mathbb{N}}=(G_m;s_1^{(m)},\ldots ,s_k^{(m)}))_{m}$ be a LEF approximation of an infinite group $\mathbf{G}_{\infty}=(G_{\infty};s_1^{(\infty)},\ldots ,s_k^{(\infty)})$. Assume that for every $m\in \mathbb{N}\cup\{\infty\}$ and for every $j\in [k]$, it holds that $s_j^{(m)}\ne e_{G_m}$. 

Then we have the following Cayley convergence in $\mathcal{G}(2k)$:
\begin{eqnarray*}
& &(\mathrm{Sym}(G_m);\chi_{s_1^{(m)}}, \ldots ,\chi_{s_k^{(m)}},\theta_{s_1^{(m)}},\ldots ,\theta_{s_k^{(m)}}) \\
\quad \stackrel{\mathrm{Cay}}{\longrightarrow}\quad & &(\mathrm{Sym}_{<\aleph_0}(G_\infty)\rtimes G_{\infty};\chi_{s_1^{(\infty)}}, \ldots ,\chi_{s_k^{(\infty)}},\theta_{s_1^{(\infty)}},\ldots ,\theta_{s_k^{(\infty)}}).
\end{eqnarray*}

Here, $G_{\infty}$ acts on $\mathrm{Sym}_{<\aleph_0}(G_\infty)$ as permutations induced by right multiplication; for a countable group $G$ and for $\gamma \in G\setminus\{e_G\}$, we define  elements $\chi_{\gamma}\in \mathrm{Sym}_{<\aleph_0}(G)$ and $\theta_{\gamma}\in \mathrm{Sym}(G)$ by
\begin{eqnarray*}
\chi_{\gamma}&=&(\textrm{the transposition on $\{e_{G},\gamma\}$}), \\
\theta_{\gamma}&=&(\textrm{the permutation on $G$ given by the right-multiplication of $\gamma$}).
\end{eqnarray*}
\end{lemma}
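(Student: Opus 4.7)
The plan is to verify the Cayley convergence in $\mathcal{G}(2k)$ through the neighborhood criterion $(\star)$ of Lemma~\ref{lemma=Neighborhood}: for every $r\in \mathbb{N}$ I shall produce $m_r\in \mathbb{N}$ such that for all $m\ge m_r$, the ball of radius $r$ in $(\mathrm{Sym}(G_m);\chi_{s_1^{(m)}},\ldots ,\theta_{s_k^{(m)}})$ is isomorphic as a rooted diagram to the corresponding $r$-ball in $(\mathrm{Sym}_{<\aleph_0}(G_\infty)\rtimes G_\infty;\chi_{s_1^{(\infty)}},\ldots ,\theta_{s_k^{(\infty)}})$. The natural candidate is the \emph{word-to-word} correspondence: a ball element is represented by a word $w$ of length $\le r$ in the formal alphabet $\{\chi_{s_j}^{\pm 1},\theta_{s_j}^{\pm 1}\}_{j\in [k]}$, and one sends $\pi_m(w)$ to $\pi_\infty(w)$, where $\pi_m,\pi_\infty$ denote evaluation in the respective target groups. (That $\pi_\infty(w)$ truly lies in $\mathrm{Sym}_{<\aleph_0}(G_\infty)\rtimes G_\infty$ follows since conjugates of transpositions by right translations remain transpositions, so every word evaluation decomposes as a finitely-supported permutation composed with some $\theta_{g_w}$.) Because products of two length-$r$ words have length $\le 2r$, this assignment descends to a well-defined, edge-preserving partial isomorphism of $r$-balls provided the following equivalence holds for every word $v$ of length $\le 2r$:
\[
\pi_m(v)=\mathrm{id} \iff \pi_\infty(v)=\mathrm{id}.
\]

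The central step is a local analysis of the action of $v=x_1\cdots x_l$ (with $l\le 2r$) on a base point $y=y_0\in G$, where $G\in\{G_m,G_\infty\}$. Setting $y_t=x_{l-t+1}(y_{t-1})$, each letter either right-multiplies by some $s_j^{\pm 1}$ (for $\theta_{s_j}^{\pm 1}$), fixes $y_{t-1}$ (for $\chi_{s_j}$ with $y_{t-1}\notin\{e_G,s_j\}$), or swaps $y_{t-1}\in\{e_G,s_j\}$ with the other element of that pair; in the third case $|y_{t-1}|_{\mathbf{G}}\le 1$, while otherwise $|y_t|_{\mathbf{G}}\ge |y_{t-1}|_{\mathbf{G}}-1$. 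A direct induction then shows that as soon as $|y_0|_{\mathbf{G}}>2l$, one has $|y_t|_{\mathbf{G}}\ge |y_0|_{\mathbf{G}}-t\ge 2$ throughout the trajectory, so every $\chi$-letter is inert on the trajectory and the net effect is right multiplication by $g_v\in G$, the product of the $s_j^{\pm 1}$'s in the $\theta$-positions of $v$. Consequently, $\pi(v)=\mathrm{id}$ is equivalent to the conjunction of (a) the condition that $v\cdot y=y$ for every $y\in B_{\mathbf{G}}(e_G,2l)$, decidable from the multiplication table of $B_{\mathbf{G}}(e_G,3l)$, and (b) the condition $g_v=e_G$ (forced whenever $G$ contains an element of word length exceeding $2l$, vacuous otherwise).

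Choosing $m_r$ so large that $R_{m_r}\ge 6r$ and $|G_{m_r}|>|B_{\mathbf{G}_\infty}(e_{G_\infty},4r)|$ — both holding eventually, the former by the hypothesis $\mathbf{G}_m\stackrel{\mathrm{Cay}}{\to}\mathbf{G}_\infty$ combined with $(\star)$, and the latter because the infinitude of $G_\infty$ forces $|G_m|\ge |B_{\mathbf{G}_\infty}(e_{G_\infty},R_m)|\to\infty$ — the partial isomorphism $\beta_{\mathbf{G}_m,\mathbf{G}_\infty,R_m}$ restricts to $B_{\mathbf{G}}(e_G,3l)$ and places conditions (a) and (b) in simultaneous correspondence between $G_m$ and $G_\infty$ for every word $v$ of length $\le 2r$; the displayed equivalence follows, yielding the required partial isomorphism of $r$-balls in $\mathcal{G}(2k)$. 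The hard part of writing this up rigorously is the induction in the local-analysis step: one must track the trajectory carefully through mixed $\chi$- and $\theta$-letters and, in particular, verify that once the trajectory leaves $B_{\mathbf{G}}(e_G,1)$ it cannot re-enter, so that the non-trivial case of $\chi$-letters never arises on a far-away base point.
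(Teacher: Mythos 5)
Your argument is correct and complete, whereas the paper simply defers to the proof of Lemma~4.9 in \cite{MimuraNonExact}; the word-to-word correspondence combined with the ``far-away base point'' analysis is exactly the expected strategy for this type of encoding lemma, and it works. The key reduction — that $\pi(v)=\mathrm{id}$ is equivalent to the conjunction of (a) fixing the ball $B_{\mathbf{G}}(e_G,2l)$ pointwise (checkable from the multiplication table of a larger but still finite ball) and (b) the $\theta$-part $g_v$ being trivial (which is meaningful precisely because both $G_\infty$ and $G_m$ eventually contain points of word length exceeding $2l$) — is exactly what transfers across the partial isomorphism $\beta_{\mathbf{G}_m,\mathbf{G}_\infty,R_m}$, and your quantitative bounds ($R_{m_r}\gtrsim 6r$, $\sharp(G_{m_r})>\sharp B_{\mathbf{G}_\infty}(e_{G_\infty},4r)$) are of the right order of magnitude, with $\sharp(G_m)\to\infty$ indeed forced by infinitude of $G_\infty$.

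Two small remarks, neither a genuine gap. First, for the rooted \emph{diagram} isomorphism (not just the bijection of ball sets) you actually need the equivalence $\pi_m(v)=\mathrm{id}\iff\pi_\infty(v)=\mathrm{id}$ for words $v$ of length up to $2r+1$, not merely $2r$, since edges are governed by words of the form $\sigma_j w_1 w_2^{-1}$; this costs nothing in the bounds. Second, your closing sentence misstates the content of the induction: it is \emph{false} in general that a trajectory which leaves $B_{\mathbf{G}}(e_G,1)$ cannot re-enter (take $v=\theta_s^{-1}\theta_s$ at $y_0=e_G$). What the induction actually proves — and what you already wrote correctly in the body of the argument — is that a trajectory \emph{starting} far enough out (say $\lvert y_0\rvert_{\mathbf{G}}>2l$, or even just $\geq l+1$) never drops below word length $2$, so no $\chi$-letter acts nontrivially along it. That is the property you use, and it is established by your ``$\lvert y_t\rvert\geq\lvert y_0\rvert-t$'' step. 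You might also add one line observing that the marking on the right-hand side does generate all of $\mathrm{Sym}_{<\aleph_0}(G_\infty)\rtimes G_\infty$ (conjugates $\theta_g^{-1}\chi_{s_j}\theta_g$ give all transpositions of adjacent vertices of $\mathrm{Cay}(\mathbf{G}_\infty)$, which generate the finitary symmetric group on a connected graph), so that the Cayley limit is the marked group named in the statement rather than a proper marked subgroup.
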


For the proof, see \cite[the proof of Lemma~4.9]{MimuraNonExact}.

\begin{proof}[Proof of Theorem~$\ref{mtheorem=SymmetricGroups}$]
We take two sequences of marked groups $((G_m;S_m))_{m\in \mathbb{N}_{\mathrm{odd}}}$ and $((G_m;T_m))_{m\in \mathbb{N}_{\mathrm{odd}}}$ as in Example~\ref{example=SpecialLinear}. More precisely, $G_m=\mathrm{SL}(m,\mathbb{F}_{p^{n_m}})$, $S_m=(\sigma^{(m)},\upsilon^{(m)},\tau^{(m)})$ and $T_m=(\sigma^{(m)},\sigma'^{(m)},\upsilon^{(m)},\tau^{(m)})$. Let $(\mathbf{H}_n)_{n\in \mathbb{N}_{\geq 3}}=((\mathbb{Z}/n\mathbb{Z};1))_n$. Then, take upper triangular products
\begin{eqnarray*}
(\mathbf{I}_l)_{l\in \mathbb{N}}&=& ((G_m;S_m))_m \bigtriangledown (\mathbf{H}_n)_n \quad \textrm{in}\ \mathcal{G}(4),\\
(\mathbf{J}_l)_{l\in \mathbb{N}}&=& ((G_m;T_m))_m \bigtriangledown (\mathbf{H}_n)_n \quad \textrm{in}\ \mathcal{G}(5).
\end{eqnarray*}
By construction, concerning Cayley boundaries, we have that
\begin{eqnarray*}
\partial_{\mathrm{Cay}}(\mathbf{I}_l)_l&=& \{(G_m;\sigma^{(m)},\upsilon^{(m)},\tau^{(m)}):m\in \mathbb{N}_{\mathrm{odd}}\} \times \mathbf{Z}\\
  & &\cup\ \{(\mathrm{N}_{>}(\mathbb{Z},\mathbb{F}_p[t])\rtimes \mathbb{Z};\sigma^{(\infty)},\upsilon^{(\infty)},\tau^{(\infty)})\times \mathbf{Z}\},\\
\partial_{\mathrm{Cay}}(\mathbf{J}_l)_l&=& \{(G_m;\sigma^{(m)},\sigma'^{(m)},\upsilon^{(m)},\tau^{(m)}):m\in \mathbb{N}_{\mathrm{odd}}\} \times \mathbf{Z}\\
& &\cup\ \{(\mathrm{SL}(\mathbb{Z},\mathbb{F}_p[t])\rtimes \mathbb{Z};\sigma^{(\infty)},\sigma'^{(\infty)},\upsilon^{(\infty)},\tau^{(\infty)})\times \mathbf{Z}\},
\end{eqnarray*}
for some markings $(\sigma^{(\infty)},\upsilon^{(m)},\tau^{(\infty)})$ and $(\sigma^{(\infty)},\sigma'^{(\infty)},\upsilon^{(m)},\tau^{(\infty)})$. Here $\mathbf{Z}=(\mathbb{Z};1)$.

Note that for each $l\in \mathbb{N}$, the underlying groups of $\mathbf{I}_l$ and $\mathbf{J}_l$ are the same; we write it as $K_l$. The marking of $\mathbf{I}_l$ is of the form $(b_{1}^{(l)},b_2^{(l)},b_3^{(l)},c^{(l)})$ and the one of $\mathbf{J}_l$ is of the form $(b_1^{(l)},{b'_{1}}^{(l)},b_2^{(l)},b_3^{(l)},c^{(l)})$. Here $b_1$, $b'_{1}$, $b_2$, $b_3$ are associated, respectively, with $\sigma$, $\sigma'$, $\upsilon$, $\tau$, and $c$ corresponds to the generator $1$ of $\mathbf{H}_n$.

Finally, we employ the encoding into symmetric groups as in Lemma~\ref{lemma=SymmetricGroups}. More precisely, consider two systems of markings $(\Xi_l)_{l\in \mathbb{N}}$ and $(\Omega)_{l\in \mathbb{N}}$ of $(\mathrm{Sym}(K_l))_{l\in \mathbb{N}}$ by 
\begin{eqnarray*}
    \Xi_l&=&(\chi_{{b_1}^{(l)}},\chi_{{b_2}^{(l)}},\chi_{{b_3}^{(l)}},\chi_{{c}^{(l)}},\theta_{{b_1}^{(l)}},\theta_{{b_2}^{(l)}},\theta_{{b_3}^{(l)}},\theta_{{c}^{(l)}}),\\
 \Omega_l&=&(\chi_{{b_1}^{(l)}},\chi_{{b_2}^{(l)}},\chi_{{b_3}^{(l)}},\chi_{{c}^{(l)}},\theta_{{b_1}^{(l)}},\theta_{{b_2}^{(l)}},\theta_{{b_3}^{(l)}},\theta_{{c}^{(l)}},\theta_{{b'_1}^{(l)}}).
\end{eqnarray*}

In what follows, we will verify the assertions as in Theorem~\ref{mtheorem=SymmetricGroups}. Item $(1)$ is by construction. To see $(2)$, all underlying groups appearing in $\partial_{\mathrm{Cay}}(\mathbf{I}_l)_l$ are
\begin{align*}
\mathrm{Sym}_{<\aleph_0}(G_m\times \mathbb{Z})\rtimes (G_m\times \mathbb{Z}),\ m\in \mathbb{N}_{\mathrm{odd}},\quad \textrm{and}\quad \mathrm{Sym}_{<\aleph_0}(\tilde{G}_{\infty})\rtimes (\tilde{G}_{\infty}),
\end{align*}
where $\tilde{G}_{\infty}=(\mathrm{N}_{>}(\mathbb{Z},\mathbb{F}_p[t])\rtimes \mathbb{Z})\times \mathbb{Z}$. 
Since all of them are amenable, \cite[Theorem~A]{MimuraSako} implies that the disjoint union $\bigsqcup_{l}\mathrm{Cay}(\mathbf{I}_l)$ has property A. 

Finally, we deal with $(3)$. In a similar argument to one above, we  see that  the Cayley boundary $\partial_{\mathrm{Cay}}(\mathbf{J}_l)_l$ contains an isomorphic and isometric copies of $((G_m;T_m))_{m\in \mathbb{N}_{\mathrm{odd}}}$ (as subgroups of respectively suitable Cayley boundary groups). Now recall that 
\[
(G_m;T_m) \quad \stackrel{\mathrm{Cay}}{\longrightarrow} \quad \mathrm{SL}(\mathbb{Z},\mathbb{F}_p[t])\rtimes \mathbb{Z}
\]
with respect to a suitable marking of the limit, and that the Cayley limit group contains $\mathrm{SL}(3,\mathbb{F}_p[t])$, which has property $(\mathrm{F}_{\mathcal{B}_{\mathrm{type}>1}})$. Note that the class $\mathcal{B}_{\mathrm{type}>1}$ fulfills the two conditions in Proposition~\ref{proposition=Embedded}. Indeed, to see $(2)$, decompose as
\[
\mathcal{B}_{\mathrm{type}>1}=\bigcup_{r\in (1,2],\ C>0}\mathcal{B}_{r,C}^{\mathrm{type}}.
\]
Hence by Proposition~\ref{proposition=Embedded}, we conclude that $(\mathrm{Cay}(G_m;T_m))_{m\in \mathbb{N}_{\mathrm{odd}}}$ admits embedded Banach $(\mathcal{B}_{\mathrm{type}>1},2)$-expanders. This with Propositions~\ref{proposition=EmbeddedExpanders}, \ref{proposition=SphereEquivalence} and \ref{proposition=NonLinearSpectralGap} imply that $\partial_{\mathrm{Cay}}(\mathbf{J}_l)_l$ does \textit{not} admit equi-coarse embeddings into $\mathcal{M}$, where $\mathcal{M}$ is either of the two classes  as in the assertion of $(3)$. Thus by Proposition~\ref{proposition=UniformCoarseEmbedding} we complete the proof. Here for every $l\in \mathbb{N}$, we set $k_l=\sharp(K_l)$ and identify $\mathrm{Sym}(k_l)$ with $\mathrm{Sym}(K_l)$.
\end{proof}
\begin{remark}
In this specific example above, we do not need to appeal to Proposition~\ref{proposition=GromovSchoen} to obtain a finitely presented lift with property $(\mathrm{F}_{\mathcal{B}_{\mathrm{type}>1}})$. Indeed, it follows from work of H. Behr \cite{Behr} that $\mathrm{SL}(n,\mathbb{F}_{p^r}[t])$ is finitely presented for every prime $p$ and for every $r\in \mathbb{N}_{\geq 1}$, \textit{provided that $n\geq 4$}. Thus the Cayley limit group $\mathrm{SL}(\mathbb{Z},\mathbb{F}_p[t])\rtimes \mathbb{Z}$ of our concern in the example above contains a copy of a \textit{finitely presented} group $\mathrm{SL}(4,\mathbb{F}_p[t])$ with property $(\mathrm{F}_{\mathcal{B}_{\mathrm{type}>1}})$ as a subgroup.
\end{remark}

We make a final remark, which is similar to  one in the Part I paper \cite{MimuraSako}: The construction above is ``\textit{semi}-explicit'' because in general, there is an issue to have an explicit generator of $\mathbb{F}_{p^{n_m}}^{\times}$. To obtain a fully explicit construction, replace coefficient rings $(\mathbb{F}_{p^{n_m}})_m$ with explicit other quotient rings of $\mathbb{F}_p[t]$; for instance take $(\mathbb{F}_p[t]/(t^{n_m}-t))_m$, and replace $(t_{n_m}\in \mathbb{F}_{p^{n_m}})_m$ with $(t\in \mathbb{F}_p[t]/(t^{n_m}-t))_m$.

\section*{Acknowledgments}
The authors wish to express their gratitude to Professor Guoliang Yu and Professor Qin Wang for their kind invitation to Fudan University in Shanghai in July, 2013. Part of this work was done during that stay. Some other part of this work was done during the  two-year stay of the first-named author in the \'{E}cole Polytechnique F\'{e}d\'{e}rale de Lausanne supported by Grant-in-Aid for JSPS Oversea Research Fellowships. The first-named author wishes to express his gratitude to Professor Nicolas Monod and Mrs. Marcia Gouffon for their hospitality and help on his visit. The two authors thank Damian Osajda for discussion on his construction of RF groups without property A, specially for Remark~\ref{remark=Osajda}. They are grateful to Sylvain Arnt for discussion and the terminology of a-$\mathcal{M}$-menability, Goulnara Arzhantseva and Florent Baudier for discussion and comments, Tom Kaiser for the reference \cite{Kaiser}, and Hiroyasu Izeki, Shin Nayatani and Tetsu Toyoda for discussions on $r$-uniformly convex metric spaces and the Izeki--Nayatani invariants. The first-named author thanks Yash Lodha for several comments, including advice to build Section~\ref{section=Gadgets} as a distinct section, and his suggestion of the terminology of ``fragmentary actions'', and Manor Mendel for discussion on several concepts on metric geometry of  non-linear spaces.

\bibliographystyle{amsalpha}
\bibliography{mimura_sako_cayley2.bib}

\end{document}